\theoremstyle{definition}
\newtheorem{Def}{Definition}[section]
\newtheorem{es}[Def]{Example}
\newtheorem{ese}[Def]{Examples}
\newtheorem{as}[Def]{Assumption}
\newtheorem*{def:wsc}{Definition~\ref{wsc}}
\theoremstyle{remark}
\newtheorem{obs}[Def]{Remark}
\newtheorem{nota}[Def]{Notation}
\theoremstyle{plain}
\newtheorem{prop}[Def]{Proposition}
\newtheorem{lema}[Def]{Lemma}
\newtheorem{cor}[Def]{Corollary}
\newtheorem{teo}[Def]{Theorem}
\newtheorem*{teo:1stchar}{Theorem~\ref{1stchar}}
\newtheorem*{teo:bieq}{Theorem~\ref{bieq}}
\newtheorem*{teo:sharply}{Theorem~\ref{sharply}}
\newtheorem*{teo:raising}{Theorem~\ref{rainsing}}
\newtheorem*{teo:char-single}{Theorem~(part of \ref{char-single})}
\newcommand{\bb}{\mathbbm}
\newcommand{\bo}{\mathbf}
\newcommand{\A}{{\mathcal A}}
\newcommand{\B}{{\mathcal B}}
\newcommand{\C}{{\mathcal C}}
\newcommand{\D}{{\mathcal D}}
\newcommand{\E}{{\mathcal E}}
\newcommand{\G}{{\mathcal G}}
\newcommand{\I}{{\mathcal I}}
\newcommand{\J}{{\mathcal J}}
\newcommand{\K}{{\mathcal K}}
\renewcommand{\L}{{\mathcal L}}
\newcommand{\M}{{\mathcal M}}
\renewcommand{\P}{{\mathcal P}}
\newcommand{\Q}{{\mathcal Q}}
\renewcommand{\S}{{\mathcal S}}
\newcommand{\T}{{\mathcal T}}
\newcommand{\V}{{\mathcal V}}
\newcommand{\DD}{{\mathbb D}}
\newcommand\Set{\operatorname{\bo{Set}}}
\newcommand\Ab{\operatorname{\bo{Ab}}}
\newcommand{\LL}{{\mathbb L}}
\newcommand{\EE}{{\mathbb E}}
\newcommand\Str{\operatorname{\textnormal Str}}
\newcommand\Mod{\operatorname{\textnormal Mod}}
\newcommand{\tx}{\textnormal}
\newcommand{\op}{^\textnormal{op}}
\newcommand{\+}{\negthinspace^{^+}\negthinspace}
\newcommand*\cocolon{%
	\nobreak
	\mskip6mu plus1mu
	\mathpunct{}%
	\nonscript
	\mkern-\thinmuskip
	{:}%
	\mskip2mu
	\relax
}
\newcommand{\changeoperator}[1]{%
	\csletcs{#1@saved}{#1@}%
	\csdef{#1@}{\changed@operator{#1}}%
}
\newcommand{\changed@operator}[1]{%
	\mathop{%
		\mathchoice{\textstyle\csuse{#1@saved}}
		{\csuse{#1@saved}}
		{\csuse{#1@saved}}
		{\csuse{#1@saved}}%
	}%
}
\def\@tocline#1#2#3#4#5#6#7{\relax
	\ifnum #1>\c@tocdepth % then omit
	\else
	\par \addpenalty\@secpenalty\addvspace{#2}%
	\begingroup \hyphenpenalty\@M
	\@ifempty{#4}{%
		\@tempdima\csname r@tocindent\number#1\endcsname\relax
	}{%
		\@tempdima#4\relax
	}%
	\parindent\z@ \leftskip#3\relax \advance\leftskip\@tempdima\relax
	\rightskip\@pnumwidth plus4em \parfillskip-\@pnumwidth
	#5\leavevmode\hskip-\@tempdima
	\ifcase #1
	\or\or \hskip 1em \or \hskip 2em \else \hskip 3em \fi%
	#6\nobreak\relax
	\hfill\hbox to\@pnumwidth{\@tocpagenum{#7}}\par% <---- \dotfill -> \hfill
	\nobreak
	\endgroup
	\fi}
\title{More on soundness in the enriched context}
\author{Giacomo Tendas}
\address{Department of Mathematics, University of Manchester, Faculty of Science and Engineering, Alan Turing Building, M13 9PL Manchester, UK}
\email{giacomo.tendas@manchester.ac.uk}
\date{\today}
\thanks{The author acknowledges with gratitude the support of the EPSRC postdoctoral fellowship EP/X027139/1. The author also thanks Nathanael Arkor for valuable feedback}
\begin{document}
	
\begin{abstract}
	Working within enriched category theory, we further develop the use of soundness, introduced by Ad\'amek, Borceux, Lack, and Rosick\'y for ordinary categories. In particular we investigate: (1) the theory of locally $\Phi$-presentable $\V$-categories for a sound class $\Phi$, (2) the problem of whether every $\Phi$-accessible $\V$-category is $\Psi$-accessible, for given sound classes $\Phi\subseteq\Psi$, and (3) a notion of $\Phi$-ary equational theory whose $\V$-categories of models characterize algebras for $\Phi$-ary monads on $\V$.
\end{abstract}	
	
\maketitle
\begin{comment}
	{\small
		\noindent{\bf Keywords:} Flatness, regular/exact categories, free completions, lex colimits, enriched categories.\\
		{\bf Mathematics Subject Classification:} 18E08, 18A35, 18D20, 18B25.\\
		{\bf Competing Interests:} the author declares none.
	}
\end{comment}

\setcounter{tocdepth}{1}
\tableofcontents

\section{Introduction}

The notion of sound class of categories (or doctrine) $\DD$ was introduced by Ad\'amek, Borceux, Lack, and Rosick\'y in~\cite{ABLR02:articolo}. One of the aims there was to introduce, for such a class $\DD$, the notion of $\DD$-accessible category, and prove that a category $\A$ is accessible (in the usual sense involving a regular cardinal) if and only if it is $\DD$-accessible for some sound class $\DD$. This way one eliminates the use of regular cardinals from the definition of accessibility. It is noted in the same paper, that actually a slightly weaker (and possibly less technical) notion of soundness can be used to obtain the same results. Following recent literature by Lack and the author~\cite{LT22:virtual,LT22:limits}, we call that weak soundness.

In short, a class of categories $\DD$ is {\em weakly sound} if every small and $\DD$-cocomplete category $\C$ is $\DD$-filtered; that is, if $\C$-colimits commute with $\DD$-limits in $\Set$.  If the class $\DD$ is pre-saturated (Definition~\ref{satu}), as it happens most of the times, this coincides with the notion of soundness introduced in \cite{ABLR02:articolo}. 

In the context of enriched categories, the concept was introduced in~\cite{DL07} and then in~\cite{LR11NotionsOL} to study generalized notions of local presentability. There, the class of categories $\DD$ is replaced by a class of weights $\Phi$, as weighted limits and colimits are the standard choice when dealing with enrichment. 

\begin{def:wsc}[\cite{ABLR02:articolo,LR11NotionsOL}]
	A locally small class of weights $\Phi$ is called {\em weakly sound} if any $\Phi$-continuous weight $M\colon\C\op\to\V$ (out of a small $\Phi$-cocomplete $\C$) is $\Phi$-flat; that is, if $M$-weighted colimits commute in $\V$ with $\Phi$-limits.
\end{def:wsc}

When $\Phi$ is the class of conical weights associated to the elements of some class of categories $\DD$, this corresponds to the notion mentioned earlier (Proposition~\ref{simpler}).

In this paper we will further develop the theory of (weak) soundness in the enriched context. The content can be subdivided into three main parts involving local presentability, accessibility, and equational presentations of enriched categories.

\vspace{7pt}
\noindent{\bf\underline{Part 1}.}
The first part (Section~\ref{soundness}) is devoted to the study of locally $\Phi$-presentable enriched categories, for $\Phi$ a weakly sound class. The first instance of such enriched notion appeared in Kelly's paper \cite{Kel82:articolo}, where he extended most of the ordinary results about locally presentable categories to the enriched context. These included the Gabriel-Ulmer duality, reflectivity and orthogonality theorems, as well as a characterization in terms of limit sketches. 

Here we deal with the more general notion of locally $\Phi$-presentable $\V$-category, which specializes to that of Kelly when $\Phi$ is the class of finite weighted limits. This was first considered in the ordinary setting in \cite{ABLR02:articolo} and \cite{centazzo2004characterization}, was partially extended to enriched categories in \cite{LR11NotionsOL}, and then again in a formal 2-categorical framework~\cite{di2023accessibility}. With our approach we do not need to assume the base of enrichment to be locally presentable (as in~\cite{Kel82:articolo}), but merely complete and cocomplete (as in \cite{Kel82:libro,LR11NotionsOL}); this allows us to capture some topological examples such as the cartesian closed category $\bo{CGTop}$ compactly generated topological spaces.

Given a weakly sound class $\Phi$, we say that a cocomplete $\V$-category $\K$ is {\em locally $\Phi$-presentable} if it has a strong generator $\G$ which is made of {\em $\Phi$-presentable} objects; that is, every $G\in\G$ is such that $\K(G,-)\colon\K\to\V$ preserves $\Phi$-flat colimits.  

In this context we can generalize most of the known results on locally presentable categories. Indeed, given a weakly sound class $\Phi$, we prove:

\begin{teo:1stchar}
	The following are equivalent for a $\V$-category $\K$: \begin{enumerate}\setlength\itemsep{0.25em}
		\item $\K$ is locally $\Phi$-presentable;
		\item $\K$ is $\Phi$-accessible and cocomplete;
		\item $\K$ is $\Phi$-accessible and complete;
		\item $\K\simeq\Phi\tx{-Cont}(\C,\V)$ for some small $\Phi$-complete $\C$;
		\item $\K$ is a $\Phi$-flat embedded and reflective subcategory of some $[\C,\V]$.
	\end{enumerate}
\end{teo:1stchar}

Above, a $\V$-category is called {\em $\Phi$-accessible} if it is the free cocompletion of a small $\V$-category under $\Phi$-flat colimits~\cite{ABLR02:articolo,LT22:virtual}. With the same hypotheses on $\V$ we can also prove a generalization of the Gabriel--Ulmer duality~\cite{GU71:libro,Kel82:articolo,centazzo2004characterization}:

\begin{teo:bieq}
	The following is a biequivalence of 2-categories:
	\begin{center}
		
		\begin{tikzpicture}[baseline=(current  bounding  box.south), scale=2]

			\node (f) at (0,0.4) {$\bo{L}\Phi\bo{P}(-,\V)\colon \bo{L}\Phi\bo{P}$};
			\node (g) at (2.6,0.4) {$\Phi\tx{-}\bo{Cont}\op\cocolon \Phi\tx{-}\bo{Cont}(-,\V)$};
			
			\path[font=\scriptsize]

			([yshift=1.3pt]f.east) edge [->] node [above] {} ([yshift=1.3pt]g.west)
			([yshift=-1.3pt]f.east) edge [<-] node [below] {} ([yshift=-1.3pt]g.west);
		\end{tikzpicture}
		
	\end{center}
\end{teo:bieq}

Here $\bo{L}\Phi\bo{P}$ is the 2-category of locally $\Phi$-presentable $\V$-categories, continuous and $\Phi$-flat colimits preserving $\V$-functors, and $\V$-natural transformations, while $\Phi\tx{-}\bo{Cont}$ is the 2-categories of those $\Phi$-complete $\V$-categories which are the Cauchy completions of small $\V$-categories, $\Phi$-continuous $\V$-functors, and $\V$-natural transformations. When $\V_0$ is locally presentable then the Cauchy completion of a small $\V$-category is still small~\cite{Joh1989:articolo}; therefore in that case the objects of $\Phi\tx{-}\bo{Cont}$ are just the small $\Phi$-complete and Cauchy complete $\V$-categories.

To prove additional results relating local $\Phi$-presentability with limit sketches and orthogonality classes, we shall need the base of enrichment $\V_0$ to be {\em locally bounded}; this includes all the locally presentable bases as well as those involving topological spaces (see Theorem~\ref{lphip-char}). In the case where $\V_0$ is also locally presentable, we prove that the 2-category $\bo{L}\Phi\bo{P}$ has all flexible limits which are computed as in $\V\tx{-}\bo{CAT}$ (Theorem~\ref{flexible}).

\vspace{7pt}
\noindent{\bf\underline{Part 2}.}
In the second part of the paper (Section~\ref{section-raising}), we drop the cocompleteness assumptions on our locally $\Phi$-presentable $\V$-categories, and focus just on $\Phi$-accessible enriched categories. We study the problem of {\em raising the index of accessibility}; that is, we study for which pairs of (weakly) sound classes of weights $\Phi\subseteq\Psi$ it is true that every $\Phi$-accessible $\V$-category is $\Psi$-accessible.

Ordinarily, and for $\Phi$ and $\Psi$ consisting respectively on the classes of $\alpha$-small and $\beta$-small diagrams (for regular cardinals $\alpha<\beta$), we know that the above condition holds if and only if $\alpha$ is {\em sharply less} than $\beta$ in the sense of~\cite[Definition~2.3.1]{MP89:libro}.  In our more general framework we introduce a notion of sharply less than relationship for sound classes of weights (which restricts to the original one when considering classes induced by regular cardinals) defined by the equivalent conditions below. This generalizes to the enriched settings parts of \cite[Theorem~2.3]{LT22:limits}.

\begin{teo:sharply}
	The following are equivalent for any given sound classes $\Phi\subseteq\Psi$: \begin{enumerate}\setlength\itemsep{0.25em}
		\item For any small $\V$-category $\C$ we have an equivalence $$ \Psi\+(\Phi^{^+}_\Psi\C) \simeq \Phi\+\C.$$
		\item If $M\colon\C\op\to\V$ is a $\Phi$-flat weight, then $\tx{Ran}_{J\op}M\colon(\Phi^{^+}_\Psi\C)\op\to\V$ is $\Psi$-flat, where $J\colon\C\to\Phi^{^+}_\Psi\C$ is the inclusion.
		\item Every $\Phi$-accessible $\V$-category $\A$ is $\Psi$-accessible and there is an equivalence  $$ \Phi^{^+}_\Psi\A_\Phi\simeq \A_\Psi.$$
		\item Every $\Phi$-accessible $\V$-category $\A$ is $\Psi$-accessible, and for any $\Phi\+$-cocontinuous $\V$-functor $F\colon\A\to \B$, between $\Phi$-accessible $\V$-categories, if $F(\A_\Phi)\subseteq\B_\Phi$ then also $F(\A_\Psi)\subseteq\B_\Psi$.
		\item For any small $\C$ the $\V$-category $\Phi\+\C$ is $\Psi$-accessible and the inclusion $J\colon \Phi\+\C\hookrightarrow[\C\op,\V]$ preserves the $\Psi$-presentable objects.
	\end{enumerate}
	When they hold, we say that {\em $\Phi$ is sharply less than $\Psi$}, and write $\Phi\trianglelefteq \Psi$.
\end{teo:sharply}

 Above, $\Phi^+$ is the class spanned by the $\Phi$-flat weights, and $\Phi^{^+}_\Psi$ denotes the class consisting of those weights in $\Psi$ which are $\Phi$-flat. Such a theorem is not only useful for relating $\Phi$- and $\Psi$-accessibility, but also to describe $\Phi$-flat colimits in simpler terms. Indeed, if $\Phi\triangleleft \Psi$, then a $\V$-category has $\Phi$-flat colimits if and only if it has $\Psi$-flat colimits and colimits weighted by $\Phi$-flat weights in $\Psi$.  
 
This new order relationship for sound classes of weights would not be useful without the following result. 
   
\begin{teo:raising}
	For any weakly sound class $\Phi$ there exists a regular cardinal $\alpha$ for which $$\Phi\triangleleft\P_\alpha.$$
\end{teo:raising}

In particular we obtain that for every sound class $\Phi$ there exists arbitrarily large sound classes which are sharply larger than $\Phi$ (Corollary~\ref{arbitr-large}).  Thus, for every weakly sound $\Phi$ there exists a regular cardinal $\alpha$ such that every $\Phi$-accessible $\V$-category $\A$ is also $\alpha$-accessible. This improves results from~\cite{BQR98,LT22:virtual} where the choice of $\alpha$ depends also on $\A$.
In Example~\ref{exe-sharp} we exhibit several pairs of sound classes where one is sharply less than the other.

\vspace{7pt}
\noindent{\bf\underline{Part 3}.}
The third and final part of the paper (Section~\ref{UAsection}) is devoted to the study of those enriched equational theories from~\cite{RT23EUA}, which are presented by {\em $\Phi$-ary equations} over a $\Phi$-ary language, for a given weakly sound class $\Phi$.  As for \cite{RT23EUA}, the point of this Section is to show that equations can be defined over a class of recursively generated terms, thus moving in a different direction with respect to earlier works \cite{LP,FH}.

A {\em language} in the sense of~\cite{LP,RT23EUA} is a set of function symbols with specified domain and codomain arities $(X,Y)$ varying among the objects of $\V$; for each such $\LL$ there is a corresponding notion of $\LL$-structure and of $\V$-category of $\LL$-structures. The class of {\em $\LL$-terms} is built recursively starting from the function symbols in $\LL$ and applying the rules of Definition~\ref{0-terms} (and is a subclass of the terms considered in \cite{LP}). Then, a theory $\EE$ over $\LL$ is a set of equations $(s=t)$ involving terms of the same arity; a model of such a theory is an $\LL$-structure which satisfies these equations, once interpreted. All this background material will be covered in Section~\ref{BackEUA}.

Starting from this, we define a language $\LL$ and an $\LL$-theory $\EE$ to be {\em $\Phi$-ary} if they involve function symbols and terms whose arities are (certain) $\Phi$-presentable objects (see Definition~\ref{terms} for the details). Among others, $\Phi$-theories characterize $\V$-categories of algebras of those enriched monads on $\V$ which preserve $\Phi$-flat colimits:

\begin{teo:char-single}
	The following are equivalent for a $\V$-category $\K$: \begin{enumerate}\setlength\itemsep{0.07em}
		\item $\K\simeq\Mod(\mathbb E)$ for a $\Phi$-ary equational theory $\mathbb E$ with recursively generated terms;
		\item $\K\simeq\tx{Alg}(T)$ for a monad $T$ on $\V$ preserving $\Phi$-flat colimits;
		\item $\K$ is cocomplete and has a $\Phi$-presentable and $\V$-projective strong generator $G\in\K$;
		\item $\K\simeq\Phi\I\tx{-Pw}(\T,\V)$ is equivalent to the $\V$-category of $\V$-functors preserving $\Phi\I$-powers, for some $\Phi\I$-theory $\T$.
	\end{enumerate}
	In particular $\K$ is locally $\Phi$-presentable.
\end{teo:char-single}

In (4) above, $\Phi\I$ is the closure of the unit $I$ in $\V$ under $\Phi$-colimits. When $\V$ is locally $\lambda$-presentable as a closed category and $\Phi$ is the class of the $\lambda$-small weights, this specializes to \cite[Theorem~5.14]{RT23EUA}. In Section~\ref{fp} we take $\V$ to be cartesian closed and $\Phi$ to consist of the class of finite products; in this case $\Phi$-flat colimits are called $\V$-sifted. The theorem above then provides a characterization of the $\V$-categories of algebras of strongly finitary (enriched) monads on $\V$; these have been studied for instance in~\cite{ADV2023sifted,KL93FfKe:articolo,ADVCALCO.2023.10,Par23}. In Appendix~\ref{Cat-sifted}, following~\cite{Bou10:PhD}, we characterize 2-dimensional sifted colimits.

\section{Background}

We fix as base of enrichment a symmetric monoidal closed category $\V=(\V_0,\otimes,I)$ which is complete and cocomplete, and we follow the notation of Kelly~\cite{Kel82:libro}
for matters about enrichment over $\V$.

The notion of free cocompletion under a class of weights will be central in the following sections. For this reason we recall some of the main notions from~\cite{KS05:articolo} below.

Given a class of weights $\Phi$ and a $\V$-category $\C$ we denote by $\Phi\C$ the {\em free cocompletion of $\C$ under $\Phi$-colimits}; this satisfies the following universal property: for any $\Phi$-cocomplete $\K$, precomposition by $Z$ induces an equivalence 
$$ \Phi\tx{-Coct}(\Phi\C,\K)\simeq\V\tx{-}\bo{CAT}(\C,\K); $$
here $\V\tx{-}\bo{CAT}(\C,\K)$ is the category of all $\V$-functors from $\C\to\K$ and $\V$-natural transformations between them, while we denote by $\Phi\tx{-Coct}(\Phi\C,\K)$ the full subcategory of $\V\tx{-}\bo{CAT}(\Phi\C,\K)$ spanned by the $\Phi$-cocontinuous $\V$-functors. The inverse of the equivalence above is given by left Kan extending along $Z$ (see \cite[Proposition~3.6]{KS05:articolo}).

Again by the results of~\cite{KS05:articolo}, the $\V$-category $\Phi\C$ can be identified with the closure of the representables in $[\C\op,\V]$ under $\Phi$-colimits. Note that $[\C\op,\V]$ may not itself be a $\V$-category (but just a $\V'$-category for some enlargement $\V'$ of $\V$) if $\C$ is not small; nonetheless $\Phi\C$ will always be a $\V$-category.

Of particular importance are the free cocompletion under the class $\P$ of all weights and the class $\Q$ of the Cauchy weights (those whose colimits commute with all limits in $\V$). 

Following~\cite{DL07}, given a $\V$-category $\C$, an element $F\in\P\C$ is a $\V$-functor $F\colon\C\op\to\V$ that is a (small) colimit of representables. Equivalently, $F\in\P\C$ if and only if it can be expressed as $F\cong\tx{Lan}_HFH$ where $H\colon\C'\hookrightarrow\C$ is a small full subcategory of $\C$. We refer to the objects of $\P\C$ as the {\em small presheaves} on $\C$.

\begin{Def}
	A class of weights $\Phi$ is called locally small if for any small $\V$-category $\C$ the free cocompletion $\Phi\C$ is also small. 
\end{Def}

If $\Phi$ is a small set of weights, then $\Phi$ is locally small. The class $\Q$ of the Cauchy weights is locally small whenever $\V$ is locally presentable  (\cite{Joh1989:articolo} or Corollary~\ref{Cauchy-small}). The class $\P$ of all small weights is not locally small in general.

\begin{Def}\label{satu}
	Given a class of weights $\Phi$, we denote by $\Phi^*$ the {\em saturation of $\Phi$}; this is defined by 
	$$ \Phi^*\cap [\C\op,\V]:=\Phi(\C)$$
	for any small $\V$-category $\C$. We say that $\Phi$ is {\em saturated} if $\Phi=\Phi^*$.
	The class $\Phi$ is called {\em pre-saturated} if for any $\V$-category $\C$ every object of the free cocompletion $\Phi\C$ is a $\Phi$-colimit of objects from $\C$.
\end{Def}

Every saturated class is pre-saturated by \cite{AK1988:articolo}. Pre-saturated classes have been studied for instance in~\cite[Appendix~A.1]{Ten22:phd}. Note that most commonly used classes of weights are pre-saturated, but in general not saturated. Indeed, the class of finite limits is pre-saturated but not saturated; its saturation is the class of {\em L-finite} limits (see~\cite[Section~3]{pare1990simply}).

\begin{Def}
	Let $\Phi$ be a locally small class of weights. A weight $M\colon\C\op\to\V$ is called {\em $\Phi$-flat} if $M$-weighted colimits commute in $\V$ with $\Phi$-limits; that is, if $M*-\colon[\C,\V]\to\V$ is $\Phi$-continuous. We denote by $\Phi^+$ the class of all $\Phi$-flat weights.
\end{Def}

Let $Y\colon\C\op\to[\C,\V]$; then, for a weight $M$ as above, we have $\tx{Lan}_YM\cong M*-$. It follows that $M$ is $\Phi$-flat if and only if $\tx{Lan}_YM$ is $\Phi$-continuous. When talking about $\Phi$-flat colimits we mean colimits weighted by $\Phi$-flat weights.

We recall the notion of soundness below:

\begin{Def}[\cite{ABLR02:articolo,LR11NotionsOL}]\label{wsc}
	A locally small class of weights $\Phi$ is called {\em weakly sound} if every $\Phi$-continuous weight $M\colon\C\op\to\V$, over a small $\Phi$-cocomplete $\C$, is $\Phi$-flat. 
\end{Def}

\begin{obs}\label{sat-sound}
	A locally small class $\Phi$ is instead called {\em sound}~\cite[Definition~3.2]{LT22:virtual} if for any weight $M$, whenever $M*-$ preserves $\Phi$-limits of representables, then $M$ is $\Phi$-flat. This corresponds to the notion of soundness originally introduced in \cite{ABLR02:articolo}, but is somewhat harder to deal with. \\
	Clearly, every sound class is weakly sound; the converse holds when $\Phi$ is pre-saturated by \cite[Proposition~3.4]{LT22:virtual}. (When $\Phi$ is actually saturated, the fact that weakly sound implies sound was proved in \cite[Proposition~3.8]{DV14elementary}.)
\end{obs}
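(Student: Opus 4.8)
The plan is to treat the two assertions separately: the implication ``sound $\Rightarrow$ weakly sound'' is a direct unwinding of the definitions, whereas the converse under pre-saturation I would obtain by reducing to the saturated case of~\cite[Proposition~3.8]{DV14elementary} (thereby recovering~\cite[Proposition~3.4]{LT22:limits}). For the forward direction, suppose $\Phi$ is sound and let $M\colon\C\op\to\V$ be $\Phi$-continuous with $\C$ small and $\Phi$-cocomplete. To invoke soundness it suffices to check that $M*-$ preserves $\Phi$-limits of representables. Any such limit has the form $\{W,Y K\}$ for a weight $W\in\Phi$ and a diagram $K$ into $\C\op$; since $Y\colon\C\op\to[\C,\V]$ preserves limits and $\C\op$ is $\Phi$-complete, it equals $Y(W*K)$ for the corresponding $\Phi$-colimit $W*K$ in $\C$. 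Applying $M*-$ and using $M*Y\cong M$ turns the left-hand side into $M(W*K)$, which is $\{W,MK\}$ precisely because $M$ is $\Phi$-continuous; as the right-hand side $\{W,M*YK\}$ is the same object, $M*-$ preserves the limit and soundness yields that $M$ is $\Phi$-flat.

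For the converse, assume $\Phi$ is pre-saturated and weakly sound. Since $\Phi$-cocompleteness, $\Phi$-continuity and $\Phi$-flatness are all unchanged when $\Phi$ is replaced by its saturation $\Phi^*$ (a $\V$-functor preserves $\Phi$-(co)limits if and only if it preserves $\Phi^*$-(co)limits, by~\cite{AK1988:articolo,KS05:articolo}), weak soundness of $\Phi$ passes to $\Phi^*$; being saturated, $\Phi^*$ is then sound by~\cite[Proposition~3.8]{DV14elementary}. It remains to transfer soundness back to $\Phi$. So let $M\colon\C\op\to\V$ be a weight for which $M*-$ preserves $\Phi$-limits of representables, and aim to show it preserves $\Phi^*$-limits of representables; $\Phi^*$-soundness then gives that $M$ is $\Phi^*$-flat, hence $\Phi$-flat since $\Phi\subseteq\Phi^*$.

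The crux, and the step I expect to be the main obstacle, is the following use of pre-saturation. A $\Phi^*$-limit of representables is $\{V,R\}$ with $V\in\Phi^*$ a weight on some $\J$ and $R\colon\J\to[\C,\V]$ valued in representables; restricting $\Phi^*$ to $[\J\op,\V]$ gives $V\in\Phi\J$, so pre-saturation expresses $V$ as a single $\Phi$-colimit $V\cong W*D$, where $D\colon\LL\to[\J\op,\V]$ has each $Dl=\J(-,El)$ representable for some $E\colon\LL\to\J$ and some $W\in\Phi$. Because weighted limits carry colimits of weights to limits and a limit weighted by a representable is an evaluation, the iterated-weight identity $\{W*D,R\}\cong\{W,\{D,R\}\}=\{W,R\circ E\}$ collapses $\{V,R\}$ to the genuine $\Phi$-limit $\{W,R\circ E\}$ of the representables $R\circ E\colon\LL\to[\C,\V]$. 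Thus preservation of $\Phi$-limits of representables already forces preservation of every $\Phi^*$-limit of representables, and the reduction is complete. The only care needed is in tracking variances and in verifying the iterated-weight identity, both of which are standard~\cite{Kel82:libro}.
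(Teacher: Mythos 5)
Your proposal is correct, and it is genuinely different from what the paper does, because the paper does not prove this remark at all: the implication ``sound $\Rightarrow$ weakly sound'' is dismissed as clear, the pre-saturated converse is attributed to \cite[Proposition~3.4]{LT22:limits}, and the saturated case to \cite[Proposition~3.8]{DV14elementary}. Your forward direction is the expected unwinding (and presumably what makes the paper's ``clearly'' true): $\Phi$-cocompleteness of $\C$ collapses every $\Phi$-limit of representables to a representable $Y(W*H)$, and $\Phi$-continuity of $M$ together with $M*Y\cong M$ gives preservation. Your converse, however, is a genuine re-derivation rather than a citation: you transfer weak soundness from $\Phi$ to its saturation $\Phi^*$ (using the Albert--Kelly/Kelly--Schmitt facts that $\Phi$-cocompleteness, $\Phi$-continuity and $\Phi$-flatness are unchanged under saturation \cite{AK1988:articolo,KS05:articolo}), apply the Dost\'al--Velebil result to the saturated class $\Phi^*$, and then pull soundness back along $\Phi\subseteq\Phi^*$ by using pre-saturation to write any $\Phi^*$-weight as a \emph{single} $\Phi$-colimit of representables, $V\cong W*(Y_\J E)$, so that $\{V,R\}\cong\{W,\{Y_\J E-,R\}\}\cong\{W,RE\op\}$ and preservation of $\Phi$-limits of representables already forces preservation of $\Phi^*$-limits of representables. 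This buys two things: a self-contained proof (modulo the saturated case of \cite{DV14elementary}) of the statement the paper outsources to \cite{LT22:limits}, and a transparent localization of where pre-saturation is actually needed, namely to avoid iterated $\Phi$-colimits whose intermediate stages are not representable. Two small points you should make explicit: $\Phi^*$ is still locally small (since $\Phi^*\C=\Phi\C$), so that $\Phi^*$-flatness and soundness for $\Phi^*$ are even well-posed; and the isomorphisms you construct must be identified with the canonical comparison maps $M*\{V,R\}\to\{V,M*R\}$, which is a routine naturality check, as you note.
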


Some examples:

\begin{es}\label{examplewsc}
	See \cite[Example~4.8]{LT22:limits} for details. In cases where
	$\V=\bo{Set}$ and we speak of a class of categories rather than a class of weights, we are referring to the corresponding conical limits.
	\begin{enumerate}
		\item $\Phi=\emptyset$. Then $\Phi^+=\P$ is the class of all weights. 
		\item $\V$ locally $\alpha$-presentable as a closed category, $\Phi$ is the class of $\alpha$-small weights defined by Kelly~\cite{Kel82:articolo}; this is generated by the $\alpha$-small conical limits and powers by $\alpha$-presentable objects. 
		\item $\V$ a symmetric monoidal closed finitary quasivariety as in \cite{LT20:articolo}, $\Phi$ is the class for finite products and finitely presentable projective powers. Examples of such $\V$ are the categories $\bo{Ab}$ of abelian groups, $\bo{GAb}$ of graded abelian groups, and $\bo{DGAb}$ of differentially graded abelian groups.
		\item $\V$ cartesian closed, $\Phi$ is the class for finite products. Examples of such $\V$ are the categories $\bo{Pos}$ of partially ordered sets, $\bo{Cat}$ of small categories, and $\bo{SSet}$ of simplicial sets; but also $\V=\bo{CGTop},\bo{HCGTop}$, and $\bo{QTop}$ of compactly generated topological spaces, Hausdorff such, and quasi-topological spaces, which are (locally bounded but) not locally presentable. (\cite[Lemma~2.3]{KL93FfKe:articolo}). 
		\item $\V=\bo{Set}$, $\Phi=\{\emptyset\}$. 
		\item $\V=\bo{Set}$, $\Phi$ consists of the finite connected categories. Then $\Psi^+$ is generated by coproducts of filtered categories.
		\item $\V=\bo{Set}$, $\Phi$ is the class of finite non empty categories. Then $\Psi^+$ is generated by the filtered categories plus the empty category.
		\item $\V=\bo{Set}$, $\Phi$ is the class of finite discrete non empty categories. Then $\Psi^+$ is generated by the sifted categories plus the empty category.
	\end{enumerate}
	See also Example~\ref{inducedfromSet} of which (2) and (3) are a particular case.
\end{es}

We can characterize soundness as follows, somewhat expanding the equivalent conditions given by~\cite[Propositions~4.5 \&~4.9]{LT22:limits}.

\begin{prop}
	The following are equivalent for $\Phi$:\begin{enumerate}
		\item $\Phi$ is weakly sound;
		\item for any weight $M\colon \C\op\to\V$ the $\V$-functor $\tx{Ran}_{J\op}M$ is $\Phi$-flat, where $J\colon\C\hookrightarrow\Phi\C$ is the inclusion.
	\end{enumerate}
	%Moreover they always imply
	%\begin{enumerate}
	%	\item[(3)] for any small $\C$ the $\V$-category $[\C\op,\V]$ is $\Phi$-accessible,
	%\end{enumerate}
	%and are equivalent to it whenever $\Phi=\Phi^{+-}$.
\end{prop}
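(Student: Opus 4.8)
The plan is to reduce both implications to a single computation of the right Kan extension. First I would record that, for any small $\C$ and any weight $M$, the pointwise formula for $\tx{Ran}_{J\op}M$ together with the Yoneda isomorphism $\Phi\C(Jc,P)\cong Pc$ gives
$$\tx{Ran}_{J\op}M\;\cong\;[\C\op,\V](-,M)|_{\Phi\C},$$
the restriction to $\Phi\C$ of the representable $[\C\op,\V](-,M)\colon[\C\op,\V]\op\to\V$. Since $\Phi$ is locally small, $\Phi\C$ is small, and it is $\Phi$-cocomplete by construction. This formula is the backbone of the argument.

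For $(1)\Rightarrow(2)$ the formula does almost all the work. Hom-functors turn colimits into limits, and the inclusion $\Phi\C\hookrightarrow[\C\op,\V]$ preserves $\Phi$-colimits (by the very definition of $\Phi\C$ as the closure of the representables under $\Phi$-colimits); hence $\tx{Ran}_{J\op}M$ sends $\Phi$-colimits in $\Phi\C$ to $\Phi$-limits in $\V$, that is, it is a $\Phi$-continuous weight on the small $\Phi$-cocomplete $\V$-category $\Phi\C$. Weak soundness then forces it to be $\Phi$-flat.

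For $(2)\Rightarrow(1)$, let $M\colon\C\op\to\V$ be $\Phi$-continuous with $\C$ small and $\Phi$-cocomplete. The key extra structure is a reflection $L\colon\Phi\C\to\C$: because every object of $\Phi\C$ is a $\Phi$-colimit of representables and $\C$ has such colimits, the universal property of $\Phi\C$ yields a $\Phi$-cocontinuous $L$ with $LJ\cong 1$ which is moreover left adjoint to $J$. I would then combine two facts. First, $\Phi$-continuity of $M$ identifies $\tx{Ran}_{J\op}M\cong M\circ L\op$: evaluating both sides on a presentation $P\cong W*(Jd)$ as a $\Phi$-colimit of representables gives the same $\Phi$-limit $\{W,Md\}$ (on the left via the displayed formula and Yoneda, on the right via $\Phi$-cocontinuity of $L$, $LJ\cong 1$, and $\Phi$-continuity of $M$); hence by $(2)$ the weight $M\circ L\op$ is $\Phi$-flat. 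Second, the coend identity
$$M*G\;\cong\;(M\circ L\op)*(G\circ L)\qquad(G\in[\C,\V]).$$
Granting this, $M*-$ factors as $((M\circ L\op)*-)\circ(-\circ L)$; precomposition $-\circ L$ preserves all limits and $(M\circ L\op)*-$ preserves $\Phi$-limits by flatness, so $M*-$ preserves $\Phi$-limits and $M$ is $\Phi$-flat.

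The main obstacle is this coend identity. I would prove it through
$$(M\circ L\op)*(G\circ L)=\int^{P}M(LP)\otimes G(LP)\cong\int^{c}Mc\otimes Gc=M*G,$$
reducing the middle isomorphism, after co-Yoneda on $G$, to $\int^{P}M(LP)\otimes\C(c,LP)\cong Mc$. Rewriting $M(LP)=\int^{c'}Mc'\otimes\C(LP,c')$ and invoking the reflection isomorphism $\C(LP,c')\cong\Phi\C(P,Jc')$ lets me collapse the $P$-coend by co-Yoneda to $\C(c,c')$ and then recover $Mc$. The delicate points here are bookkeeping the mixed variances in the iterated coends and justifying $\C(LP,c')\cong\Phi\C(P,Jc')$ for a general sound class, which I would settle by writing $P$ as a $\Phi$-colimit of representables and comparing both sides to the same $\Phi$-limit $\{W,\C(d,c')\}$. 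Finally I would note that routing through $L$ is essential: the superficially simpler identity $M*G\cong(\tx{Ran}_{J\op}M)*(\tx{Lan}_J G)$ does not suffice, because $\tx{Lan}_J$ need not preserve $\Phi$-limits, whereas the precomposition $-\circ L$ always does.
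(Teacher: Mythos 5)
Your argument is correct in substance, but your $(2)\Rightarrow(1)$ takes a genuinely different route from the paper's. The paper applies $(2)$ to an \emph{arbitrary} weight $M\colon\C\op\to\V$ and uses the canonical presentation $M\cong(\tx{Ran}_{J\op}M)*W$, where $W\colon\Phi\C\hookrightarrow[\C\op,\V]$ is the inclusion, to exhibit every object of $[\C\op,\V]$ as a $\Phi$-flat colimit of objects of $\Phi\C$; it then concludes by citing \cite[4.9]{LT22:limits}, which says that this property is equivalent to weak soundness. You instead invoke $(2)$ only for $\Phi$-continuous weights on small $\Phi$-cocomplete $\C$ (exactly the case occurring in the definition of weak soundness) and argue self-containedly: you use the reflection $L\dashv J$ available when $\C$ is $\Phi$-cocomplete, identify $\tx{Ran}_{J\op}M\cong M\circ L\op$, and transfer flatness along the coend identity $M*G\cong(M\circ L\op)*(G\circ L)$, using that precomposition with $L$ preserves all limits. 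What the paper's route buys is brevity, at the price of a nontrivial external input; what yours buys is independence from that input, together with the mildly sharper observation that the restriction of $(2)$ to $\Phi$-continuous weights on $\Phi$-cocomplete domains already implies $(1)$. (Your $(1)\Rightarrow(2)$ is essentially identical to the paper's.)

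One detail needs repair. Twice you propose to verify an isomorphism ``by writing $P$ as a $\Phi$-colimit of representables $P\cong W*(Jd)$''. A general object of $\Phi\C$ is only an \emph{iterated} $\Phi$-colimit of representables; single-step presentations exist exactly when $\Phi$ is pre-saturated, which is not assumed here (see Definition~\ref{satu} and the remarks following it). The standard fix: in each case there is a canonical comparison map (from the universal property of $\tx{Ran}_{J\op}$, respectively from the action of $L$ on homs composed with $LJ\cong 1$), both sides send $\Phi$-colimits in the variable $P$ to $\Phi$-limits in $\V$ (using $\Phi$-cocontinuity of $L$ and of the inclusion $\Phi\C\hookrightarrow[\C\op,\V]$, and $\Phi$-continuity of $M$), and the comparison is invertible on representables; hence the full subcategory of $\Phi\C$ on which it is invertible contains the representables and is closed under $\Phi$-colimits, so it is all of $\Phi\C$. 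With this adjustment your proof is complete.
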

\begin{proof}
	$(1)\Rightarrow(2)$. This true by definition of weak soundness since $\tx{Ran}_{J\op}M$ is $\Phi$-continuous by construction.
	
	$(2)\Rightarrow(1)$. We have for any $M\colon \C\op\to\V$ that
	$$ M\cong (\tx{Ran}_{J\op}M)J\cong \tx{Ran}_{J\op}M* W  $$
	with $W\colon \Phi\C\hookrightarrow[\C\op,\V]$. Since $\Phi\C\subseteq [\C\op,\V]_\Phi$ (essentially by definition) and $\tx{Ran}_{J\op}M$ is $\Phi$-flat, one concludes thanks to \cite[4.9]{LT22:limits} which says that $\Phi$ is weakly sound if and only if every object of $[\C\op,\V]$ is a $\Phi$-flat colimit of objects from $\Phi\C$. 
	%$(2)\Rightarrow(3)$ Follows from the arguments above using that $\Phi\C\subseteq [\C\op,\V]_\Phi$ by construction. 
	%Finally, if $\Phi=\Phi^{+-}$ then $\Phi\C= [\C\op,\V]_\Phi$ (see \cite{KS05:articolo}). Therefore, if (3) holds then every object of $[\C\op,\V]$ is a $\Phi$-flat colimit of objects from $\Phi\C$, and hence $\Phi$ is weakly sound by \cite[4.9]{LT22:limits}.
\end{proof}

\section{Soundness and local presentability}\label{soundness}

In this section we treat local presentability with respect to a weakly sound class of weight. Some of the results have either been proved in \cite{LR11NotionsOL} and \cite{di2023accessibility} (we shall say when that is the case), or are a standard generalization of facts in \cite{ABLR02:articolo} and \cite{Kel82:articolo}. We collect all these and new results here to provide a comprehensive treatment of the theory of local presentability in this more general setting. These results will be relevant in Section~\ref{UAsection}.

In Section~\ref{basic-presentable} below we introduce the basic notions, while in Section~\ref{main-presentable} we prove the characterization theorems for locally $\Phi$-presentable $\V$-categories. Then in Section~\ref{sound-set} we consider the case when the enriched class $\Phi$ is induced (in a precise way) by an ordinary sound class of weights. This allows us to prove, in a new way, Johnson's theorem about the smallness of Cauchy completions for categories enriched over a locally presentable base \cite{Joh1989:articolo}.

\subsection{Definitions and basic results}\label{basic-presentable}

For this section we fix $\V=(\V_0,\otimes,I)$ to be symmetric monoidal closed, complete, and cocomplete, and consider $\Phi$ a locally small class of weights (later this will be assumed to be weakly sound).

\begin{Def}
	Let $\K$ be a $\V$-category with $\Phi$-flat colimits; we say that $X\in\K$ is $\Phi$-presentable if $\K(X,-)\colon\K\to\V$ preserves $\Phi$-flat colimits. Denote by $\K_{\Phi}$ the full subcategory of $\K$ made of the $\Phi$-presentable objects.
\end{Def}

The following was first considered in the enriched context in \cite[Section~6.3]{LR11NotionsOL} with an equivalent definition. They say that a $\V$-category is locally $\Phi$-presentable if it is $\Phi$-accessible (Definition~\ref{Phi-acc}) and cocomplete, while we follow the approach that involves a strong generator. That these provide equivalent notions will follow from our Theorem~\ref{1stchar}.

\begin{Def}
	Let $\K$ be a $\V$-category. We say that $\K$ is locally $\Phi$-presentable if it is cocomplete and it has a small (enriched) strong generator $J\colon \G\hookrightarrow\K$ made of $\Phi$-presentable objects; this means that the induced singular $\V$-functor $\K(J,1)\colon\K\to[\G\op,\V]$ is conservative.
\end{Def}

It follows that, when $\V=\bo{Set}$ and $\Phi$ is the class for $\lambda$-small limits, we recover Gabriel and Ulmer's notion of locally $\lambda$-presentable category~\cite{GU71:libro}. When $\V$ is locally $\lambda$-presentable as a closed category and $\Phi$ is the class for all $\lambda$-small weighted limits, then we recover Kelly's definition from~\cite{Kel82:articolo}.

\begin{prop}\label{phi-pres-presheaves}
	The $\V$-category $\V$ itself is locally $\Phi$-presentable, and so is $[\C,\V]$ for any small $\C$.
\end{prop}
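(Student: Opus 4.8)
The plan is to exhibit, in each case, an explicit small strong generator consisting of $\Phi$-presentable objects; cocompleteness comes for free, since $\V$ is cocomplete by our standing hypotheses and colimits in $[\C,\V]$ are computed pointwise.

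For $\V$ itself I would take $\G$ to be the full subcategory on the single object $I$. The unit isomorphism $\V(I,X)\cong X$ shows that the hom-$\V$-functor $\V(I,-)$ is (isomorphic to) the identity $1_\V$; hence the induced singular $\V$-functor $\V(I,1)\colon\V\to[\I\op,\V]\simeq\V$ is, up to this equivalence, the identity and in particular conservative, so $\{I\}$ is a strong generator. Since $1_\V$ preserves all colimits it preserves the $\Phi$-flat ones, so $I$ is $\Phi$-presentable, and therefore $\V$ is locally $\Phi$-presentable.

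For $[\C,\V]$ I would take $\G$ to be the full subcategory on the representables $\C(C,-)$ for $C\in\C$; this is small because $\C$ is. By the enriched Yoneda lemma the hom-$\V$-functor $[\C,\V](\C(C,-),-)$ is the evaluation $\tx{ev}_C\colon[\C,\V]\to\V$, $F\mapsto FC$. As colimits in $[\C,\V]$ are formed pointwise, each $\tx{ev}_C$ preserves all colimits, in particular $\Phi$-flat ones, so every representable is $\Phi$-presentable. For the generator condition, the associated singular $\V$-functor sends $F$ to the family $(FC)_{C\in\C}$, and a $\V$-natural transformation $F\to G$ is invertible precisely when each $FC\to GC$ is; equivalently, the representables are dense in $[\C,\V]$ by Yoneda, so the singular functor is fully faithful and a fortiori conservative. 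Hence $[\C,\V]$ is locally $\Phi$-presentable.

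I do not expect any genuine obstacle here: the only points requiring care are the identification of the hom-functors of the chosen generators through the (enriched) Yoneda lemma and the observation that these functors preserve \emph{all} colimits, so trivially the $\Phi$-flat ones. The single generator-theoretic condition, conservativity of the singular $\V$-functor, reduces in both cases to the fact that isomorphisms in $\V$ and in $[\C,\V]$ are detected pointwise.
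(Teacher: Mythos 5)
Your proof is correct and follows exactly the paper's approach: the unit $I$ as a $\Phi$-presentable strong generator of $\V$, and the representables as a small strong generator of $[\C,\V]$ consisting of $\Phi$-presentable objects. The paper states these two facts without further justification, so your expansion (the unit isomorphism $\V(I,-)\cong 1_\V$, Yoneda identifying $[\C,\V](\C(C,-),-)$ with evaluation, and pointwise detection of isomorphisms giving conservativity of the singular $\V$-functor) simply supplies the routine details the paper leaves implicit.
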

\begin{proof} 
	The unit is a strong generator of $\V$ and is $\Phi$-presentable. Similarly, the representables form a small strong generator of $[\C,\V]$ made of $\Phi$-presentable objects.
\end{proof}

\begin{prop}\label{conservative}
	If $\K$ is locally $\Phi$-presentable, $\L$ is cocomplete, and $F\colon\L\to\K$ is a conservative right adjoint that preserves $\Phi$-flat colimits, then $\L$ is also locally $\Phi$-presentable.
\end{prop}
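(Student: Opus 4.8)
The plan is to transport the strong generator of $\K$ across the left adjoint of $F$. Since $F$ is a right adjoint it has a left adjoint $S\colon\K\to\L$, and the enriched adjunction $S\dashv F$ supplies a $\V$-natural isomorphism $\L(SG,-)\cong\K(G,F-)$ for every $G\in\K$. Let $J\colon\G\hookrightarrow\K$ be the small strong generator of $\K$ made of $\Phi$-presentable objects furnished by local $\Phi$-presentability. I claim that the objects $SG$, for $G\in\G$, form a small strong generator of $\L$ consisting of $\Phi$-presentable objects; combined with the cocompleteness of $\L$ (which is assumed) this yields local $\Phi$-presentability. Smallness is immediate, as $\G$ is small and $S$ is a $\V$-functor.

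First I would check $\Phi$-presentability. Since $\L$ is cocomplete it has $\Phi$-flat colimits, so the notion makes sense. Using the adjunction isomorphism we have $\L(SG,-)\cong\K(G,-)\circ F$. By hypothesis $F$ preserves $\Phi$-flat colimits, and as $G$ is $\Phi$-presentable the $\V$-functor $\K(G,-)$ preserves $\Phi$-flat colimits; hence so does their composite $\L(SG,-)$, and each $SG$ is $\Phi$-presentable.

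Next I would verify the strong generator condition. Write $\tilde J := SJ\colon\G\to\L$. The singular $\V$-functor $\L(\tilde J,1)\colon\L\to[\G\op,\V]$ sends $X$ to the presheaf $G\mapsto\L(SG,X)$, which by the adjunction isomorphism is $\V$-naturally isomorphic to $G\mapsto\K(G,FX)$; hence $\L(\tilde J,1)\cong\K(J,1)\circ F$, where $\K(J,1)\colon\K\to[\G\op,\V]$ is the singular functor of $\K$. Now $\K(J,1)$ is conservative because $\G$ is a strong generator of $\K$, and $F$ is conservative by hypothesis; since a composite of conservative $\V$-functors is conservative, so is $\L(\tilde J,1)$. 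This is precisely the condition that the objects $SG$ form a strong generator of $\L$.

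There is little genuine difficulty here: the argument is a formal adjoint-transpose computation. The only points requiring care are that the hom-object isomorphism $\L(SG,-)\cong\K(G,F-)$ is the \emph{enriched} adjunction (not merely the one between underlying ordinary categories), so that it may be composed $\V$-functorially, and that the singular functor of the image family factors $\V$-naturally as $\K(J,1)\circ F$. Once these identifications are in place, both the preservation-of-colimits and the conservativity conclusions follow formally.
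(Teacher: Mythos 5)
Your proof is correct and is essentially the paper's own argument: the paper likewise takes the left adjoint $L$ of $F$, shows each $LG$ is $\Phi$-presentable via $\L(LG,-)\cong\K(G,F-)$, and asserts that $L(\G)$ is a strong generator because $F$ is conservative. The only difference is that you spell out the factorization $\L(SJ,1)\cong\K(J,1)\circ F$ of the singular functor, which the paper leaves implicit.
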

\begin{proof}
	Let $\G$ be a strong generator of $\K$ made of $\Phi$-presentable objects, and $L$ be a left adjoint to $F$. Then $L(\G)$ is a strong generator of $\L$ (since $F$ is conservative) and is made of $\Phi$-presentable objects: if $G\in\G$ then
		$$ \L(FG,-)\cong\K(G,F-) $$
	preserves $\Phi$-flat colimits since $F$ does and $G\in\K_\Phi$.
\end{proof}

The notion of $\Phi$-accessible $\V$-category has been already introduced in \cite[Section~3]{LT22:virtual} for a locally presentable base of enrichment. They can be defined exactly in the same way also in this context (see below), and most of the results of \cite{LT22:virtual} extend with the only change that Cauchy completions may not be small. 

\begin{Def}[\cite{LT22:virtual}]\label{Phi-acc}
	We say that $\A$ is {\em $\Phi$-accessible} if it has $\Phi$-flat colimits and there exists a small $\C\subseteq\A_\Phi$ such that every object of $\A$ can be written as a $\Phi$-flat colimit of objects from $\C$.
\end{Def}

Equivalently, $\K$ is $\Phi$-accessible if and only if it is the free cocompletion of a small $\V$-category under $\Phi$-flat colimits (\cite{LT22:virtual}). In this section we are only interested in the complete or cocomplete case (which will then imply local $\Phi$-presentability). 

For the notions of local $\Phi$-presentability and $\Phi$-accessibility to interact nicely, we need $\Phi$ to be a weakly sound.

\subsection{Characterization theorems}\label{main-presentable}

\begin{as}
	From now on we fix a locally small and weakly sound class $\Phi$. 
\end{as}

The following proposition characterizes the $\Phi$-presentable objects of a locally $\Phi$-presentable $\V$-category, and shows that they freely generate under $\Phi$-flat colimits.

\begin{prop}\label{K_Phi}
	Let $\K$ be locally $\Phi$-presentable with strong generator $\G\subseteq\K_\Phi$, and let $H\colon\K_{\Phi}\hookrightarrow\K$ be the inclusion. Then:\begin{enumerate}\setlength\itemsep{0.25em}
		\item $\K_{\Phi}$ is closed in $\K$ under $\Phi$-colimits;
		
		\item the closure of $\G$ under $\Phi$-colimits and Cauchy colimits is $\K_\Phi$;
		
		\item if $\C$ is the closure of $\G$ under $\Phi$-colimits, then $\C$ is small, dense, its Cauchy completion is $\K_\Phi$, and every $X\in\K$ can be expressed as the $\Phi$-flat colimit:
		$$ X\cong \K(H_\C-,X)*H_\C, $$
		where $H_\C\colon\C\to\K$ is the inclusion.
		
		\item every $X\in\K$ can be expressed as the (possibly large) $\Phi$-flat colimit:
		$$ X\cong \K(H-,X)*H. $$
	\end{enumerate}
\end{prop}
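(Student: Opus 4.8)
To prove Proposition~\ref{K_Phi}, I would establish the four claims more or less in the stated order, since each relies on the previous. Throughout, write $H\colon\K_\Phi\hookrightarrow\K$ for the inclusion and recall that, by assumption, $\G\subseteq\K_\Phi$ is a strong generator of the cocomplete $\V$-category $\K$, so that $\K(J,1)\colon\K\to[\G\op,\V]$ is conservative (and indeed fully faithful once we use density).

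For part (1), I would argue directly from the definition of $\Phi$-presentability. Given a $\Phi$-diagram in $\K_\Phi$ with weight $W$ and its colimit $W*D$ computed in $\K$, I must show $\K(W*D,-)\cong\{W,\K(D,-)\}$ preserves $\Phi$-flat colimits. The key point is that $\Phi$-limits commute with $\Phi$-flat colimits in $\V$ (this is exactly the flatness condition), so the weighted limit $\{W,-\}$ (a $\Phi$-limit) commutes with the $\Phi$-flat colimit; combined with the fact that each $\K(D d,-)$ preserves $\Phi$-flat colimits, one concludes. This is the cleanest of the four parts.

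**The technical core.**
Parts (3) and (4) are the heart of the statement, and I expect part (3) to be the main obstacle. The plan for (3): let $\C$ be the closure of $\G$ under $\Phi$-colimits taken in $\K$. By local smallness of $\Phi$, iterating $\Phi$-colimits starting from a small $\G$ yields a small $\C$, and $\C\subseteq\K_\Phi$ by part (1). Density of $\C$ is the crux — here I would use that $\G$ is a strong generator and that $\C$ is closed under $\Phi$-colimits to upgrade strong generation to density, essentially via the argument that a strong generator closed under enough colimits is dense (a $\V$-enriched version of the standard presentability argument). Density gives precisely the colimit formula $X\cong\K(H_\C-,X)*H_\C$, and this colimit is $\Phi$-flat because the weight $\K(H_\C-,X)$ is $\Phi$-flat: for each $X$, this weight is $\Phi$-continuous (being a restriction of the representable $\K(-,X)$ composed into $\C^{\mathrm{op}}$, and $\C$ is $\Phi$-cocomplete so $\C^{\mathrm{op}}$ is $\Phi$-complete), whence $\Phi$-flat by weak soundness of $\Phi$. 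Part (2) then follows by identifying $\K_\Phi$ with the Cauchy completion of $\C$: every $\Phi$-presentable object is a retract of (equivalently, a Cauchy colimit of) objects of $\C$, using that $\Phi$-presentable objects commute with the $\Phi$-flat colimit formula and hence are absolute-colimit-built from $\C$.

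**Finishing and the anticipated difficulty.**
For part (4), I would deduce the large $\Phi$-flat colimit formula $X\cong\K(H-,X)*H$ from part (3): since $\K_\Phi$ is the Cauchy completion of the dense small $\C$, and Cauchy colimits are absolute, the density presentation over $\C$ extends to a density presentation over all of $\K_\Phi$, giving the displayed colimit; its weight $\K(H-,X)$ is again $\Phi$-flat by the same weak-soundness argument applied over $\K_\Phi$ (which is $\Phi$-cocomplete by part (1)). The main obstacle throughout is proving density in part (3) — turning a strong generator into a dense one requires genuine use of cocompleteness and closure under $\Phi$-colimits, and care is needed because $\K_\Phi$ (and hence the large formula) need not be small, so I must keep track of which presheaf categories are genuinely $\V$-categories versus $\V'$-categories for an enlargement $\V'$. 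The flatness verifications, by contrast, reduce to invoking weak soundness (Definition~\ref{wsc}) once the relevant $\V$-categories are shown to be $\Phi$-cocomplete, and should be routine.
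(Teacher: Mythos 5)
Your proposal is correct and follows essentially the same route as the paper: part (1) via commutation of $\Phi$-limits with $\Phi$-flat colimits, part (3) by upgrading the strong generator to a dense small subcategory using closure under $\Phi$-colimits, $\Phi$-presentability, and weak soundness, and parts (2) and (4) via the Cauchy completion and absoluteness of Cauchy colimits. The density step you defer to the ``standard presentability argument'' is exactly what the paper spells out: the restricted Yoneda $\V$-functor $Z=\K(H_\C-,1)\colon\K\to[\C\op,\V]$ is conservative (since $\G\subseteq\C$ is a strong generator) and preserves $\Phi$-flat colimits (since $\C\subseteq\K_\Phi$), so $Z(ZX*H_\C)\cong ZX*ZH_\C\cong ZX*Y\cong ZX$ shows the comparison $ZX*H_\C\to X$ is inverted by $Z$ and hence is an isomorphism.
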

\begin{proof}
	The points (1, 2, 4) are proved as in~\cite[Proposition~3.9]{LT22:virtual} with the only difference that one should keep in mind that the Cauchy completion $\Q(\C)$ of a small $\V$-category $\C$ may not be small, and that for any $X\in\V$ the restriction $\K(J-,X)\colon \C\op\to \V$ is $\Phi$-flat (the inclusion $J\colon\C\to \K$ is $\Phi$-cocontinuous).
	
	(3). We shall prove first that $\C$ is dense. Consider then the conservative functor $Z\colon\K\to[\C\op,\V]$ such that $ZX=\K(H-,X)$, where $H\colon\C\to\K$ is the inclusion; note also that $Z$ preserves $\Phi$-flat colimits and $ZH\cong Y\colon\C\to[\C\op,\V]$. Given $X\in\K$ the functor $ZX$ is $\Phi$-continuous and hence $\Phi$-flat, our aim is to prove that the comparison $ZX*H\to X$ in $\K$ is an isomorphism. But for that is enough to prove that its image through $Z$ is an isomorphism, and in fact we have
	$$ Z(ZX*H)\cong ZX*ZH \cong ZX*Y \cong ZX.$$%
	Thus $ZX*H\cong X$ and $\C$ is dense.
	
	Density then implies that $ X\cong \K(H_\C-,X)*H_\C $ for any $X\in\K$; this colimit is $\Phi$-flat since $\K(H_\C-,X)$ is $\Phi$-continuous. The fact that $\K_\Phi$ is the Cauchy completion of $\C$ follows from the same arguments as \cite[Proposition~3.9]{LT22:virtual}.
\end{proof}

Next we characterize those $\V$-functors that preserve all limits and $\Phi$-flat colimits. In the statement we assume $\Phi$-accessibility and completeness of the domain $\V$-category; it will follow from Theorem~\ref{1stchar} that this is equivalent to being locally $\Phi$-presentable.

\begin{prop}\label{lpAFT}
	Let $\K$ be $\Phi$-accessible and complete, and $\A$ be $\Phi$-accessible. Then the following are equivalent for $F\colon \K\to \A$:\begin{enumerate}\setlength\itemsep{0.25em}
		\item $F$ is continuous and preserves $\Phi$-flat colimits;
		\item $F$ preserves $\Phi$-flat colimits and has a left adjoint;
		\item $F$ has a left adjoint which preserves the $\Phi$-presentable objects.
	\end{enumerate}
\end{prop}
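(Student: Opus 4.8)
The plan is to run the cycle $(1)\Rightarrow(2)\Rightarrow(3)\Rightarrow(1)$. I would first record the standing facts: since $\K$ is $\Phi$-accessible and complete it is locally $\Phi$-presentable by Theorem~\ref{1stchar}, so in particular $\K$ is complete and cocomplete and admits a small dense generator contained in $\K_\Phi$ (Proposition~\ref{K_Phi}), while $\A$, being $\Phi$-accessible, has $\Phi$-flat colimits and a small dense generator $\D\subseteq\A_\Phi$ (Definition~\ref{Phi-acc}). The two formal implications come first. For $(2)\Rightarrow(3)$, let $L\dashv F$ and take $A\in\A_\Phi$; the adjunction gives $\K(LA,-)\cong\A(A,F-)$, and the right-hand side preserves $\Phi$-flat colimits as a composite of $F$ (which does, by hypothesis) with $\A(A,-)$ (which does, since $A$ is $\Phi$-presentable). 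Hence $LA\in\K_\Phi$, i.e.\ $L$ preserves $\Phi$-presentable objects.

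For $(3)\Rightarrow(1)$, note that $F$, being a right adjoint, is automatically continuous, so only $\Phi$-flat cocontinuity needs checking. Given a $\Phi$-flat colimit $M*D$ in $\K$, I would test the canonical comparison $M*FD\to F(M*D)$ against the dense generator $\D\subseteq\A_\Phi$: for each $C\in\D$,
\[ \A(C,F(M*D)) \cong \K(LC,M*D) \cong M*\K(LC,D-) \cong M*\A(C,FD-) \cong \A(C,M*FD), \]
where the central isomorphism uses $LC\in\K_\Phi$ (as $C\in\A_\Phi$ and $L$ preserves $\Phi$-presentables) and the last uses $C\in\A_\Phi$ together with the existence of $\Phi$-flat colimits in $\A$. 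Since this identifies the comparison after applying each $\A(C,-)$ and $\D$ is jointly conservative by density, the comparison is invertible.

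The real content is $(1)\Rightarrow(2)$, constructing the left adjoint. By the enriched representability criterion, $F$ has a left adjoint exactly when $\A(A,F-)\colon\K\to\V$ is representable for every $A\in\A$. I would reduce the general $A$ to the presentable case: writing $A\cong M*G$ as a $\Phi$-flat colimit of $\Phi$-presentable objects (with $G$ landing in $\A_\Phi$ and $M$ a $\Phi$-flat weight) gives $\A(A,F-)\cong\{M,\A(G-,F-)\}$. Each $\A(Gd,F-)$ is continuous and preserves $\Phi$-flat colimits (because $Gd\in\A_\Phi$ and $F$ satisfies $(1)$); and representables are closed under such weighted limits, with $\{M,\K(K_-,-)\}\cong\K(M*K_-,-)$. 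So everything reduces to representability of a single continuous, $\Phi$-flat-cocontinuous $\V$-functor $\K\to\V$.

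This last step is exactly adjoint-functor-theorem content, and I expect it to be the main obstacle. It cannot follow from the density formula $GX\cong\K(H_\C-,X)*GH_\C$ alone, since that uses only $\Phi$-flat cocontinuity — for instance $3\times(-)\colon\Set\to\Set$ satisfies it but is not representable, failing precisely continuity. To inject the continuity-driven input I would invoke Theorem~\ref{rainsing} to fix a regular cardinal $\alpha$ with $\Phi\triangleleft\P_\alpha$; since $\Phi\subseteq\P_\alpha$ forces $\P_\alpha^+\subseteq\Phi^+$, the functor $F$ also preserves $\alpha$-flat colimits, while $\K$, being $\P_\alpha$-accessible and complete, is locally $\alpha$-presentable. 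A continuous, $\alpha$-accessible $\V$-functor out of a locally $\alpha$-presentable $\V$-category admits a left adjoint by the enriched adjoint functor theorem, the solution-set condition being supplied by the $\alpha$-accessibility of $F$ and of $\A$. The delicate point is verifying these hypotheses in the present generality, where $\V$ is assumed only complete and cocomplete rather than locally presentable.
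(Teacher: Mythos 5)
Your implications $(2)\Rightarrow(3)$ and $(3)\Rightarrow(1)$ are correct and essentially identical to the paper's arguments, and your opening reduction of $(1)\Rightarrow(2)$ to representability of $\A(A,F-)$ for $\Phi$-presentable $A$ is also the paper's first move. The genuine gap is at exactly the step you call ``the main obstacle'': you propose to obtain the left adjoint by choosing a regular cardinal $\alpha$ with $\Phi\triangleleft\P_\alpha$ via Theorem~\ref{rainsing} and then applying an adjoint functor theorem for locally $\alpha$-presentable $\V$-categories. But Theorem~\ref{rainsing} is proved under the standing assumptions of Section~\ref{section-raising}, namely that $\V_0$ is locally presentable and that all classes of weights are saturated (hence sound), whereas Proposition~\ref{lpAFT} lives in Section~\ref{soundness}, where $\V$ is only assumed complete and cocomplete (so as to cover bases like $\bo{CGTop}$) and $\Phi$ is only locally small and weakly sound. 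In that generality no regular cardinal $\alpha$ with $\Phi\triangleleft\P_\alpha$ need exist, and classical $\alpha$-accessibility of $\K$ can fail outright; avoiding local presentability of $\V_0$ is one of the paper's stated aims. So your argument proves the proposition only under genuinely stronger hypotheses, you acknowledge this (``the delicate point'') without resolving it, and the route cannot be repaired within the stated generality.

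What the paper does instead, and what your proof is missing, is Kelly's representability criterion \cite[Theorem~4.80]{Kel82:libro}: a $\V$-functor $G\colon\K\to\V$ is representable if and only if the limit $\{G,1_\K\}$ exists and is preserved by $G$. Take $G=\A(A,F-)$ with $A\in\A_\Phi$. Since $G$ preserves $\Phi$-flat colimits and $\K$ is $\Phi$-accessible, $G$ is the left Kan extension of its restriction along the inclusion $H\colon\K_\Phi\hookrightarrow\K$, i.e.\ $G\cong \A(A,FH-)*YH\op$ with $Y\colon\K\op\to[\K,\V]$ the Yoneda embedding; hence $\{G,1_\K\}$ reduces (each side existing if the other does) to the limit of $H$ weighted by $\A(A,FH-)$, which exists because $\K$ is complete (and $\K_\Phi$ admits a small dense subcategory by Proposition~\ref{K_Phi}, so this is in effect a small limit), and it is preserved by $G$ because $G$ is continuous, $F$ and $\A(A,-)$ both being so. This is precisely where the continuity hypothesis enters --- the input you correctly observed was indispensable, but which you injected through machinery that is unavailable under the proposition's hypotheses.
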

\begin{proof}
	$(1)\Rightarrow (2)$. Let $F\colon \K\to\A$ be continuous and $\Phi$-flat colimit preserving. We need to prove that for each $A\in\A$ the $\V$-functor $\A(A,F-)$ is representable. Since $\A$ is $\Phi$-accessible it is enough to consider only the $\Phi$-presentable $A\in\A$ (then the left adjoint extends under $\Phi$-flat colimits to all of $\A$). Consider then $A\in\A_\Phi$; by \cite[Theorem~4.80]{Kel82:libro} to prove that $\A(A,F-)$ is representable is the same as proving that $\{\A(A,F-),\tx{id}_{\K}\}$ exists and is preserved by $\A(A,F-)$. By hypothesis $\A(A,F-)$ preserves $\Phi$-flat colimits and $\K$ is $\Phi$-accessible; thus $\A(A,F-)$ is the left Kan extension of its restriction to the $\Phi$-presentable objects. Let $H\colon \K_{\Phi}\to\K$ be the inclusion; then this means that $\A(A,F-)\cong \A(A,FH-)*YH\op$, where $Y\colon \K\op\to[\K,\V]$ is Yoneda. We can consider then the following chain of isomorphisms, each side existing if the other does:
	\begin{equation*}
		\begin{split}
			\{\A(A,F-),\tx{id}_{\K}\}&\cong \{\A(A,FH-)*YH\op,\tx{id}_{\K}\}\\
			&\cong \{\A(A,FH-),\{YH\op,\tx{id}_{\K}\}\}\\
			&\cong \{\A(A,FH-),H\op\}\\
		\end{split}
	\end{equation*}
	and the last limit exists since $\A$ is complete. Moreover this is preserved by $\A(A,F-)$ being continuous.
	
	$(2)\Rightarrow(3)$. Let $L$ be the left adjoint to $F$ and $A\in\A_{\Phi}$; then $\K(LA,-)\cong \A(A,F-)$ preserves $\Phi$-flat colimits since $F$ and $\A(A,-)$ do.
	
	$(3)\Rightarrow(1)$. $F$ is continuous since it is a right adjoint. Moreover, the set of $\Phi$-presentable objects of $\A$ forms a strong generator, and homming out of them preserves $\Phi$-flat colimits; thus $F$ preserves them if and only if each $\A(A,F-)$, $A\in\A_{\Phi}$ preserves them. But $\A(A,F-)\cong\K(LA,-)$, so it preserves $\Phi$-flat colimits since $L$ preserves the $\Phi$-presentable objects.
\end{proof}

The equivalence $(2)\Leftrightarrow(4)$ below is given by \cite[6.4]{LR11NotionsOL}. While, when $\V_0$ is locally presentable, the first three equivalences and the fifth follow from \cite[4.22]{LT22:limits}.

\begin{teo}\label{1stchar}
	The following are equivalent for a $\V$-category $\K$: \begin{enumerate}\setlength\itemsep{0.25em}
		\item $\K$ is locally $\Phi$-presentable;
		\item $\K$ is $\Phi$-accessible and cocomplete;
		\item $\K$ is $\Phi$-accessible and complete;
		\item $\K\simeq\Phi\tx{-Cont}(\C,\V)$ for some small $\Phi$-complete $\C$;
		\item $\K$ is a $\Phi$-flat embedded and reflective subcategory of some $[\C,\V]$.
	\end{enumerate}
	Moreover in $(4)$ we can choose $\C$ to be $\K_\Phi\op$ (even if that may not be small).
\end{teo}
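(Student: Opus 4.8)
The plan is to prove the cycle of implications $(1)\Rightarrow(2)\Rightarrow(3)\Rightarrow(4)\Rightarrow(5)\Rightarrow(1)$, leaning on Proposition~\ref{K_Phi} and the adjoint-functor criterion of Proposition~\ref{lpAFT}, while using weak soundness of $\Phi$ at the crucial steps. For $(1)\Rightarrow(2)$, I would observe that a locally $\Phi$-presentable $\K$ is cocomplete by definition, and is $\Phi$-accessible because Proposition~\ref{K_Phi}(3) exhibits a small dense $\C$ (the closure of the strong generator $\G$ under $\Phi$-colimits) lying inside $\K_\Phi$, with every object of $\K$ a $\Phi$-flat colimit of objects from $\C$; this is exactly Definition~\ref{Phi-acc}. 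The implication $(2)\Rightarrow(3)$ is the one real piece of content in passing between completeness and cocompleteness: a cocomplete $\Phi$-accessible $\V$-category is automatically complete. I would deduce this by presenting $\K$ as a reflective $\Phi$-flat-embedded subcategory of a presheaf category and transporting limits back, or alternatively by invoking the standard fact that an accessible category with all colimits has all limits; the enriched argument goes through the reflection constructed in the $(4)\Rightarrow(5)$ step, so I may prefer to prove $(3)$ only after setting up the presheaf embedding.

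For $(3)\Rightarrow(4)$, the key is to take $\C:=\K_\Phi\op$ and show $\K\simeq\Phi\tx{-Cont}(\K_\Phi\op,\V)$, which also establishes the final ``Moreover'' clause directly. By Proposition~\ref{K_Phi}(4) every object $X$ is the $\Phi$-flat colimit $X\cong\K(H-,X)*H$, so the singular functor $\K(H-,-)\colon\K\to[\K_\Phi,\V]$ is fully faithful (density of $\K_\Phi$, which follows from $\G\subseteq\K_\Phi$ being a strong generator closed under the relevant colimits). Its essential image lands in the $\Phi$-continuous presheaves because each $\K(X,-)$ restricted appropriately sends $\Phi$-colimits in $\K_\Phi$ to $\Phi$-limits in $\V$; conversely, given a $\Phi$-continuous $M\colon\K_\Phi\op\to\V$, I would realize it as $\K(H-,M*H)$, using weak soundness to guarantee that $M$ is $\Phi$-flat (so that $M*H$ exists as a $\Phi$-flat colimit in the cocomplete $\K$) and that the comparison is an isomorphism. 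This identification of the essential image is where weak soundness is genuinely used.

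For $(4)\Rightarrow(5)$, I would embed $\Phi\tx{-Cont}(\C,\V)$ into $[\C,\V]$ and show the inclusion is reflective with the reflection preserving $\Phi$-flat colimits, i.e. that it is a $\Phi$-flat-embedded reflective subcategory. Reflectivity should come from an orthogonality/localization argument: $\Phi$-continuous presheaves are exactly those orthogonal to the canonical cocones exhibiting $\Phi$-limits, and one builds the reflector by the usual small-object-style or transfinite construction permitted by cocompleteness of $\V$; the $\Phi$-flat-embedding condition says this reflection commutes with $\Phi$-flat colimits, which again follows because $\Phi$-flat weights commute with the $\Phi$-limits cutting out the subcategory. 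Finally $(5)\Rightarrow(1)$ follows from Proposition~\ref{conservative}: a reflective subcategory inclusion is a conservative right adjoint, and the $\Phi$-flat-embedding hypothesis makes it preserve $\Phi$-flat colimits, so local $\Phi$-presentability of $[\C,\V]$ (Proposition~\ref{phi-pres-presheaves}) transfers to $\K$.

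The main obstacle I anticipate is the identification of the essential image in $(3)\Rightarrow(4)$, together with the closely related need to show that an abstractly $\Phi$-accessible complete $\K$ has enough $\Phi$-presentable objects for $\K_\Phi$ to be dense: one must verify that the $\Phi$-continuity of $\K(X,-)|_{\K_\Phi}$ is not merely necessary but also sufficient to recover $X$, and this is precisely where weak soundness converts ``$\Phi$-continuous'' into ``$\Phi$-flat'' so that the colimit $M*H$ exists and reproduces $M$. I would isolate this as the technical heart of the proof and handle the reflectivity in $(4)\Rightarrow(5)$ by citing the standard enriched orthogonal-reflection machinery rather than rebuilding it.
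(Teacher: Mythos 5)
Your step $(3)\Rightarrow(4)$ contains the real gap, and it is structural. First, Proposition~\ref{K_Phi} is stated and proved only for \emph{locally $\Phi$-presentable} $\K$, i.e.\ under hypothesis (1); invoking its parts (1) and (4) while assuming only (3) is circular, since $(3)\Rightarrow(1)$ is part of what the theorem asserts. Second, exhibiting $\K\simeq\Phi\tx{-Cont}(\K_\Phi\op,\V)$ cannot establish (4) as stated: (4) demands a \emph{small} $\Phi$-complete $\C$, and $\K_\Phi$ need not be small --- the paper's ``Moreover'' clause is an addendum precisely because of this, not a substitute proof of (4). Third, weak soundness (Definition~\ref{wsc}) only converts $\Phi$-continuity into $\Phi$-flatness for weights over \emph{small} $\Phi$-cocomplete categories, so it does not apply to $\Phi$-continuous functors on a possibly large $\K_\Phi\op$; and under (3) alone it is not even clear that $\K$ has $\Phi$-colimits, so one cannot form the small $\Phi$-cocomplete closure of a generator (nor conclude that $\K_\Phi\op$ is $\Phi$-complete). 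Your phrase ``$M*H$ exists as a $\Phi$-flat colimit in the \emph{cocomplete} $\K$'' shows you are really arguing under (2), not (3); but in a cyclic proof each implication must be derived from its own hypothesis alone, and if $(3)\Rightarrow(4)$ secretly uses (2) then (3) never functions as a hypothesis, so its equivalence with the other conditions is not proved. This is exactly why the paper proves $(4)\Rightarrow(3)$ rather than $(3)\Rightarrow(4)$, and exits from (3) via $(3)\Rightarrow(5)$, where completeness is precisely what Proposition~\ref{lpAFT} needs to produce a left adjoint to the singular embedding $\K\hookrightarrow[\G\op,\V]$.

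The second gap is in $(4)\Rightarrow(5)$: the ``standard enriched orthogonal-reflection machinery'' you plan to cite is not available under the paper's standing hypotheses. Here $\V_0$ is assumed only complete and cocomplete; transfinite/small-object constructions of reflections need boundedness of the base (local boundedness or local presentability), and the paper is explicit about this --- in Theorem~\ref{lphip-char} the passage from a $\Phi$-orthogonality class to local $\Phi$-presentability is proved \emph{only} when $\V_0$ is locally bounded, via Kelly's reflection theorem. Avoiding that machinery is the whole point of the paper's route: from (4) one first deduces (3) (completeness and $\Phi$-accessibility of $\Phi\tx{-Cont}(\C,\V)$ are immediate), and the reflection in (5) then comes from Proposition~\ref{lpAFT}, a representability argument using only limits that already exist in $\K$. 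For the rest: your $(1)\Rightarrow(2)$ agrees with the paper, your $(5)\Rightarrow(1)$ via Propositions~\ref{conservative} and~\ref{phi-pres-presheaves} is correct, and your $(2)\Rightarrow(3)$ can be repaired without any reflection machinery (the singular functor into presheaves on a small dense subcategory of $\Phi$-presentables has a left adjoint given by weighted colimits, which exist since $\K$ is cocomplete); but the cycle as you have arranged it cannot be closed without reorganizing it along the paper's lines, e.g.\ as $(1)\Leftrightarrow(2)$ together with $(1)\Rightarrow(4)\Rightarrow(3)\Rightarrow(5)\Rightarrow(1)$.
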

\begin{proof}
	$(1)\Rightarrow(2)$. Let $\K$ be locally $\Phi$-presentable and $\G$ be a small strong generator made of $\Phi$-presentable objects. Let $\C$ be the closure of $\G$ under $\Phi$-colimits; this is dense and small by Proposition~\ref{K_Phi}. Now, by density of $\C$, every $X$ in $\K$ can be written as the colimit 
	$$ X\cong \K(H-,X)*H, $$
	where $\K(H-,X)$ is $\Phi$-flat (being $\Phi$-continuous). Thus (2) holds.
	
	$(2)\Rightarrow(1)$ follows by definition, since a dense generator is in particular a strong generator.
	
	$(1)\Rightarrow(4)$. Consider $\C$ as in Proposition~\ref{K_Phi}(3) and let $H=H_\C$. By density we have a fully faithful $J\colon\K\hookrightarrow[\C,\V]$ sending $X$ to $JX:=\K(H-,X)$, where $H\colon\K_{\Phi}\hookrightarrow\K$ is the inclusion. Note that $J$ preserves $\Phi$-flat colimits since we are homming out of $\Phi$-presentable objects. Moreover, since $H$ preserves $\Phi$-colimits, every $\V$-functor $\K(H-,X)$ is $\Phi$-continuous, so that $J$ restricts to $\K\hookrightarrow\Phi\tx{-Cont}(\C,\V)$. Finally, any $F\in \Phi\tx{-Cont}(\C,\V)$ is $\Phi$-flat since $\Phi$ is weakly sound; therefore we can consider the $\Phi$-flat colimit $F*H\in K$. This is preserved by $J$ so that $J(F*H)\cong F*(JH)\cong F*Y_\C \cong F$. It follows that $\K$ is equivalent to $\Phi\tx{-Cont}(\C,\V)$.
	
	$(4)\Rightarrow(3)$. It is clear that $\K\simeq\Phi\tx{-Cont}(\C,\V)$ is complete since $\Phi$-continuous $\V$-functors are stable under limits. To show that it is $\Phi$-accessible notice that it has $\Phi$-flat colimits (since these commute in $\V$ with $\Phi$-limits), that the inclusion $H\colon \C\op\hookrightarrow\K$ is dense, and that $\C$ is made of $\Phi$-presentable objects. Moreover, for any $X\in\K$ we can write $X\cong X*H$ (by density of $H$) where $X$ is $\Phi$-flat (being $\Phi$-continuous). Thus $\K$ is $\Phi$-accessible.
	
	$(3)\Rightarrow(5)$. Consider the inclusion $J\colon \K\hookrightarrow[\G\op,\V]$ induced by a dense $\G$ made of $\Phi$-presentable objects, which is continuous and $\Phi$-flat colimit preserving. It follows then by the previous proposition that $J$ has a left adjoint. 
	
	$(5)\Rightarrow(1)$. By Proposition \ref{phi-pres-presheaves} the $\V$-category $[\C,\V]$ is locally $\Phi$-presentable. Let $J$ be the inclusion and $L$ the left adjoint; then $\K$ is cocomplete being a reflective subcategory of a cocomplete one. Let $\G:=L([\C,\V]_\Phi)$; since $\K(LX,-)\cong[\C,V](X,J-)$ and $J$ preserves $\Phi$-flat colimits, it follows that every element of $\G$ is $\Phi$-presentable; moreover $\G$ generates $\K$ under $\Phi$-flat colimits: for each $K\in\K$ the object $JK\cong M*H$ is a $\Phi$-flat colimit of $\Phi$-presentable objects of $[\C,\V]$; therefore $K\cong LJK\cong L(M*H)\cong M*LH$ is a $\Phi$-flat colimit of objects from $\G$. This means exactly that $\K$ is locally $\Phi$-presentable.
\end{proof}

\begin{cor}\label{Phi-flat-preserve}
	Let $\K$ be a locally $\Phi$-presentable $\V$-category. For any $\V$-category $\L$ with $\Phi$-flat colimits, left Kan extending along the inclusion $H\colon\K_\Phi\hookrightarrow\K$ induces an equivalence
	$$ \tx{Lan}_H\colon [\K_\Phi,\L]\xrightarrow{\ \simeq \ } \Phi\+\tx{-Coct}(\K,\L), $$
	where $\Phi\+\tx{-Coct}(\K,\L)$ is the $\V$-category of $\V$-functors $\K\to\L$ preserving $\Phi$-flat colimits.
\end{cor}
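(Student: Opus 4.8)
The plan is to read the statement as an instance of the universal property of the free cocompletion recalled earlier: I would first show that the inclusion $H\colon\K_\Phi\hookrightarrow\K$ exhibits $\K$ as the free cocompletion $\Phi^{+}(\K_\Phi)$ of $\K_\Phi$ under $\Phi$-flat colimits, where $\Phi^{+}$ is the class of $\Phi$-flat weights. Granting this, the equivalence is immediate: in its enriched form, restriction along the universal functor $\K_\Phi\to\Phi^{+}(\K_\Phi)$ gives an equivalence $\Phi^{+}\tx{-Coct}(\Phi^{+}(\K_\Phi),\L)\simeq[\K_\Phi,\L]$ whose inverse is left Kan extension (the enriched form of \cite[Proposition~3.6]{KS05:articolo}). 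Transporting this across $\K\simeq\Phi^{+}(\K_\Phi)$ — under which $H$ corresponds to the universal functor — turns left Kan extension along the universal functor into $\tx{Lan}_H$, yielding exactly the claimed $[\K_\Phi,\L]\simeq\Phi^{+}\tx{-Coct}(\K,\L)$.

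To establish $\K\simeq\Phi^{+}(\K_\Phi)$, I would analyse the nerve $N\colon\K\to[\K_\Phi\op,\V]$ given by $NX=\K(H-,X)$. Since $\K_\Phi$ is closed in $\K$ under $\Phi$-colimits (Proposition~\ref{K_Phi}(1)), the inclusion $H$ is $\Phi$-cocontinuous, so each $NX$ is $\Phi$-continuous and hence $\Phi$-flat because $\Phi$ is weakly sound (Definition~\ref{wsc}). As every $\Phi$-flat weight $M$ is the ($\Phi$-flat) $M$-weighted colimit of representables $M\cong M*Y$, and conversely the $\Phi$-flat weights are closed under $\Phi$-flat colimits, the free cocompletion $\Phi^{+}(\K_\Phi)$ coincides with the full subcategory of $\Phi$-flat weights on $\K_\Phi$. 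Consequently $N$ corestricts to $N\colon\K\to\Phi^{+}(\K_\Phi)$, and it carries $H$ to the Yoneda embedding, i.e. to the universal functor.

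It then remains to check that this corestriction is an equivalence. Full faithfulness is precisely the density of $H$ established in Proposition~\ref{K_Phi}(4). For essential surjectivity, given a $\Phi$-flat $M$ I would form the $\Phi$-flat colimit $X:=M*H\in\K$ (available since $\K$ is cocomplete); because every object of $\K_\Phi$ is $\Phi$-presentable, $N$ preserves $\Phi$-flat colimits pointwise, so $NX\cong M*NH\cong M*Y\cong M$. The main point requiring care is the size of $\K_\Phi$: it need not be small (only the dense subcategory $\C$ of Proposition~\ref{K_Phi}(3) is), so $[\K_\Phi\op,\V]$ may be only a $\V'$-category and one must ensure the enriched universal property is available for the possibly large $\K_\Phi$. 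I would resolve this by working with the small dense $\C$, using that $\K_\Phi$ is the Cauchy completion of $\C$ and that the Cauchy weights are $\Phi$-flat, so that $\Phi^{+}(\C)\simeq\Phi^{+}(\K_\Phi)$ and the restriction $[\K_\Phi,\L]\to[\C,\L]$ is an equivalence (as $\L$, having $\Phi$-flat colimits, is Cauchy complete).
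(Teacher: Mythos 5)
Your proposal is correct and takes essentially the same route as the paper: the paper's proof also consists in identifying $\K$ as the free cocompletion of $\K_\Phi$ under $\Phi$-flat colimits and then reading the stated equivalence as the universal property of that cocompletion, the only difference being that the paper obtains the identification by citing Theorem~\ref{1stchar}, Proposition~\ref{K_Phi}, and the equivalent formulations of $\Phi$-accessibility from~\cite{LT22:virtual}, whereas you re-derive it directly via the nerve argument. Your closing reduction to the small dense $\C$ (using that $\K_\Phi$ is its Cauchy completion and that Cauchy weights are $\Phi$-flat) is exactly the size bookkeeping hidden in those citations, so nothing is missing.
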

\begin{proof}
	By the proposition above and the equivalent formulations of a $\Phi$-accessibility, the $\V$-category $\K$ is the free cocompletion of of $\K_\Phi$ under $\Phi$-flat colimits. Thus the equivalence above is just expressing its universal property. 
\end{proof}

\begin{es}
	Consider a cartesian closed base of enrichment $\V$ together with the sound class $\bo{Fp}$ for finite products (Example~\ref{examplewsc}). 
	Given an ordinary category $\C$ with finite products, it is easy to see that the free $\V$-category $\C_\V$ over $\C$ still has finite products and that these are computed as in $\C$. It follows that the $\V$-category $\tx{Fp}(\C_\V,\V)$ is locally $\tx{Fp}$-presentable (by Theorem~\ref{1stchar}), so that all the results of this section apply. But if we take its underlying category, by construction we have that 
	$$ \tx{Fp}(\C_\V,\V)_0\cong \tx{Fp}(\C,\V_0) $$
	which is in general not locally $\bo{Fp}$-presentable (that is, not a finitary variety), nor locally presentable unless $\V_0$ is.
	
	For instance, we can consider $\V=\bo{CGTop}$, the cartesian closed category of compactly generated topological spaces (which is not a variety and not locally presentable), and the ordinary Lawvere theory for groups. It follows from the arguments above, that the enriched category of groups internal to $\bo{CGTop}$ is locally $\bo{Fp}$-presentable, but its underlying ordinary category is not.
\end{es}

We now prove an enriched analogue of the Gabriel--Ulmer duality for sound classes that will generalize the ordinary version of~\cite{centazzo2004characterization} (see also \cite[Remark~3.8]{tendas2023dualities}).
Denote by $\bo{L}\Phi\bo{P}$ the 2-category of locally $\Phi$-presentable $\V$-categories, continuous and $\Phi$-flat colimit preserving $\V$-functors, and $\V$-natural transformations. On the other hand, let $\Phi\tx{-}\bo{Cont}$ be the 2-category of $\Phi$-complete $\V$-categories which are the Cauchy completion of a small $\V$-category, $\Phi$-continuous $\V$-functors, and $\V$-natural transformation.

\begin{nota}\label{notation}
	Given $\K\in \bo{L}\Phi\bo{P}$, we denote with $\bo{L}\Phi\bo{P}(\K,\V)$ --- bolded --- the hom-category in $\bo{L}\Phi\bo{P}$. This is the (ordinary) category of continuous and $\Phi$-flat colimit preserving $\V$-functors $\K\to\V$, and $\V$-natural transformations between them. Instead, we denote by $\tx{L}\Phi\tx{P}(\K,\V)$ --- unbolded --- the $\V$-category obtained by taking the full subcategory of $[\K,\V]$ spanned by the continuous and $\Phi$-flat colimit preserving $\V$-functors (the fact that this is actually a $\V$-category, and not just a $\V'$-category for some universe enlargement $\V'\supseteq\V$, follows from the theorem below). Therefore we obtain an equality
	$$\tx{L}\Phi\tx{P}(\K,\V)_0=\bo{L}\Phi\bo{P}(\K,\V).$$
	We use the same notational convention for $\C\in\Phi\tx{-}\bo{Cont}$, where the $\V$-category $\Phi\tx{-}\tx{Cont}(\C,\V)$ has already been defined. Therefore we obtain an equality
	$$ \Phi\tx{-}\tx{Cont}(\C,\V)_0=\Phi\tx{-}\bo{Cont}(\C,\V) $$
	of ordinary categories. 
\end{nota}

\begin{teo}\label{bieq}
	The following is a biequivalence of 2-categories:
	\begin{center}
		
		\begin{tikzpicture}[baseline=(current  bounding  box.south), scale=2]

			\node (f) at (0,0.4) {$\tx{L}\Phi\tx{P}(-,\V)\colon \bo{L}\Phi\bo{P}$};
			\node (g) at (2.6,0.4) {$\Phi\tx{-}\bo{Cont}\op\cocolon \Phi\tx{-}\tx{Cont}(-,\V)$};
			
			\path[font=\scriptsize]

			([yshift=1.3pt]f.east) edge [->] node [above] {} ([yshift=1.3pt]g.west)
			([yshift=-1.3pt]f.east) edge [<-] node [below] {} ([yshift=-1.3pt]g.west);
		\end{tikzpicture}
		
	\end{center}
\end{teo}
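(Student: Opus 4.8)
The plan is to realize the two displayed $2$-functors $S:=\bo{L}\Phi\bo{P}(-,\V)$ and $T:=\Phi\tx{-}\bo{Cont}(-,\V)$ as mutually quasi-inverse, so that the statement reduces to producing pseudonatural equivalences $TS\simeq\tx{id}$ and $ST\simeq\tx{id}$. Both assignments are manifestly (strict) $2$-functors, since homming into $\V$ and precomposition are functorial; here precomposition along a $\Phi$-continuous $\V$-functor is continuous and preserves $\Phi$-flat colimits, because limits and $\Phi$-flat colimits in these functor $\V$-categories are computed pointwise. That $T$ lands in $\bo{L}\Phi\bo{P}$ is the implication $(4)\Rightarrow(1)$ of Theorem~\ref{1stchar}; that $S$ lands in $\Phi\tx{-}\bo{Cont}\op$ will follow from the first computation below.

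First I would establish the object-level equivalence $\bo{L}\Phi\bo{P}(\K,\V)\simeq\K_\Phi\op$. By Theorem~\ref{1stchar} a locally $\Phi$-presentable $\K$ is $\Phi$-accessible and complete, and $\V$ is $\Phi$-accessible by Proposition~\ref{phi-pres-presheaves}, so Proposition~\ref{lpAFT} applies to each $1$-cell $F\colon\K\to\V$ of $\bo{L}\Phi\bo{P}$: such an $F$ has a left adjoint $L$ preserving $\Phi$-presentable objects, and since $I$ is $\Phi$-presentable in $\V$ we get $F\cong\V(I,F-)\cong\K(LI,-)$ with $LI\in\K_\Phi$. Conversely every $\K(A,-)$ with $A\in\K_\Phi$ is continuous and preserves $\Phi$-flat colimits, so $A\mapsto\K(A,-)$ is the desired $\V$-equivalence, full faithfulness being Yoneda. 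Since $\K_\Phi$ is closed under $\Phi$-colimits and is the Cauchy completion of a small $\V$-category by Proposition~\ref{K_Phi}(1,3), its opposite $\K_\Phi\op$ is $\Phi$-complete and again such a Cauchy completion, so indeed $S(\K)\in\Phi\tx{-}\bo{Cont}\op$.

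The step I expect to be the main obstacle is the dual computation $(\Phi\tx{-}\bo{Cont}(\C,\V))_\Phi\simeq\C\op$. Writing $\K:=\Phi\tx{-}\bo{Cont}(\C,\V)$ and $W\colon\C\op\to\K$, $Wc=\C(c,-)$, for the Yoneda embedding (which lands in $\K$ as representables are $\Phi$-continuous), each $Wc$ is $\Phi$-presentable because $\K(\C(c,-),-)\cong\tx{ev}_c$ preserves the pointwise $\Phi$-flat colimits; thus $W$ factors fully faithfully through $\K_\Phi$. For essential surjectivity I would show that the image of $W$ is closed in $\K$ under $\Phi$-colimits and Cauchy colimits, and then invoke Proposition~\ref{K_Phi}(2) applied to the dense $\Phi$-presentable generator $\C\op$ furnished by the proof of Theorem~\ref{1stchar}$(4)\Rightarrow(3)$. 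Closure under Cauchy colimits is automatic, as these are absolute and $\C$ (hence $\C\op$) is Cauchy complete. Closure under $\Phi$-colimits amounts to $W$ preserving them: for a $\Phi$-weight $\phi$ and a diagram $D$ in $\C\op$ whose $\Phi$-colimit $\phi*D$ exists because $\C$ is $\Phi$-complete, testing against an arbitrary $F\in\K$ and using $\K(\C(x,-),F)\cong Fx$ together with the $\Phi$-continuity of $F$ reduces both $\K(W(\phi*D),F)$ and $\K(\phi*WD,F)$ to the same $\Phi$-limit in $\V$, whence $W(\phi*D)\cong\phi*WD$. Therefore $W\colon\C\op\xrightarrow{\simeq}\K_\Phi$.

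With these two identifications the round-trips are immediate on objects: $TS(\K)=\Phi\tx{-}\bo{Cont}(\K_\Phi\op,\V)\simeq\K$ is precisely the ``moreover'' clause of Theorem~\ref{1stchar}, while $ST(\C)\simeq(\Phi\tx{-}\bo{Cont}(\C,\V))_\Phi\op\simeq\C$ by the previous paragraph. It then remains to promote these to pseudonatural equivalences, which is routine adjunction-and-Yoneda bookkeeping: under $S(\K)\simeq\K_\Phi\op$ a $1$-cell $F\colon\K\to\K'$ is sent by $S$ to the opposite of the restriction to $\Phi$-presentables of its left adjoint $L$ (well defined by Proposition~\ref{lpAFT}), so that both legs of the naturality square for $TS\simeq\tx{id}$ carry $X$ to the functor $A'\mapsto\K(LA',X)\cong\K'(A',FX)$, and the squares for $ST\simeq\tx{id}$ are checked the same way. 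Coherence of these invertible modifications is routine, and a pair of $2$-functors equipped with pseudonatural equivalences of both composites to the identities is automatically a biequivalence; I would spell out this final paragraph only schematically, the genuine content being the two object-level equivalences above.
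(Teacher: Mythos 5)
Your first half — identifying $\bo{L}\Phi\bo{P}(\K,\V)\simeq\K_\Phi\op$ via Proposition~\ref{lpAFT} and $\Phi$-presentability of the unit, and checking that this lands in $\Phi\tx{-}\bo{Cont}\op$ using Proposition~\ref{K_Phi}(1,3) — is exactly the paper's argument. In the second half you genuinely diverge: the paper proves that every $\Phi$-presentable object of $\K=\Phi\tx{-}\bo{Cont}(\C,\V)$ is representable by the representability criterion of \cite[Proposition~6.14]{KS05:articolo} ($JX$ is representable iff $[\C,\V](JX,-)$ preserves the colimit $JX*Y$, available because $\C$ is Cauchy complete and $JX$ is $\Phi$-flat by weak soundness), whereas you show that the representables are closed in $\K$ under $\Phi$-colimits and Cauchy colimits and then invoke Proposition~\ref{K_Phi}(2). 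Your two closure verifications are correct: $W$ preserves those $\Phi$-colimits that exist in $\C\op$ (test against $\Phi$-continuous $F$ and apply Yoneda), and the image of a fully faithful $\V$-functor out of a Cauchy complete category is closed under absolute colimits. This gives a more self-contained route to $\K_\Phi\simeq\C\op$ that avoids the Kelly--Schmitt criterion; what it costs is a heavier reliance on Proposition~\ref{K_Phi}, which is where your proof has a real problem.

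The gap is size-theoretic. In this paper $\V_0$ is only complete and cocomplete, so an object $\C$ of $\Phi\tx{-}\bo{Cont}$ is a Cauchy completion $\Q(\D)$ of a small $\D$ and need \emph{not} be small (the paper stresses this right after the statement). Theorem~\ref{1stchar}(4) requires a small $\Phi$-complete category, so three of your appeals are not licensed when $\C$ is large: that $T$ lands in $\bo{L}\Phi\bo{P}$ by $(4)\Rightarrow(1)$; the ``dense $\Phi$-presentable generator $\C\op$ furnished by the proof of $(4)\Rightarrow(3)$''; and the application of Proposition~\ref{K_Phi}(2), whose generator $\G$ is the small one witnessing local $\Phi$-presentability. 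The repair is precisely the opening move of the paper's own proof: write $\C=\Q(\D)$ with $\D$ small, replace $\D$ by its (still small, by local smallness of $\Phi$) closure under $\Phi$-limits so that it is $\Phi$-complete, and use $\Phi\tx{-}\bo{Cont}(\C,\V)\simeq\Phi\tx{-}\bo{Cont}(\D,\V)$. Then Theorem~\ref{1stchar} and Proposition~\ref{K_Phi}(2) apply with the small generator $\D\op$, and your closure argument finishes once you add the observation that every representable on $\C$ is a Cauchy colimit of representables on $\D$, so that the closure of $\D\op$ under $\Phi$-colimits and Cauchy colimits is exactly the image of $W\colon\C\op\to\K$. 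With this patch your proof is correct; note the issue vanishes entirely when $\V_0$ is locally presentable, since then Cauchy completions of small $\V$-categories are small.
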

\begin{proof}
	Let $\K$ be locally $\Phi$-presentable; then we first note that $\tx{L}\Phi\tx{P}(\K,\V)\simeq \K_\Phi\op$, induced by the restriction of the Yoneda embedding $Y\colon \K\op\to[\K,\V]$. It is clear that for every $X\in\K_{\Phi}$ we have $\K(X,-)\in \tx{L}\Phi\tx{P}(\K,\V)$. Conversely let $F\colon \K\to\V$ be continuous and $\Phi$-flat colimits preserving; then by the previous proposition it has a left adjoint $L$. As a consequence $\K(LI,-)\cong[I,F-]\cong F$ and $LI\in\K_{\Phi}$, proving the equivalence. Thus the 2-functor is well defined since $\K_\Phi\op$ is $\Phi$-complete and the Cauchy completion of a small category.  Moreover, given $F\colon \K\to\A$ in $\bo{L}\Phi\bo{P}$ it follows that $\tx{L}\Phi\tx{P}(F-,\V)$ corresponds, up to the equivalence above, to the restriction of the left adjoint of $F$ to the $\Phi$-presentable objects. Now, the fact that $\Phi\tx{-}\tx{Cont}(\tx{L}\Phi\tx{P}(\K,\V) ,\V)\cong \K$ follows directly from $(1)\Rightarrow(2)$ of Theorem \ref{1stchar}.
	
	Let now $\C=\Q(\D)$ be $\Phi$-complete with $\D$ small; we can assume $\D$ to be $\Phi$-complete as well. Then $$ \K:=\Phi\tx{-}\tx{Cont}(\C,\V)\simeq \Phi\tx{-}\tx{Cont}(\D,\V)$$%
	is locally $\Phi$-presentable by Theorem \ref{1stchar}. By the arguments above, is then enough to prove that $\K_\Phi$ consists just of the representable functors, that is: $\K_\Phi\simeq\C\op$. On one hand every representable $YC=\C(C,-)$ in $\K$ is clearly $\Phi$-presentable since $\K(YC,-)\cong \tx{ev}_C\circ J$, where $J\colon \Phi\tx{-}\tx{Cont}(\C,\V)\to[\C,\V]$ is the inclusion, and both $\tx{ev}_C$ and $J$ preserves $\Phi$-flat colimits. On the other hand, let $X\in\K_\Phi$; then $[\C,\V](JX,J-)$ preserves $\Phi$-flat colimits. By \cite[Proposition~6.14]{KS05:articolo}, $JX$ is representable if and only if $[\C,\V](JX,-)$ preserves the colimit $JX*Y$ (since $\C$ is Cauchy complete). But $JX$ is $\Phi$-continuous, and hence $\Phi$-flat; thus $JX*Y\cong J(X*H)$ where $H$ is the inclusion of $\C$ in $\K$. Thus the thesis follows from the fact that $[\C,\V](JX,J-)$ preserves $\Phi$-flat colimits.
\end{proof}

\begin{obs}
	Local presentability has been considered in a formal 2-categorical context in~\cite{di2023accessibility}. Given a 2-category $\mathbb K$, the 2-categorical notion of local presentability relies on the choice of a {\em context} $\nu=(\S,\P)$ as in \cite[Section~2.4]{di2023accessibility}. In our case we take $\mathbb K=\V$-$\bo{CAT}$, the 2-category of large $\V$-categories, $\V$-functors, and $\V$-natural transformations; and we consider the context $\nu=(\Phi\+,\P)$ where $\Phi\+$ and $\P$ are the pseudomonads induced by freely adding $\Phi$-flat colimits and all small colimits respectively.\\
	Given this, to compare our notion with theirs, one need to understand the notion of {\em $\P$-petit} and {\em $\S$-Cauchy complete object} \cite[Definitions~2.37, 3.3]{di2023accessibility} (at least for $\S=\Phi\+$). Arguing as in \cite[Remark~3.4]{di2023accessibility}, it is easy to see that a $\V$-category $\C$ is Cauchy complete (in the standard enriched sense) if and only if it is $\Phi\+$-Cauchy complete. Similarly, one can also show that every $\V$-category $\C$ that is the Cauchy cocompletion of a small $\V$-category is $\S$-petit (for any $\S$ in a context $\nu$); however, we do not know if the converse holds. \\
	If the converse did hold, then the {\em $\nu$-presentable} objects (\cite[Definition~2.40]{di2023accessibility}) of $\V$-$\bo{CAT}$ would coincide with our locally $\Phi$-presentable $\V$-categories. In this case then the equivalence of (2),(3),(4), and (5) in Theorem~\ref{1stchar}, as well as the duality of Theorem~\ref{bieq} above, would follow from the results in Section~2 and 3 of \cite{di2023accessibility}.
\end{obs}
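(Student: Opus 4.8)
The aim is to substantiate the three assertions contained in the remark: (i) that $\Phi\+$-Cauchy completeness coincides with ordinary Cauchy completeness, (ii) that Cauchy completions of small $\V$-categories are $\S$-petit, and (iii) the conditional identification of the $\nu$-presentable objects of $\V\tx{-}\bo{CAT}$. Before any of this one must check that $\nu=(\Phi\+,\P)$ is a genuine context in the sense of \cite[Section~2.4]{di2023accessibility}: both $\Phi\+$ and $\P$ are the lax-idempotent pseudomonads on $\V\tx{-}\bo{CAT}$ that freely adjoin, respectively, $\Phi$-flat colimits and all small colimits, so the only thing to verify is the ordering $\Phi\+\leq\P$, which holds because every $\Phi$-flat weight is small. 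Throughout I will use the inclusion $\Q\subseteq\Phi\+$: a Cauchy weight commutes in $\V$ with \emph{all} limits, hence in particular with $\Phi$-limits, so it is $\Phi$-flat.

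For (i) I would follow the pattern of \cite[Remark~3.4]{di2023accessibility}. There, $\S$-Cauchy completeness of a $\V$-category (\cite[Definition~3.3]{di2023accessibility}) is closure under the \emph{$\S$-absolute} colimits, that is, those preserved by every $\S$-cocontinuous $\V$-functor, and the case $\S=\P$ recovers exactly the ordinary absolute (equivalently, Cauchy) colimits. The observation that makes the case $\S=\Phi\+$ reduce to this is that a $\P$-cocontinuous functor (one preserving all small colimits) is \emph{a fortiori} $\Phi\+$-cocontinuous, since it preserves the $\Phi$-flat colimits. Demanding preservation by the larger class of $\Phi\+$-cocontinuous functors is therefore a stronger condition on a weight, so every $\Phi\+$-absolute weight is $\P$-absolute and hence Cauchy; conversely, every Cauchy weight is genuinely absolute and so in particular $\Phi\+$-absolute. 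Thus the $\Phi\+$-absolute colimits are precisely the $\Q$-colimits, and $\Phi\+$-Cauchy completeness is ordinary Cauchy completeness.

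For (ii), write $\C=\Q(\D)$ with $\D$ small. The witness of $\S$-pettiness (\cite[Definition~2.37]{di2023accessibility}) is the inclusion $\D\hookrightarrow\Q(\D)=\C$: it has small domain and is \emph{absolutely} dense, since passing to the Cauchy completion is exactly closure under absolute colimits. Because this density is realised by absolute colimits, and the absolute colimits lie in every doctrine of a context (equivalently, each free $\S$-cocompletion is already Cauchy complete), the witnessing data is respected by every such $\S$; hence $\C$ is $\S$-petit for any $\S$ in a context $\nu$, as claimed. The point left open---and the genuine obstacle to a clean comparison---is the \emph{converse} of (ii): I do not see how to show that an arbitrary $\S$-petit $\V$-category must be the Cauchy completion of a small one, and this is precisely what the remark flags as unknown.

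Finally, granting that converse, the identification of the two notions of local presentability would go as follows. Combining (i) and (ii) with the converse, the $\V$-categories that are simultaneously $\S$-petit and $\S$-Cauchy complete would be exactly the Cauchy completions of small $\V$-categories; restricting to the $\Phi$-complete ones yields precisely the categories $\C$ appearing in Theorem~\ref{1stchar}(4), which may be taken to be $\K_\Phi\op$. A $\nu$-presentable object of $\V\tx{-}\bo{CAT}$ (\cite[Definition~2.40]{di2023accessibility}) is then a reflective $\P$-cocompletion of such a $\C$, which by Theorem~\ref{1stchar}(4)--(5) is exactly a locally $\Phi$-presentable $\V$-category. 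Under this dictionary the equivalences $(2)$--$(5)$ of Theorem~\ref{1stchar} and the duality of Theorem~\ref{bieq} would become instances of the results in Sections~2--3 of \cite{di2023accessibility}; the worth of the present section is that it establishes these conclusions unconditionally, independently of the open converse.
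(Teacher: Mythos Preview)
The statement is a remark, and the paper supplies no separate proof: the justifications are the brief phrases embedded in the remark itself (``Arguing as in \cite[Remark~3.4]{di2023accessibility}\ldots'' and ``Similarly, one can also show\ldots''). Your proposal is a reasonable and essentially correct unpacking of those hints, and it follows the same line the remark gestures at; in particular your argument for (i)---that $\Phi\+$-cocontinuous functors form a strictly larger class than $\P$-cocontinuous ones, so $\Phi\+$-absolute weights coincide with the Cauchy weights---is exactly the shape of reasoning intended by the reference to \cite[Remark~3.4]{di2023accessibility}.
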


Next we need to recall the notion of limit sketches. While the ordinary notion is due to Ehresmann~\cite{Ehr68:sketches}, these were first considered in the enriched context by Kelly~\cite{Kel82:libro}, and have been used to classify locally presentable enriched categories in~\cite{Kel82:articolo}, as well as in the context of locally bounded enriched categories~\cite{LP21}. Mixed sketches (involving also colimits specifications) are used to characterize enriched accessible categories~\cite{BQR98,LT22:virtual,LT22:limits}.  

\begin{Def}[\cite{Kel82:libro}]
	A {\em limit sketch} is the data $\S=(\B,\mathbb{L})$ of a small $\V$-category $\B$ and a set $\LL$ of cylinders $M\Rightarrow \B(B,H-)$ in $\B$. Given a class of weights $\Phi$, if every weight $M $ appearing in $\mathbb{L}$ lies in $\Phi$, we call $\S$ a {\em $\Phi$-limit sketch}. 
\end{Def}

\begin{Def}
	Given a limit sketch $\S=(\B,\mathbb{L})$, a model of $\S$ is a $\V$-functor $F\colon\B\to\V$ which sends every cylinder in $\LL$ to a limit cylinder in $\V$.
\end{Def}

The notion of orthogonality below is very closely related to that of limit sketch (as we will see in Theorem~\ref{lphip-char}). These have been studied again in \cite{Kel82:libro,Kel82:articolo,LP21}.

\begin{Def}[\cite{Kel82:libro}]
	Let $\K$ be a $\V$-category and $f\colon A\to B$ an arrow in $\K$. We say that $X\in\K$ is orthogonal with respect to $f$ if $\K(X,f)$ is an isomorphism in $\V$. Let $\M$ be a set of arrows in $\K$; we denote by $\M^\perp$ the full subcategory of $\K$ spanned by the objects of $\K$ which are orthogonal with respect to every morphism in $\M$.  
\end{Def}

\begin{Def}	
	A full subcategory $\L$ of $\K$ is called an {\em orthogonality class} if it arises as $\M^\perp$ for some $\M$ as above. If all the arrows in $\M$ have $\Phi$-presentable domain and codomain, we say that $\L$ is a {\em $\Phi$-orthogonality class}. 
\end{Def}

The following theorem then characterizes locally $\Phi$-presentable $\V$-categories. For one implication we need the further assumption of $\V_0$ being locally bounded \cite[Section~6.1]{Kel82:libro}. We don't really need to recall the definition here; it will suffice to say that locally bounded categories include all locally presentable categories as well as certain categories of topological spaces (see \cite{LP21} for a good amount of examples).

\begin{teo}\label{lphip-char}
	Consider the following conditions for a $\V$-category $\K$:\begin{enumerate}\setlength\itemsep{0.25em}
		\item $\K$ is locally $\Phi$-presentable;
		\item $\K\simeq\tx{Mod}(\S,\V)$ for a $\Phi$-limit sketch $\S=(\C,\mathbb{L})$;
		\item $\K$ is equivalent to a $\Phi$-orthogonality class in some $[\C,\V]$;
	\end{enumerate}
	Then, $(1)\Rightarrow(2)\Rightarrow(3)$ always, and $(3)\Rightarrow(1)$ if $\V_0$ is locally bounded.
\end{teo}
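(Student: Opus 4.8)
The three implications rely on rather different tools, so I would treat them one at a time.

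For $(1)\Rightarrow(2)$ I would begin from Theorem~\ref{1stchar}, which presents a locally $\Phi$-presentable $\K$ as $\K\simeq\Phi\tx{-Cont}(\C,\V)$ for a small $\Phi$-complete $\V$-category $\C$. The idea is to encode \emph{being $\Phi$-continuous} by a limit sketch on $\C$ itself: take $\S=(\C,\LL)$ where $\LL$ is the set of all $\Phi$-limit cylinders $M\Rightarrow\C(\{M,H\},H-)$ arising from weights $M\in\Phi$ and diagrams $H\colon\D\to\C$. A $\V$-functor $F\colon\C\to\V$ sends such a cylinder to a limit cylinder precisely when it preserves the corresponding $\Phi$-limit, so the models of $\S$ are exactly the $\Phi$-continuous weights and $\Mod(\S,\V)\simeq\K$. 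Since $\C$ is small and $\Phi$ is locally small, a representative family of such cylinders forms a genuine set, and every weight occurring in it lies in $\Phi$; hence $\S$ is a $\Phi$-limit sketch.

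For $(2)\Rightarrow(3)$, given a $\Phi$-limit sketch $\S=(\C,\LL)$ I would convert each cylinder into an orthogonality condition inside $[\C,\V]$. To a cylinder $\gamma\colon M\Rightarrow\C(B,H-)$ (with $M\in\Phi$ and $H\colon\D\to\C$) I associate the presheaf $P_\gamma$, the $\Phi$-weighted colimit of the representables $\C(Hd,-)$, determined by the natural isomorphism $[\C,\V](P_\gamma,F)\cong\{M,FH\}$, together with the canonical comparison $\bar\gamma\colon P_\gamma\to\C(B,-)$ induced by $\gamma$. Then $F$ is orthogonal to $\bar\gamma$ if and only if the comparison $FB\to\{M,FH\}$ is invertible, i.e.\ exactly when $F$ sends $\gamma$ to a limit cylinder; so $\Mod(\S,\V)=\{\bar\gamma\mid\gamma\in\LL\}^\perp$. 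Both endpoints are $\Phi$-presentable in $[\C,\V]$: the representable $\C(B,-)$ is $\Phi$-presentable by Proposition~\ref{phi-pres-presheaves}, and $P_\gamma$ is a $\Phi$-colimit of representables, hence $\Phi$-presentable by Proposition~\ref{K_Phi}(1). This exhibits $\K$ as a $\Phi$-orthogonality class.

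For $(3)\Rightarrow(1)$ the goal is to verify hypothesis $(5)$ of Theorem~\ref{1stchar}, that is, to realize $\K\simeq\M^\perp$ as a reflective subcategory of $[\C,\V]$ whose inclusion preserves $\Phi$-flat colimits. Closure under $\Phi$-flat colimits is the easy half: if $X\cong N*X_\bullet$ is a $\Phi$-flat colimit with each $X_i\in\M^\perp$, then for $f\colon A\to B$ in $\M$ both $[\C,\V](A,-)$ and $[\C,\V](B,-)$ preserve this colimit (as $A,B$ are $\Phi$-presentable), so orthogonality of $X$ to $f$ follows from that of the $X_i$. The reflectivity is where local boundedness enters and is the main obstacle: I would invoke that $[\C,\V]$ is locally bounded whenever $\V_0$ is (\cite{LP21}), and then apply Kelly's reflection theorem for orthogonality classes with respect to a \emph{set} of maps in a locally bounded $\V$-category (\cite[Ch.~6]{Kel82:libro}) to obtain a left adjoint to the inclusion $\M^\perp\hookrightarrow[\C,\V]$. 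With reflectivity and $\Phi$-flat embeddedness established, Theorem~\ref{1stchar}~$(5)\Rightarrow(1)$ concludes. The delicate points to check are the local boundedness of the presheaf category and that $\M$ meets the boundedness hypotheses of Kelly's theorem, both of which rely essentially on the local boundedness of $\V_0$.
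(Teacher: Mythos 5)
Your proposal is correct and follows essentially the same route as the paper: the same sketch $\LL$ of all $\Phi$-limit cylinders for $(1)\Rightarrow(2)$, the same conversion of cylinders $\gamma$ into maps $M*YH\op\to\C(B,-)$ for $(2)\Rightarrow(3)$ (where you are in fact more careful than the paper in checking that both endpoints are $\Phi$-presentable), and the same closure-plus-reflectivity argument feeding into Theorem~\ref{1stchar}(5)$\Rightarrow$(1) for $(3)\Rightarrow(1)$. The only cosmetic difference is in the last step: the paper applies \cite[Theorem~6.5]{Kel82:libro} directly, which is already stated for orthogonality classes in $[\C,\V]$ when $\V_0$ is locally bounded, so your intermediate appeal to local boundedness of $[\C,\V]$ itself is harmless but unnecessary.
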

\begin{proof}
	$(1)\Rightarrow (2)$. By Proposition~\ref{K_Phi} we can write $\K\simeq\Phi\tx{-Cont}(\C,\V)$ for some small $\Phi$-complete $\C$. Now, for each diagram $(M\colon\B\to\V,G\colon\B\to\C)$ with $M$ in $\Phi$ let 
	$$\lambda_{M,G}\colon M\to \C(\{M,G\},G-)$$%
	be its limiting cylinder. We consider the sketch $\S=(\C,\mathbb{L})$ with $\mathbb{L}$ being the collection of all these limiting cylinders; this is in $\Phi$ by construction. Then a $\V$-functor $F\colon \C\to\V$ is $\Phi$-continuous if and only if it sends each cylinder in $\mathbb{L}$ to a limiting cylinder in $\V$. It then follows at once that $\Phi\tx{-Cont}(\C,\V)=\tx{Mod}(\S,\V)$.
	
	$(2)\Rightarrow (3)$. Let $\S=(\C,\mathbb{L})$ be a $\Phi$-limit sketch. For each $\lambda\colon M\to \C(X,G-)$ in $\mathbb{L}$ consider the composite
	$$ \bar{\lambda}\colon M\stackrel{\lambda}{\longrightarrow}\C(X,G-)= \C\op(G\op-,X)\stackrel{Y}{\longrightarrow}[\C,\V](YG\op-,YX), $$%
	where $Y\colon \C\op\to[\C,\V]$ is the Yoneda embedding. This is a cocylinder for the diagram $(M,YG\op)$ and therefore corresponds to an arrow $f_\lambda\colon M*YG\op\to YX$ in $[\C,\V]$. It is now easy to see that a $\V$-functor $F\colon \C\to\V$ sends $\lambda$ to a limiting cylinder if and only if it is orthogonal with respect to $f_\lambda$. In conclusion, define $\M$ as the collection of all the $f_\lambda$ for each $\lambda\in\mathbb{L}$; then $\M$ defines an $\alpha$-orthogonality class which coincides with $\tx{Mod}(\S,\V)$.
	
	$(3)\Rightarrow (1)$. Assume that $\V_0$ is locally bounded and let $\M$ be a family of morphisms which defines $\K=\M^{\perp}$ as a $\Phi$-orthogonality class in $[\C,\V]$. It follows at once that $\K$ is closed in $[\C,\V]$ under $\Phi$-flat colimits and all limits. Then by \cite[Theorem~6.5]{Kel82:libro} (since $\V_0$ is assumed to be locally bounded) the inclusion $J\colon\K\hookrightarrow[\C,\V]$ has a left adjoint $F$. It follows that $\K$ is locally $\Phi$-presentable by Theorem~\ref{1stchar}.
\end{proof}

We conclude with some results about the existence of certain limits in the 2-category $\bo{L}\Phi\bo{P}$, generalizing Bird's result~\cite{Bir84:tesi} for locally $\alpha$-presentable $\V$-categories.

\begin{lema}\label{functors}
	If $\K$ is locally $\Phi$-presentable and $\E$ is a small $\V$-category, then $[
	\E,\K]$ is also locally $\Phi$-presentable.
\end{lema}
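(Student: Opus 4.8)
The plan is to reduce to the presheaf case, exploiting the characterization of Theorem~\ref{1stchar}(5) together with the closed structure of $\V\tx{-}\bo{CAT}$, and then to invoke the stability result of Proposition~\ref{conservative}. First I would use Theorem~\ref{1stchar} to present $\K$ as a reflective subcategory of a presheaf $\V$-category: there is a small $\C$ and a fully faithful inclusion $J\colon\K\hookrightarrow[\C,\V]$ which preserves $\Phi$-flat colimits and admits a left adjoint $L$, so $L\dashv J$. Since $[\E,-]\colon\V\tx{-}\bo{CAT}\to\V\tx{-}\bo{CAT}$ is a $2$-functor, it preserves adjunctions and fully faithful functors; postcomposition thus yields an adjunction $[\E,L]\dashv[\E,J]$ whose inclusion $[\E,J]\colon[\E,\K]\hookrightarrow[\E,[\C,\V]]$ is again fully faithful (its counit is $[\E,\varepsilon]$, an isomorphism because the counit $\varepsilon$ is). Hence $[\E,\K]$ is a reflective subcategory of $[\E,[\C,\V]]$.

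Next I would rewrite the ambient category. By the exponential law for the symmetric monoidal closed structure of $\V\tx{-}\bo{CAT}$ there is an equivalence $[\E,[\C,\V]]\simeq[\E\otimes\C,\V]$, and since $\E$ and $\C$ are small so is $\E\otimes\C$; therefore $[\E\otimes\C,\V]$ is locally $\Phi$-presentable by Proposition~\ref{phi-pres-presheaves}. It then remains to check the hypotheses of Proposition~\ref{conservative} for $F:=[\E,J]\colon[\E,\K]\to[\E\otimes\C,\V]$: it is a right adjoint by the previous paragraph, it is conservative (being fully faithful), and it preserves $\Phi$-flat colimits. For the last point I would use that, because $\K$ is cocomplete and $\E$ small, all colimits in $[\E,\K]$ are computed pointwise; a $\Phi$-flat colimit is thus sent by $[\E,J]$ to the pointwise application of $J$, which $J$ preserves by hypothesis. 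Finally $[\E,\K]$ is cocomplete (again pointwise), so Proposition~\ref{conservative} applies and $[\E,\K]$ is locally $\Phi$-presentable.

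All the steps are standard; the only point requiring genuine care is the verification that $[\E,J]$ preserves $\Phi$-flat colimits, which hinges on colimits in $[\E,\K]$ being computed pointwise and on $J$ preserving $\Phi$-flat colimits. I do not expect a real obstacle, however, since the essential content—that local $\Phi$-presentability is stable both under passage to presheaf categories and under forming reflective subcategories along a conservative $\Phi$-flat-colimit-preserving right adjoint—has already been isolated in Proposition~\ref{phi-pres-presheaves} and Proposition~\ref{conservative}.
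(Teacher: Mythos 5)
Your proof is correct and follows essentially the same route as the paper: both present $\K$ via Theorem~\ref{1stchar} as a $\Phi$-flat embedded reflective subcategory $J\colon\K\hookrightarrow[\C,\V]$, apply $[\E,-]$ to get the fully faithful $[\E,J]\colon[\E,\K]\hookrightarrow[\E,[\C,\V]]\cong[\E\otimes\C,\V]$ with left adjoint $[\E,L]$, and note that $[\E,J]$ preserves $\Phi$-flat colimits because these are computed pointwise. The only (inessential) difference is the last step: the paper concludes directly by Theorem~\ref{1stchar}(5)$\Rightarrow$(1), whereas you route through Proposition~\ref{conservative}, which costs you the extra easy checks that $[\E,\K]$ is cocomplete and that the fully faithful $[\E,J]$ is conservative.
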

\begin{proof}
	By Theorem~\ref{1stchar} there exists a small $\V$-category $\C$ and fully faithful $J\colon \K\to[\C,\V]$ which preserves $\Phi$-flat colimits and has a left adjoint $L$. It follows that the $\V$-functor 
	$$[\E,J]\colon[\E,\K]\hookrightarrow[\E,[\C,\V]]\cong[\E\otimes \C,\V]$$
	is fully faithful, preserves $\Phi$-flat colimits (since these are computed pointwise and $J$ preserves them), and has $[\E,L]$ as left adjoint. Thus, by Theorem~\ref{1stchar} the $\V$-category $[\E,\K]$ is locally $\Phi$-presentable.
\end{proof}

In the lemma below we say that a $\V$-functor is an isofibration if its underlying ordinary functor is one.

\begin{lema}\label{limits}
	The forgetful 2-functor $\bo{L}\Phi\bo{P}\to \V\tx{-}\bo{CAT}$ creates (reflects and preserves) the following 2-dimensional limits:\begin{enumerate}
		\item small products;
		\item powers;
		\item if $\V_0$ is locally presentable, pullbacks along isofibrations.
	\end{enumerate}
\end{lema}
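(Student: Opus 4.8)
The plan is to prove that the forgetful 2-functor $\bo{L}\Phi\bo{P}\to\V\tx{-}\bo{CAT}$ creates each of these three kinds of limit by the same two-part strategy: first verify that the limit computed in $\V\tx{-}\bo{CAT}$ again lands in $\bo{L}\Phi\bo{P}$ (so the underlying $\V$-category is locally $\Phi$-presentable), and second check that the projection legs and the induced comparison 1-cells are morphisms of $\bo{L}\Phi\bo{P}$ (i.e.\ continuous and $\Phi$-flat colimit preserving), so that the $\V\tx{-}\bo{CAT}$-limit satisfies the universal property in $\bo{L}\Phi\bo{P}$. The common engine for the first part is Theorem~\ref{1stchar}: to show a $\V$-category is locally $\Phi$-presentable it suffices to exhibit a fully faithful $\Phi$-flat-colimit-preserving embedding into some $[\D,\V]$ admitting a left adjoint, exactly as in Lemma~\ref{functors}.

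For \textbf{small products} (1), given a family $\K_i$ of locally $\Phi$-presentable $\V$-categories, I would use Theorem~\ref{1stchar} to fix fully faithful $J_i\colon\K_i\hookrightarrow[\C_i,\V]$ preserving $\Phi$-flat colimits and admitting left adjoints $L_i$. Since $\prod_i[\C_i,\V]\cong[\coprod_i\C_i,\V]$ and $\coprod_i\C_i$ is small, the product $\prod_i J_i\colon\prod_i\K_i\hookrightarrow[\coprod_i\C_i,\V]$ is again fully faithful, preserves $\Phi$-flat colimits (computed componentwise), and has $\prod_i L_i$ as a left adjoint; hence $\prod_i\K_i$ is locally $\Phi$-presentable. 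The projections $\prod_i\K_i\to\K_i$ are continuous and preserve $\Phi$-flat colimits because both are computed pointwise, so they are morphisms in $\bo{L}\Phi\bo{P}$, and the universal property transfers. For \textbf{powers} (2), the power of a locally $\Phi$-presentable $\K$ by a small $\V$-category $\E$ is the functor $\V$-category $[\E,\K]$, which is locally $\Phi$-presentable by Lemma~\ref{functors} already proved above; the evaluation and cotensor structure 1-cells preserve limits and $\Phi$-flat colimits pointwise, so the power in $\V\tx{-}\bo{CAT}$ is the power in $\bo{L}\Phi\bo{P}$.

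For the \textbf{pullback along an isofibration} (3), take a cospan $\K\xrightarrow{F}\M\xleftarrow{G}\L$ in $\bo{L}\Phi\bo{P}$ with $G$ an isofibration, and form the pullback $\P=\K\times_\M\L$ in $\V\tx{-}\bo{CAT}$. Because $G$ is an isofibration the pullback is (equivalent to) the 2-pullback, and its underlying $\V$-category has objects pairs $(K,L)$ with $FK\cong GL$; I would show $\P$ is cocomplete and that both projections are continuous and preserve $\Phi$-flat colimits, which reduces to the fact that $F$ and $G$ preserve these (colimits and limits in $\P$ being computed by a pointwise formula using the isofibration to strictify the comparison isomorphisms). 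To see $\P$ is locally $\Phi$-presentable I would again invoke Theorem~\ref{1stchar}: here I expect to use \emph{local presentability of $\V_0$} to guarantee that $\K$, $\L$, $\M$ are genuinely locally $\lambda$-presentable for a common $\lambda$ with $\Phi\triangleleft\P_\lambda$ (via Theorem~\ref{rainsing}), so that the classical result that $\bo{Acc}$/$\bo{LP}$ is closed under pseudopullbacks along isofibrations applies, after which one checks the resulting strong generator is made of $\Phi$-presentable objects.

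The \textbf{main obstacle} is case (3): unlike products and powers, the pullback is not computed by a naive pointwise formula, and one must genuinely use the isofibration hypothesis to replace the pseudo-pullback (which a priori lives in the wrong completion) by a strict one with well-behaved colimits, and the local presentability of $\V_0$ to secure that the index of $\Phi$-accessibility can be raised to an honest regular cardinal where the standard accessible-category machinery (closure of accessible categories under such limits) is available. Verifying that $\Phi$-presentability of generators is preserved through this identification, rather than merely $\lambda$-presentability, is the delicate point.
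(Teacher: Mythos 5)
Parts (1) and (2) of your proposal match the paper's proof essentially verbatim: the same embedding $\prod_i\K_i\hookrightarrow[\coprod_i\C_i,\V]$ with componentwise left adjoint for products, and Lemma~\ref{functors} for powers (the paper only adds that the power by an \emph{ordinary} category $\B$ is $[\B_\V,\K]$, with $\B_\V$ the free $\V$-category on $\B$, since $2$-categorical powers are indexed by ordinary categories rather than $\V$-categories; this is a minor adjustment to your phrasing).

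Part (3) contains a genuine gap, and it is exactly the point you flag as ``delicate'' without resolving it. Your plan is to raise the index via Theorem~\ref{rainsing} so that $\Phi\triangleleft\P_\lambda$, invoke the closure of locally $\lambda$-presentable $\V$-categories under pseudopullbacks along isofibrations, and then ``check the resulting strong generator is made of $\Phi$-presentable objects.'' But the generator produced by the cardinal-based machinery consists of $\lambda$-presentable objects, and $\lambda$-presentability is strictly weaker than $\Phi$-presentability: since $\Phi\subseteq\P_\lambda$, every $\P_\lambda$-flat weight is $\Phi$-flat, so $\Phi$-presentable objects are $\lambda$-presentable but not conversely. There is no way to ``check'' that the $\lambda$-presentable generator consists of $\Phi$-presentable objects---in general it does not. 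The refinement that $\Phi\triangleleft\P_\lambda$ provides (Theorem~\ref{sharply}(3)) is that in a \emph{$\Phi$-accessible} category the $\lambda$-presentables are $\Phi^{^+}_\lambda$-colimits of $\Phi$-presentables; but appealing to this presupposes that the pullback is already known to be $\Phi$-accessible, which is precisely what you are trying to prove, so the argument is circular at the decisive step. The paper closes this gap by a different mechanism: it shows, via \cite[Theorem~5.5]{LT22:virtual} and the fact that the pullback along an isofibration is a bipullback, that the pullback $\M$ is accessible, complete, and has $\Phi$-flat colimits, with both projections preserving limits and $\Phi$-flat colimits; hence $\M$ is locally presentable, in particular cocomplete. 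Then $G_1\times G_2\colon\M\to\K_1\times\K_2$ is conservative, preserves $\Phi$-flat colimits, and is a right adjoint (being continuous between locally presentable $\V$-categories), while $\K_1\times\K_2$ is locally $\Phi$-presentable by part (1). Proposition~\ref{conservative} then applies, and its proof supplies the generator you are missing: the left adjoint $L$ of $G_1\times G_2$ carries the $\Phi$-presentable strong generator of $\K_1\times\K_2$ to a strong generator of $\M$ made of $\Phi$-presentable objects, because $\M(LG,-)\cong(\K_1\times\K_2)(G,(G_1\times G_2)-)$ preserves $\Phi$-flat colimits. Grafting this step onto your argument would repair part (3).
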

\begin{proof}
	(1). Let $K_j$, for $j\in\J$, be a family of locally $\Phi$-presentable $\V$-categories. By Theorem~\ref{1stchar} for each $j$ we find small $\V$-category $\C_j$ together with a fully faithful $H_j\colon K_j\to [\C_j,\V]$ which preserves $\Phi$-flat colimits and has a left adjoint $L_j$.
	Consider now the product $\textstyle\prod_{j\in J}\K_j$ in  $\V\tx{-}\bo{CAT}$, then we have a fully faithful inclusion
	$$  (H_j)_{j\in J}\colon\prod_{j\in J}\K_j\hookrightarrow \prod_{j\in J}[\C_j,\V]\cong[\sum_{j\in J}\C_j,\V]$$
	with left adjoint $(L_j)_{j\in J}$. It is easy to see that limits and colimits are computed componentwise in the products above; thus the component projections of $\textstyle\prod_{j\in J}\K_j$ as well as $(H_j)_{j\in J}$ preserve all limits and all $\Phi$-flat colimits. By Theorem~\ref{1stchar} the product $\textstyle\prod_{j\in J}\K_j$ is locally $\Phi$-presentable; this is enough to show that $\textstyle\prod_{j\in J}\K_j$ is also a product in $\bo{L}\Phi\bo{P}$.
	
	(2). This follows from Lemma~\ref{functors} above since the power of $\K$ by a category $\B$ in $\V\tx{-}\bo{CAT}$ can be obtained simply by taking the $\V$-category $[\B_\V,\K]$ where $\B_\V$ is the free $\V$-category on $\B$.
	
	(3).  Consider $F_1\colon \K_1\to \L$ and $F_2\colon\K_2\to\L$ in $\bo{L}\Phi\bo{P}\to \V\tx{-}\bo{CAT}$ where $F_2$ is an isofibration. Consider their pullback $\M$ together with the projection $G_i\colon\M\to\K_1$.  By \cite[Theorem~5.5]{LT22:virtual} the $\V$-category $\M$ is accessible; moreover, since the pullback is also a bipullback ($F_2$ is an isofibration) and $F_1$ and $F_2$ are continuous and preserve $\Phi$-flat colimits; then $\M$ is complete and $\Phi$-cocomplete and $G_1$ and $G_2$ preserve such limits and colimits. It follows that $\M$ is locally presentable and $G_1\times G_2\colon\M\to \K_1\times\K_2$ is conservative, preserves $\Phi$-flat colimits, and is a right adjoint (being continuous between locally presentable $\V$-categories). Thus $\M$ is locally $\Phi$-presentable by Proposition~\ref{conservative}. This implies that $\M$ is also the pullback of $F_1$ and $F_2$ in $\bo{L}\Phi\bo{P}\to \V\tx{-}\bo{CAT}$.
\end{proof}

\begin{teo}\label{flexible}
	If $\V_0$ is locally presentable, then $\bo{L}\Phi\bo{P}$ has, and $U\colon\bo{L}\Phi\bo{P}\to \V\tx{-}\bo{CAT}$ preserves, all flexible (and hence all pseudo- and bi-) limits.
\end{teo}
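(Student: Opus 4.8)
The plan is to reduce everything to the four generating classes of flexible limits---products, inserters, equifiers, and splittings of idempotents---and then to show that $U\colon\bo{L}\Phi\bo{P}\to\V\tx{-}\bo{CAT}$ creates each of these. Recall that a 2-category admits all flexible limits as soon as it admits products, inserters, equifiers, and splittings of idempotents, and that a 2-functor preserves all flexible limits as soon as it preserves these four (see \cite{Bir84:tesi}). Since $\V\tx{-}\bo{CAT}$ has all flexible limits, once $U$ is shown to create the four generating limits it follows both that $\bo{L}\Phi\bo{P}$ has all flexible limits and that $U$ preserves them; the assertion about pseudo- and bilimits is then immediate, these being flexible.

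Products are created by $U$ by Lemma~\ref{limits}(1). For inserters and equifiers I would realize each as a pullback along an isofibration of the kind handled by Lemma~\ref{limits}(3), assembled out of the powers supplied by Lemma~\ref{functors} and Lemma~\ref{limits}(2). Concretely, given $F,G\colon\K\to\L$ in $\bo{L}\Phi\bo{P}$, the power $[\bo{2}_\V,\L]$ by the arrow category is locally $\Phi$-presentable, the domain--codomain functor $(\partial_0,\partial_1)\colon[\bo{2}_\V,\L]\to\L\times\L$ is a continuous, $\Phi$-flat-colimit-preserving isofibration, and the inserter of $F,G$ is the pullback of $(F,G)\colon\K\to\L\times\L$ along it. Likewise, writing $\DD$ for the category with two objects and a parallel pair of arrows, the equifier of $\alpha,\beta\colon F\Rightarrow G$ is the pullback of the induced $\V$-functor $P\colon\K\to[\DD_\V,\L]$, $K\mapsto(\alpha_K,\beta_K)$, along the diagonal isofibration $\Delta\colon[\bo{2}_\V,\L]\to[\DD_\V,\L]$, $f\mapsto(f,f)$. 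In both cases every $\V$-category and $\V$-functor defining the pullback lies in $\bo{L}\Phi\bo{P}$ and one leg is an isofibration, so Lemma~\ref{limits}(2)--(3) show the pullbacks, and hence the inserters and equifiers, are created by $U$.

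The main obstacle will be the splitting of idempotents, which I would treat exactly in parallel with the pullback case of Lemma~\ref{limits}(3). The 2-category $\V\tx{-}\bo{CAT}$ has splittings of idempotents; given a coherent idempotent $e\colon\K\to\K$ in $\bo{L}\Phi\bo{P}$ on a locally $\Phi$-presentable $\K$, let $i\colon\K'\to\K$ and $r\colon\K\to\K'$ with $ri\cong 1$ and $ir\cong e$ be its splitting in $\V\tx{-}\bo{CAT}$. Being a retract of $\K$, the $\V$-category $\K'$ is complete and cocomplete, and by \cite[Theorem~5.5]{LT22:virtual} it is accessible, hence locally presentable. Since $e$ is continuous and preserves $\Phi$-flat colimits and every object in the image of $i$ is $e$-fixed (as $e\,i\cong ir\,i\cong i$), any limit or $\Phi$-flat colimit of a diagram $iD$ is itself $e$-fixed; this shows that $i$ preserves all limits and all $\Phi$-flat colimits. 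Being also fully faithful, $i$ is a conservative continuous functor between locally presentable $\V$-categories, hence a right adjoint, and Proposition~\ref{conservative} then gives that $\K'$ is locally $\Phi$-presentable with $i,r$ morphisms of $\bo{L}\Phi\bo{P}$; thus $U$ creates the splitting.

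Having exhibited products, powers, the two displayed pullbacks, and idempotent splittings as created by $U$---and hence, in particular, products, inserters, equifiers, and splittings of idempotents---I would conclude, by the generation of flexible limits from these four classes \cite{Bir84:tesi}, that $\bo{L}\Phi\bo{P}$ has all flexible limits and that $U$ preserves them. Note that the hypothesis that $\V_0$ be locally presentable is used twice: once to invoke Lemma~\ref{limits}(3) for the pullbacks underlying inserters, equifiers, and the splitting construction, and once to pass from ``continuous'' to ``right adjoint'' via the special adjoint functor theorem in the idempotent-splitting step.
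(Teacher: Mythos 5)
Your proof has the same skeleton as the paper's, which reads, in its entirety: the limits of Lemma~\ref{limits}, ``together with splittings of idempotent equivalences (which exist by \cite[Remark~7.6]{BKPS89:articolo}), imply the existence of all flexible limits (argue as in the beginning of \cite[Theorem~5.5]{LT22:virtual})''. Your explicit presentations of inserters and equifiers as pullbacks of powers along isofibrations are exactly the argument the paper outsources to the beginning of that proof, and they are correct; both $(\partial_0,\partial_1)$ and your diagonal $\Delta$ are indeed isofibrations lying in $\bo{L}\Phi\bo{P}$, so Lemma~\ref{limits}(2)--(3) applies. The genuine divergence is the fourth generator: the paper invokes \cite[Remark~7.6]{BKPS89:articolo}, by which splittings of idempotent equivalences already exist in a 2-category with the limits constructed so far, so no extra work is needed there; you instead construct the splitting by hand in $\V\tx{-}\bo{CAT}$ and verify local $\Phi$-presentability via \cite[Theorem~5.5]{LT22:virtual} and Proposition~\ref{conservative}, in close parallel with the paper's own proof of Lemma~\ref{limits}(3). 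That route can be made to work, and is a reasonable self-contained alternative, but it needs two repairs.

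First, the generation theorem you quote should be attributed to \cite{BKPS89:articolo} rather than \cite{Bir84:tesi}, and it must be taken in the form involving splittings of idempotent \emph{equivalences}: the strict splitting of a strict idempotent is a non-isomorphism-invariant universal property (its limit in $\V\tx{-}\bo{CAT}$ is the $\V$-category of strictly fixed objects and strictly fixed maps), and your construction, which only produces $ri\cong 1$ and $ir\cong e$, does not deliver it. Since your splitting paragraph treats coherent idempotents, your statement of the generators and your verification of them do not match unless you use the idempotent-equivalence version, which is precisely what \cite[Remark~7.6]{BKPS89:articolo} licenses. Second, the claim that ``being a retract of $\K$, the $\V$-category $\K'$ is complete and cocomplete'' is not justified as stated: a pseudo-retract of a (co)complete $\V$-category need not inherit (co)completeness from the retraction alone. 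What is true---and what your next sentence in effect proves---is that $\K'$ may be taken to be the full subcategory of $e$-pseudo-fixed objects, which is closed in $\K$ under all limits and all $\Phi$-flat colimits because $e$ preserves these; this gives completeness, then accessibility of $\K'$ via \cite[Theorem~5.5]{LT22:virtual} (noting that $e$ is an accessible $\V$-functor, since $\Phi$-flat colimits include $\lambda$-filtered ones for suitable $\lambda$) gives local presentability, and only then does cocompleteness follow. With the sentence reordered in this way, the remainder of your splitting step (full faithfulness, the adjoint functor theorem, and Proposition~\ref{conservative}) goes through.
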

\begin{proof}
	Follows directly from Lemma~\ref{limits} above since products, powers, and pullbacks along isofibrations, together with splittings of idempotent equivalences (which exist by \cite[Remark~7.6]{BKPS89:articolo}), imply the existence of all flexible limits (argue as in the beginning of \cite[Theorem~5.5]{LT22:virtual}).
\end{proof}

\subsection{Sound classes induced from $\bo{Set}$}\label{sound-set}$ $

Let $\DD$ be a locally small and weakly sound class of indexing categories in $\bo{Set}$. We say that a small category $\C$ is $\DD$-filtered if $\Delta 1\colon \C\op\to\bo{Set}$ is $\DD$-flat; that is: if $\C$-colimits commute in $\bo{Set}$ with $\DD$-limits. 

In the unenriched setting the definition of weakly sound class further simplifies to the condition below. One direction is already given by \cite[2.5]{ABLR02:articolo} (with the usual caveat that they use sound classes rather than weakly sound ones); the other follows easily.

\begin{prop}\label{simpler}
	The class $\DD$ is weakly sound if and only if every small and $\DD$-cocomplete category $\C$ is $\DD$-filtered.
\end{prop}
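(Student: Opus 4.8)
The plan is to treat the two implications separately: the forward implication (weak soundness forces every small $\DD$-cocomplete category to be $\DD$-filtered) is the easy one and I would prove it directly, while the reverse implication is the substantial one and coincides, up to the passage between soundness and weak soundness, with \cite[2.5]{ABLR02:articolo}.

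For the forward direction, let $\C$ be small and $\DD$-cocomplete and consider the weight $\Delta 1\colon\C\op\to\bo{Set}$ constant at the terminal object. Since a constant functor at the terminal object preserves every limit that exists in its domain, $\Delta 1$ is in particular $\DD$-continuous; as $\C$ is small and $\DD$-cocomplete, Definition~\ref{wsc} applies and yields that $\Delta 1$ is $\DD$-flat. By the definition of $\DD$-filtered recalled above, this says exactly that $\C$ is $\DD$-filtered.

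For the reverse direction I would argue that the hypothesis forces $\DD$ to be sound, whence weakly sound by Remark~\ref{sat-sound}. Concretely, given a $\DD$-continuous weight $M\colon\C\op\to\bo{Set}$ over a small $\DD$-cocomplete $\C$, I would pass to the category of elements $\tx{el}(M)$, which is small and carries a projection $P\colon\tx{el}(M)\to\C$; the co-Yoneda formula gives $M*F\cong\colim_{\tx{el}(M)}(F\circ P)$, so that $M$ is $\DD$-flat precisely when $\colim_{\tx{el}(M)}$ commutes with $\DD$-limits in $\bo{Set}$, i.e. when $\tx{el}(M)$ is $\DD$-filtered. Thus it suffices to check that $\tx{el}(M)$ is $\DD$-cocomplete and then invoke the hypothesis. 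A $\DD$-indexed diagram in $\tx{el}(M)$ projects to one in $\C$, whose colimit $c=\colim P(-)$ exists; the compatible family of chosen elements assembles into an element of the corresponding limit in $\bo{Set}$, and the $\DD$-continuity of $M$ is exactly what produces a unique element of $Mc$ realizing the colimit in $\tx{el}(M)$.

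The main obstacle is this last step: matching the shape of the colimit one forms in $\C$ against the shape of the limit that $M$ must preserve, i.e. the bookkeeping that identifies the limit-preservation required of $M$ with its $\DD$-continuity. This is precisely the delicate point handled in \cite[2.5]{ABLR02:articolo}, so rather than reproduce it I would cite their result for the passage from "every small $\DD$-cocomplete category is $\DD$-filtered" to soundness of $\DD$, and then close the argument using that every sound class is weakly sound (Remark~\ref{sat-sound}). The only discrepancy, namely that \cite{ABLR02:articolo} phrase everything for sound rather than weakly sound classes, works in our favour, since their conclusion is the stronger of the two.
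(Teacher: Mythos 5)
Your forward implication is correct and coincides with the paper's: $\Delta 1$ preserves all limits, hence is $\DD$-continuous, hence $\DD$-flat by Definition~\ref{wsc}, which is exactly the statement that $\C$ is $\DD$-filtered. The gap is in the converse, where you outsource the decisive step to \cite[2.5]{ABLR02:articolo}, read as the implication ``every small $\DD$-cocomplete category is $\DD$-filtered $\Rightarrow$ $\DD$ is sound''. That is not what the cited result provides. As the paper's own preamble to Proposition~\ref{simpler} indicates, \cite[2.5]{ABLR02:articolo} gives one direction of the equivalence with soundness as a \emph{hypothesis} (for a sound class, $\DD$-cocomplete implies $\DD$-filtered); throughout \cite{ABLR02:articolo} soundness is assumed, never derived. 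Your remark that the discrepancy ``works in our favour, since their conclusion is the stronger of the two'' inverts the logic: a stronger conclusion requires a stronger theorem, not a weaker one. Indeed, the implication you want to quote, combined with the easy direction you just proved, would yield ``weakly sound $\Rightarrow$ sound'' for every class of categories; but by Remark~\ref{sat-sound} this converse is only known under pre-saturation, so it is not a citable fact, and certainly not one established in \cite{ABLR02:articolo}. As written, the converse half of your proof is unsupported.

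What makes this unfortunate is that the sketch you give before deferring is already the paper's proof, and the ``main obstacle'' you name is not one: no shape-matching is needed, because the limit $M$ must preserve is by definition the $M$-image of the colimit formed in $\C$. Concretely, let $G\colon D\to\tx{El}(M)\op$ be a diagram with $D\in\DD$, with projection $PG$ in $\C$ and carried elements $x_d\in M(PGd)$. Since $\C$ is $\DD$-cocomplete, $c:=\colim PG$ exists; since $M$ is $\DD$-continuous, the canonical map $Mc\to\lim_d M(PGd)$ is invertible; the family $(x_d)_d$ is compatible, i.e.\ an element of this limit, hence comes from a unique $x\in Mc$ with $M(\iota_d)(x)=x_d$ for the colimit injections $\iota_d$; and a routine check shows that $(c,x)$ is a colimit of $G$ in $\tx{El}(M)\op$. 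Thus $\tx{El}(M)\op$ is $\DD$-cocomplete, hence $\DD$-filtered by hypothesis; then $M$ is a $\DD$-filtered conical colimit of representables (co-Yoneda, as you observe), and since $\DD$-limits in $[\C,\bo{Set}]$ are pointwise and $\DD$-filtered colimits commute with them, $M$ is $\DD$-flat. Completing this verification, rather than citing \cite{ABLR02:articolo}, gives exactly the proof in the paper and requires no mention of soundness at all.
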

\begin{proof}
	Assume that $\DD$ is weakly sound and that $\C$ is $\DD$-cocomplete; then $\Delta 1\colon \C\op\to\bo{Set}$ is $\DD$-continuous and hence $\DD$-flat. Therefore $\C$ is $\DD$-filtered. Conversely, assume that every small and $\DD$-cocomplete category $\C$ is $\DD$-filtered, and let $M\colon \C\op\to\bo{Set}$ be $\DD$-cocontinuous. It follows that $\tx{El}(M)\op$ is $\DD$-cocomplete and hence $\DD$-filtered by hypothesis. As a consequence $M$ is a $\DD$-filtered colimit of representables, and thus $\DD$-flat.
\end{proof}

Recall also the following.

\begin{prop}\cite[Theorem~2.4]{ABLR02:articolo}
	Let $\DD$ be a weakly sound class. A weight $M\colon \C\op\to\bo{Set}$ is $\DD$-flat if and only if the category $\tx{El}(M)\op$ is $\DD$-filtered.
\end{prop}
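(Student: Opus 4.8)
The plan is to prove the two implications separately, using the expression of the weighted colimit $M*-$ as a conical colimit over the category of elements. Recall the density formula: for any $F\colon\C\to\bo{Set}$ there is a natural isomorphism $M*F\cong\colim_{\tx{El}(M)\op}(F\circ\pi)$, where $\pi\colon\tx{El}(M)\op\to\C$ is the canonical projection; equivalently $\colim_{\tx{El}(M)\op}\circ\,\pi^{*}=M*(-)$, where $\pi^{*}$ denotes restriction along $\pi$ and preserves all limits (these being pointwise in $\bo{Set}$-valued functor categories). Note also that, by definition, $\tx{El}(M)\op$ is $\DD$-filtered precisely when $\colim_{\tx{El}(M)\op}\colon[\tx{El}(M)\op,\bo{Set}]\to\bo{Set}$ preserves $\DD$-limits.

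For the direction $(\Leftarrow)$, suppose $\tx{El}(M)\op$ is $\DD$-filtered, so that $\colim_{\tx{El}(M)\op}$ preserves $\DD$-limits. Since $\pi^{*}$ preserves all limits, the composite $M*(-)=\colim_{\tx{El}(M)\op}\circ\,\pi^{*}$ preserves $\DD$-limits, which is exactly the statement that $M$ is $\DD$-flat. This is the implication already used in the proof of Proposition~\ref{simpler}.

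For the converse $(\Rightarrow)$, assume $M$ is $\DD$-flat. To prove $\tx{El}(M)\op$ is $\DD$-filtered I would first reduce to a test on representables: the conical class attached to the weakly sound $\DD$ is pre-saturated, hence sound by Remark~\ref{sat-sound}, so applying soundness to the weight $\Delta 1\colon\tx{El}(M)\to\bo{Set}$ it suffices to show that $\colim_{\tx{El}(M)\op}$ preserves $\DD$-limits of representables on $\tx{El}(M)\op$. Using the identification $\tx{El}(M)\simeq Y\downarrow M$, a $\DD$-indexed diagram of such representables is determined by a diagram $G\colon D\to\tx{El}(M)$ with $D\in\DD$, that is, by a cocone of representables $YC_{d}\to M$. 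I would then rewrite the comparison map witnessing that $\colim_{\tx{El}(M)\op}$ preserves the $\DD$-limit $\lim_{d}\tx{El}(M)\op(Gd,-)$, by means of $\colim_{\tx{El}(M)\op}\circ\,\pi^{*}=M*(-)$, as the comparison map witnessing that $M*(-)$ preserves a suitable $\DD$-limit of representables $\lim_{d}\C(C_{d},-)$ on $\C$; the latter is an isomorphism precisely because $M$ is $\DD$-flat, and transporting back gives the required commutation.

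The main obstacle is this last matching step. The representables $\tx{El}(M)\op(Gd,-)$ are \emph{not} restrictions along $\pi$ of presheaves on $\C$: each genuinely depends on the chosen element, not only on its underlying object, so the density formula cannot be applied termwise and only becomes usable after the $\DD$-limit over $d$ has been formed. The delicate work lies in identifying, naturally and compatibly with the colimit, the comparison map living on $\tx{El}(M)\op$ with the one produced on $\C$ through $M*(-)$, together with checking that the conical class is genuinely sound so that the representable test is legitimate; this bookkeeping is the substance of \cite[Theorem~2.4]{ABLR02:articolo}.
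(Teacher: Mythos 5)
Your $(\Leftarrow)$ direction is correct and is essentially the paper's own argument (it is the converse half of the proof of Proposition~\ref{simpler}). The genuine problems are in the $(\Rightarrow)$ direction, and there are two. First, your reduction to testing $\DD$-limits of representables is an application of \emph{soundness} in the sense of Remark~\ref{sat-sound}, whereas the statement assumes only \emph{weak} soundness; you bridge the two by asserting that the conical class attached to a weakly sound $\DD$ is pre-saturated. That assertion is unproved and cannot be taken for granted: pre-saturation (Definition~\ref{satu}) demands that every object of the free $\DD$-cocompletion of any $\C$ be a \emph{single} conical $\DD$-colimit of representables, and nothing in weak soundness yields this one-step property. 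The careful distinction maintained in Remark~\ref{sat-sound} (sound $\Rightarrow$ weakly sound, with the converse holding only \emph{under} pre-saturation) exists precisely because the two notions are not known to coincide in general; if your claim were correct, that distinction would be vacuous for every class of categories, and the present proposition would not need to be stated for weakly sound classes at all. Second, even granting the representable test, you explicitly leave the core step --- identifying the comparison map for a $\DD$-limit of representables on $\tx{El}(M)\op$ (which, as you yourself note, are not of the form $N\pi$ for a presheaf $N$ on $\C$) with one that flatness of $M$ controls --- as ``bookkeeping'' deferred to \cite{ABLR02:articolo}. That matching is the actual mathematical content of the hard direction, so as written the proposal does not establish it.

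The paper's argument takes a different route that needs no representable test and invokes weak soundness only through Proposition~\ref{simpler}. When $\C$ is $\DD$-cocomplete, flatness of $M$ implies that $M$ is $\DD$-continuous, so $\tx{El}(M)\op$ is itself $\DD$-cocomplete and hence $\DD$-filtered by Proposition~\ref{simpler}. For a general $\C$, one passes to the closure $J\colon\C\hookrightarrow\D$ of the representables under $\DD$-colimits in $[\C\op,\bo{Set}]$: the weight $M$ is $\DD$-flat if and only if its extension $\tx{Lan}_{J\op}M$ to the $\DD$-cocomplete $\D$ is $\DD$-continuous, the cocomplete case applies to $\tx{Lan}_{J\op}M$, and $\DD$-filteredness is transferred back along the fully faithful and final inclusion $\tx{El}(M)\op\hookrightarrow\tx{El}(\tx{Lan}_{J\op}M)\op$. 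To repair your proof you would have to either add pre-saturation of $\DD$ as a hypothesis (thereby weakening the statement) and then actually carry out the comparison of representables, or else replace the representable test by this cocompletion-and-finality argument.
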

\begin{comment}
	Assume first that $\C$ is $\Phi$-complete, so that $M$ is $\Phi$-continuous. Then $\tx{El}(M)$ is $\Phi$-complete as well and hence $\Phi$-filtered by the proposition above. In general, let $J\colon \C\hookrightarrow\D$ be the closure of $\C$ in $[\C\op,\bo{Set}]$ under $\Phi$-limits; then $M$ is $\Phi$-flat if and only if $\tx{Lan}_JM$ is $\Phi$-continuous. By the previous argument, and since the inclusion of $\tx{El}(M)\op$ in $\tx{El}(\tx{Lan}_JM)\op$ is fully faithful and final, it follows that $\tx{El}(M)\op$ is $\Phi$-filtered. Conversely, if the opposite of $\tx{El}(M)\op$ is $\Phi$-filtered then $M$ is a $\Phi$-filtered colimit of representables, and hence is $\Phi$-flat.
\end{comment}

Let $\V=(\V_0,\otimes,I)$ by a locally $\DD$-presentable and symmetric monoidal closed category for which $(\V_0)_\DD$ is closed under tensor product (but does not necessarily contain the unit); in particular $\V_0$ is locally presentable.

\begin{Def}(\cite[Section~5]{LR11NotionsOL})
	Denote by $\Phi_\DD$ the locally small class of weights in $\V$ generated by the conical weights in $\DD$ and powers by objects in $(\V_0)_\DD$.
\end{Def}

\begin{es}\label{inducedfromSet}
	When $\V$ is locally $\alpha$-presentable and $\DD$ consists of the $\alpha$-small diagrams, then $\Phi_\DD $-limits generate the $\alpha$-small weighted limits of Kelly \cite{Kel82:articolo}. If $\V$ is a quasivariety and $\DD$ is the class for finite products, $\Phi_\DD $-limits and colimits have been considered in Section~5 of \cite{LR11NotionsOL} where $\V$ is called locally strongly finitely presentable as a $\otimes$-category; this is also called symmetric monoidal quasivariety in~\cite{LT20:articolo}.
\end{es}

\begin{prop}\tx{(\cite[5.22]{LR11NotionsOL}).}\label{Phi-V-sound}
	Let $\C$ be small and $\Phi_\DD$-cocomplete. The following are equivalent for a weight $M\colon \C\op\to\V$:\begin{enumerate}\setlength\itemsep{0.25em}
		\item $M$ is $\Phi_\DD $-flat;
		\item $M$ is $\Phi_\DD $-continuous;
		\item $M$ is a conical $\DD$-filtered colimit of representables.
	\end{enumerate}
	Moreover, $\DD$-filtered conical colimits commute in $\V$ with $\Phi_\DD $-limits. It follows in particular that the class $\Phi_\DD $ is weakly sound.
\end{prop}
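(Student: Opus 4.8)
The plan is to prove the \textbf{Moreover} commutation statement first, since both nontrivial implications among (1)--(3) rest on it, and then to read off the equivalences and weak soundness.

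I would begin with the commutation of conical $\DD$-filtered colimits with $\Phi_\DD$-limits in $\V$. Since the weights whose limits commute in $\V$ with all conical $\DD$-filtered colimits form a saturated class, it suffices to verify commutation with a generating family of $\Phi_\DD$, namely the conical $\DD$-limits and the powers by objects of $(\V_0)_\DD$. For conical $\DD$-limits the computation takes place in $\V_0$: because $\V_0$ is locally $\DD$-presentable it is reflective in a presheaf category and closed there under $\DD$-filtered colimits, so both the $\DD$-limits and the $\DD$-filtered colimits are computed pointwise, and in $\Set$ they commute by weak soundness of $\DD$. For powers, given $P\in(\V_0)_\DD$ the power $\{P,-\}$ in $\V$ is the internal hom $[P,-]$; to see it preserves a conical $\DD$-filtered colimit I would test the comparison map against the strong generator $(\V_0)_\DD$, using $\V_0(A,[P,X])\cong\V_0(A\otimes P,X)$ together with the hypothesis that $(\V_0)_\DD$ is closed under $\otimes$, so that $A\otimes P$ is again $\DD$-presentable and $\V_0(A\otimes P,-)$ preserves the colimit. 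This is exactly where the tensor-closedness assumption is essential.

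Granting the commutation, the implication $(3)\Rightarrow(1)$ is immediate: every representable $Y_c$ is $\Phi_\DD$-flat because $Y_c*-\cong\tx{ev}_c$ preserves all limits, and if $M\cong\colim_i Y_{c_i}$ is a conical $\DD$-filtered colimit then $M*-\cong\colim_i\tx{ev}_{c_i}$ is a $\DD$-filtered colimit of $\Phi_\DD$-continuous functors, hence $\Phi_\DD$-continuous by the Moreover part, so $M$ is $\Phi_\DD$-flat. For $(1)\Rightarrow(2)$ I would use the factorization $M\cong(M*-)\circ Y$, where $Y\colon\C\op\to[\C,\V]$ is the Yoneda embedding $Yc=\C(c,-)$: the embedding $Y$ preserves the $\Phi_\DD$-limits existing in $\C\op$ (representables turn the $\Phi_\DD$-colimits of $\C$ into limits), and if $M$ is $\Phi_\DD$-flat then $M*-$ is $\Phi_\DD$-continuous, so the composite $M$ is $\Phi_\DD$-continuous.

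The substantial step, and the one I expect to be the main obstacle, is $(2)\Rightarrow(3)$; the strategy is to reduce to the unenriched case. If $M$ is $\Phi_\DD$-continuous, then for each $P\in(\V_0)_\DD$ the functor $\V_0(P,M-)\colon\C\op\to\Set$ preserves conical $\DD$-limits (as $M$ does and $\V_0(P,-)$ preserves all limits), hence is $\DD$-continuous over the $\DD$-cocomplete $\C$; by weak soundness of $\DD$ in $\Set$ it is $\DD$-flat, so its category of elements is $\DD$-filtered. The difficulty is to glue these $\Set$-level presentations into a single conical $\DD$-filtered presentation of $M$ by representables. Here I would exploit that $\C$, being $\Phi_\DD$-cocomplete, has copowers $P\cdot c$ by $\DD$-presentable $P$, and that $\Phi_\DD$-continuity of $M$ converts them into powers, $M(P\cdot c)\cong[P,Mc]$, whence the bijection $\V_0(P,Mc)\cong\V_0(I,M(P\cdot c))$. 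This lets one absorb each $\DD$-presentable probe $P$ into the indexing object, reorganizing the generalized elements into a $\DD$-filtered category of elements over which $M$ is the conical colimit of the associated representables; verifying that the resulting comparison is an isomorphism is the delicate point, and I would check it by applying the functors $\V_0(P,-)$ and invoking the $\Set$-level presentations just obtained. Finally, weak soundness of $\Phi_\DD$ is precisely the implication $(2)\Rightarrow(1)$ on $\Phi_\DD$-cocomplete domains, which now follows by composing $(2)\Rightarrow(3)\Rightarrow(1)$, so the closing sentence of the statement is a formal consequence of the equivalences.
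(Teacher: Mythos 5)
Your proposal is correct and takes essentially the same approach as the paper's own argument (most of which the paper delegates to \cite[5.22]{LR11NotionsOL}): for the crucial implication $(2)\Rightarrow(3)$ you reduce to $\Set$ exactly as the paper does, expressing $M$ as a conical colimit of representables over the $\DD$-filtered category of elements of $\V_0(I,M-)$, using $\Phi_\DD$-continuity to turn copowers by $\DD$-presentable objects into powers via $\V_0(P,MC)\cong\V_0(I,M(P\cdot C))$, and verifying the comparison map by applying $\V_0(P,-)$ for $P$ in the strong generator $(\V_0)_\DD$, with tensor-closedness of $(\V_0)_\DD$ entering precisely where you say it does. Your additional direct proof of the ``Moreover'' commutation clause (pointwise reduction to $\Set$ for conical $\DD$-limits, generator-testing for powers), which the paper leaves entirely to the citation, is also correct.
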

\begin{comment}
$(1)\Rightarrow(2)$ is trivial and $(3)\Rightarrow(1)$ follows from the previous proposition, so let us focus on $(2)\Rightarrow (3)$. Let $M_I:=\V_0(I,M-)$; we know that $M_I$ is the ordinary colimit obtained along its category of elements $\tx{El}(M_I)$; note that this is $\Phi$-filtered since $M_I$ is $\Phi$-continuous. We'll prove that $M$ can be obtained as the same $\Phi$-filtered conical colimit. Since colimits are computed pointwise it is enough to show that 
$$ M(C)\cong\tx{colim} (\tx{el}(M_I)_{\V} \stackrel{\pi_{\V}}{\longrightarrow} \C \stackrel{\C(C,-)}{\longrightarrow} \V) $$%
holds for any $C\in\C$. 

For that consider a strong generator $\G\subseteq(\V_0)_{\Phi}$, whose powers are preserved by $M$. For each $G\in\G$ and $C\in\C$ we can write
\begin{equation*}
\begin{split}
\V_0(G,M(C))&\cong \V_0(I,[G,M(C)])\\
&\cong \V_0(I,M(G\cdot C))\\
&\cong M_I(G\cdot C)\\
&\cong \underset{(x,D)}{\tx{colim}}\ \C_0(G\cdot C,D)\\
&\cong \underset{(x,D)}{\tx{colim}}\ \V_0(I,\C(G\cdot C,D))\\
&\cong \underset{(x,D)}{\tx{colim}}\ \V_0(I,[G,\C(C,D)])\\
&\cong \underset{(x,D)}{\tx{colim}}\ \V_0(G,\C(C,D))\\
&\cong \V_0(G,\underset{(x,D)}{\tx{colim}}\ \C(C,D)).\\
\end{split}
\end{equation*}
As a consequence, since the family $\G$ is strongly generating, it follows at once that $M(C)\cong \tx{colim}_{(x,D)}\ \C(D,C)$ as desired.
\end{comment}

Since $\Phi_\DD$ is weakly sound it follows that all the results of Section~\ref{main-presentable} apply, including the duality of Theorem~\ref{bieq}. Next, we will try to understand the enriched notion of $\Phi_\DD$-presentability by just using (conical) $\DD$-filtered colimits. 

\begin{Def}
	We say that an object $X$ of a $\V$-category $\K$ is {\em $\DD$-presentable} if the $\V$-functor $\K(X,-)\colon \K\to\V$ preserves (conical) $\DD$-filtered colimits. Denote by $\K_\DD$ the full subcategory of $\K$ spanned by the $\DD$-presentable objects. We say that a cocomplete $\V$-category $\K$ is {\em locally $\DD$-presentable} if it has a strong generator made of $\DD$-presentable objects.
\end{Def}

This is a priori different from local $\Phi_\DD$-presentability, but in the cocomplete case this difference vanishes, as the following proposition demonstrates.

\begin{prop}\label{flat-cocomplete}
	Let $\K$ be a complete or $\Phi_\DD$-cocomplete $\V$-category; then $\K$ has $\Phi_\DD$-flat colimits if and only if it has $\DD$-filtered colimits. A $\V$-functor from such a $\K$ preserves $\Phi_\DD$-flat colimits if and only if it preserves $\DD$-filtered colimits.
\end{prop}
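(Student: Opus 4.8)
The plan is to prove both biconditionals by the same mechanism, treating the two directions separately. The easy inclusions are immediate from the last clause of Proposition~\ref{Phi-V-sound}: every conical $\DD$-filtered weight commutes in $\V$ with $\Phi_\DD$-limits, hence is $\Phi_\DD$-flat. Thus a $\V$-category with $\Phi_\DD$-flat colimits automatically has $\DD$-filtered colimits, and a $\V$-functor preserving $\Phi_\DD$-flat colimits automatically preserves $\DD$-filtered ones; this uses neither completeness nor cocompleteness.

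For the substantial direction I would first reduce an arbitrary $\Phi_\DD$-flat colimit to one indexed by a $\Phi_\DD$-cocomplete domain, where Proposition~\ref{Phi-V-sound}(3) applies. Fix a $\Phi_\DD$-flat weight $M\colon\C\op\to\V$ on a small $\C$ and a diagram $D\colon\C\to\K$, and let $J\colon\C\hookrightarrow\Phi_\DD\C$ be the inclusion into the free $\Phi_\DD$-cocompletion. Since $\Phi_\DD$ is weakly sound, the weight $\bar M:=\tx{Ran}_{J\op}M$ is again $\Phi_\DD$-flat, and it restricts to $M$ along $J\op$, i.e.\ $\bar M(Jc)\cong Mc$. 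Assuming $\K$ is $\Phi_\DD$-cocomplete, the universal property of $\Phi_\DD\C$ produces the $\Phi_\DD$-cocontinuous extension $\bar D:=\tx{Lan}_J D\colon\Phi_\DD\C\to\K$ with $\bar D J\cong D$. A short co-Yoneda computation then gives
$$ \bar M*\bar D\cong\int^{c\in\C}\bar M(Jc)\otimes Dc\cong M*D, $$
so it suffices to compute $\bar M*\bar D$. As $\bar M$ is $\Phi_\DD$-flat on the $\Phi_\DD$-cocomplete $\Phi_\DD\C$, Proposition~\ref{Phi-V-sound}(3) exhibits it as a conical $\DD$-filtered colimit of representables, $\bar M\cong\colim_{i\in\I}Y(N_i)$; since $(-)*\bar D$ is cocontinuous in the weight and sends representables to their values under $\bar D$, we obtain
$$ M*D\cong\colim_{i\in\I}\bar D(N_i), $$
a conical $\DD$-filtered colimit which exists because $\K$ has $\DD$-filtered colimits. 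This settles the object statement when $\K$ is $\Phi_\DD$-cocomplete. The functor statement in this case then falls out: given $F\colon\K\to\L$ preserving $\DD$-filtered colimits, applying $F$ to the presentation $M*D\cong\colim_{i}\bar D(N_i)$ shows $F$ sends it to the colimit computed the same way for $FD$, so $F$ preserves $\Phi_\DD$-flat colimits. The point is that, after the reduction, the entire flat colimit has been rewritten as a single $\DD$-filtered colimit, so only preservation of $\DD$-filtered colimits is invoked.

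The main obstacle is the complete case, where the reduction above breaks down: forming $\bar D=\tx{Lan}_J D$ requires the $\Phi_\DD$-colimits $N_i*D$ to exist, and completeness does not supply these (it gives $\Phi_\DD$-limits and $(\V_0)_\DD$-powers, not the tensors and conical $\DD$-colimits on the colimit side). My plan here is to reduce to the previous case by showing that a complete $\K$ with $\DD$-filtered colimits already admits each $\Phi_\DD$-flat colimit directly, realizing $M*D$ as the representing object of the continuous $\V$-functor $k\mapsto[\C\op,\V]\bigl(M,\K(D-,k)\bigr)$ through the enriched adjoint functor theorem. The delicate step is the solution-set condition: one must extract a small solution set from the smallness of $\C$ and $M$ together with the local $\DD$-presentability of $\V_0$, using that $\bar M$ is a $\DD$-filtered colimit of representables to factor any cocone $M\Rightarrow\K(D-,k)$ through a $\DD$-filtered colimit of a bounded family. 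Establishing this bound is the technical heart of the proposition; once $M*D$ is known to exist it is computed by the same $\DD$-filtered presentation as above, and the functor statement follows exactly as in the cocomplete case.
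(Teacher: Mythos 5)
The easy direction and your cocomplete case are essentially the paper's own proof (which is that of \cite[Proposition~3.20]{LT22:virtual}): extend the weight to $\Phi_\DD\C$, use Proposition~\ref{Phi-V-sound} to present the extension as a conical $\DD$-filtered colimit of representables, and collapse $M*D$ into a single conical $\DD$-filtered colimit of values of $\tx{Lan}_JD$. The genuine gap is the complete case. You correctly observe that $\tx{Lan}_JD$ need not exist there, but you then abandon the reduction in favour of the enriched adjoint functor theorem, and the step you yourself call ``the technical heart'' --- producing a small solution set for $k\mapsto[\C\op,\V](M,\K(D-,k))$ --- is never carried out. Nothing in the hypotheses ($\K$ complete with $\DD$-filtered colimits; no accessibility or size bound on $\K$) visibly yields such a solution set, so this half of the proposition remains unproven as written.

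The missing idea is that the reduction survives completeness: replace $\tx{Lan}_JD$ by $\tx{Ran}_JD$, which \emph{does} exist, since each value $(\tx{Ran}_JD)(N)=\{\Phi_\DD\C(N,J-),D\}$ is a small weighted limit and $\K$ is complete. What makes this work is a point your coend computation hides: the isomorphism $\bar M*G\cong M*(GJ)$, each side existing if the other does, holds for \emph{every} $G\colon\Phi_\DD\C\to\K$, not only for $G=\tx{Lan}_JD$; so any $G$ with $GJ\cong D$ computes $M*D$. Indeed, since $M$ is $\Phi_\DD$-flat, its left Kan extension $\tx{Lan}_{J\op}M$ is $\Phi_\DD$-continuous (flatness makes $x\mapsto M*\Phi_\DD\C(x,J-)$ send $\Phi_\DD$-colimits of $\Phi_\DD\C$ to limits in $\V$), hence it coincides with your $\bar M=\tx{Ran}_{J\op}M$, both being $\Phi_\DD$-continuous extensions of $M$; and for a left Kan extended weight one has $(\tx{Lan}_{J\op}M)*G\cong M*(GJ)$ by adjointness, for arbitrary $G$. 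Taking $G=\tx{Ran}_JD$ gives $M*D\cong\bar M*(\tx{Ran}_JD)\cong\colim_i(\tx{Ran}_JD)(N_i)$, and the complete case closes exactly like the cocomplete one. The same observation is also what makes your preservation clause rigorous even in the cocomplete case: $F\bar D$ is \emph{not} $\tx{Lan}_J(FD)$ in $\L$ (since $F$ need not preserve $\Phi_\DD$-colimits), so ``computed the same way'' is not available verbatim; but $F\bar D$ is still \emph{an} extension of $FD$ along $J$, so the any-extension formula yields $M*(FD)\cong\colim_iF\bar D(N_i)\cong F(M*D)$.
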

\begin{proof}
	The proof is exactly the same as that of~\cite[Proposition~3.20]{LT22:virtual}, simply replace $\alpha$-small colimits with $\Phi_\DD$-colimits and use Proposition~\ref{Phi-V-sound} above.
\end{proof}

\begin{cor}\label{DvsPhiD}
	A $\V$-category $\K$ is locally $\Phi_\DD$-presentable if and only if it is locally $\DD$-presentable.
\end{cor}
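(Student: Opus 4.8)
The plan is to leverage the key definitional characterizations already established, namely Proposition~\ref{flat-cocomplete} together with the two notions of presentability. The statement to prove is that a $\V$-category $\K$ is locally $\Phi_\DD$-presentable if and only if it is locally $\DD$-presentable. Both notions are defined on a cocomplete $\V$-category $\K$ as the existence of a small strong generator made of objects that are ``presentable'' in the respective sense; the only thing that differs is which class of colimits the representable functors $\K(G,-)$ are required to preserve. So the entire proof reduces to showing that, for a cocomplete $\K$, the $\Phi_\DD$-presentable objects coincide with the $\DD$-presentable objects.

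First I would observe that since $\K$ is cocomplete, it is in particular $\Phi_\DD$-cocomplete (and complete), so Proposition~\ref{flat-cocomplete} applies directly. That proposition has two parts: it equates having $\Phi_\DD$-flat colimits with having $\DD$-filtered colimits, and — crucially for us — it states that a $\V$-functor out of such a $\K$ preserves $\Phi_\DD$-flat colimits if and only if it preserves $\DD$-filtered colimits. Applying this second part to the representable $\V$-functor $\K(X,-)\colon\K\to\V$ for a fixed object $X\in\K$, I conclude immediately that $\K(X,-)$ preserves $\Phi_\DD$-flat colimits precisely when it preserves $\DD$-filtered colimits. Unwinding the two definitions of presentability, this says exactly that $X\in\K_{\Phi_\DD}$ if and only if $X\in\K_\DD$; in other words $\K_{\Phi_\DD}=\K_\DD$ as full subcategories of $\K$.

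With the equality $\K_{\Phi_\DD}=\K_\DD$ in hand, the corollary follows formally. A cocomplete $\K$ is locally $\Phi_\DD$-presentable iff it admits a small strong generator $\G\subseteq\K_{\Phi_\DD}$, and it is locally $\DD$-presentable iff it admits a small strong generator $\G\subseteq\K_\DD$; since the two subcategories of admissible generators are literally the same, the two conditions are equivalent. I would note that the cocompleteness hypothesis is built into both definitions of ``locally presentable'' here, which is exactly what licenses the use of Proposition~\ref{flat-cocomplete}.

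I do not anticipate a genuine obstacle, as this corollary is essentially a repackaging of Proposition~\ref{flat-cocomplete} at the level of generators; the real content was already proved there (by reference to \cite[Proposition~3.20]{LT22:virtual} and Proposition~\ref{Phi-V-sound}). The only point requiring a moment's care is to verify that the preservation statement in Proposition~\ref{flat-cocomplete} is applicable to $\K(X,-)$, i.e.\ that $\K(X,-)$ is indeed a $\V$-functor out of a complete-or-$\Phi_\DD$-cocomplete $\K$, which holds trivially since $\K$ is assumed cocomplete. Thus the proof is short: invoke Proposition~\ref{flat-cocomplete} to obtain $\K_{\Phi_\DD}=\K_\DD$, and then match the two definitions.
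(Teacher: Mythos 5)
Your proof is correct and is exactly the paper's argument: since both notions of local presentability presuppose cocompleteness, Proposition~\ref{flat-cocomplete} applies to the representables $\K(X,-)$ and gives $\K_{\Phi_\DD}=\K_\DD$, after which the two definitions match. The paper states this in one line; you have merely spelled out the same steps.
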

\begin{proof}
	Since $\K$ is cocomplete, this follows at once by the previous proposition because then $\DD$-presentable and $\Phi_\DD$-presentable objects coincide.
\end{proof}

To compare the enriched with the ordinary notion of presentability we need a further assumption on the ordinary $\DD$-presentable objects of $\V_0$:

\begin{Def}
	We say that $\V$ is {\em locally $\DD$-presentable as a closed category} if $\V_0$ is locally $\DD$-presentable and $(\V_0)_\DD$ contains the unit and is closed under tensor product.
\end{Def}

\begin{prop}
	Let $\V$ be locally $\DD$-presentable as a closed category. If $\K$ is locally $\Phi_\DD$-presentable then $\K_0$ is locally $\DD$-presentable as an ordinary category. Moreover 
	$$ (\K_{\Phi_\DD})_0=(\K_{\DD})_0=(\K_0)_\DD. $$
\end{prop}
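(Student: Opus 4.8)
The plan is to split the statement into the chain of equalities $(\K_{\Phi_\DD})_0=(\K_\DD)_0=(\K_0)_\DD$, which is the substantive part, and the separate assertion that $\K_0$ is locally $\DD$-presentable, which I would deduce afterwards by manufacturing a suitable ordinary strong generator. Throughout I would use that $\K$ is cocomplete (so $\K_0$ is cocomplete and conical colimits in $\K$ restrict to the corresponding colimits in $\K_0$), that $I\in(\V_0)_\DD$ and $(\V_0)_\DD$ is closed under $\otimes$ (the closed-category hypothesis), and that $(\V_0)_\DD$ is a strong generator of $\V_0$.

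I would first dispatch the two easy parts. For $(\K_{\Phi_\DD})_0=(\K_\DD)_0$, since $\K$ is $\Phi_\DD$-cocomplete, Proposition~\ref{flat-cocomplete} tells us that a $\V$-functor out of $\K$ preserves $\Phi_\DD$-flat colimits if and only if it preserves $\DD$-filtered colimits; applying this to $\K(X,-)$ shows $\K_{\Phi_\DD}=\K_\DD$ as $\V$-subcategories of $\K$, and in particular on underlying categories. For the inclusion $(\K_\DD)_0\subseteq(\K_0)_\DD$, I would observe that if $X\in\K_\DD$ then $\K_0(X,-)=\V_0(I,\K(X,-))$ preserves $\DD$-filtered colimits, because $\K(X,-)$ does (as a $\V$-functor, on conical $\DD$-filtered colimits) and $\V_0(I,-)$ does since $I\in(\V_0)_\DD$.

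The reverse inclusion $(\K_0)_\DD\subseteq(\K_\DD)_0$ is where the real work lies, and I expect it to be the main obstacle: the naive reduction via $\K_0(G\cdot X,-)\cong\V_0(G,\K(X,-))$ is circular, since checking the comparison map by homming out of $(\V_0)_\DD$ just asks again whether each copower $G\cdot X$ is $\DD$-presentable. I would instead argue by a retract/Cauchy argument. Given $X\in(\K_0)_\DD$, take $\C$ to be the closure of the generator under $\Phi_\DD$-colimits as in Proposition~\ref{K_Phi}(3), so that $X\cong \K(H_\C-,X)*H_\C$ with $\C\subseteq\K_\DD$ small; the weight $\K(H_\C-,X)$ is $\Phi_\DD$-continuous, so by Proposition~\ref{Phi-V-sound} it is a conical $\DD$-filtered colimit of representables, and hence $X\cong\colim_e H_\C(Fe)$ is a conical $\DD$-filtered colimit in $\K$ of objects of $\C\subseteq\K_\DD$. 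Passing to $\K_0$ this is a $\DD$-filtered colimit there; since $X$ is $\DD$-presentable in $\K_0$, the functor $\K_0(X,-)$ preserves it, so $\tx{id}_X$ factors through one coprojection, exhibiting $X$ as a retract in $\K_0$ of some $C=H_\C(Fe)\in\K_\DD$. The associated idempotent on $C$ splits (absolutely) through $X$ in the cocomplete, hence Cauchy complete, $\V$-category $\K$; as $\K_\DD$ is Cauchy complete by Proposition~\ref{K_Phi} and $C\in\K_\DD$, we conclude $X\in\K_\DD$, i.e.\ $X\in(\K_\DD)_0$.

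Finally, to see that $\K_0$ is locally $\DD$-presentable I would check it is cocomplete (being the underlying category of a cocomplete $\V$-category) and exhibit a small strong generator of ordinary $\DD$-presentable objects. Taking the enriched strong generator $\G\subseteq\K_{\Phi_\DD}$ and the strong generator $(\V_0)_\DD$ of $\V_0$, the family $\{\,G\cdot X : G\in(\V_0)_\DD,\ X\in\G\,\}$ is a small strong generator of $\K_0$: if $f$ is a morphism in $\K_0$ with every $\K_0(G\cdot X,f)\cong\V_0(G,\K(X,f))$ invertible, then $\K(X,f)$ is invertible for all $X\in\G$ (as $(\V_0)_\DD$ reflects isomorphisms), whence $f$ is invertible (as $\G$ is an enriched strong generator). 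Each $G\cdot X$ lies in $\K_\DD=\K_{\Phi_\DD}$, which is closed under copowers by $(\V_0)_\DD$ since these are $\Phi_\DD$-colimits (Proposition~\ref{K_Phi}(1)); hence by the first paragraph each $G\cdot X$ is $\DD$-presentable in $\K_0$. This gives the required generator and completes the proof.
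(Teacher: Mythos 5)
Your proof is correct, and its first half is exactly the paper's: $(\K_{\Phi_\DD})_0=(\K_\DD)_0$ via Proposition~\ref{flat-cocomplete}, and $(\K_\DD)_0\subseteq(\K_0)_\DD$ from $\DD$-presentability of the unit. For the remaining two claims you take a genuinely different route, in the opposite order. The paper first gets local $\DD$-presentability of $\K_0$: since every object of $\K$ is a conical $\DD$-filtered colimit of objects of $(\K_\DD)_0$, which consists of $\DD$-presentable objects of $\K_0$ by the inclusion just proved, $\K_0$ is cocomplete and $\DD$-accessible, hence locally $\DD$-presentable by the ordinary theory; it then reads off $(\K_0)_\DD\subseteq(\K_\DD)_0$ from that same ordinary theory, namely that every $\DD$-presentable object of a locally $\DD$-presentable category is a $\DD$-colimit of objects of the generator, and $(\K_\DD)_0$ is closed under such colimits. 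You instead prove the inclusion $(\K_0)_\DD\subseteq(\K_\DD)_0$ first and by hand --- expressing $X$ as a conical $\DD$-filtered colimit of objects of $\C$ via Propositions~\ref{K_Phi}(3) and~\ref{Phi-V-sound}, factoring $\tx{id}_X$ through a colimit stage, and absorbing the resulting retract by Cauchy completeness of $\K_{\Phi_\DD}$ --- and only afterwards manufacture the explicit small strong generator $\{G\cdot X : G\in(\V_0)_\DD,\ X\in\G\}$ to conclude local $\DD$-presentability. In effect you inline the classical retract argument that the paper delegates to the ordinary (ABLR-style) structure theory: your version is self-contained and makes visible exactly where Cauchy completeness of $\K_{\Phi_\DD}$ enters, at the cost of length, while the paper's is shorter precisely because it quotes the ordinary result. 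Two cosmetic points you may want to make explicit: smallness of your generator requires $(\V_0)_\DD$ to be essentially small (or simply replace it by a small strong generator of $\V_0$ consisting of $\DD$-presentables, which exists by definition of local $\DD$-presentability of $\V_0$); and your inclusion $(\K_\DD)_0\subseteq(\K_0)_\DD$, exactly like the paper's, tacitly uses that $\K$ is complete, hence powered, so that ordinary $\DD$-filtered colimits in $\K_0$ agree with conical ones in $\K$.
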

\begin{proof}
	We already know from the previous proposition that $(\K_{\Phi_\DD})_0=(\K_{\DD})_0$. Since the unit is $\DD$-presentable then every $\DD$-presentable object in $\K$ is also $\DD$-presentable in $\K_0$; thus $(\K_{\DD})_0\subseteq(\K_0)_\DD$. This is enough to guarantee that $(\K_{\DD})_0$ generates $\K_0$ under $\DD$-filtered colimits and is made by $\DD$-presentable objects; hence $\K_0$ is locally $\DD$ presentable. It follows now that each element of $(\K_0)_\DD$ is a $\DD$-colimit of objects from $(\K_{\DD})_0$, which is closed under these colimits. Thus $(\K_{\DD})_0=(\K_0)_\DD$.
\end{proof}

Denote by $\Q$ the class of Cauchy weights and by $\S$ the one-element class for split coequalizers. 

\begin{prop}\label{Cauchy-sound}
	Let $\V$ be locally $\DD$-presentable as a closed category. Then $$\Q\subseteq (\Phi_\DD\cup\S)^*.$$
\end{prop}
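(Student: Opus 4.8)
The plan is to show that for every small $\V$-category $\C$ each Cauchy weight $M\colon\C\op\to\V$ lies in $(\Phi_\DD\cup\S)(\C)$; by definition of the saturation this is exactly the assertion $\Q\subseteq(\Phi_\DD\cup\S)^*$. I would first reduce the problem to a $\Phi_\DD$-cocomplete base. By Proposition~\ref{phi-pres-presheaves} and Corollary~\ref{DvsPhiD} the presheaf $\V$-category $[\C\op,\V]$ is locally $\Phi_\DD$-presentable, with the representables forming a dense generator consisting of (small-projective, hence) $\Phi_\DD$-presentable objects. A Cauchy weight $M$ is small-projective in $[\C\op,\V]$, i.e. $[\C\op,\V](M,-)$ preserves all colimits and in particular the $\Phi_\DD$-flat ones; thus $M\in([\C\op,\V])_{\Phi_\DD}$. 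Writing $\C_0:=\Phi_\DD\C$ for the small $\Phi_\DD$-cocomplete closure of the representables under $\Phi_\DD$-colimits, Proposition~\ref{K_Phi}(3) identifies $([\C\op,\V])_{\Phi_\DD}$ with the Cauchy completion $\Q(\C_0)$. Hence $M$ corresponds to a Cauchy weight on $\C_0$, and since every object of $\C_0$ is by construction a $\Phi_\DD$-colimit of representables of $\C$, it suffices to prove the localized statement:

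\emph{every Cauchy weight on a small $\Phi_\DD$-cocomplete $\V$-category $\D$ is a retract of a representable.}

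Indeed, a retract is the splitting of an idempotent, which is an absolute split coequalizer and therefore an $\S$-colimit; so such a weight would be an $\S$-colimit of an object of $\C_0\subseteq(\Phi_\DD\cup\S)(\C)$, placing $M$ in $(\Phi_\DD\cup\S)(\C)$.

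To prove the localized statement I would use the module description of Cauchy weights. Since $M\colon\D\op\to\V$ is Cauchy, regarded as a module $I\nrightarrow\D$ it has a right adjoint $N\colon\D\to\V$; write $\eta\colon I\to\int^{d}Md\otimes Nd$ for the unit and $\epsilon\colon M\bullet N\to\tx{id}_\D$ for the counit, subject to the two triangle identities. The point where the hypotheses on $\V$ enter is the factorization of $\eta$: because $\V$ is locally $\DD$-presentable as a closed category, the unit $I$ is $\DD$-presentable, $(\V_0)_\DD$ is closed under $\otimes$, and every object of $\V$ is a $\DD$-filtered colimit of $\DD$-presentables. Using also that $\DD$-filtered colimits commute with $\Phi_\DD$-limits (Proposition~\ref{Phi-V-sound}), one presents the coend $\int^{d}Md\otimes Nd$ as a $\DD$-filtered colimit of objects $\int^{d\in\D_0}M'd\otimes N'd$, where $\D_0\subseteq\D$ ranges over the $\DD$-small full subcategories and $M'\to M|_{\D_0}$, $N'\to N|_{\D_0}$ over the $\DD$-presentable approximations. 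As $I$ is $\DD$-presentable, $\eta$ then factors through one such stage.

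Fixing such data, I would set $d_\ast:=\int^{d\in\D_0}M'd\cdot d$, which exists in $\D$ because it is a $\Phi_\DD$-colimit (a $\DD$-small conical colimit of copowers by the $\DD$-presentable objects $M'd$) and $\D$ is $\Phi_\DD$-cocomplete. The approximation $M'\to M|_{\D_0}$ classifies, via the Yoneda lemma and the universal property of $d_\ast$, a retraction $r\colon\D(-,d_\ast)\to M$, while the factored unit supplies a section $s\colon M\to\D(-,d_\ast)$; the first triangle identity then forces $rs=\tx{id}_M$, exhibiting $M$ as a retract of the representable $\D(-,d_\ast)$. I expect the main obstacle to be precisely this last step: checking that the approximated unit remains compatible with the counit closely enough that $rs=\tx{id}_M$ holds on the nose. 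The equation need only hold after passing to a larger $\DD$-small $\D_0$ and finer approximations, and since the relevant hom-object sees $M$ as a $\DD$-presentable object one can absorb the discrepancy by enlarging the stage; but organizing these choices coherently, so that the triangle identity survives the passage to the finite approximation, is the delicate heart of the argument.
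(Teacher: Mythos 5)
Your opening reduction is correct, and it is essentially the same first move the paper makes: proving $\Q\subseteq(\Phi_\DD\cup\S)^*$ comes down to showing that every Cauchy weight over a small $\Phi_\DD$-cocomplete $\V$-category is a retract of a representable (equivalently, that $\Phi_\DD$-cocompleteness plus split idempotents implies Cauchy completeness), and your passage through $[\C\op,\V]_{\Phi_\DD}\simeq\Q(\Phi_\DD\C)$ via Proposition~\ref{K_Phi}(3) is sound. The problem is your proof of that localized statement, which has genuine gaps, one of which you concede yourself. Concretely: (a) for a general sound class $\DD$ there is no notion of ``$\DD$-small full subcategory'' (it makes sense when $\DD$ consists of the $\alpha$-small categories, but not, say, for the class of finite connected categories), so the asserted presentation of $\int^d Md\otimes Nd$ as a $\DD$-filtered colimit of stages $(\D_0,M',N')$ is not even well posed in this generality, let alone proved; (b) even granting it, $d_\ast=\int^{d\in\D_0}M'd\cdot d$ need not be a $\Phi_\DD$-colimit: the weight $M'$ lives on a \emph{full} subcategory $\D_0\subseteq\D$ whose hom-objects are arbitrary objects of $\V$, and membership in (the saturation of) $\Phi_\DD$ constrains the homs as well as the values -- the coend over $\D_0$ involves coequalizers and copowers by the objects $\D_0(d,d')\otimes M'd'$, which are not $\Phi_\DD$-colimits; so $d_\ast$ may simply fail to exist in $\D$; (c) most seriously, the equation $rs=\tx{id}_M$ is never established: the triangle identities hold for the genuine unit and counit, not for their approximations, and your closing remark about ``absorbing the discrepancy by enlarging the stage'' is precisely the missing argument rather than a proof of it.

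What makes this repairable is that the unit/counit apparatus creating the coherence problem is unnecessary. Since $M$ is Cauchy, $[\C\op,\V](M,-)$ preserves all colimits; since $\V$ is locally $\DD$-presentable as a closed category, $I$ is $\DD$-presentable, so the ordinary functor $[\C\op,\V]_0(M,-)\cong\V_0(I,[\C\op,\V](M,-))$ preserves conical $\DD$-filtered colimits. Every presheaf, in particular $M$ itself, is a conical $\DD$-filtered colimit $M\cong\colim_i M_i$ with $M_i\in\Phi_\DD\C$ (combine Proposition~\ref{K_Phi}(3), applied to $[\C\op,\V]$ with the representables as generator, with Proposition~\ref{Phi-V-sound}(3)); hence $\tx{id}_M$ factors through some colimit injection $u_{i_0}\colon M_{i_0}\to M$, exhibiting $M$ on the nose as a retract of $M_{i_0}\in\Phi_\DD\C$, i.e.\ as the splitting of the idempotent $su_{i_0}$, whence $M\in(\Phi_\DD\cup\S)(\C)$; no triangle identity has to survive any approximation. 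For comparison, the paper's own proof avoids weight-level manipulation altogether: it embeds a small $\Phi_\DD$-cocomplete $\C$ with split idempotents into $\K=\Phi_\DD\tx{-Cont}(\C\op,\V)$, passes to underlying ordinary categories using $(\K_{\Phi_\DD})_0=(\K_0)_\DD$, and invokes the ordinary Gabriel--Ulmer duality for $\DD$ to identify $\C_0\simeq(\K_0)_\DD$, so that $\C\simeq\K_{\Phi_\DD}$ is Cauchy complete.
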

\begin{proof}
	It is enough to prove that every small $\Phi_\DD$-complete $\V$-category with splittings of idempotents is Cauchy complete as a $\V$-category; the fact that every $\Phi_\DD$-continuous $\V$-functor also preserves Cauchy limits is trivial. Let $\C$ be small and $\Phi_\DD$-cocomplete with splittings of idempotents, and let $\K:=\Phi_\DD\tx{-Cont}(\C\op,\V)$; then $\K$ is locally $\Phi_\DD$-presentable and $\C\subseteq \K_{\Phi_\DD}$ generates it under $\DD$-filtered colimits. By the previous Proposition the ordinary category $\K_0$ is locally $\DD$-presentable too with $(\K_{\Phi_\DD})_0=(\K_0)_\DD$. Note now that $\C_0$ has split idempotents, is a strong generator of $\K_0$ (since it generates under conical colimits), and is closed in $\K_0$ under $\DD$-colimits; thus $\C_0\simeq(\K_0)_\DD$ by the ordinary Gabriel-Ulmer duality for $\DD$. It follows that $\C\simeq \K_{\Phi_\DD}$ and is therefore Cauchy complete.
\end{proof}

A direct consequence is the following: 

\begin{cor}
	Let $\V_0$ be locally $\alpha$-presentable as a closed category. Then the Cauchy weights are (in the saturation of the) $\alpha$-small weights.
\end{cor}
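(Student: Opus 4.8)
The plan is to specialize Proposition~\ref{Cauchy-sound} to the class $\DD$ of $\alpha$-small categories and then absorb the split-coequalizer weight into the $\alpha$-small weights. First I would take $\DD$ to be the (locally small and weakly sound) class of $\alpha$-small diagrams in $\bo{Set}$; the hypothesis that $\V_0$ is locally $\alpha$-presentable as a closed category says precisely that $\V_0$ is locally $\DD$-presentable with $(\V_0)_\DD=(\V_0)_\alpha$ containing the unit $I$ and closed under $\otimes$, so that $\V$ is locally $\DD$-presentable as a closed category in the exact sense required by Proposition~\ref{Cauchy-sound}. By Example~\ref{inducedfromSet} the induced class $\Phi_\DD$ generates exactly the $\alpha$-small weighted limits of Kelly, so that $\Phi_\DD$ and the class of $\alpha$-small weights share the same saturation.

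Next, Proposition~\ref{Cauchy-sound} supplies the inclusion $\Q\subseteq(\Phi_\DD\cup\S)^*$, where $\S$ is the one-element class for split coequalizers. The remaining point is that $\S$ is itself $\alpha$-small: the split-coequalizer weight is the conical weight on the (finite) $\V$-category indexing split coequalizers, and a finite conical weight is $\alpha$-small for every infinite regular cardinal $\alpha$. Hence both $\Phi_\DD$ and $\S$ lie in the saturation of the $\alpha$-small weights.

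Finally I would conclude by monotonicity and idempotency of the saturation operation: since $\Phi_\DD\cup\S$ is contained in the saturation of the $\alpha$-small weights, so is its own saturation $(\Phi_\DD\cup\S)^*$, and therefore $\Q\subseteq(\Phi_\DD\cup\S)^*$ is contained in the saturation of the $\alpha$-small weights, which is exactly the assertion. The only mild subtlety is checking that the split-coequalizer weight is genuinely $\alpha$-small as a \emph{weight} (not merely as an indexing category), but this is immediate since it is a conical weight on a finite category; no serious obstacle arises, as the corollary is in essence a direct reading of Proposition~\ref{Cauchy-sound}.
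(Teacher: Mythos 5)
Your proposal is correct and matches the paper's intended argument: the paper states this corollary as a direct consequence of Proposition~\ref{Cauchy-sound}, and your proof is exactly that specialization (taking $\DD$ to be the $\alpha$-small diagrams, invoking Example~\ref{inducedfromSet} to identify $\Phi_\DD$ with Kelly's $\alpha$-small weights up to saturation, and absorbing the split-coequalizer weight, which is a finite conical weight and hence $\alpha$-small, via monotonicity and idempotency of saturation). The only difference is that you spell out the details the paper leaves implicit, which is fine.
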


Since the class of $\alpha$-small weights is pre-saturated~\cite[Definition~A.1.13]{Ten22:phd}, this means that every Cauchy weight $M$ is the left Kan extension of some $\alpha$-small weight~\cite[Proposition~A.1.14]{Ten22:phd}.

As a consequence we recover the following result of Johnson:

\begin{cor}[\cite{Joh1989:articolo}]\label{Cauchy-small}
	Let $\V_0$ be locally presentable. Then the class $\Q$ is essentially small.
\end{cor}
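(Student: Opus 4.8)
The plan is to deduce this from the corollary above by producing a single regular cardinal $\alpha$ that works for all of $\V$ at once. Since $\V_0$ is locally presentable, it is locally $\beta$-presentable for some regular $\beta$, and the unit $I$ is $\gamma$-presentable for some regular $\gamma$. Because $\V$ is closed, the tensor $\otimes\colon\V_0\times\V_0\to\V_0$ preserves colimits in each variable; by a standard sharpness argument (detailed at the end) there are arbitrarily large regular cardinals $\alpha$ for which $\otimes$ sends pairs of $\alpha$-presentable objects to $\alpha$-presentable objects. Choosing such an $\alpha\geq\beta,\gamma$, the full subcategory $(\V_0)_\alpha$ of $\alpha$-presentable objects then contains $I$ and is closed under $\otimes$; that is, $\V$ is locally $\alpha$-presentable as a closed category in the sense of the definition above.

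With this $\alpha$ fixed, take $\DD$ to be the class of $\alpha$-small diagrams in $\Set$, so that $\Phi_\DD$ is Kelly's class of $\alpha$-small weights as in Example~\ref{examplewsc}(2). The corollary above then applies and gives the inclusion of classes $\Q\subseteq\Phi_\DD^*$. Now fix a small $\V$-category $\C$. Since the class $\Q$ of Cauchy (absolute) weights is saturated, we have $\Q(\C)=\Q\cap[\C\op,\V]$; combining this with the inclusion just obtained and the defining property of the saturation yields
$$ \Q(\C)=\Q\cap[\C\op,\V]\subseteq\Phi_\DD^*\cap[\C\op,\V]=\Phi_\DD(\C). $$
By Kelly's results the free cocompletion $\Phi_\DD(\C)$ of the small $\V$-category $\C$ under $\alpha$-small colimits is small. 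Hence $\Q(\C)$, being equivalent to a full subcategory of the small $\V$-category $\Phi_\DD(\C)$, is essentially small; as $\C$ was arbitrary, $\Q$ is locally small, which is the assertion.

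The only genuinely nontrivial step is the first one: upgrading ``$\V_0$ locally presentable'' to ``$\V$ locally $\alpha$-presentable as a closed category'', where the delicate point is obtaining \emph{closure} (not merely boundedness) of the $\alpha$-presentable objects under $\otimes$. The subtlety is that $\otimes\colon\V_0\times\V_0\to\V_0$ is cocontinuous only separately, so one cannot apply the single-functor sharpness lemma to it directly; instead one applies that lemma to the genuinely cocontinuous functor $\V_0\otimes\V_0\to\V_0$ induced by $\otimes$, where $\V_0\otimes\V_0$ is the tensor product of locally presentable categories and its $\alpha$-presentable objects are generated by the pure tensors of $\alpha$-presentables. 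Everything after that is the routine bookkeeping recorded above.
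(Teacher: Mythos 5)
Your proof is correct and has the same skeleton as the paper's: upgrade ``$\V_0$ locally presentable'' to ``$\V$ locally $\alpha$-presentable as a closed category'' for some regular $\alpha$, feed this into the corollary preceding the statement (itself a consequence of Proposition~\ref{Cauchy-sound}) to get that every Cauchy weight lies in the saturation of the $\alpha$-small weights, and conclude via smallness of the free $\alpha$-small cocompletion $\Phi_\DD(\C)$ of a small $\C$; your explicit chain $\Q(\C)=\Q\cap[\C\op,\V]\subseteq\Phi_\DD^*\cap[\C\op,\V]=\Phi_\DD(\C)$, using saturatedness of $\Q$, is exactly the bookkeeping the paper leaves implicit. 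The one genuine divergence is the first step: the paper simply cites \cite[Proposition~2.4]{KL2001:articolo}, whereas you reconstruct that result. Your reconstruction works, but it invokes heavier machinery than needed: Bird's tensor product $\V_0\otimes\V_0$ of locally presentable categories plus the uniformization theorem of \cite{AR94:libro}, and it silently relies on the fact that pure tensors of $\alpha$-presentables are $\alpha$-presentable in $\V_0\otimes\V_0$ for arbitrarily large $\alpha$. That last fact is proved by precisely the direct argument that also proves the Kelly--Lack proposition with no tensor product at all: fix $\beta$ with $\V_0$ locally $\beta$-presentable, note each $A\otimes B$ with $A,B$ $\beta$-presentable is $\gamma_{A,B}$-presentable for some $\gamma_{A,B}$, and for any regular $\alpha$ sharply greater than $\beta$ exceeding all $\gamma_{A,B}$ and the presentability rank of $I$, write $\alpha$-presentables as retracts of $\alpha$-small colimits of $\beta$-presentables; separate cocontinuity of $\otimes$ then gives closure of $(\V_0)_\alpha$ under $\otimes$ directly. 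Finally, a terminological remark in your favour: what you prove is that $\Q$ is locally small in the paper's sense (each $\Q\C$ is essentially small), which is the actual content of Johnson's theorem and is what the paper uses elsewhere, so your reading of ``essentially small'' is the appropriate one.
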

\begin{proof}
	If $\V_0$ is locally presentable then it is also locally $\alpha$-presentable as a closed category for some $\alpha$ by \cite[Proposition~2.4]{KL2001:articolo}. Thus the result follows from the corollary above since the class of $\alpha$-small weights is essentially small.
\end{proof}

Finally we can prove:

\begin{cor}
	Let $\V$ be locally $\DD$-presentable as a closed category and such that $\Phi_\DD$-colimits are absolute (equivalently, conical $\DD$-colimits and $\V_\DD$-powers are absolute). Then 
	$$\Q=(\Phi_\DD\cup\S)^*.$$
\end{cor}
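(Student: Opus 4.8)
The plan is to combine the inclusion already established in Proposition~\ref{Cauchy-sound}, namely $\Q \subseteq (\Phi_\DD \cup \S)^*$, with its reverse. For the reverse inclusion the key structural fact I would use is that the class $\Q$ of Cauchy weights is \emph{saturated}: for any small $\C$ the free cocompletion $\Q\C$ is precisely the Cauchy completion of $\C$, and its objects are exactly the Cauchy weights on $\C$, so that $\Q^* = \Q$. I would also record that the saturation operator $(-)^*$ is monotone: if $\Phi \subseteq \Psi$ then $\Phi\C \subseteq \Psi\C$ for every small $\C$ (the closure of the representables under $\Phi$-colimits is contained in the closure under the larger class of $\Psi$-colimits), and hence $\Phi^* \subseteq \Psi^*$.

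Granting this, the reverse inclusion reduces to showing $\Phi_\DD \cup \S \subseteq \Q$. The inclusion $\S \subseteq \Q$ is the classical fact that split coequalizers are absolute colimits, so that their weight is Cauchy. The inclusion $\Phi_\DD \subseteq \Q$ is exactly the running hypothesis that $\Phi_\DD$-colimits are absolute, since absolute colimits are precisely those weighted by Cauchy weights (equivalently, those commuting in $\V$ with all limits). Here the parenthetical \enquote{equivalently} in the statement is justified by the saturatedness of $\Q$ once more: as $\Phi_\DD$ is generated by the conical $\DD$-colimits together with the $\V_\DD$-powers, the containment $\Phi_\DD \subseteq \Q$ holds if and only if each of these generators is absolute.

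With $\Phi_\DD \cup \S \subseteq \Q$ in hand, monotonicity and idempotency of saturation give
$$ (\Phi_\DD \cup \S)^* \subseteq \Q^* = \Q, $$
which is the reverse inclusion. Combined with Proposition~\ref{Cauchy-sound} this yields the desired equality $\Q = (\Phi_\DD \cup \S)^*$. I expect the only genuinely delicate point to be the careful justification that $\Q$ is saturated and that its saturation behaves monotonically; everything else is a direct application of the hypothesis together with the absoluteness of split coequalizers. The argument should therefore be short, as the substantial content was already absorbed into Proposition~\ref{Cauchy-sound}.
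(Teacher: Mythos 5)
Your proposal is correct and takes essentially the same route as the paper: one inclusion is Proposition~\ref{Cauchy-sound}, and the reverse follows from the absoluteness hypothesis. The paper compresses this into a one-line proof, whereas you spell out the details behind ``the other inclusion holds by hypothesis'' (saturatedness of $\Q$, monotonicity of $(-)^*$, and absoluteness of split coequalizers), all of which are accurate.
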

\begin{proof}
	This is a direct consequence of Proposition~\ref{Cauchy-sound} since, by hypothesis, also the other inclusion holds.
\end{proof}

\begin{obs}
	The corollary above applies when $\V$ is locally dualizable category as in~\cite[Definition~4.3]{LT21:articolo}. Indeed, one needs to take $\DD$ to be the class of finite products; then $\Phi_\DD$-colimits are generated by finite direct sums and powers by a strong generator made of dualizable objects. Thus, as a consequence of the Corollary above we recover~\cite[Corollary~4.22]{LT21:articolo}, and in particular the characterization of absolutely complete $\Ab$-categories and $\bo{GAb}$-categories (\cite{NST2020cauchy}).
\end{obs}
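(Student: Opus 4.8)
The plan is to show that a locally dualizable $\V$ in the sense of~\cite[Definition~4.3]{LT21:articolo} satisfies both hypotheses of the preceding Corollary once $\DD$ is taken to be the class of finite discrete categories, so that conical $\DD$-limits are finite products; the asserted consequences then follow formally.

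First I would unpack local dualizability: it equips $\V$ with a strong generator of dualizable objects, closed under $\otimes$ and containing the unit $I$, with $\V_0$ locally finitely presentable and its \emph{strongly} finitely presentable objects being exactly the dualizable ones. From this I would verify the first hypothesis, that $\V$ is locally $\DD$-presentable as a closed category: since $\DD$ is the class for finite products, $(\V_0)_\DD$ is the class of strongly finitely presentable objects of $\V_0$ (cf.~Example~\ref{inducedfromSet}), and as these coincide with the dualizable objects, $(\V_0)_\DD$ contains $I$ and is closed under $\otimes$, which is exactly the requirement.

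The core of the argument is the second hypothesis, that $\Phi_\DD$-colimits are absolute. By construction $\Phi_\DD$ is generated, on the colimit side, by conical $\DD$-colimits --- finite coproducts, which here are finite direct sums --- together with copowers by the objects of $(\V_0)_\DD$. I would then invoke two standard facts: finite direct sums are absolute colimits, and the copower by an object $A$ is an absolute colimit whenever $A$ is dualizable (copowers by dualizable objects being Cauchy). Since every object of $(\V_0)_\DD$ is dualizable, both generating families consist of absolute colimits, whence every weight in $\Phi_\DD$ is Cauchy; this is precisely what ``$\Phi_\DD$-colimits are absolute'' means. I expect this to be the main obstacle, as it requires identifying dualizable objects with the sources of absolute copowers in the general (non-cartesian) symmetric monoidal setting, and checking that the generating description of $\Phi_\DD$ really does reduce absoluteness to these two cases.

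With both hypotheses established, the preceding Corollary yields $\Q=(\Phi_\DD\cup\S)^*$. To finish, I would observe that this equality is exactly~\cite[Corollary~4.22]{LT21:articolo}, and that specializing to $\V=\Ab$ and $\V=\bo{GAb}$ recovers the characterization of absolutely complete $\Ab$- and $\bo{GAb}$-categories from~\cite{NST2020cauchy}: there the dualizable objects are the finitely generated free objects (respectively their graded analogues), $\Phi_\DD$ accounts for the finite direct sums, and $\S$ supplies the splitting of idempotents, which together generate all Cauchy colimits.
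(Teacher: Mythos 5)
Your proposal is correct and takes essentially the same route as the paper: the remark's own justification consists precisely of taking $\DD$ to be the class of finite products, observing that $(\V_0)_\DD$ is made of dualizable objects, and that $\Phi_\DD$-colimits are generated by finite direct sums and copowers by dualizable objects, both of which are absolute, so the Corollary applies. You merely make explicit the details the paper leaves implicit (that $(\V_0)_\DD$ contains $I$ and is closed under $\otimes$, and that copowers by dualizable objects are Cauchy), which is exactly the intended reading.
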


\section{Soundness and raising the index of accessibility}\label{section-raising}

Unlike the locally presentable case, raising the index of accessibility is an issue: ordinarily it is not true that if $\A$ is an $\alpha$-accessible category and $\beta>\alpha$, then $\A$ is $\beta$-accessible. This problem was address from the very beginning~\cite{MP89:libro,AR94:libro} by introducing a new order relationship $\triangleleft$, called the {\em sharply less relationship}, between regular cardinals. In \cite[Section~2.3]{MP89:libro} it is proved that, given $\alpha<\beta$, every $\alpha$-accessible category $\A$ is $\beta$-accessible if and only if $\alpha\triangleleft\beta$, and in that case every $\beta$-presentable object of $\A$ is a $\beta$-small $\alpha$-filtered colimit of $\alpha$-presentable objects.

In the context of a sound class of weights this problem has never been addressed (not even for ordinary categories); all that is known is that every $\Phi$-accessible $\V$-category $\A$ is $\alpha$-accessible for arbitrarily large regular cardinals $\alpha$ \cite[Theorem~4.4]{LT22:limits}, but such $\alpha$ might depend on the given $\V$-category $\A$. In this section we shall extend the sharply less relationship to the context of sound classes of weights and prove theorems similar to those known in the ordinary setting.

\begin{as}
	For the purposes of this section all the classes of weights are assumed to be saturated; thus there is no distinction between weakly sound and sound classes (Remark~\ref{sat-sound}). We also assume $\V_0$ to be locally presentable: while Theorem~\ref{sharply} holds without the local presentability assumption, this is needed to prove Theorem~\ref{rainsing}.
	
\end{as}

Given a regular cardinal $\alpha$ for which $\V_0$ is locally presentable as a closed category, we denote by $\P_\alpha$ the sound class given by the saturation of the $\alpha$-small weights.

\begin{Def}
	Given two sound classes $\Phi\subseteq\Psi$, we define:
	$$ \Phi^{^+}_\Psi:=\Phi\+ \cap \Psi .$$
\end{Def}

\begin{obs}
	Since $\Phi\+$ and $\Psi$ are saturated, for any small $\C$ we can see $\Phi^{^+}_\Psi\C$ as the intersection below.
	\begin{center}
		\begin{tikzpicture}[baseline=(current  bounding  box.south), scale=2]
			
			\node (a0) at (0,0.8) {$\Phi^{^+}_\Psi\C$};
			\node (a0') at (0.3,0.6) {$\lrcorner$};
			\node (b0) at (1.2,0.8) {$\Phi\+\C$};
			\node (c0) at (0,0) {$\Psi\C$};
			\node (d0) at (1.2,0) {$[\C\op,\V]$};
			
			\path[font=\scriptsize]
			
			(a0) edge [right hook->] node [above] {} (b0)
			(a0) edge [right hook->] node [left] {} (c0)
			(b0) edge [right hook->] node [right] {} (d0)
			(c0) edge [right hook->] node [below] {} (d0);
		\end{tikzpicture}	
	\end{center} 
	Indeed, by saturatedness, a weight $M\colon\C\op\to\V$ is in $\Phi\+$ (respectively, in $\Psi$) if and only if it lies in $\Phi\+\C$ (respectively, in $\Psi\C$).
\end{obs}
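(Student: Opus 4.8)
The plan is to reduce the claim to the defining property of saturation (Definition~\ref{satu}). Recall that for a saturated class $\Theta$ one has $\Theta\cap[\C\op,\V]=\Theta\C$ for every small $\C$; concretely, a weight $M\colon\C\op\to\V$, viewed as an object of $[\C\op,\V]$, belongs to the class $\Theta$ precisely when it lies in the free cocompletion $\Theta\C$ regarded as a full subcategory of $[\C\op,\V]$. So the whole statement is really an unwinding of this equivalence applied to the two classes separately.

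First I would apply the equivalence to $\Theta=\Phi\+$ and to $\Theta=\Psi$, both saturated by assumption, obtaining the two biconditionals $M\in\Phi\+ \Leftrightarrow M\in\Phi\+\C$ and $M\in\Psi \Leftrightarrow M\in\Psi\C$, valid for every $M\in[\C\op,\V]$. Since $\Phi^{^+}_\Psi=\Phi\+\cap\Psi$ by definition, an object $M\in[\C\op,\V]$ lies in the class $\Phi^{^+}_\Psi$ exactly when it lies in both $\Phi\+$ and $\Psi$, hence exactly when $M\in\Phi\+\C$ and $M\in\Psi\C$, i.e. when $M\in\Phi\+\C\cap\Psi\C$. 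Intersecting with $[\C\op,\V]$ therefore gives $\Phi^{^+}_\Psi\cap[\C\op,\V]=\Phi\+\C\cap\Psi\C$, which is precisely the intersection of the two full subcategories $\Phi\+\C$ and $\Psi\C$ inside $[\C\op,\V]$; as the pullback of two fully faithful inclusions is computed as such an intersection, this identifies the displayed square as a genuine pullback in $\V\tx{-}\bo{CAT}$, all four legs being the evident full inclusions.

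The one step where any argument at all is required — and thus the closest thing to an obstacle — is to check that the left vertex $\Phi^{^+}_\Psi\C$, read as the free cocompletion under $\Phi^{^+}_\Psi$-colimits (the convention used later in Theorem~\ref{sharply}), agrees with the slice $\Phi^{^+}_\Psi\cap[\C\op,\V]$ computed above. For this it suffices to observe that $\Phi^{^+}_\Psi=\Phi\+\cap\Psi$ is again saturated: any colimit weighted by a member of $\Phi\+\cap\Psi$ is in particular both a $\Phi\+$-colimit and a $\Psi$-colimit, so the saturation of $\Phi\+\cap\Psi$ cannot leave either $\Phi\+$ or $\Psi$, whence $(\Phi\+\cap\Psi)^*=\Phi\+\cap\Psi$. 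Saturation then yields $\Phi^{^+}_\Psi\cap[\C\op,\V]=\Phi^{^+}_\Psi\C$, so the two readings coincide and the claim follows. Beyond this bookkeeping with the saturation operator I do not anticipate any genuine difficulty.
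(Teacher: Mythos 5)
Your proposal is correct and takes essentially the same approach as the paper: the Remark's entire justification is the unwinding of saturatedness of $\Phi\+$ and $\Psi$ that forms the core of your argument. The only addition is your explicit check that $\Phi\+\cap\Psi$ is again saturated (so that the free cocompletion $\Phi^{^+}_\Psi\C$ agrees with the slice $\Phi^{^+}_\Psi\cap[\C\op,\V]$); the paper leaves this closure-operator fact implicit, and your argument for it is correct.
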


\begin{obs}
	When $\Phi=\Psi$, then 
	$$ \Phi^{^+}_\Phi\C\simeq\Q\C$$
	coincides with the Cauchy completion of $\C$. This follows from a straightforward generalization of \cite[Lemma~3.3]{Ten2022continuity} where instead of $\alpha$-small weights one uses the sound class~$\Phi$.
\end{obs}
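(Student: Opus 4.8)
The plan is to prove the asserted equivalence by establishing the two inclusions $\Phi^{^+}_\Phi\C\subseteq\Q\C$ and $\Q\C\subseteq\Phi^{^+}_\Phi\C$ of full subcategories of $[\C\op,\V]$, thereby recovering the quoted generalization of \cite[Lemma~3.3]{Ten2022continuity}. As in the preceding remark I identify the weight classes with their restrictions to $\C$, so that by saturatedness $\Phi^{^+}_\Phi\C=\Phi\+\C\cap\Phi\C$ consists exactly of those $M\colon\C\op\to\V$ which are simultaneously $\Phi$-flat and lie in the free $\Phi$-cocompletion $\Phi\C$. The substantive half is the forward inclusion; the reverse one is routine once one knows $\Q\subseteq\Phi$.

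For $\Phi^{^+}_\Phi\C\subseteq\Q\C$, I would show that any such $M$ is a Cauchy weight by verifying the criterion of \cite[Proposition~6.14]{KS05:articolo}, namely that $[\C\op,\V](M,-)$ preserves the canonical (co-Yoneda) presentation $M\cong M*Y$, where $Y\colon\C\to[\C\op,\V]$ is the Yoneda embedding. Two observations combine to give this. First, since $M\in\Phi\C$, the $\V$-functor $[\C\op,\V](M,-)$ preserves all $\Phi$-flat colimits: writing $M$ as a $\Phi$-colimit of representables turns this hom into a $\Phi$-limit of evaluation functors, each evaluation preserves all colimits, and $\Phi$-limits commute with $\Phi$-flat colimits by the very definition of flatness. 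This is precisely the containment $\Phi\C\subseteq[\C\op,\V]_\Phi$ already recorded in the background section. Second, since $M$ is $\Phi$-flat, the colimit $M*Y$ is weighted by a $\Phi$-flat weight, hence is itself a $\Phi$-flat colimit. Putting the two together, $[\C\op,\V](M,-)$ preserves $M*Y$, so $M$ is Cauchy and lies in $\Q\C$.

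For the reverse inclusion $\Q\C\subseteq\Phi^{^+}_\Phi\C$, I would argue directly. If $M$ is a Cauchy weight then, by definition, $M$-weighted colimits commute with \emph{all} limits in $\V$, in particular with $\Phi$-limits, so $M$ is $\Phi$-flat and $M\in\Phi\+\C$. That $M\in\Phi\C$ is the one point requiring the ambient hypotheses: it holds because in the present setting the class $\Phi$ contains the Cauchy weights, so that $\Phi\C$ is Cauchy complete and every absolute colimit of representables already lies in it. Hence $M\in\Phi\+\C\cap\Phi\C=\Phi^{^+}_\Phi\C$, and the two full subcategories coincide, both equal to the Cauchy completion $\Q\C$.

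I expect the main obstacle to lie in the forward inclusion, and specifically in the clean invocation of \cite[Proposition~6.14]{KS05:articolo}: one must check that preservation by $[\C\op,\V](M,-)$ of the \emph{single} colimit $M*Y$ genuinely forces $M$ to be small-projective (i.e.\ Cauchy), and one must be careful that $[\C\op,\V]$ is in general only a $\V'$-category for an enlargement $\V'$ when $\C$ is merely small, so that the flatness and preservation arguments are carried out at the correct level of size. The reverse inclusion is comparatively formal, its only nontrivial ingredient being the containment $\Q\subseteq\Phi$, which is exactly what makes $\Phi^{^+}_\Phi\C$ the full Cauchy completion rather than some smaller cocompletion.
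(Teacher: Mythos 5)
Your two-inclusion decomposition and your entire treatment of the forward inclusion $\Phi^{^+}_\Phi\C\subseteq\Q\C$ are correct and match the intended argument: under this section's saturation assumption, $M\in\Phi^{^+}_\Phi\C$ means exactly that $M$ lies in $\Phi$ and is $\Phi$-flat; then $[\C\op,\V](M,-)\cong\{M,-\}$ is a $\Phi$-weighted limit functor (equivalently, one can present $M$ as a $\Phi$-colimit of representables, as you do), hence preserves $\Phi$-flat colimits, while $M\cong M*Y$ is itself a $\Phi$-flat colimit; \cite[Proposition~6.14]{KS05:articolo} then gives that $M$ is Cauchy. This is precisely how \cite[Lemma~3.3]{Ten2022continuity} generalizes, and it is the same device the paper uses elsewhere (e.g.\ in the proof of Theorem~\ref{bieq}). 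Your size worry is harmless here, since $\C$ is small and $[\C\op,\V]$ is then a genuine $\V$-category.

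The genuine gap is in the reverse inclusion, at exactly the point you flag as ``the one point requiring the ambient hypotheses''. Your claim that in the present setting the class $\Phi$ contains the Cauchy weights is not among the section's standing assumptions (locally small, saturated, sound, $\V_0$ locally presentable) and does not follow from them. For a concrete counterexample take $\V=\Set$ and $\Phi$ the saturation of the class for finite coproducts: this class is locally small, saturated, and sound, yet $\Phi\C$ consists of finite coproducts of representables and is not Cauchy complete. If $\C$ is the free-living idempotent, then no coproduct of $n\neq 1$ representables is $\Phi$-flat (finite coproducts do not commute with finite products in $\Set$), so $\Phi^{^+}_\Phi\C$ contains only the representables, whereas $\Q\C$ also contains the splitting of the idempotent; hence $\Phi^{^+}_\Phi\C\not\simeq\Q\C$. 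So the inclusion $\Q\C\subseteq\Phi\C$ --- and with it the remark read at this level of generality --- genuinely requires the extra hypothesis $\Q\subseteq\Phi$, equivalently that each $\Phi\C$ be Cauchy complete. That hypothesis does hold when $\Phi=\P_\alpha$, where it is Johnson's theorem, recovered in Section~\ref{sound-set} of the paper; this is what makes the generalization from the $\alpha$-small case ``straightforward'' there. To be fair, your instinct tracks an assumption the paper itself makes tacitly (the same containment underlies steps in the proof of Theorem~\ref{sharply}, such as the equality $[\C\op,\V]_\Phi=\Phi\C$), but as written your step is unjustified: you should either add $\Q\subseteq\Phi$ as an explicit hypothesis or record that the statement fails for sound classes not containing the Cauchy weights.
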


Now, for any small $\V$-category $\C$ we have an inclusion
\begin{equation}\label{sharp-inclusion}
	 \Psi\+(\Phi^{^+}_\Psi\C) \subseteq \Phi\+\C
\end{equation}
as full subcategories of $\P\C$. Indeed, $\Psi\+(\Phi^{^+}_\Psi\C)$ is contained in $\P\C$ by \cite[4.10]{LT22:limits} since $\Psi\+$-colimits commute in $\V$ with $\Phi^{^+}_\Psi$-limits (these limits are in particular $\Psi$-limits), and then $\Psi\+(\Phi^{^+}_\Psi\C) \subseteq \Phi\+\C$ since both $\Psi\+$ and $\Phi^{^+}_\Psi$ are contained in $\Phi\+$. 

\begin{prop}
	Consider sound classes $\Phi\subseteq\Psi$, then any locally $\Phi$-presentable $\K$ is locally $\Psi$-presentable. 
\end{prop}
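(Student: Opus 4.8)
The plan is to observe that the very data witnessing local $\Phi$-presentability already witnesses local $\Psi$-presentability, so that no new generator needs to be constructed. The whole argument rests on the (contravariant) behaviour of flatness under enlarging the class of limits, namely on the inclusion $\Psi^+\subseteq\Phi^+$.

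First I would record this key inclusion. If $M\colon\C\op\to\V$ is $\Psi$-flat, then $M*-\colon[\C,\V]\to\V$ is $\Psi$-continuous; since $\Phi\subseteq\Psi$, every $\Phi$-limit is in particular a $\Psi$-limit, so $M*-$ is a fortiori $\Phi$-continuous, and hence $M$ is $\Phi$-flat. Thus $\Psi^+\subseteq\Phi^+$, i.e.\ every $\Psi$-flat colimit is in particular a $\Phi$-flat colimit. Note this step uses nothing beyond the hypothesis $\Phi\subseteq\Psi$.

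From this I would deduce $\K_\Phi\subseteq\K_\Psi$. Since $\K$ is locally $\Phi$-presentable it is cocomplete, so in particular it has all $\Psi$-flat colimits and the notion of $\Psi$-presentable object makes sense in $\K$. Now if $X\in\K$ is $\Phi$-presentable, then $\K(X,-)\colon\K\to\V$ preserves all $\Phi$-flat colimits; by the previous step these include all $\Psi$-flat colimits, so $\K(X,-)$ preserves $\Psi$-flat colimits and $X$ is $\Psi$-presentable. Hence every $\Phi$-presentable object is $\Psi$-presentable.

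Finally, let $\G\hookrightarrow\K$ be a small strong generator with $\G\subseteq\K_\Phi$ witnessing local $\Phi$-presentability. By the inclusion just established $\G\subseteq\K_\Psi$, so $\G$ is a small strong generator of the cocomplete $\V$-category $\K$ made of $\Psi$-presentable objects; this is precisely the definition of locally $\Psi$-presentable, and we are done. There is no genuine obstacle here: the only point requiring care is the contravariance in $\Psi^+\subseteq\Phi^+$ (enlarging the class of limits shrinks the class of flat weights), after which the conclusion is immediate and does not even invoke the soundness hypotheses beyond what is already packaged into the definitions.
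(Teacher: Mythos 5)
Your proposal is correct and follows exactly the paper's argument: the paper also notes that every $\Phi$-presentable object is $\Psi$-presentable (which you justify in detail via the contravariant inclusion $\Psi^+\subseteq\Phi^+$) and concludes that the same strong generator witnesses local $\Psi$-presentability. Your write-up simply makes explicit the flatness-contravariance step that the paper leaves implicit.
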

\begin{proof}
	Since every $\Phi$-presentable object is $\Psi$-presentable, a strong generator made of $\Phi$-presentable objects is also a strong generator made of $\Psi$-presentable objects. Thus local $\Phi$-presentability implies local $\Psi$-presentability.
\end{proof}

The same does not hold for $\Phi$-accessible and $\Psi$-accessible $\V$-categories, even in the ordinary case. That is why the {sharply less than} relation between regular cardinal was introduced in~\cite{MP89:libro}.

Given weakly sound classes $\Phi\subseteq\Psi$ and a $\Phi$-accessible $\V$-category $\A$, we can consider the full subcategories $\A_\Phi\subseteq\A_\Psi$ of $\A$ spanned respectively by the $\Phi$-presentable and $\Psi$-presentable objects of $\A$. By $\Phi$-accessibility of $\A$ and definition of $\Phi^{^+}_\Psi$ we know that
$$ \A_\Phi\subseteq \Phi^{^+}_\Psi\A_\Phi\subseteq \Phi\+\A_\Phi\simeq \A. $$
Then, since the $\Psi$-presentable objects are closed under all existing $\Psi$-colimits, it follows that we have an inclusion
\begin{equation}\label{Phi-Psi-pres}
	 \Phi^{^+}_\Psi\A_\Phi\subseteq \A_\Psi
\end{equation}
as full subcategories of $\A$. In particular, when $\A=[\C\op,\V]$ is the presheaf $\V$-category on a small $\C$, then $\A_\Phi=\Phi\C$ (since it is the closure of the representable under $\Phi$-colimits) and $\A_\Psi=\Psi\C$; therefore the inclusion above can be rewritten as
\begin{equation}\label{Phi-Psi-free}
	 \Phi^{^+}_\Psi(\Phi\C)\subseteq \Psi\C.
\end{equation}
We are now ready to state and prove the following result which generalizes \cite[Theorem~2.3]{LT22:limits} to general sound classes of weights.

\begin{teo}\label{sharply}
	The following are equivalent for any given sound classes $\Phi\subseteq\Psi$: \begin{enumerate}\setlength\itemsep{0.25em}
		\item For any small $\V$-category $\C$ the inclusion~(\ref{sharp-inclusion}) above is an equivalence $$ \Psi\+(\Phi^{^+}_\Psi\C) \simeq \Phi\+\C.$$
		\item If $M\colon\C\op\to\V$ is a $\Phi$-flat weight, then $\tx{Ran}_{J\op}M\colon(\Phi^{^+}_\Psi\C)\op\to\V$ is $\Psi$-flat, where $J\colon\C\to\Phi^{^+}_\Psi\C$ is the inclusion.
		\item Every $\Phi$-accessible $\V$-category $\A$ is $\Psi$-accessible and the inclusion~(\ref{Phi-Psi-pres}) above is an equivalence  $$ \Phi^{^+}_\Psi\A_\Phi\simeq \A_\Psi.$$
		\item Every $\Phi$-accessible $\V$-category $\A$ is $\Psi$-accessible, and for any $\Phi\+$-cocontinuous $\V$-functor $F\colon\A\to \B$, between $\Phi$-accessible $\V$-categories, if $F(\A_\Phi)\subseteq\B_\Phi$ then also $F(\A_\Psi)\subseteq\B_\Psi$.
		\item For any small $\C$ the $\V$-category $\Phi\+\C$ is $\Psi$-accessible and the inclusion $J\colon \Phi\+\C\hookrightarrow[\C\op,\V]$ preserves the $\Psi$-presentable objects.
	\end{enumerate}
	When they hold, we say that {\em $\Phi$ is sharply less than $\Psi$}, and write $\Phi\trianglelefteq \Psi$.
\end{teo}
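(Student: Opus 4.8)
The goal is to prove the equivalence of five conditions that characterize the relation $\Phi\trianglelefteq\Psi$. My strategy is to establish a cycle of implications rather than prove each pair separately, since that minimizes redundant work. A natural cycle is $(1)\Rightarrow(2)\Rightarrow(5)\Rightarrow(3)\Rightarrow(4)\Rightarrow(1)$, though I would reorganize it once I see which steps share machinery. The unifying observation is that all five conditions are really saying the same thing in different guises: condition~(1) is the ``universal'' statement on free cocompletions, conditions~(3),(4) are its incarnation on general $\Phi$-accessible $\V$-categories, and conditions~(2),(5) are the ``pointwise'' reformulations in terms of flatness of Kan extensions and preservation of presentable objects. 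The bridge between the free and the general case is the fact (from Part~1, especially Proposition~\ref{K_Phi} and the $\Phi$-accessibility characterization) that every $\Phi$-accessible $\A$ is $\Phi\+(\A_\Phi)$; this lets me transport statements about $\Phi\+\C$ to statements about arbitrary $\A$.

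\emph{First}, for $(1)\Rightarrow(2)$ I would argue that under~(1) the weight $\tx{Ran}_{J\op}M$ exhibits $M$ as lying in $\Psi\+(\Phi^{^+}_\Psi\C)$; more precisely, since $\Phi\+\C\simeq\Psi\+(\Phi^{^+}_\Psi\C)$, the $\Phi$-flat weight $M$ on $\C$ corresponds to a $\Psi$-flat weight on $\Phi^{^+}_\Psi\C$, and a diagram chase identifies this with $\tx{Ran}_{J\op}M$ using that $J$ is fully faithful and that right Kan extension along $J\op$ computes the transported weight. \emph{Next}, for the passage to the general setting I would use Equation~(\ref{Phi-Psi-free}): condition~(5) says precisely that $\Phi^{^+}_\Psi(\Phi\C)\simeq\Psi\C$ together with $\Psi$-accessibility of $\Phi\+\C$, and this is the case $\A=[\C\op,\V]$ of~(3). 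The equivalence $(3)\Leftrightarrow(5)$ should then follow by the standard device of writing a general $\A$ as a $\Phi$-flat-reflective subcategory of a presheaf category and checking that the reflection interacts correctly with the $\Psi$-presentable objects, invoking that $\Psi$-presentables are closed under $\Psi$-colimits.

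\emph{For} $(3)\Rightarrow(4)$, given a $\Phi\+$-cocontinuous $F\colon\A\to\B$ preserving $\Phi$-presentables, I would use that $F$ restricts to $\A_\Phi\to\B_\Phi$ and hence (by functoriality of $\Phi^{^+}_\Psi$ applied to this restriction together with the equivalence $\Phi^{^+}_\Psi\A_\Phi\simeq\A_\Psi$) sends $\A_\Psi$ into $\B_\Psi$. The converse direction, closing the cycle back to~(1), is where I would specialize $F$ to the inclusion $J\colon\Phi\+\C\hookrightarrow[\C\op,\V]$ (or to a canonical comparison $\Psi\+(\Phi^{^+}_\Psi\C)\to\Phi\+\C$) and leverage the hypothesis $F(\A_\Phi)\subseteq\B_\Phi$ to force the inclusion~(\ref{sharp-inclusion}) to be essentially surjective, using that both sides already share the same $\Phi$-presentable objects.

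\emph{The main obstacle} I anticipate is $(1)\Rightarrow(2)$, or more precisely the careful identification of $\tx{Ran}_{J\op}M$ with the transported weight. The subtlety is that $\Phi^{^+}_\Psi\C$ sits inside $\Phi\+\C$ but the flatness one needs is with respect to $\Psi$, not $\Phi$; I must check that right Kan extending a $\Phi$-flat weight along the inclusion of $\C$ into $\Phi^{^+}_\Psi\C$ genuinely lands in the $\Psi$-flat weights, and this is exactly where soundness of $\Psi$ and the commutation of $\Psi\+$-colimits with $\Phi^{^+}_\Psi$-limits (as used to justify~(\ref{sharp-inclusion})) must be brought to bear. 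A secondary difficulty is bookkeeping around the possible non-smallness of Cauchy completions, which as the excerpt repeatedly warns must be tracked since $\A_\Phi$ need not be small; I would handle this by working throughout with $\Phi\+\C$ for small $\C$ and only afterward passing to the general $\Phi$-accessible $\A$ via its dense small subcategory.
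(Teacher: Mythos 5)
Your plan has two genuine gaps. The first and most serious: your cycle $(1)\Rightarrow(2)\Rightarrow(5)\Rightarrow(3)\Rightarrow(4)\Rightarrow(1)$ contains no argument for the edge leaving $(2)$. You sketch $(1)\Rightarrow(2)$, $(3)\Leftrightarrow(5)$, $(3)\Rightarrow(4)$ and $(4)\Rightarrow(1)$, but nowhere do you explain how the $\Psi$-flatness of $\tx{Ran}_{J\op}M$ is to be used, and this is precisely the heart of the theorem: it is the step that turns a statement about weights into $\Psi$-accessibility of an arbitrary $\Phi$-accessible $\A$. The paper's proof of $(2)\Rightarrow(3)$ does this by writing $A\cong \A(Z-,A)*Z$ for $Z\colon\A_\Phi\to\A$ the inclusion, observing that density of $\Phi^{^+}_\Psi\A_\Phi$ gives $A\cong\A(K-,A)*K$ with $\A(K-,A)\cong\tx{Ran}_{J\op}\A(Z-,A)$, which is $\Psi$-flat by $(2)$; the identification $\A_\Psi\simeq\Phi^{^+}_\Psi\A_\Phi$ then follows because $\A_\Psi$ is the Cauchy completion of $\Phi^{^+}_\Psi\A_\Phi$, which is already Cauchy complete. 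Without some version of this argument your conditions $(1)$--$(2)$ and $(3)$--$(5)$ remain two disconnected clusters, and the proof does not close.

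The second gap is your reading of condition $(5)$. It is not true that $(5)$ ``says precisely'' that $\Phi^{^+}_\Psi(\Phi\C)\simeq\Psi\C$ together with $\Psi$-accessibility of $\Phi\+\C$, nor that this is the case $\A=[\C\op,\V]$ of $(3)$: you are conflating $\Phi\+\C$ with $[\C\op,\V]$. The instance $\A=[\C\op,\V]$ of $(3)$ is exactly the equivalence~(\ref{Phi-Psi-free}), which the paper records as a corollary of the theorem, not as one of its conditions, and it says nothing about $\Phi\+\C$. Condition $(5)$ concerns the $\Psi$-presentable objects of $\Phi\+\C$, which (once the theorem is proved) are $\Phi^{^+}_\Psi\C$, not $\Psi\C$; relating $(5)$ to $(3)$ requires applying $(3)$ to $\A=\Phi\+\C$ and using $(\Phi\+\C)_\Phi\simeq\Q\C$ together with $\Phi^{^+}_\Psi(\Q\C)\simeq\Phi^{^+}_\Psi\C$. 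Relatedly, your ``standard device'' of writing a general $\Phi$-accessible $\A$ as a $\Phi$-flat-reflective subcategory of a presheaf category is unavailable: reflectivity would force $\A$ to be cocomplete, hence locally $\Phi$-presentable; the correct bridge is the equivalence $\A\simeq\Phi\+\C$ for a small $\C\subseteq\A_\Phi$, which you mention at the outset but do not invoke where it is needed. (A smaller point: in $(1)\Rightarrow(2)$ your ``diagram chase'' needs the specific fact that a $\Psi$-flat weight on $\Phi^{^+}_\Psi\C$ is $\Phi^{^+}_\Psi$-continuous, hence is the right Kan extension of its own restriction along $J\op$; full faithfulness of $J$ alone only gives $(\tx{Ran}_{J\op}M)J\op\cong M$, not the identification you want.)
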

\begin{proof}
	$(1)\Rightarrow(2)$. Let $M\colon\C\op\to\V$ be $\Phi$-flat, so that $M\in\Phi\+\C\simeq \Psi\+(\Phi^{^+}_\Psi\C) = \Phi\+\C$. As a consequence $M$ is a $\Psi$-flat colimit of elements in $\Phi^{^+}_\Psi\C$; that is, there exists $H\colon \D\to \Phi^{^+}_\Psi\C$ and $N\colon \D\op\to \V$ in $\Psi\+$ such that $M\cong N*ZH$, where $Z\colon \Phi^{^+}_\Psi\C\hookrightarrow[\C\op,\V]$ is the inclusion. By replacing $N$ with $\tx{Lan}_{J\op}N$ (since this is still in $\Psi\+$) we can assume that $\D=\Phi^{^+}_\Psi\C$ and $H=1_{\Phi^{^+}_\Psi\C}$, so that $M\cong N*Z$. From this it follows that
	\begin{align*}
		M(-) & \cong \tx{ev}_{(-)}(N*Z)\\ 
		& \cong N\square*\tx{ev}_{(-)}Z\square\\ 
		& \cong N\square*\Phi^{^+}_\Psi\C(J-,\square)\\ 
		& \cong NJ\op(-).
	\end{align*}
	
	Now, since $N\colon (\Phi^{^+}_\Psi\C)\op\to \V$ is $\Psi$-flat, it is in particular $\Phi^{^+}_\Psi$-continuous, and hence $N\cong \tx{Ran}_{J\op}(NJ\op)$. As a consequence
	$$ \tx{Ran}_{J\op}M \cong \tx{Ran}_{J\op}NJ\op\cong N $$
	is $\Psi$-flat.
	
	$(2)\Rightarrow(3)$. Let $\A$ be $\Phi$-accessible. Then $\Phi^{^+}_\Psi\A_\Phi\subseteq \A_\Psi$ by (\ref{Phi-Psi-pres}). Given $A\in\A$ we can write it as a $\Phi$-flat colimit of the form $A\cong \A(Z-,A)*Z$ where $Z\colon \A_\Phi\to \A$ is the inclusion. Let now $K\colon \Phi^{^+}_\Psi\A_\Phi\to \A$ and $J\colon\A_\Phi\to\Phi^{^+}_\Psi\A_\Phi$ also be the inclusions, so that we have $KJ=Z$. Since $\Phi^{^+}_\Psi\A_\Phi$ is dense in $\A$ we also have 
	$$ A\cong \A(K-,A)*K, $$
	but $\A(K-,A)\cong \tx{Ran}_{J\op}\A(Z-,A)$ is $\Psi$-flat by (2). This is enough to conclude that $\A$ is $\Psi$-accessible.
	
	Now, by the proof of \cite[3.9]{LT22:virtual} it follows that $\A_\Psi$ is the Cauchy completion of $\Phi^{^+}_\Psi\A_\Phi$, which is already Cauchy complete. Thus $\A_\Psi\simeq \Phi^{^+}_\Psi\A_\Phi$.
	
	$(3)\Rightarrow(1)$. Given a small $\V$-category $\C$, the free cocompletion $\Phi\+\C$ is $\Phi$-accessible by definition. Thus, by (3), $\Phi\+\C$ is also $\Psi$-accessible and $\Phi\+\C\simeq \Psi\+((\Phi\+\C)_\Psi)$. Now, again by (3), we have
	$$ (\Phi\+\C)_\Psi\simeq \Phi^{^+}_\Psi(\Phi\+\C)_\Phi \simeq \Phi^{^+}_\Psi(\Q\C)) \simeq \Phi^{^+}_\Psi\C $$
	where we used that $(\Phi\+\C)_\Phi\simeq \Q\C$ and that $\Phi^{^+}_\Psi$ contains the Cauchy weights. Putting the equivalences together we obtain that $ \Psi\+(\Phi^{^+}_\Psi\C) \simeq \Phi\+\C$.
	
	$(3)\Rightarrow(4)$. We only need to prove the second part of the statement. Assume that $F\colon\A\to \B$ is a $\Phi\+$-cocontinuous $\V$-functor between $\Phi$-accessible $\V$-categories such that $F(\A_\Phi)\subseteq\B_\Phi$. Then, given any $X\in\A_\Psi$, by (3) we can write $X$ as a $\Phi^{^+}_\Psi$-colimit of elements from $\A_\Phi$; since $F$ preserves $\Phi\+$-colimits it follows that $FX$ is a $\Phi^{^+}_\Psi$-colimit of objects from $\B_\Phi$. Then, since $\B_\Psi$ is closed under any existing $\Psi$-colimits and $\B_\Phi\subseteq\B_\Psi$, it follows that $FX\in\B_\Psi$.
	
	$(4)\Rightarrow(5)$. Assuming (4), the first part of (5) is trivial. For the statement about the inclusion $J\colon \Phi\+\C\hookrightarrow[\C\op,\V]$, notice that this preserves $\Phi\+$-colimits by definition and sends $(\Phi\+\C)_\Phi$ to $\Q\C\subseteq [\C\op,\V]$ which is contained in $[\C\op,\V]_\Phi=\Phi\C$. Thus $J$ preserves the $\Psi$-presentable objects by (4).
	
	$(5)\Rightarrow(3)$. Any $\Phi$ accessible $\A$ can be written as $\A\simeq\Phi\+\A_\Phi$, and thus is $\Psi$-accessible by the first part of (5). Under such equivalence we have $\A_\Psi\simeq (\Phi\+\A_\Phi)_\Psi $. By the second part of (5) it follows that
	$$(\Phi\+\A_\Phi)_\Psi\subseteq [\A_\Phi\op,\V]_\Psi=\Psi(\A_\Phi).$$
	Therefore
	$$ \A_\Psi \subseteq \Phi\+(\A_\Phi)\cap \Psi(\A_\Phi)=\Phi^{^+}_\Psi(\A_\Phi);$$
	since the other inclusion always holds, it follows that $\A_\Psi\simeq \Phi^{^+}_\Psi\A_\Phi$.
\end{proof}

As a consequence:

\begin{cor}
	The sharply less relationship $\triangleleft$ between sound classes of weights is transitive.
\end{cor}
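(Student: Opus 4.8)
The goal is to show that if $\Phi\trianglelefteq\Psi$ and $\Psi\trianglelefteq\Omega$ for sound classes $\Phi\subseteq\Psi\subseteq\Omega$, then $\Phi\trianglelefteq\Omega$. The cleanest route is to verify one of the five equivalent conditions of Theorem~\ref{sharply}; I would work with condition~(3), since it is phrased entirely in terms of accessibility of $\V$-categories and is the most transparently transitive. Thus I want to show: (a) every $\Phi$-accessible $\V$-category is $\Omega$-accessible, and (b) for every such $\A$ there is an equivalence $\Phi^{^+}_\Omega\A_\Phi\simeq\A_\Omega$.

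For part (a), let $\A$ be $\Phi$-accessible. By $\Phi\trianglelefteq\Psi$ (condition~(3)) it is $\Psi$-accessible, and then by $\Psi\trianglelefteq\Omega$ (condition~(3) again) it is $\Omega$-accessible. This chaining is immediate and requires no extra work beyond invoking the hypotheses.

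For part (b), I would build the equivalence by composing the two given equivalences. From $\Psi\trianglelefteq\Omega$ applied to the $\Psi$-accessible $\V$-category $\A$ we get $\A_\Omega\simeq\Psi^{^+}_\Omega\A_\Psi$, and from $\Phi\trianglelefteq\Psi$ applied to $\A$ we get $\A_\Psi\simeq\Phi^{^+}_\Psi\A_\Phi$. Substituting the second into the first yields
\begin{equation*}
	\A_\Omega\simeq \Psi^{^+}_\Omega\bigl(\Phi^{^+}_\Psi\A_\Phi\bigr),
\end{equation*}
so the task reduces to identifying $\Psi^{^+}_\Omega(\Phi^{^+}_\Psi\A_\Phi)$ with $\Phi^{^+}_\Omega\A_\Phi$. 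Here I expect the main obstacle to lie: one must show that freely adding $\Omega$-presentable-and-$\Psi$-flat colimits on top of the $\Phi$-flat-and-$\Psi$-bounded completion of $\A_\Phi$ reproduces exactly the $\Phi$-flat-and-$\Omega$-bounded completion. The natural tool is the free-cocompletion identity $\Psi^{^+}_\Omega(\Phi^{^+}_\Psi\C)\simeq\Phi^{^+}_\Omega\C$ for a small $\C=\A_\Phi$, which is precisely the instance of condition~(1) of Theorem~\ref{sharply} obtained by iterating the two hypotheses; indeed, applying condition~(1) of $\Phi\trianglelefteq\Psi$ and then of $\Psi\trianglelefteq\Omega$ to $\C$ gives $\Omega^+(\Psi^{^+}_\Omega(\Phi^{^+}_\Psi\C))\simeq\Omega^+(\Phi^{^+}_\Omega\C)$, and one extracts the statement about presentable objects by intersecting with the appropriate saturated class. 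An alternative, perhaps smoother, strategy is to prove transitivity directly through condition~(1): verify the single free-cocompletion equivalence $\Omega^+(\Phi^{^+}_\Omega\C)\simeq\Phi^+\C$ for every small $\C$ by telescoping $\Omega^+(\Phi^{^+}_\Omega\C)\simeq\Omega^+(\Psi^{^+}_\Omega(\Phi^{^+}_\Psi\C))\simeq\Psi^+(\Phi^{^+}_\Psi\C)\simeq\Phi^+\C$, using each hypothesis once. The delicate point in either approach is the bookkeeping of the intersected flatness-and-boundedness conditions—ensuring $\Phi^{^+}_\Psi$ followed by $\Psi^{^+}_\Omega$ collapses to $\Phi^{^+}_\Omega$—which hinges on $\Phi\+\cap\Omega = \Phi\+\cap\Psi\cap\Omega$ given the saturation assumptions in force for this section, together with the fact that a $\Psi$-flat weight is $\Phi$-flat whenever $\Phi\subseteq\Psi$.
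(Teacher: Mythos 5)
Your reduction of transitivity to the collapse $\Psi^{^+}_\Omega(\Phi^{^+}_\Psi\C)\simeq\Phi^{^+}_\Omega\C$ is the right shape, but the identity you say it ``hinges on'' is false. Since $\Psi\subseteq\Omega$, your identity $\Phi\+\cap\Omega=\Phi\+\cap\Psi\cap\Omega$ just reads $\Phi^{^+}_\Omega=\Phi^{^+}_\Psi$, and this fails in general: take $\Phi=\Psi=\P_{\aleph_0}$ and $\Omega=\P_{\aleph_1}$ (note $\trianglelefteq$ is reflexive, because $\Phi^{^+}_\Phi\A_\Phi\simeq\Q\A_\Phi\simeq\A_\Phi$ as $\A_\Phi$ is Cauchy complete, and $\P_{\aleph_0}\trianglelefteq\P_{\aleph_1}$ since $\aleph_0\triangleleft\aleph_1$). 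Then $\Phi^{^+}_\Psi=\Q$ consists of the Cauchy weights, while $\Phi^{^+}_\Omega$ contains, e.g., the conical weight for colimits of countable chains, which is not Cauchy. What is true, and what you actually need, are the two one-sided inclusions $\Phi^{^+}_\Psi\subseteq\Phi^{^+}_\Omega$ (because $\Psi\subseteq\Omega$) and $\Psi^{^+}_\Omega\subseteq\Phi^{^+}_\Omega$ (because $\Psi\+\subseteq\Phi\+$ when $\Phi\subseteq\Psi$), together with the fact that, $\Phi\+$ and $\Omega$ both being saturated, the subcategory $\Phi^{^+}_\Omega\C\subseteq[\C\op,\V]$ (identified, as in the Remark following the definition of $\Phi^{^+}_\Psi$, with the weights over $\C$ lying in $\Phi\+\cap\Omega$) is closed under $\Phi^{^+}_\Psi$- and $\Psi^{^+}_\Omega$-colimits. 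This yields only an inclusion $\Psi^{^+}_\Omega(\Phi^{^+}_\Psi\C)\subseteq\Phi^{^+}_\Omega\C$, not an equivalence, but an inclusion suffices: sandwiching, $\Phi\+\C\simeq\Omega\+(\Psi^{^+}_\Omega(\Phi^{^+}_\Psi\C))\subseteq\Omega\+(\Phi^{^+}_\Omega\C)\subseteq\Phi\+\C$, where the first equivalence is your telescoping and the last inclusion is the paper's inclusion~(\ref{sharp-inclusion}) for the pair $\Phi\subseteq\Omega$; hence everything is an equality and condition~(1) of Theorem~\ref{sharply} holds for $\Phi\trianglelefteq\Omega$. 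Note also that your proposed move of ``extracting'' the collapse from $\Omega\+(\Psi^{^+}_\Omega(\Phi^{^+}_\Psi\C))\simeq\Omega\+(\Phi^{^+}_\Omega\C)$ by intersecting with a saturated class cannot work as stated: distinct full subcategories can have the same closure under $\Omega$-flat colimits (for instance $\C$ and $\Q\C$), so agreement after applying $\Omega\+$ does not recover the subcategories themselves.

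You should also be aware that the paper's own proof is a one-liner via condition~(4) of Theorem~\ref{sharply}, which chains with no free-cocompletion bookkeeping at all: accessibility chains trivially, and if $F\colon\A\to\B$ is $\Phi\+$-cocontinuous between $\Phi$-accessible $\V$-categories with $F(\A_\Phi)\subseteq\B_\Phi$, then $\Phi\trianglelefteq\Psi$ gives $F(\A_\Psi)\subseteq\B_\Psi$; since $\A$ and $\B$ are now $\Psi$-accessible and $F$ is automatically $\Psi\+$-cocontinuous (again because $\Psi\+\subseteq\Phi\+$), the hypothesis $\Psi\trianglelefteq\Omega$ applies verbatim to give $F(\A_\Omega)\subseteq\B_\Omega$. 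Condition~(4) is phrased precisely so that transitivity is immediate; your choice of conditions~(3)/(1) is workable after the repair above, but it is intrinsically the heavier route, and it is exactly where your error occurred.
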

\begin{proof}
	It follows from point (4) of Theorem~\ref{sharply} above.
\end{proof}

\begin{obs}
	When $\Phi$ and $\Psi$ are respectively $\P_\alpha$ and $\P_\beta$, for $\alpha<\beta$, then it is enough to require that $\alpha$-accessibility implies $\beta$-accessibility to obtain that $\P_\alpha\triangleleft\P_\beta$: the additional conditions in (3) and (4) are then automatically satisfied (see~\cite[Proposition~2.3.11]{MP89:libro} for the ordinary result; the enriched version follows from~\cite[Theorem~3.25]{LT22:virtual}). It follows that $\P_\alpha\triangleleft\P_\beta$ if and only if $\alpha\triangleleft\beta$ as regular cardinals. 
\end{obs}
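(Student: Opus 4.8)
The plan is to read the Observation through the lens of Theorem~\ref{sharply}, specializing $\Phi=\P_\alpha$ and $\Psi=\P_\beta$. In this case $\Phi$-flat colimits are exactly $\alpha$-filtered colimits, the $\Phi$-presentable objects $\A_\Phi$ are the $\alpha$-presentable objects (abbreviate them $\A_\alpha$), the $\Psi$-presentable objects $\A_\Psi$ are the $\beta$-presentable objects $\A_\beta$, and $\Phi^{^+}_\Psi=\P_\alpha\+\cap\P_\beta$ is the class of $\beta$-small $\alpha$-filtered weights; thus $\Phi^{^+}_\Psi\A_\alpha$ is the closure of $\A_\alpha$ under $\beta$-small $\alpha$-filtered colimits. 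The first thing I would record is the trivial implication: by Theorem~\ref{sharply}, $\P_\alpha\triangleleft\P_\beta$ entails (via any of conditions (3)--(5)) that every $\alpha$-accessible $\V$-category is $\beta$-accessible, so the nontrivial content lies in the converse.

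For the converse I would verify condition (3) of Theorem~\ref{sharply} assuming only that $\alpha$-accessibility implies $\beta$-accessibility. The first half of (3) is then precisely the hypothesis, so the work is to establish the additional equivalence $\Phi^{^+}_\Psi\A_\alpha\simeq\A_\beta$ for every $\alpha$-accessible $\A$; since the inclusion~(\ref{Phi-Psi-pres}) always holds, only the reverse inclusion $\A_\beta\subseteq\Phi^{^+}_\Psi\A_\alpha$ needs proof. Unwound, this is exactly the classical structural statement that every $\beta$-presentable object is a $\beta$-small $\alpha$-filtered colimit of $\alpha$-presentable objects. This is the step I expect to be the main obstacle, and it is precisely what is imported from \cite[Proposition~2.3.11]{MP89:libro} in the ordinary case and from \cite[Theorem~3.25]{LT22:virtual} in the enriched case; I would invoke these rather than reprove them.

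Once the decomposition is in hand, condition (4) follows painlessly and can serve as a sanity check: if $F\colon\A\to\B$ is $\P_\alpha\+$-cocontinuous (that is, preserves $\alpha$-filtered colimits) and sends $\A_\alpha$ into $\B_\alpha$, then for $X\in\A_\beta$ we write $X$ as a $\beta$-small $\alpha$-filtered colimit of objects of $\A_\alpha$; such a colimit is in particular $\alpha$-filtered, so $F$ preserves it, the diagram lands in $\B_\alpha\subseteq\B_\beta$, and since $\B_\beta$ is closed under $\beta$-small colimits we get $FX\in\B_\beta$. Finally, to obtain the last sentence of the Observation I would combine the equivalence just established, namely that $\P_\alpha\triangleleft\P_\beta$ holds if and only if $\alpha$-accessibility implies $\beta$-accessibility, with the classical theorem of \cite{MP89:libro} identifying this accessibility-raising property with the cardinal relation $\alpha\triangleleft\beta$, yielding $\P_\alpha\triangleleft\P_\beta$ if and only if $\alpha\triangleleft\beta$.
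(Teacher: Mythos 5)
Your proposal is correct and takes essentially the same route as the paper: the remark is justified there precisely by specializing Theorem~\ref{sharply} (condition (3), with (4) then following by the colimit-preservation argument you give) and importing the decomposition of $\beta$-presentables as ($\beta$-small $\alpha$-filtered) colimits of $\alpha$-presentables from \cite[Proposition~2.3.11]{MP89:libro} in the ordinary case and \cite[Theorem~3.25]{LT22:virtual} in the enriched case, exactly as you do. One cosmetic caveat: since the classes in this section are saturated, $\Phi^{^+}_\Psi=\P_\alpha\+\cap\P_\beta$ also contains the Cauchy (retract) weights, so the cited results --- which produce $\beta$-presentables only as \emph{retracts} of $\beta$-small $\alpha$-filtered colimits of $\alpha$-presentables --- still land inside $\Phi^{^+}_\Psi\A_\Phi$; your gloss of $\Phi^{^+}_\Psi$ as ``the $\beta$-small $\alpha$-filtered weights'' should be read with that in mind.
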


\begin{cor}
	Given sound classes $\Phi\trianglelefteq \Psi$, the inclusion~(\ref{Phi-Psi-free}) is an equivalence
	$$ \Phi^{^+}_\Psi(\Phi\C)\simeq \Psi\C $$
	for any small $\V$-category $\C$.
\end{cor}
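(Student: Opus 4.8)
The plan is to obtain this as the special case of Theorem~\ref{sharply}(3) applied to the presheaf $\V$-category $\A=[\C\op,\V]$, which is legitimate precisely because $\Phi\trianglelefteq\Psi$ guarantees all the equivalent conditions of that theorem hold simultaneously.

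First I would check that $[\C\op,\V]$ is $\Phi$-accessible, so that Theorem~\ref{sharply}(3) applies to it. By Proposition~\ref{phi-pres-presheaves} the presheaf category $[\C\op,\V]$ is locally $\Phi$-presentable, and hence $\Phi$-accessible by the implication $(1)\Rightarrow(2)$ of Theorem~\ref{1stchar}. Since $\C$ is assumed small, $[\C\op,\V]$ is a genuine $\V$-category, so the theorem applies without any set-theoretic caveat.

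Next, invoking condition (3) of Theorem~\ref{sharply} with $\A=[\C\op,\V]$ yields the equivalence $\Phi^{^+}_\Psi\A_\Phi\simeq\A_\Psi$. It then remains only to identify the two sides in terms of free cocompletions. The $\Phi$-presentable objects of $[\C\op,\V]$ are exactly the closure of the representables under $\Phi$-colimits, namely $\Phi\C$; likewise $[\C\op,\V]_\Psi=\Psi\C$. Both identifications were already recorded in the discussion preceding~(\ref{Phi-Psi-free}), so substituting them turns the equivalence $\Phi^{^+}_\Psi\A_\Phi\simeq\A_\Psi$ into $\Phi^{^+}_\Psi(\Phi\C)\simeq\Psi\C$, which is exactly the assertion that the inclusion~(\ref{Phi-Psi-free}) is an equivalence.

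I do not expect any genuine obstacle here: all the substantive work has already been carried out in establishing Theorem~\ref{sharply}, and this corollary is a direct specialization of part (3) of that theorem. The only point requiring (entirely routine) verification is the identification of the $\Phi$- and $\Psi$-presentable objects of the presheaf $\V$-category, which is the standard fact that the presentable objects of $[\C\op,\V]$ are the closure of the representables under the corresponding colimits.
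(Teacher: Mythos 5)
Your proposal is correct and is essentially the same argument the paper gives: apply Theorem~\ref{sharply}(3) to $\A=[\C\op,\V]$ and use the identifications $\A_\Phi=\Phi\C$, $\A_\Psi=\Psi\C$ recorded before~(\ref{Phi-Psi-free}). Your added justification that the presheaf $\V$-category is $\Phi$-accessible (via Proposition~\ref{phi-pres-presheaves} and Theorem~\ref{1stchar}) is a detail the paper leaves implicit, but it fills in exactly the step the paper assumes.
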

\begin{proof}
	Given $\C$, consider the $\Phi$-accessible $\V$-category $\K:=[\C\op,\V]$. Then $\K_\Phi=\Phi\C$ and $\K_\Psi=\Psi\C$. Thus the result follows from point (3) of the theorem above.
\end{proof}

Below, given a regular cardinal $\beta$, we denote $\Phi^{^+}_\beta:=\Phi^{^+}_{\P_\beta}$. 

\begin{teo}\label{rainsing}
	For any weakly sound class $\Phi$ there exists a regular cardinal $\alpha$ for which $$\Phi\triangleleft\P_\alpha.$$ Such an $\alpha$ can be chosen to be any regular cardinal sharply greater than some given $\beta$ which satisfies $\Phi^{^+}_\beta(\Phi\C)\simeq \P_\beta\C$, induced by the inclusion~(\ref{Phi-Psi-pres}), for any small $\V$-category $\C$.
\end{teo}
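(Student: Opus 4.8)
The goal is to produce, for any weakly sound $\Phi$, a regular cardinal $\alpha$ with $\Phi\triangleleft\P_\alpha$. By Theorem~\ref{sharply}, it suffices to verify one of the five equivalent conditions for the pair $(\Phi,\P_\alpha)$; the most tractable entry point is condition~(3), which asks that every $\Phi$-accessible $\A$ be $\P_\alpha$-accessible together with the identification $\Phi^{^+}_\alpha\A_\Phi\simeq\A_{\P_\alpha}$. The strategy splits into two halves: first secure $\P_\alpha$-accessibility of every $\Phi$-accessible $\V$-category (raising the index up to the cardinal-indexed world), and then upgrade this to the \emph{sharp} relationship by arranging the crucial free-cocompletion identity $\Phi^{^+}_\beta(\Phi\C)\simeq\P_\beta\C$ quoted in the statement. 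Since the statement itself already isolates the sufficient hypothesis---find $\beta$ with $\Phi^{^+}_\beta(\Phi\C)\simeq\P_\beta\C$ for all small $\C$, then take $\alpha$ sharply greater than $\beta$---the real content is (i) producing such a $\beta$, and (ii) propagating the sharp relationship from $\beta$ to $\alpha$.

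\textbf{Finding $\beta$.}
First I would invoke the known fact (cited in the excerpt from~\cite{LT22:limits, BQR98, LT22:virtual}) that every $\Phi$-accessible $\V$-category is $\gamma$-accessible for arbitrarily large $\gamma$, so there is no obstruction to mere $\P_\beta$-accessibility for large enough $\beta$. The delicate point is the comparison of free cocompletions $\Phi^{^+}_\beta(\Phi\C)\simeq\P_\beta\C$. I would analyze this by noting that $\Phi\C$ is small (as $\Phi$ is locally small) and that $\P_\beta\C$ is the closure of the representables under $\beta$-small weighted colimits, equivalently the $\beta$-presentable objects of $[\C\op,\V]$. The inclusion $\Phi^{^+}_\beta(\Phi\C)\subseteq\P_\beta\C$ is~(\ref{Phi-Psi-free}); for the reverse I would show every $\beta$-presentable presheaf on $\C$ is a $\Phi$-flat, $\beta$-small colimit of objects of $\Phi\C$. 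Here I would use that $\V_0$ is locally presentable (hence locally $\lambda$-presentable as a closed category for some $\lambda$), pick $\beta$ large regular with $\beta\rhd\lambda$ and $\beta$ bounding the size of $\Phi\C$ for representative small $\C$, and appeal to the characterization that $\Phi$-flat weights are those whose $M*-$ is $\Phi$-continuous, together with the weak soundness of $\Phi$ via the Proposition characterizing it (every object of $[\C\op,\V]$ is a $\Phi$-flat colimit of objects from $\Phi\C$). A $\beta$-smallness/cardinal-arithmetic argument then cuts this down to a $\beta$-small colimit.

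\textbf{From $\beta$ to $\alpha$ and the main obstacle.}
Once $\beta$ is in hand, I would choose $\alpha\rhd\beta$ (sharply greater, in the classical sense for regular cardinals, which is available by~\cite{MP89:libro}). The final step is to chase the equivalence up: combining $\Phi^{^+}_\beta(\Phi\C)\simeq\P_\beta\C$ with $\P_\beta\triangleleft\P_\alpha$ (the classical sharp inequality, giving $(\P_\beta)^{^+}_\alpha(\P_\beta\C)\simeq\P_\alpha\C$ via condition~(5) and the preceding Remark) and transitivity-type bookkeeping to conclude $\Phi^{^+}_\alpha(\Phi\C)\simeq\P_\alpha\C$, whence condition~(3) of Theorem~\ref{sharply} holds for $(\Phi,\P_\alpha)$. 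I expect the \emph{main obstacle} to be establishing the existence of a single $\beta$ for which $\Phi^{^+}_\beta(\Phi\C)\simeq\P_\beta\C$ holds \emph{uniformly} in $\C$: the cardinal must simultaneously dominate $\lambda$ (so that $\P_\beta$ behaves well relative to the closed structure), bound the presentation rank of all the $\Phi$-flat weights arising as $\beta$-presentable presheaves, and be compatible with the $\Phi$-flatness of the comparison weights. Pinning down this cardinal---and verifying that $\beta$-presentability of a presheaf forces a $\beta$-\emph{small} $\Phi$-flat presentation---is where the enriched cardinal arithmetic genuinely bites, and where I would spend most of the proof.
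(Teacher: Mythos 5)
Your second half contains a genuine gap. After obtaining the identity at level $\beta$ and choosing $\alpha\triangleright\beta$, your ``transitivity-type bookkeeping'' only produces (at best) the presheaf-level identity $\Phi^{^+}_\alpha(\Phi\C)\simeq\P_\alpha\C$, i.e.\ the instance of~(\ref{Phi-Psi-free}) for $\A=[\C\op,\V]$; but this does \emph{not} yield condition~(3) of Theorem~\ref{sharply}. Condition~(3) quantifies over \emph{all} $\Phi$-accessible $\A$ and, crucially, requires each of them to be $\P_\alpha$-accessible; the presheaf identity is a \emph{consequence} of sharpness (this is exactly the Corollary following Theorem~\ref{sharply}), not a sufficient condition for it. Your only gesture toward the accessibility half is the citation that every $\Phi$-accessible $\V$-category is $\gamma$-accessible for arbitrarily large $\gamma$, but there $\gamma$ depends on the category $\A$ --- removing precisely this dependence is the whole point of Theorem~\ref{rainsing}, so it cannot be invoked to dispose of that half. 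The statement of the theorem itself signals the issue: the hypothesis is the identity at level $\beta$, yet the conclusion is only $\Phi\triangleleft\P_\alpha$ for $\alpha$ \emph{sharply greater} than $\beta$; if the identity at a given level implied sharpness at that same level, one could simply take $\alpha=\beta$.

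What the paper does instead is verify condition~(5): for each small $\C$ it exhibits $\Phi\+\C$ as the intersection (a pullback) of $\tx{Lan}_{H\op}$ and $\tx{Ran}_{H\op}\colon[\C\op,\V]\to[(\Phi\C)\op,\V]$, where $H\colon\C\hookrightarrow\Phi\C$ is the inclusion, using soundness to identify $\Phi$-flat presheaves with those whose left and right Kan extensions along $H\op$ agree. By \cite[Lemma~3.24]{LT22:virtual} it then suffices to check that both Kan extensions preserve $\beta$-flat colimits and $\beta$-presentable objects; the level-$\beta$ identity $\P_\beta\C=\Phi^{^+}_\beta(\Phi\C)$ is used exactly once, to show that $\tx{Ran}_{H\op}$ sends $\beta$-presentables to $\beta$-presentables, and the hypothesis $\alpha\triangleright\beta$ is what raises the index of accessibility of the intersection from $\beta$ to $\alpha$. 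This step has no counterpart in your proposal. Your first half (finding $\beta$) is in the right spirit but also vaguer than the paper's: one cannot ``bound the size of $\Phi\C$ for representative small $\C$'' since $\C$ ranges over a proper class; the paper instead fixes $\beta'$ with $\Phi\subseteq\P_{\beta'}$, bounds the $\Phi$-flat decompositions (via $\Phi\+(\Phi(-))\simeq\P(-)$) of a small generating \emph{set} $S$ of $\beta'$-small weights, takes $\beta\triangleright\beta'$ above $\sup_{M\in S}\beta_M$, and gets the reverse inclusion $\P_\beta\C\subseteq\Phi^{^+}_\beta(\Phi\C)$ by decomposing $\beta$-small colimits into $\beta'$-small ones and $\beta$-small $\beta'$-filtered ones.
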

\begin{proof}
	Given $\Phi$, the first step of the proof is to find a $\beta$ for which $\Phi^{^+}_\beta(\Phi\C)\simeq \P_\beta\C$ for any small $\V$-category $\C$.
	
	For that, consider the first $\beta'$ such that $\Phi\subseteq\P_{\beta'}$. Then, there exists $\beta\triangleright\beta'$ for which $\P_{\beta'}\C\subseteq \Phi^{^+}_\beta(\Phi\C)$ for any $\C$. Indeed, fix a small set $S$ of weights whose saturation is $\P_{\beta'}$, then  since $\Phi\+(\Phi(-))\simeq\P(-)$ (by \cite[Proposition~4.9]{LT22:limits}), it follows that any weight $M\in S$ can be written as a $\Phi$-flat colimit of $\Phi$-colimits of representables. The $\Phi$-flat colimit involved in such description of $M$ will be $\beta_M$-small for sone $\beta_M$; thus it is enough to consider $\beta\geq\tx{sup}_{M\in S}\beta_M$. It is now clear that for such $\beta$ we have $\P_{\beta'}\C\subseteq \Phi^{^+}_\beta(\Phi\C)$ for any $\C$.
	
	Now, since $\Phi\subseteq\P_{\beta'}$ and $\beta'<\beta$ it follows that the inclusion
	$$ \Phi^{^+}_\beta(\Phi\C)\subseteq \P_\beta\C $$
	holds for any $\C$. For the converse note that, since $\beta\triangleright\beta'$, all $\beta$-small conical colimits are generated by the $\beta'$-small and the $\beta$-small $\beta'$-filtered conical ones. It follows that $\beta$-small copowers can also be written as $\beta$-small $\beta'$-filtered conical colimits of $\beta'$-small copowers. Therefore, any object of $\P_\beta\C$ (that is, any $\beta$-small weighted colimit of representables) can be rewritten as a $\beta$-small $\beta'$-filtered colimit from $\P_{\beta'}\C$. But $\P_{\beta'}\C\subseteq \Phi^{^+}_\beta(\Phi\C)$ and $\beta$-small $\beta'$-filtered colimits are in $\Phi^{^+}_\beta$ (since $\Phi\subseteq\P_{\beta'}$). Thus $ \Phi^{^+}_\beta(\Phi\C)=\P_\beta\C $.
	
	For the main objective of this theorem, consider then any $\alpha\triangleright\beta$. We shall prove that condition~(5) of Theorem~\ref{sharply}  above holds, so that $\Phi\triangleleft\P_\alpha$.
	
	The argument is similar to that of \cite[3.25]{LT22:virtual}. Let $\C$ be any small $\V$-category, then we can see $\Phi\+\C$ as the intersection
	\begin{center}
		\begin{tikzpicture}[baseline=(current  bounding  box.south), scale=2]
			
			\node (a0) at (0,0.9) {$\Phi\+\C$};
			\node (b0) at (1.5,0.9) {$[\C\op,\V]$};
			\node (c0) at (0,0) {$[\C\op,\V]$};
			\node (d0) at (1.5,0) {$[(\Phi\C)\op,\V]$};
			\node (e0) at (0.2,0.7) {$\lrcorner$};
			
			\path[font=\scriptsize]
			
			(a0) edge [right hook->] node [above] {$J$} (b0)
			(a0) edge [right hook->] node [left] {$J$} (c0)
			(b0) edge [right hook->] node [right] {$\tx{Lan}_{H\op}$} (d0)
			(c0) edge [right hook->] node [below] {$\tx{Ran}_{H\op}$} (d0);
		\end{tikzpicture}	
	\end{center}
	where $H\colon\C\hookrightarrow\Phi\C$ is the inclusion. Indeed, a $\V$-functor $M\colon\C\op\to\V$ lies in $\Phi\+\C$ if and only if $M$ is $\Phi$-flat, if and only if $\tx{Lan}_{H\op}M$ is $\Phi$-flat (by \cite[3.3]{LT22:virtual}), if and only if $\tx{Lan}_{H\op}M$ is $\Phi$-continuous (by soundness of $\Phi$), if and only if $\tx{Lan}_{H\op}M\cong \tx{Ran}_{H\op}M$ (since $(\Phi\C)\op$ is the free completion of $\C\op$ under $\Phi$-colimits).
	
	By~\cite[Lemma~3.24]{LT22:virtual} and its proof, to show that $\Phi\+\C$ is $\alpha$-accessible and that $J$ preserves the $\alpha$-presentable objects, it is enough to show that $\tx{Ran}_{H\op}$ and $\tx{Lan}_{H\op}$ both preserve $\beta$-flat colimits and the $\beta$-presentable objects.
	
	Note that the $\beta$-presentable objects of $[\C\op,\V]$ are spanned by $\P_\beta\C$. On the one hand, $\tx{Lan}_{H\op}$ is cocontinuous and sends representables to representables; therefore it sends $\P_\beta\C$ to $\beta$-small colimits of representables in $[(\Phi\C)\op,\V]$, which are $\beta$-presentable.\\
	On the other hand, since $\tx{Ran}_{H\op}$ is defined by computing certain $\Phi$-limits in $\V$, it therefore preserves all $\Phi$-flat colimits. Concerning preservation of the $\beta$-presentable objects, we shall use that $\P_\beta\C = \Phi^{^+}_\beta(\Phi\C)$. Now, $\tx{Ran}_{H\op}$ sends $\Phi\C$ to the representables in $[(\Phi\C)\op,\V]$ (by construction) and thus, since it preserves $\Phi^{^+}_\beta$-colimits (these being $\Phi$-flat), it sends $\P_\beta\C = \Phi^{^+}_\beta(\Phi\C)$ to $\Phi^{^+}_\beta$-colimits of representables in $[(\Phi\C)\op,\V]$, which are $\beta$-presentable objects (since $\Phi^{^+}_\beta\subseteq\P_\beta$).
\end{proof}

We now turn to some examples:

\begin{ese}\label{exe-sharp}$ $
	\begin{enumerate}
		\item If $\Phi=\P_\lambda$ we can choose $\beta=\lambda$; hence we obtain that $\P_\lambda\triangleleft\P_\alpha$ whenever $\lambda\triangleleft\alpha$. (This is also a consequence of \cite[3.25]{LT22:virtual}.)
		
		\item If $\V=\Set$ and $\Phi=\textbf{Fp}$ is the class for finite products, then $\textbf{Fp}^+_\omega(\textbf{Fp}\ \C)=\textbf{Fin}\ \C$ for any $\C$: one can obtain all finite colimits by adding first finite coproducts and then coequalizers of reflexive pairs. Therefore $\textbf{Fp}\triangleleft \P_{\aleph_1}$.
		(This was also observed in \cite[4.8]{AR11:articolo}.) It remains an open problem, even for $\V=\Set$, whether $\textbf{Fp}\triangleleft \P_{\aleph_0}$.
		
		\item If $\V=\bo{Cat}$ and $\Phi=\textbf{Fp}$ is again the class for finite products, then $\textbf{Fp}^+_\omega(\textbf{Fp}\ \C)=\textbf{Fin}\ \C$ for any $\C$ by Corollary~\ref{fp<fin}. Therefore $\textbf{Fp}\triangleleft \P_{\aleph_1}$.
		
		\item If $\V$ is a symmetric monoidal variety as in \cite{LT20:articolo} and $\Phi=\textbf{Fpp}$ is the class for finite products and powers by finitely presentable projective objects, then $\textbf{Fpp}^+_\omega(\textbf{Fpp}\ \C)=\textbf{Fin}\ \C$ for any $\C$: indeed, conical sifted colimits commute in $\V$ with $\textbf{Fpp}$-limits, and hence reflexive coequalizers are in $\textbf{Fpp}^+_\omega$. Thus one can argue again by saying that finite weighted colimits can be obtained from $\textbf{Fpp}$-limits by adding reflexive coequalizers. Therefore $\textbf{Fpp}\triangleleft \P_{\aleph_1}$.
		
		\item If $\V=\Set$ and $\Phi=\lambda\tx{-}\textbf{Conn}$ is the class for $\lambda$-small connected limits, then $\lambda\tx{-}\textbf{Conn}^+_\lambda(\lambda\tx{-}\textbf{Conn}\ \C)=\P_\lambda \C$ for any $\C$: note that $\lambda\tx{-}\textbf{Conn}^+$ contains all $\lambda$-small coproducts; thus it is enough to observe that any colimit of a $\lambda$-small diagram can be written as a $\lambda$-small coproduct of the ($\lambda$-small) colimits of its connected components. Therefore $\lambda\tx{-}\textbf{Conn}\triangleleft \P_{\lambda^+}$.
		
	\end{enumerate}
\end{ese}

\begin{cor}\label{arbitr-large}
	For any sound class $\Phi$ there are arbitrarily large regular cardinals $\lambda$ for which $\Phi\triangleleft\P_\lambda$.
\end{cor}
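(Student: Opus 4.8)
The plan is to obtain this as an immediate consequence of Theorem~\ref{rainsing}, whose real strength lies in the \emph{uniformity} of the cardinal it produces. First I would apply Theorem~\ref{rainsing} to the given sound class $\Phi$ to extract a single regular cardinal $\beta$, depending only on $\Phi$, for which $\Phi^{^+}_\beta(\Phi\C)\simeq\P_\beta\C$ holds for every small $\V$-category $\C$. The point to exploit is the second sentence of that theorem: once such a $\beta$ has been fixed, one has $\Phi\triangleleft\P_\alpha$ for \emph{every} regular cardinal $\alpha$ that is sharply greater than $\beta$. By the Remark identifying $\P_\gamma\triangleleft\P_\delta$ with $\gamma\triangleleft\delta$ as regular cardinals, ``sharply greater than $\beta$'' here means precisely a regular cardinal $\alpha$ with $\beta\triangleleft\alpha$.

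It then suffices to invoke the classical fact that, for any fixed regular cardinal $\beta$, the sharply less relation $\triangleleft$ on regular cardinals has arbitrarily large elements above $\beta$: there exist arbitrarily large regular cardinals $\alpha$ with $\beta\triangleleft\alpha$ (see~\cite[Section~2.3]{MP89:libro} and~\cite{AR94:libro}). Concretely, one may take $\alpha=\lambda^+$ for any strong limit cardinal $\lambda$ of cofinality at least $\beta$: then $\mu^{<\beta}\leq\lambda^{<\beta}=\lambda<\alpha$ for all $\mu<\alpha$, which together with the regularity of $\lambda^+$ forces $\beta\triangleleft\alpha$, and such $\lambda$ are unbounded in the cardinals. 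Combining the two observations, each of these arbitrarily large $\alpha$ satisfies $\Phi\triangleleft\P_\alpha$, which is exactly the assertion of Corollary~\ref{arbitr-large}.

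There is no genuine obstacle beyond Theorem~\ref{rainsing} itself; the only subtlety worth flagging is again the uniformity of $\beta$. This is precisely what separates the present statement from the earlier results of~\cite{BQR98,LT22:virtual}, where the index of accessibility could depend on the ambient $\V$-category: because the $\beta$ delivered by Theorem~\ref{rainsing} depends on $\Phi$ alone, a single choice of $\beta$ can be parlayed into a proper class of admissible $\alpha$ simply by quoting the set-theoretic fact that $\triangleleft$ is unbounded above $\beta$.
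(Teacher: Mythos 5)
Your proof is correct, and it reaches the statement by a slightly different mechanism than the paper. The paper's own proof uses only the bare existence part of Theorem~\ref{rainsing} (there is \emph{some} $\alpha$ with $\Phi\triangleleft\P_\alpha$) and then combines three further ingredients: the observation that $\P_\alpha\triangleleft\P_\lambda$ if and only if $\alpha\triangleleft\lambda$ as regular cardinals, the transitivity of $\triangleleft$ for sound classes (proved as a corollary of Theorem~\ref{sharply}), and the set-theoretic fact that for any $\alpha$ there are arbitrarily large $\lambda$ with $\alpha\triangleleft\lambda$ (\cite[Corollary~2.3.6]{MP89:libro}); the chain $\Phi\triangleleft\P_\alpha\triangleleft\P_\lambda$ then closes the argument. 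You instead exploit the uniformity clause in the second sentence of Theorem~\ref{rainsing}: once a $\beta$ with $\Phi^{^+}_\beta(\Phi\C)\simeq\P_\beta\C$ for all small $\C$ is fixed, \emph{every} regular $\alpha$ with $\beta\triangleleft\alpha$ already satisfies $\Phi\triangleleft\P_\alpha$, so the only remaining input is the same unboundedness fact from~\cite{MP89:libro}. This buys you independence from the transitivity corollary, at the cost of leaning on the stronger, uniform form of the theorem; conversely, the paper's route would survive even if Theorem~\ref{rainsing} were stated purely existentially. Your explicit witness $\alpha=\lambda^+$ for $\lambda$ a strong limit cardinal of cofinality at least $\beta$ is also correct --- regularity of $\lambda^+$ together with $\mu^{<\beta}\leq\lambda^{<\beta}=\lambda<\alpha$ for all $\mu<\alpha$ does force $\beta\triangleleft\alpha$, and such $\lambda$ are unbounded --- but it merely re-proves the cited fact from \cite{MP89:libro} and is not needed.
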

\begin{proof}
	This follows from the theorem above, the fact that $\triangleleft$ is transitive, and that, given $\alpha$, there are arbitrarily large $\lambda$ with $\alpha\triangleleft\lambda$ (see \cite[Corollary~2.3.6]{MP89:libro}).
\end{proof}

\section{Soundness and universal algebra}\label{UAsection}

In this section we extend the work of~\cite{RT23EUA} to the setting of weakly sound classes of weights. Indeed, while in~\cite{RT23EUA} the aim was to give a notion of $\lambda$-ary language and $\lambda$-ary equational theory (based on recursively generated terms) whose models characterize the $\V$-category of algebras of $\lambda$-ary monads on $\V$; here we extend these notions to obtain a characterization of the $\V$-category of algebras of those monads on $\V$ which preserve $\Phi$-flat colimits, for a given weakly sound class $\Phi$ of weights.

We start by recalling the notion of languages, structures, and terms introduced in \cite{RT23EUA}. The same notions of languages and structures were considered before in~\cite[Section~5]{LP}.

\begin{Def}[\cite{LP}]
	A single-sorted {\em (functional) language} $\mathbb L$ (over $\V$) is the data of a set of function symbols $f\colon(X,Y)$ whose arities $X$ and $Y$ are objects of $\V$.
\end{Def}

\begin{Def}[\cite{LP}]
	Given a language $\mathbb L$, an {\em $\mathbb L$-structure} is the data of an object $A\in\V$ together with a morphism $$f_A\colon A^X\to A^Y$$ in $\V$ for any function symbol $f\colon(X,Y)$ in $\mathbb L$.
	
	A {\em morphism of $\mathbb L$-structures} $h\colon A\to B$ is the data of a map $h\colon A\to B$ in $\V$ making the following square commute
	\begin{center}
		\begin{tikzpicture}[baseline=(current  bounding  box.south), scale=2]
			
			\node (a0) at (0,0.8) {$A^X$};
			\node (b0) at (1,0.8) {$B^X$};
			\node (c0) at (0,0) {$A^Y$};
			\node (d0) at (1,0) {$B^Y$};
			
			\path[font=\scriptsize]
			
			(a0) edge [->] node [above] {$h^X$} (b0)
			(a0) edge [->] node [left] {$f_A$} (c0)
			(b0) edge [->] node [right] {$f_B$} (d0)
			(c0) edge [->] node [below] {$h^Y$} (d0);
		\end{tikzpicture}	
	\end{center} 
	for any $f\colon(X,Y)$ in $\mathbb L$.
\end{Def}

The $\V$-category $\Str(\LL)$ of $\LL$-structured is defined in \cite[Definition~3.3]{RT23EUA}; it has $\LL$-structures as objects and morphisms of such as arrows. This comes together with a forgetful $\V$-functor $U\colon \Str(\LL)\to \V$ which has a left adjoint $F$.

The recursively generated terms generalize those introduced in \cite[Definition~4.1]{RT23EUA}, while extended terms were already introduced in \cite[Definition~3.2]{LP} (see \cite[Remark~4.6]{RT23EUA}).

\begin{Def}\label{0-terms}
	Let $\mathbb L$ be a language over $\V$. The class of \textit{$\mathbb L$-terms} is defined recursively as follows:
	\begin{enumerate}
		\item Every morphism $f\colon Y\to X$ of $\V$ is an $(X,Y)$-ary term; 
		\item Every function symbol $f:(X,Y)$ of $\mathbb L$ is an $(X,Y)$-ary term;
		\item If $t$ is a $(X,Y)$-ary term and $Z$ is in $\V$, then $t^Z$ is a $(Z\otimes X,Z\otimes Y)$-ary term;
		\item Given $t_J=(t_j)_{j\in J}$, where $J$ is small and $t_j$ is an $(X_j,Y_j)$-ary term, and $s$ an $(\sum_{j\in J} Y_j,W)$-ary term; then $s(t_J)$ is a $(\sum_{j\in J} X_j, W)$-ary term.
	\end{enumerate}
	An {\em extended term $t:(X,Y)$ for $\mathbb L$} is a morphism $t\colon FY\to FX$ in $\Str(\mathbb L)$.
\end{Def}

Given a (possibly extended) term $t:(X,Y)$ and $A\in\Str(\LL)$, we define the interpretation $t_A\colon A^X\to A^Y$ of $t$ in $A$ following \cite[Section~4]{RT23EUA}. Then one can talk about equations:

\begin{Def}[\cite{LP,RT23EUA}]
	An \textit{equation} between (possibly extended) terms is an expression of the form $$(s=t),$$ where $s$ and $t$ have the same arity. We say that an $\mathbb L$-structure $A$ {\em satisfies} such equation if $s_A=t_A$ in $\V$. \\
	Given a set $\mathbb E$ of equations in $\mathbb L$, we denote by $\Mod(\mathbb E)$ the full subcategory of $\Str(\mathbb L)$ spanned by those $\mathbb L$-structures that satisfy all equations in $\mathbb E$.
\end{Def}

A result that will be useful to characterize $\Mod(\EE)$ relying on its underlying objects and morphisms, and the fact that powers exist:

\begin{lema}\label{ord->enriched}
	Let $\K$ be a $\V$-category with powers and $U\colon \K\to\V$ a $\V$-functor that preserves them. Suppose there is a language $\LL$ and an equational $\LL$-theory $\EE$ together with an isomorphism of ordinary categories $G\colon \K_0\to \Mod(\EE)_0$ making the triangle
	\begin{center}
		\begin{tikzpicture}[baseline=(current  bounding  box.south), scale=2]
			
			\node (a0) at (0,0.8) {$\K_0$};
			\node (b0) at (1.3,0.8) {$\Mod(\EE)_0$};
			\node (c0) at (0.65,0) {$\V$};
			
			\path[font=\scriptsize]
			
			(a0) edge [->] node [above] {$G$} (b0)
			(a0) edge [->] node [left] {$U_0$} (c0)
			(b0) edge [->] node [right] {$(U_\EE)_0$} (c0);
		\end{tikzpicture}	
	\end{center} 
	commute. Then there exists a unique isomorphism of $\V$-categories $E\colon \K\to \Mod(\EE)$ with $E_0=G$ and for which $U_\EE\circ E=U$.
\end{lema}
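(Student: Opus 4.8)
The plan is to reconstruct the $\V$-enrichment of both $\K$ and $\Mod(\EE)$ as subobjects of the internal homs of $\V$, and then to identify these subobjects by means of $G$. Writing $B^X$ for the power of $B$ by $X$ (so that $U(B^X)\cong\V(X,UB)$ because $U$ preserves powers, and likewise for $U_\EE$), the candidate $E$ is forced: it must be $G$ on objects, while on hom-objects it must be a comparison $E_{A,B}\colon\K(A,B)\to\Mod(\EE)(GA,GB)$ lying over the common internal hom $\V(UA,UB)=\V(U_\EE GA,U_\EE GB)$, i.e.\ satisfying $(U_\EE)_{GA,GB}\circ E_{A,B}=U_{A,B}$. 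Everything then reduces to two facts: that $U_{A,B}$ and $(U_\EE)_{GA,GB}$ exhibit $\K(A,B)$ and $\Mod(\EE)(GA,GB)$ as the \emph{same} subobject of $\V(UA,UB)$, and that the resulting identification is $\V$-functorial.

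\textbf{Faithfulness and monomorphisms.} First I would record faithfulness. The forgetful $\V$-functor $U_\EE$ is faithful (a morphism of $\LL$-structures is an underlying map satisfying conditions), and $G$ is an isomorphism of ordinary categories, so from $(U_\EE)_0\circ G=U_0$ it follows that $U_0$ is faithful. Next, for any $A,B$ and any $X$, the power adjunction gives $\V_0(X,\K(A,B))\cong\K_0(A,B^X)$, and under this bijection $\V_0(X,U_{A,B})$ is carried to the action of $U_0$ on $\K_0(A,B^X)\to\V_0(UA,U(B^X))$ (this is exactly the compatibility of the $\V$-functor $U$ with powers). Since $U_0$ is faithful this is injective for every $X$, so by the Yoneda characterization of monomorphisms $U_{A,B}$ is a monomorphism in $\V$; the same argument (or the explicit description of the hom-objects of $\Str(\LL)$ as equalizers inside $\V(UA,UB)$) shows $(U_\EE)_{GA,GB}$ is a monomorphism. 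Thus $\K(A,B)$ and $\Mod(\EE)(GA,GB)$ are two subobjects of $\V(UA,UB)$, whose $X$-indexed families of generalized elements are, respectively, the $U_0$-image of $\K_0(A,B^X)$ and the $(U_\EE)_0$-image of $\Mod(\EE)_0(GA,(GB)^X)$, both sitting inside $\V_0(X,\V(UA,UB))$.

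\textbf{The crux: $G$ preserves powers.} The main point is to show $G(B^X)\cong(GB)^X$ compatibly with the power projections. Here I would use that $U_\EE$ not only preserves but \emph{creates} powers: powers in $\Str(\LL)$, and hence in the limit-closed full subcategory $\Mod(\EE)$, are created by the forgetful $\V$-functor. Because $U$ preserves the power cone of $B$ by $X$ and $(U_\EE)_0\circ G=U_0$, the image under $G$ of this cone is a cone in $\Mod(\EE)$ whose underlying $\V$-cone is the canonical power cone of $GB$ by $X$; creation of powers by $U_\EE$ then forces $G(B^X)$, together with $G$ of the projections, to \emph{be} the power $(GB)^X$. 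With power-preservation in hand, $G$ and the identity $(U_\EE)_0\circ G=U_0$ match the two families of generalized elements above, so the two subobjects of $\V(UA,UB)$ coincide; this produces a unique isomorphism $E_{A,B}\colon\K(A,B)\xrightarrow{\ \sim\ }\Mod(\EE)(GA,GB)$ over $\V(UA,UB)$.

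\textbf{Assembly and uniqueness.} Finally I would set $E$ to be $G$ on objects and $E_{A,B}$ on hom-objects. The $\V$-functoriality of $E$ (compatibility with composition and identities) can be checked after postcomposing with the monomorphism $(U_\EE)_{GA,GB}$, where the laws reduce to those already holding in $\K$ through $U$, using only that $U,U_\EE$ are $\V$-functors and that $E$ lies over $\V(UA,UB)$. The identity $U_\EE\circ E=U$ holds by construction, and $E_0=G$ follows since, applying $\V_0(I,-)$ to $(U_\EE)_{GA,GB}\circ E_{A,B}=U_{A,B}$ and using faithfulness of $(U_\EE)_0$, the action of $E_0$ on morphisms must coincide with that of $G$. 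Uniqueness is then immediate: any $E'$ with $U_\EE\circ E'=U$ and $E'_0=G$ has each $E'_{A,B}$ equal to the unique factorization of $U_{A,B}$ through the monomorphism $(U_\EE)_{GA,GB}$, so $E'=E$. The expected obstacle is exactly the middle step — establishing that $G$ preserves powers — which is where the hypothesis that $U$ preserves powers is genuinely used, via the creation of powers by $U_\EE$.
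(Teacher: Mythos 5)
Your overall skeleton coincides with the paper's: define $E$ as $G$ on objects, view $U_{A,B}$ and $(U_\EE)_{GA,GB}$ as (mono)morphisms into $[UA,UB]$, match their generalized elements $\V_0(X,-)$ using powers, obtain $E_{A,B}$ by Yoneda, and check functoriality and uniqueness by composing with the faithful/monic forgetful data. You are also right about where the real content lies: everything reduces to the compatibility $G(B^X)\cong (GB)^X$ over the comparison isomorphism $\phi\colon U(B^X)\to [X,UB]$, which is exactly the identification the paper makes when it rewrites $\Mod(\EE)_0(GA,G(B^X))$ as $\V_0(X,\Mod(\EE)(GA,GB))$, citing preservation of powers by $U$ and $U_\EE$.

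However, the argument you give for that crux step does not work. The ``power cone of $B$ by $X$'' in $\K$ is not ordinary-categorical data: it is the counit, an element of $\V_0(X,\K(B^X,B))$ (equivalently, the map $\phi$ itself), not a family of morphisms of $\K_0$. Since $G$ is only an isomorphism of ordinary categories, ``the image under $G$ of this cone'' is undefined. What you would actually need is that the underlying $\V$-cone $U\epsilon\in\V_0(X,[U(B^X),UB])$ lies in the subobject $\V_0(X,\Mod(\EE)(G(B^X),GB))$, i.e.\ that its transpose $\phi$ is a morphism of $\LL$-structures $G(B^X)\to (GB)^X$ --- but that is precisely the statement being proved, so the appeal to creation of powers by $U_\EE$ is circular. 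Creation (or reflection) cannot be triggered without first having a cone in $\Mod(\EE)$, and comparing underlying objects gives nothing: two objects of $\Mod(\EE)$ can have the same underlying object of $\V$ and fail to be isomorphic (two non-conjugate $\LL$-structures on one object), so $U_\EE G(B^X)\cong [X,UB]=U_\EE((GB)^X)$ by itself forces no isomorphism $G(B^X)\cong(GB)^X$. Your intuition is the conical one: when $\V=\Set$, the power $B^X$ is the product of copies of $B$ indexed by the points of $X$, its cone consists of honest morphisms $\epsilon_x\in\K_0(B^X,B)$, and $G$ preserves such conical limits, so your argument goes through; for a general $\V$ the power cone has no such pointwise description and the step collapses. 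Modulo this step your assembly, $E_0=G$, $U_\EE E=U$, and uniqueness arguments are correct and agree with the paper's; note, though, that the gap sits exactly at the point the paper itself passes over most quickly (``since $U$ and $U_\EE$ preserve powers, this can be rewritten as\dots''), so filling it would require an argument that neither your proposal nor the paper's text spells out.
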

\begin{proof}
	The $\V$-functor $E$ is defined on objects as $G$; on homs we need to construct maps $E_{AB}\colon\K(A,B)\to \Mod(\EE)(GA,GB)$ for which the following triangle
	 \begin{center}
	 	\begin{tikzpicture}[baseline=(current  bounding  box.south), scale=2]
	 		
	 		\node (a0) at (-0.4,0.8) {$\K(A,B)$};
	 		\node (b0) at (1.7,0.8) {$ \Mod(\EE)(GA,GB)$};
	 		\node (c0) at (0.65,0) {$[UA,UB]$};
	 		
	 		\path[font=\scriptsize]
	 		
	 		(a0) edge [dashed, ->] node [above] {$E_{AB}$} (b0)
	 		(a0) edge [->] node [left] {$U_{AB}\ \ $} (c0)
	 		(b0) edge [->] node [right] {$\ \ (U_\EE)_{AB}$} (c0);
	 	\end{tikzpicture}	
	 \end{center} 
	commutes. To do that, note that for any $X\in\V$ we there is a commutative triangle between the hom-sets of the underlying categories
	\begin{center}
		\begin{tikzpicture}[baseline=(current  bounding  box.south), scale=2]
			
			\node (a0) at (-0.4,0.8) {$\K_0(A,B^X)$};
			\node (b0) at (1.7,0.8) {$ \Mod(\EE)_0(GA,GB^X)$};
			\node (c0) at (0.65,-0.1) {$\V_0(UA,UB^X)$};
			
			\path[font=\scriptsize]
			
			(a0) edge [dashed, ->] node [above] {$G_{AB^X}$} (b0)
			(a0) edge [->] node [left] {$(U_0)_{AB^X}\ \ $} (c0)
			(b0) edge [->] node [right] {$\ \ ((U_\EE)_0)_{AB^X}$} (c0);
		\end{tikzpicture}	
	\end{center} 
	where  now, since $U$ and $U_\EE$ preserves powers, this can be rewritten as 
	\begin{center}
		\begin{tikzpicture}[baseline=(current  bounding  box.south), scale=2]
			
			\node (a0) at (-0.5,0.8) {$\V_0(X,\K(A,B))$};
			\node (b0) at (1.8,0.8) {$\V_0(X,\Mod(\EE)(GA,GB))$};
			\node (c0) at (0.65,-0.1) {$\V_0(X,[UA,UB])$};
			
			\path[font=\scriptsize]
			
			(a0) edge [dashed, ->] node [above] {$G'_{AB}$} (b0)
			(a0) edge [->] node [left] {$\V_0(X,U_{AB})\ \ $} (c0)
			(b0) edge [->] node [right] {$\ \ \V_0(X,(U_\EE)_{AB})$} (c0);
		\end{tikzpicture}	
	\end{center}
	where $G^X_{AB}$ is obtained from $G_{AB^X}$ by composing with the relevant isomorphism. By Yoneda there is then a unique isomorphism $E_{AB}\colon\K(A,B)\to \Mod(\EE)(GA,GB)$ such that $\V_0(X,E_{AB})=G^X_{AB}$. This completes the definition of the isomorphism $E$; the fact that $E$ is itself a $\V$-functor follows from the fact that $U$ and $U_\EE$ are and that $U_\EE$ is faithful.
\end{proof}

\subsection{Languages and theories with respect to a weakly sound class}\label{BackEUA}

We assume $\V$ to be a symmetric monoidal closed and locally presentable category, and we fix a small and dense generator $\A\subseteq\V$ and a locally small weakly sound class $\Phi$.

\begin{obs}
	For the results of the paper it would be enough to assume that $\V_0$ is locally bounded \cite[Section~6]{Kel82:libro} and that has a small dense generator.
	Essentially by definition, every $\V$ which is locally presentable satisfies these two properties. However, whether there are non locally presentable examples (even satisfying just the density hypothesis), is independent from ZFC, and actually equivalent to the Vopenka's principle~\cite[6.B]{AR94:libro}. Thus, we do not really lose much generality by assuming local presentability, and we can also use the results of \cite{RT23EUA}.
\end{obs}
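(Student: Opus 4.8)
The plan is to treat the three claims of the remark in turn. First I would dispatch the claim that every locally presentable $\V$ enjoys both properties, which is what ``essentially by definition'' refers to. Fixing a regular cardinal $\lambda$ with $\V_0$ locally $\lambda$-presentable, $\V_0$ is locally bounded in the sense of \cite[Section~6.1]{Kel82:libro} (locally presentable categories being the prototypical locally bounded examples, as recorded in \cite{LP21}), while the full subcategory $(\V_0)_\lambda$ of $\lambda$-presentable objects is small and dense and so supplies the required small dense generator $\A\subseteq\V$.

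Next I would argue that local boundedness together with a small dense generator is all the development actually consumes. Local presentability is used in Part~1 in only two guises: through reflection statements --- most conspicuously in $(3)\Rightarrow(1)$ of Theorem~\ref{lphip-char}, which invokes \cite[Theorem~6.5]{Kel82:libro} and needs precisely local boundedness --- and through the singular/nerve embeddings $\K\hookrightarrow[\C,\V]$ built in Section~\ref{main-presentable}, which require only a small dense generator. I would therefore audit each result imported into Section~\ref{UAsection} and check that every appeal to local presentability factors through one of these two features, together with the standing completeness and cocompleteness of $\V$. The one genuine caveat is that the results borrowed from \cite{RT23EUA} are stated over a locally presentable base; reusing them verbatim is exactly why local presentability is retained as a working hypothesis even though the arguments themselves would survive the weaker one.

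For the independence claim there is nothing to prove from scratch, only to locate. I would appeal to the set-theoretic equivalence of \cite[6.B]{AR94:libro}, by which the statement that every complete and cocomplete category carrying a small dense generator is locally presentable coincides with Vopenka's principle; the failure of the latter produces such a category that is not locally presentable, and conversely. Since Vopenka's principle is a large-cardinal hypothesis independent of ZFC, so is the existence of a non-locally-presentable $\V$ of the kind under consideration, which is the assertion to be justified.

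The hard part is the second assertion. Unlike the first, which is immediate from the definitions, and the third, which is a citation to an existing independence result, it is not a single lemma but a bookkeeping claim about the entire paper, and an honest verification must trace every use of local presentability through Sections~\ref{soundness} and~\ref{UAsection} --- and, in particular, confront the fact that \cite{RT23EUA} itself is phrased under local presentability. This is why the paper sensibly keeps the stronger hypothesis while recording, through the remark, that only its two consequences are logically needed.
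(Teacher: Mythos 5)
Your proposal is correct and takes essentially the same route as the paper: the remark carries no proof beyond its citations, and your three-part treatment --- the $\lambda$-presentable objects supplying a small dense generator together with local boundedness of locally presentable categories, an audit of the sections that depend on the standing assumption, and the appeal to \cite[6.B]{AR94:libro} for the equivalence with Vopenka's principle, with the caveat that reusing \cite{RT23EUA} verbatim is what keeps local presentability as the working hypothesis --- is exactly the intended justification. One inaccuracy worth fixing in your audit: the singular embeddings $\K\hookrightarrow[\G\op,\V]$ of Section~\ref{main-presentable} use a small strong/dense generator of the $\V$-category $\K$ (available there because $\V$ is merely assumed complete and cocomplete), not of $\V$ itself; the small dense generator $\A\subseteq\V$ is consumed only in Section~\ref{UAsection}, where it supplies the arities of the languages and powers the density argument in Proposition~\ref{monad->equations}.
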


We fix as subcategory of arities $\Phi\I$, the closure of $\I=\{I\}$ under $\Phi$-colimits in $\V$. (The notation is that of the free cocompletion under $\Phi$-colimits).

\begin{prop}
	The $\V$-category $\Phi\I$ is closed in $\V$ under tensor product, so that $\Phi\I$ has $\Phi\I$-copowers. Moreover $$\V\simeq \Phi\I\tx{-Pw}(\Phi\I\op,\V)$$
	induced by the singular $\V$-functor on the inclusion $H\colon \Phi\I\hookrightarrow\V$.
\end{prop}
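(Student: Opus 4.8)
The plan is to establish the two assertions in turn: first that $\Phi\I$ is closed under $\otimes$ in $\V$ (which immediately supplies the claimed copowers), and then that the singular $\V$-functor of $H$ is fully faithful and lands equivalently onto the power-preserving functors.

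For the closure under tensor product, recall that $\Phi\I$ is the closure of $\{I\}$ in $\V$ under $\Phi$-colimits, and that since $\V$ is symmetric monoidal closed the functor $-\otimes-$ is cocontinuous in each variable separately. I would then consider the full subcategory
$$ \S := \{X\in\V : X\otimes Y\in\Phi\I \tx{ for all } Y\in\Phi\I\}. $$
Since $I\otimes Y\cong Y$, we have $I\in\S$; and if $X$ is a $\Phi$-colimit of objects $X_i\in\S$, then for each $Y\in\Phi\I$ the object $X\otimes Y\cong\tx{colim}\,(X_i\otimes Y)$ is a $\Phi$-colimit of objects of $\Phi\I$, hence lies in $\Phi\I$; thus $\S$ is itself closed under $\Phi$-colimits. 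As $\Phi\I$ is the smallest full subcategory of $\V$ containing $I$ and closed under $\Phi$-colimits, we conclude $\Phi\I\subseteq\S$, that is $X\otimes Y\in\Phi\I$ whenever $X,Y\in\Phi\I$. Because copowers in $\V$ are computed by $\otimes$, this says precisely that $\Phi\I$ has, and is closed under, $\Phi\I$-copowers.

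For the equivalence, I would first observe that $H$ is dense. Indeed $\{I\}\hookrightarrow\V$ is dense, as its nerve $A\mapsto\V(I,A)\cong A$ is naturally isomorphic to the identity; since $\{I\}\subseteq\Phi\I$ and $H$ is fully faithful, $\Phi\I$ is dense as well, so the singular $\V$-functor $\tilde H:=\V(H-,-)\colon\V\to[\Phi\I\op,\V]$ is fully faithful. Next, each $\tilde H(A)=\V(H-,A)$ preserves $\Phi\I$-powers: a $\Phi\I$-power in $\Phi\I\op$ is a copower $X\otimes C$ in $\Phi\I$ (for $X,C\in\Phi\I$), and the tensor--hom adjunction gives $\V(X\otimes C,A)\cong[X,\V(C,A)]$, which is the power $\V(C,A)^X$ in $\V$. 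Hence $\tilde H$ factors fully faithfully through $\Phi\I\tx{-Pw}(\Phi\I\op,\V)$. Essential surjectivity then rests on the key observation that the $X$-power of the object $I$ in $\Phi\I\op$ is, via the unit isomorphism $X\otimes I\cong X$, just $X$ itself; so for any $\Phi\I$-power-preserving $F\colon\Phi\I\op\to\V$, setting $A:=FI$, preservation of this particular power yields
$$ FX\cong F(I^X)\cong (FI)^X=[X,A]=\V(HX,A) $$
for every $X\in\Phi\I$, whence $F\cong\tilde H(A)$. Together with full faithfulness this gives $\V\simeq\Phi\I\tx{-Pw}(\Phi\I\op,\V)$.

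The main obstacle I anticipate is the naturality in the last step: the pointwise isomorphisms $FX\cong[X,FI]$ must be shown to assemble into a single $\V$-natural isomorphism $F\cong\tilde H(FI)$. This follows from the $\V$-naturality of the power-comparison maps for $F$ combined with the naturality of the unit isomorphisms $I^X\cong X$ in $\Phi\I\op$, but it is the one point demanding genuine care rather than formal bookkeeping.
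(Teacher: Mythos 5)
Your proof is correct. The second half of your argument --- density of $\Phi\I$ inherited from density of the unit, the fact that each $\V(H-,A)$ turns the copowers $X\otimes C$ into powers via the tensor--hom adjunction, and essential surjectivity extracted from preservation of the single power $I^X\cong X\otimes I$, giving $FX\cong[X,FI]$ --- is essentially the paper's argument verbatim. The first half is where you genuinely diverge. The paper unwinds the construction of $\Phi\I$ as a transfinite union of stages $\Phi_\alpha\I$ and argues by ordinal induction: at a successor stage it writes $X\cong M*H$ and $Y\cong N*K$ and uses the two-variable interchange $(M*H)\otimes(N*K)\cong M(-)*\bigl(N(\square)*(H(-)\otimes K(\square))\bigr)$, with separate treatment of limit stages. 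You instead fix the second variable and run a closure argument: the full subcategory $\S=\{X : X\otimes Y\in\Phi\I \ \text{for all}\ Y\in\Phi\I\}$ contains $I$ and is closed under $\Phi$-colimits because each $-\otimes Y$ is cocontinuous, so minimality of the closure gives $\Phi\I\subseteq\S$. This buys a proof with no ordinal bookkeeping and no two-variable colimit interchange; only separate cocontinuity of $\otimes$ is used, one variable at a time, and the trick that makes a single pass suffice is that the quantifier over $Y$ sits inside the definition of $\S$. (For a general tensor-closed seed category in place of $\{I\}$ you would need two successive closure arguments rather than one, but still no induction.) The paper's explicit filtration buys nothing extra here beyond making the meaning of ``closure'' concrete. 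Your closing concern about naturality is legitimate but resolves exactly as you indicate: preservation of powers is witnessed by the canonical comparison maps, which are $\V$-natural in $X$, so the pointwise isomorphisms $FX\cong[X,FI]$ assemble into $F\cong\V(H-,FI)$ automatically --- a point the paper silently elides.
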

\begin{proof}
	The first part is done using the definition of $\Phi\I$ as the transfinite union of $\V$-categories $\P_\alpha\I$ with: $\Phi_0\I=\I$, $\Phi_{\alpha+1}\I$ is the full subcategory of $\V$ obtained by adding to $\P_\alpha\I$ every $\Phi$-colimit of a diagram landing in it, we take unions at limit steps.
	
	We prove by recursion that for each $\alpha$ and any $X,Y\in\Phi_{\alpha}\I$, then $X\otimes Y\in\Phi\I$; this will suffice since for any $X,Y\in\Phi\I$ there exists $\alpha$ such that $X,Y\in\Phi_{\alpha}\I$. The statement is trivial for $\alpha=0$. Assume the statement holds for $\alpha$ and consider any $X,Y\in \Phi_{\alpha+1}\I$; we can write $X\cong M*H$ and $Y\cong N*K$ with $M,N\in\Phi$ (or trivial weights, in case one between $X$ or $Y$ actually lies in $\P_\alpha\I$) and $H,K$ landing in $\P_\alpha\I$. Then
	$$X\otimes Y\cong (M*H)\otimes (N*K)\cong M(-)*(N(\square)*(H(-)\otimes K(\square))).$$
	By hypothesis $H(-)\otimes K(\square)$ is in in $\Phi\I$, and hence in some $\P_\beta\I$; thus $X\otimes Y\in\Phi_{\beta+2}\I\subseteq\Phi\I$. The limit step follows from the previous case using again that, if $\alpha$ is limit, then for any $X,Y\in\P_\alpha\I$ there is $\beta<\alpha$ such that $X,Y\in\Phi_{\beta+1}\I$.
	
	For the last part, let $K\colon \Phi\I\hookrightarrow\V$ be the inclusion. Note that $[K,1]\colon \V\to[\Phi\I\op,\V]$ is fully faithful since ($\I$ and hence) $\Phi\I$ is dense in $\V$. Moreover $[K(-),X]$ preserves any limits that $K\op$ preserves, so $\V\subseteq \Phi\I\tx{-Pw}(\Phi\I\op,\V)$. Conversely, if $F\colon\Phi\I\op\to\V$ preserves $\Phi\I$-powers then for any $X\in\Phi\I$ we have
	$$ FX\cong F(X\otimes I)\cong [X,FI]; $$
	showing that $F\cong [K(-),FI]$.	
\end{proof}

Note that $\Phi\I$ is contained in $\V_\Phi$, the full subcategory spanned by the $\Phi$-presentable objects; this is because the unit $I$ is $\Phi$-presentable and $\V_\Phi$ is closed under $\Phi$-colimits, see Section~\ref{soundness}.

\begin{prop}\label{V-Phi-flat}
	The $\V$-category $\V_\Phi$ is the Cauchy completion of $\Phi\I$; moreover left Kan extending along the inclusion $H\colon\Phi\I\hookrightarrow\V$ induces an equivalence
	$$ \tx{Lan}_H\colon [\Phi\I,\V]\xrightarrow{\ \simeq \ } \Phi\+\tx{-Coct}(\V,\V), $$
	where $\Phi\+\tx{-Coct}(\V,\V)$ is the $\V$-category of $\V$-functors $\V\to\V$ preserving $\Phi$-flat colimits.
\end{prop}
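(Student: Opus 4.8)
The plan is to deduce both claims from the general machinery of Section~\ref{main-presentable} applied to the base $\K=\V$. First recall that $\V$ is itself locally $\Phi$-presentable (Proposition~\ref{phi-pres-presheaves}), with the single object $\{I\}$ serving as a $\Phi$-presentable strong generator. Since $\Phi\I$ is by definition the closure of $\{I\}$ under $\Phi$-colimits in $\V$, it is precisely the $\V$-category $\C$ appearing in Proposition~\ref{K_Phi}(3). That proposition then yields at once that $\Phi\I$ is small and dense in $\V$, and that its Cauchy completion is $\K_\Phi=\V_\Phi$; this is the first assertion.

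For the equivalence I would factor the inclusion as $H=K\circ J$, where $J\colon\Phi\I\hookrightarrow\V_\Phi$ and $K\colon\V_\Phi\hookrightarrow\V$ are the inclusions. Applying Corollary~\ref{Phi-flat-preserve} to the locally $\Phi$-presentable $\K=\V$ and the target $\L=\V$ (which has $\Phi$-flat colimits, being cocomplete) gives the equivalence $\tx{Lan}_K\colon[\V_\Phi,\V]\xrightarrow{\ \simeq\ }\Phi\+\tx{-Coct}(\V,\V)$. It then remains to identify $[\Phi\I,\V]$ with $[\V_\Phi,\V]$. Since $\V_\Phi\simeq\Q(\Phi\I)$ is the Cauchy completion of $\Phi\I$ and Cauchy colimits are absolute, hence preserved by every $\V$-functor, one has $\Q\tx{-Coct}(\V_\Phi,\V)=[\V_\Phi,\V]$; thus the universal property of the free Cauchy cocompletion (\cite[Proposition~3.6]{KS05:articolo}, for the class $\Q$ and the $\Q$-cocomplete target $\V$) shows that restriction along $J$ is an equivalence $[\V_\Phi,\V]\simeq[\Phi\I,\V]$, whose inverse is exactly $\tx{Lan}_J$.

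Finally I would compose the two equivalences: by pseudofunctoriality of left Kan extension ($\tx{Lan}$ along a composite is the composite of the $\tx{Lan}$'s), from $H=K\circ J$ we get $\tx{Lan}_H\cong\tx{Lan}_K\circ\tx{Lan}_J$, so the functor named in the statement is a composite of two equivalences and therefore an equivalence onto $\Phi\+\tx{-Coct}(\V,\V)$. The argument is essentially bookkeeping and I do not expect a serious obstacle; the two points needing care are the identification of $\Phi\I$ with the category $\C$ of Proposition~\ref{K_Phi}(3) (so that its Cauchy completion is genuinely $\V_\Phi$), and the verification that the composite of the two displayed equivalences really is the single $\V$-functor $\tx{Lan}_H$ of the statement, which is precisely what the composition law for left Kan extensions supplies.
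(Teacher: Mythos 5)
Your proposal is correct and follows essentially the same route as the paper: the first claim via Proposition~\ref{K_Phi} applied to $\K=\V$ with $\G=\{I\}$ (so that $\Phi\I$ is the closure of the generator under $\Phi$-colimits, dense, with Cauchy completion $\V_\Phi$), and the second claim by composing the equivalence $\tx{Lan}_J\colon[\Phi\I,\V]\simeq[\V_\Phi,\V]$ coming from the universal property of the Cauchy completion with the equivalence of Corollary~\ref{Phi-flat-preserve}. The only difference is that you spell out details the paper leaves implicit (absoluteness of Cauchy colimits and the pseudofunctoriality $\tx{Lan}_{K\circ J}\cong\tx{Lan}_K\circ\tx{Lan}_J$), which is harmless.
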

\begin{proof}
	That $\V_\Phi$ is the Cauchy completion of $\Phi\I$ follows from Proposition~\ref{K_Phi}(4) since $\Phi\I$ is dense and closed under $\Phi$-colimits in $\V$.
	
	The second part follows from Corollary~\ref{Phi-flat-preserve} since we have an equivalence
	$$ [\Phi\I,\V]\xrightarrow{\ \simeq \ } [\V_\Phi,\V]$$
	induced by left Kan extending along the inclusion $\Phi\I\hookrightarrow\V_\Phi$; indeed, this is the universal property of the Cauchy completion.
\end{proof}

\begin{obs}
	If (the saturation of) $\Phi$ contains the Cauchy weights, then it follows that we already have an equality $\Phi\I=\V_\Phi$. In general this is not the case.
	
	For instance, let $\Phi=\emptyset$ and $\V$ is an $\Ab$-enriched category; then $\Phi\I=\{I\}$, while $\V_\Phi$ contains all the finite products of $I$.
	
\end{obs}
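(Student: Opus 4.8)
The plan is to read the positive claim straight off Proposition~\ref{V-Phi-flat}, which already identifies $\V_\Phi$ with the Cauchy completion $\Q(\Phi\I)$ of $\Phi\I$. It therefore suffices to show that, under the hypothesis $\Q\subseteq\Phi^*$, the $\V$-category $\Phi\I$ is \emph{already} Cauchy complete; for then $\Q(\Phi\I)\simeq\Phi\I$ and the two subcategories of $\V$ coincide.

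First I would note that $\Phi\I$, being the free cocompletion of $\{I\}$ under $\Phi$-colimits (realised as a closure inside the cocomplete $\V$), has all $\Phi$-colimits. The key input is then the defining property of saturation: a $\V$-category admits $\Phi$-colimits if and only if it admits $\Phi^*$-colimits (see~\cite{AK1988:articolo,KS05:articolo}). Consequently $\Phi\I$ is $\Phi^*$-cocomplete. Since by assumption every Cauchy weight lies in $\Phi^*$, each Cauchy colimit is in particular a $\Phi^*$-colimit; hence $\Phi\I$ has all Cauchy colimits, i.e.\ is Cauchy complete. Feeding this back into Proposition~\ref{V-Phi-flat} gives $\V_\Phi\simeq\Q(\Phi\I)\simeq\Phi\I$, the asserted equality.

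For the accompanying counterexample I would simply verify the stated data. With $\Phi=\emptyset$ there are no weights to close under, so $\Phi\I=\{I\}$; meanwhile $\Phi^+=\P$ by Example~\ref{examplewsc}(1), so $\Phi$-presentability means preservation of all colimits. In an $\Ab$-enriched $\V$ finite products agree with finite biproducts and $\V(I^n,-)\cong(-)^{\oplus n}$ preserves every colimit, so each $I^n$ belongs to $\V_\Phi$ although $\Phi\I$ contains only $I$. This is consistent with Proposition~\ref{V-Phi-flat}, since $\Q(\{I\})$ records exactly these biproducts together with their retracts, and it reflects the failure of $\Q\subseteq\Phi^*$ in this situation (indeed $\emptyset^*$ contains only the representable weights).

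The one point that needs care is the upgrade from $\Phi$-cocompleteness to Cauchy completeness: it must be performed through the saturation, since the hypothesis is phrased as $\Q\subseteq\Phi^*$ rather than $\Q\subseteq\Phi$. Once this is handled the remainder is a direct appeal to Proposition~\ref{V-Phi-flat}, with no further computation required.
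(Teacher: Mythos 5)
Your proof is correct and takes exactly the route the paper intends: the remark is a direct consequence of Proposition~\ref{V-Phi-flat} (identifying $\V_\Phi$ with the Cauchy completion of $\Phi\I$), combined with the defining property of the saturation that $\Phi$-cocompleteness is equivalent to $\Phi^*$-cocompleteness, so that $\Q\subseteq\Phi^*$ forces $\Phi\I$ to be Cauchy complete and hence equal to its Cauchy completion $\V_\Phi$. Your verification of the counterexample ($\Phi\I=\{I\}$ while the finite biproducts $I^{\oplus n}$ are $\Phi$-presentable since $\V(I^{\oplus n},-)\cong(-)^{\oplus n}$ is cocontinuous) likewise matches the paper's intended justification.
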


We chose $\Phi\I$, instead of $\V_\Phi$, as class of arities because it provides a smaller and ``simpler'' (since Cauchy colimits are not well understood in general) class of objects of $\V$. Nonetheless, below we could replace $\Phi\I$ with $\V_\Phi$ and everything would still work.

\begin{Def}\label{terms}
	We say that a language $\mathbb L$ is {\em $\Phi$-ary} if every function symbol in $\mathbb L$ has input arity $X\in\Phi\I$.
	An $(X,Y)$-ary $\mathbb L$-term $t$ is called {\em $\Phi$-ary} if $X\in\Phi\I$. Similarly, an $(X,Y)$-ary extended $\mathbb L$-term $t$ is called {\em $\Phi$-ary} if $X\in\Phi\I$.
	An $\mathbb L$-theory $\mathbb T$ is called {\em $\Phi$-ary} if all (extended) terms appearing in it are $\Phi$-ary.
\end{Def}

The following generalizes \cite[5.11]{RT23EUA} from the $\lambda$-ary to the sound case, and is a consequence of \cite[Theorem~5.20]{LP}

\begin{prop}
	Let $\EE$ be a $\Phi$-ary equational theory on a $\Phi$-ary language $\LL$. The $\V$-functor $U \colon \Mod(\EE)\to \V$ is strictly monadic and preserves $\Phi$-flat colimits.
\end{prop}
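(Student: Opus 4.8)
The plan is to treat the two assertions separately. Strict monadicity follows the argument of \cite[5.11]{RT23EUA} essentially unchanged, since that argument does not use the specific arity bound: it proceeds by the enriched version of Beck's monadicity theorem. Concretely, as in \cite{RT23EUA} the functor $U$ has a left adjoint (the free-model functor, obtained from the free $\LL$-structure functor $F$); $U$ is conservative because $\Mod(\EE)$ is full in $\Str(\LL)$ and $U_\LL\colon\Str(\LL)\to\V$ reflects isomorphisms; and $U$ creates coequalizers of $U$-split pairs. The last point holds because such coequalizers are absolute, hence computed in $\V$ and preserved by every power $[X,-]$, so the defining equalities $s_A=t_A$ persist to the coequalizer and it remains a model. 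I would record these three facts and invoke the enriched Beck theorem to conclude strict monadicity.

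For the preservation of $\Phi$-flat colimits --- the genuinely new part --- the key observation is that \emph{for every $X\in\Phi\I$ the power $[X,-]\colon\V\to\V$ preserves $\Phi$-flat colimits}. Indeed, since $X$ lies in the closure of $\{I\}$ under $\Phi$-colimits, $[X,-]$ is built from the identity functor $[I,-]$ by the $\Phi$-limits dual to those $\Phi$-colimits; as $\Phi$-flat colimits commute in $\V$ with $\Phi$-limits by the very definition of soundness, each such step preserves $\Phi$-flat colimits, and hence so does $[X,-]$ (formally, by transfinite induction along the construction of $\Phi\I$ recalled in the previous proposition).

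Granting this, I would first treat $\Str(\LL)$. Identifying an $\LL$-structure $f_A\colon A^X\to A^Y$ with an algebra structure $Y\otimes A^X\to A$ via the copower-power adjunction, one gets $\Str(\LL)\cong\tx{Alg}(S_\LL)$ for the signature $\V$-endofunctor $S_\LL A=\coprod_{f\in\LL}Y_f\otimes A^{X_f}$. Each summand $Y_f\otimes[X_f,-]$ preserves $\Phi$-flat colimits --- the copower $Y_f\otimes-$ is cocontinuous and $[X_f,-]$ preserves them by the observation above, using that $X_f\in\Phi\I$ because $\LL$ is $\Phi$-ary --- and coproducts of such functors do too; thus $S_\LL$ preserves $\Phi$-flat colimits, and therefore $U_\LL\colon\Str(\LL)\to\V$ creates them (the forgetful functor from algebras for an endofunctor creates the colimits that the endofunctor preserves). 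Finally I would show $\Mod(\EE)$ is closed under $\Phi$-flat colimits in $\Str(\LL)$: each extended $\Phi$-ary term $t\colon(X,Y)$ induces, via $F\dashv U$, a $\V$-natural transformation $t_{(-)}\colon[X,U-]\Rightarrow[Y,U-]$; given a $\Phi$-flat colimit $A\cong N*G$ of models in $\Str(\LL)$ (created from $\V$ by the previous step), the domain functor $[X,U-]$ preserves it since $X\in\Phi\I$, so $[X,UA]\cong N*[X,UG-]$, and comparing $s_A$ with $t_A$ on the universal cocylinder of this weighted colimit while using $s_{Gd}=t_{Gd}$ for each $d$ forces $s_A=t_A$. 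Hence the colimit is again a model, and $U$, being the restriction of $U_\LL$, preserves (indeed creates) $\Phi$-flat colimits.

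I expect the closure step to be the main obstacle: one must phrase the equality $s_A=t_A$ through the universal property of the weighted colimit $N*[X,UG-]$ rather than naively, and crucially it is only the \emph{domain} arity $X$ (not the codomain $Y$) that must lie in $\Phi\I$ --- precisely what Definition~\ref{terms} of a $\Phi$-ary term guarantees. A secondary check is that the interpretation of (extended) terms is genuinely $\V$-natural and compatible with the identification $\Str(\LL)(FX,A)\cong[X,UA]$, which is supplied by the interpretation constructed in \cite[Section~4]{RT23EUA}.
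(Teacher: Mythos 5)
Your proposal is correct and takes essentially the same route as the paper: strict monadicity is deferred to \cite[5.11]{RT23EUA} (the paper simply observes that every $\Phi$-ary theory is $\lambda$-ary for some $\lambda$), and preservation of $\Phi$-flat colimits rests on the same two pillars as the paper's proof --- powers $[X,-]$ with $X\in\Phi\I$ preserve $\Phi$-flat colimits (in the paper this is the inclusion $\Phi\I\subseteq\V_\Phi$ from Section~\ref{UAsection}), so these colimits are created by $\Str(\LL)\to\V$, and $\Mod(\EE)$ is closed under them because the domain-arity comparison is invertible while the codomain side needs only the comparison map. Your packaging of the $\Str(\LL)$ step via endofunctor algebras $\tx{Alg}(S_\LL)$ and your transfinite induction for the key observation are cosmetic variants of the paper's direct construction of the $\LL$-structure on the colimit; one small slip is that the commutation you invoke there is the definition of $\Phi$-flatness, not of soundness.
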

\begin{proof}
	Given \cite[Example 2.4(5)]{LP}, this follows from \cite[Theorem~5.20]{LP}.
\end{proof}

For other implication we cannot use the results of \cite{LP}, as we want to characterize $\V$-categories of algebras as models of equational theories  which contain only standard (that is, recursively generated) terms. Nonetheless, we can apply the results of \cite{BG,LP23} to $\Phi\I$ since a $\V$-functor $\V\to\V$ is the left Kan extension of its restriction to $\Phi\I$ if and only if it preserves $\Phi$-flat colimits (Proposition~\ref{V-Phi-flat}).

\begin{prop}\label{monad->equations}
	Let $T\colon \V\to\V$ be a monad preserving $\Phi$-flat colimits. Then there exists a $\Phi$-ary equational theory $\EE$, whose equations are defined between standard terms, on a $\Phi$-ary language $\LL$ together with an isomorphism $E\colon \tx{Alg}(T)\to \Mod(\EE)$ making the triangle
	\begin{center}
		\begin{tikzpicture}[baseline=(current  bounding  box.south), scale=2]
			
			\node (a0) at (0,0.8) {$\tx{Alg}(T)$};
			\node (b0) at (1.3,0.8) {$\Mod(\EE)$};
			\node (c0) at (0.65,0) {$\V$};
			
			\path[font=\scriptsize]
			
			(a0) edge [->] node [above] {$E$} (b0)
			(a0) edge [->] node [left] {$U$} (c0)
			(b0) edge [->] node [right] {$U_\EE$} (c0);
		\end{tikzpicture}	
	\end{center} 
	commute.
\end{prop}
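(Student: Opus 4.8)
The plan is to reduce the problem to a presentation of $T$ by operations and equations over the arities $\Phi\I$, and then to recast that presentation in the formalism of Definition~\ref{terms} and promote it to a genuine $\V$-enriched isomorphism by means of Lemma~\ref{ord->enriched}. First I would observe that, since $T$ preserves $\Phi$-flat colimits, Proposition~\ref{V-Phi-flat} tells us that $T$ is the left Kan extension of its restriction along $H\colon\Phi\I\hookrightarrow\V$; equivalently, $T$ is a monad with arities $\Phi\I$ in the sense of~\cite{BG,LP23} (recall that $\Phi\I$ is dense in $\V$, as it contains the dense generator $\I$). This is precisely the hypothesis under which those papers produce a presentation of $T$.

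Next I would invoke the presentation theorems of~\cite{BG,LP23} applied to the arities $\Phi\I$: the monad $T$ is presented by a signature $\Sigma$ of operations, each of input arity an object of $\Phi\I$, together with a set of equations between derived operations, in such a way that the $T$-algebras are exactly the $\Sigma$-structures satisfying those equations. From such a presentation I would build a language $\LL$ by taking one function symbol $\sigma\colon(X,Y)$ for every operation in $\Sigma$ of arity $(X,Y)$; since each input arity $X$ lies in $\Phi\I$, the language $\LL$ is $\Phi$-ary by Definition~\ref{terms}. The derived operations and the equations of the presentation are then translated into (possibly extended) $\LL$-terms and a set of equations $\EE$ by means of the recursive rules of Definition~\ref{0-terms}; because all the input arities sit in $\Phi\I$, these terms and equations are again $\Phi$-ary, so that $\EE$ is a $\Phi$-ary equational theory on $\LL$.

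With $\LL$ and $\EE$ in hand, I would first match the two sides at the level of underlying ordinary categories. A $T$-algebra structure $a\colon TA\to A$ on an object $A\in\V$ corresponds, via the presentation, to an interpretation on $A$ of every function symbol of $\LL$ subject to the equations of $\EE$; this is exactly the datum of an object of $\Mod(\EE)$ with underlying object $A$, and the correspondence is manifestly functorial and compatible with the two forgetful functors. This yields an isomorphism of ordinary categories $G\colon\tx{Alg}(T)_0\to\Mod(\EE)_0$ over $\V$. To upgrade $G$ to a $\V$-functor I would apply Lemma~\ref{ord->enriched}: the $\V$-category $\tx{Alg}(T)$ has powers, created by the monadic forgetful $\V$-functor $U\colon\tx{Alg}(T)\to\V$ (since $\V$ is complete), and $U$ preserves them; the lemma then provides a unique $\V$-isomorphism $E\colon\tx{Alg}(T)\to\Mod(\EE)$ with $E_0=G$ and $U_\EE\circ E=U$, which is precisely the commuting triangle in the statement.

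The main obstacle I anticipate is the translation step: one must verify that the operations and equations supplied abstractly by~\cite{BG,LP23} really can be expressed inside the concrete recursive term calculus of Definition~\ref{0-terms}, in particular that the power rule $t^Z$ and the substitution rule $s(t_J)$ are enough to realize every derived operation of the presentation while keeping all input arities inside $\Phi\I$, so that $\Phi$-ariness is preserved throughout. The enriched naturality that is usually the delicate point is here deliberately sidestepped, by checking the isomorphism only on underlying categories and then appealing to Lemma~\ref{ord->enriched}; the remaining content is therefore concentrated in confirming the hypotheses of that lemma, namely that $\tx{Alg}(T)$ is powered and that $U$ preserves powers.
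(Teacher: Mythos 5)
Your overall skeleton agrees with the paper's: both reduce to the monad--theory correspondence of \cite{BG} for the arities $\Phi\I$, both match the two sides at the level of underlying ordinary categories, and both promote the result to an enriched isomorphism via Lemma~\ref{ord->enriched}. However, there is a genuine gap in the middle, and it is exactly where you yourself locate the ``main obstacle.'' You invoke ``the presentation theorems of~\cite{BG,LP23}'' as supplying a signature $\Sigma$ of operations with input arities in $\Phi\I$ and a set of equations between derived operations whose models are the $T$-algebras. No such syntactic presentation theorem exists in those references: what \cite{BG} (Theorem~2.4, as used in the paper) provides is an abstract $\Phi\I$-theory, i.e.\ an identity-on-objects $\V$-functor $H\colon\Phi\I\op\to\T$ together with a pullback square exhibiting $\tx{Alg}(T)$ as the $\V$-category of $\Phi\I$-power-preserving $\V$-functors $\T\to\V$ compatible with $\V(K-,1)$. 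Converting this abstract $\T$ into a language and equational theory in the concrete term calculus of Definitions~\ref{0-terms} and~\ref{terms} is not a citation away; it is the substantive content of the proposition, and your proof defers it entirely.

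The reason this step is nontrivial --- and cannot be waved through as a ``translation'' --- is that the homs $\T(X,Y)$ are objects of $\V$, not sets of operations, so there is no signature to read off directly. The paper's proof handles this by introducing one function symbol $\overline f\colon(X,Y\otimes Z)$ for each morphism $f\colon Z\to\T(X,Y)$ with $Z$ ranging over the fixed small dense generator $\A$ of $\V$, and then imposing four families of equations: (a) naturality in $Z$, which by density of $\A$ forces the interpretations to assemble into morphisms $F_{X,Y}\colon\T(X,Y)\to[A^X,A^Y]$ in $\V$; (b) compatibility with composition in $\T$; (c) compatibility with $H$ (whence unitality); and (d) preservation of $\Phi\I$-powers. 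Only after this explicit construction can one identify $\Mod(\EE)_0$ with the pullback defining $\tx{Alg}(T)_0$ and then apply Lemma~\ref{ord->enriched} as you do. Note also the paper's warning that the analogous argument in \cite{RT23EUA} cannot simply be cited here, since $(\Phi\I)_0$ need not be a strong generator of $\V_0$; this is a further indication that the encoding step is where the real work, and the real risk, lies. As written, your proposal assumes the conclusion of that step rather than proving it.
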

\begin{proof}
	The proof is somewhat inspired by that of \cite[5.12]{RT23EUA} but requires some subtle modifications since we cannot use \cite[9.2]{LT20:articolo} because $(\Phi\I)_0$ is not a strong generator of $\V_0$. 
	
	By \cite[2.4]{BG} we can find a $\Phi\I$-theory $H\colon\Phi\I^{\op}\to \T$ for which $\tx{Alg}(T)$ is given by the pullback below.
	\begin{center}
		\begin{tikzpicture}[baseline=(current  bounding  box.south), scale=2]
			
			\node (a0) at (0,0.8) {$\tx{Alg}(T)$};
			\node (a0') at (0.3,0.6) {$\lrcorner$};
			\node (b0) at (1.3,0.8) {$[\T,\V]$};
			\node (c0) at (0,0) {$\V$};
			\node (d0) at (1.3,0) {$[\Phi\I\op,\V]$};
			
			\path[font=\scriptsize]
			
			(a0) edge [right hook->] node [above] {} (b0)
			(a0) edge [->] node [left] {$U$} (c0)
			(b0) edge [->] node [right] {$[H,\V]$} (d0)
			(c0) edge [right hook->] node [below] {$\V(K-,1)$} (d0);
		\end{tikzpicture}	
	\end{center} 
	Such $H\op$ can be chosen to be the left part of the (identity on objects, fully faithful) factorization of 
	$$ \Phi\I\xrightarrow{K}\V\xrightarrow{F} \tx{Alg}(T)$$
	where $F$ is the left adjoint to $U$. In particular then we can assume that $\T$ has and $H$ preserves $\Phi\I$-powers, so that for any $X,Y\in\Phi\I$ the copower of $Y$ by $X$ in $\T$ is simply the (image through $H$ of the) tensor product $X\otimes Y$. 
	Under these assumptions on $H$, a $\V$-functor $G\colon\T\to \V$ preserves $\Phi\I$-powers if and only if $GH$ does. Therefore, since $\V(K-,X)$ preserves these powers for any $X\in\V$, then $\tx{Alg}(T)$ is also defined by the pullback below.
	\begin{center}
		\begin{tikzpicture}[baseline=(current  bounding  box.south), scale=2]
			
			\node (a0) at (0,0.8) {$\tx{Alg}(T)$};
			\node (a0') at (0.3,0.6) {$\lrcorner$};
			\node (b0) at (1.5,0.8) {$\Phi\I\tx{-Pw}[\T,\V]$};
			\node (c0) at (0,0) {$\V$};
			\node (d0) at (1.5,0) {$\Phi\I\tx{-Pw}[\Phi\I\op,\V]$};
			
			\path[font=\scriptsize]
			
			(a0) edge [right hook->] node [above] {} (b0)
			(a0) edge [->] node [left] {$U$} (c0)
			(b0) edge [->] node [right] {$[H,\V]$} (d0)
			(c0) edge [right hook->] node [below] {$\V(K-,1)$} (d0);
		\end{tikzpicture}	
	\end{center} 
	To conclude it is enough to construct a $\Phi$-ary language $\mathbb L$ and a $\Phi$-ary $\mathbb L$-theory $\mathbb E$ for which $\Mod(\mathbb E)$ is also presented as the pullback above.
	
	Consider the $\Phi$-ary language $\mathbb L$ defined by a function symbol $\overline f:(X,Y\otimes Z)$ for any morphism $f\colon Z\to \T(X,Y)$ in $\V$ with $Z\in\A$. In particular, any morphism $g\colon X\to Y$ in $\T$ corresponds to a $(X,Y)$-ary function symbol (taking $Z=I$). Note that for any $g\colon Z\to [X,Y]$, with $X,Y\in\Phi\I$, we have two different $(X,Y\otimes Z)$-ary terms given by $g^t\colon X\to Y\otimes Z$ (from the first rule defining terms) and $\overline{H(g)}$.\\
	The $\mathbb L$-theory $\mathbb E$ is given by the following equations:\begin{enumerate}
		\item[(a)] $ k^Y(\overline f)=\overline{fk}$, for any $k\colon Z'\to Z$ in $\A$ and $f\colon Z\to \T(X,Y)$;
		\item[(b)] $\overline f_2^{Z_1}(\overline f_1)= \overline{M(f_2\otimes f_1)} $, for any $f_1\colon Z_1\to \T(X_1,X_2)$ and $f_2\colon Z_2\to \T(X_2,X_3)$;
		\item[(c)] $g^t=\overline{H(g)}$, for any $g\colon Z\to [X,Y]$ with $X,Y\in\Phi\I$;
		\item[(d)] $\overline{Z\otimes f} = \overline f^Z$ for any morphism $f\colon X\to Y$ in $\T$ and $Z\in\Phi\I$.
	\end{enumerate}
	Now, to give an $\mathbb L$-structure is the same as giving an object $A$ of $\V$ together with a map 
	$$ \overline f_A\colon A^X\to A^{Y\otimes Z} $$
	for any $f\colon Z\to \T(X,Y)$ in $\V$ with $Z\in\A$. Taking the transpose of $\overline f_A$ with respect to $Z$ this defines a function
	$$ \V_0(Z,\T(X,Y))\longrightarrow \V_0(Z,[A^X,A^Y]) $$
	for any $Z\in\A$. Then, $A$ satisfies the equations in (a) if and only if the map above is natural in $Z\in\A$, if and only if (by density of $\A$) it is induced by homming out of a morphism 
	$$ F_{X,Y}\colon \T(X,Y)\longrightarrow [A^X,A^Y] $$
	in $\V$ for any $X,Y\in\Phi\I$. 
	
	Using that $\A$ is a strong generator of $\V$, the equations in (b) say that the morphisms $F_{X,Y}$ respect composition, meaning that the square below
	\begin{center}
		\begin{tikzpicture}[baseline=(current  bounding  box.south), scale=2]
			
			\node (a0) at (0,0.9) {$\T(X_2,X_3)\otimes \T(X_1,X_2)$};
			\node (b0) at (2.3,0.9) {$\T(X_1,X_3)$};
			\node (c0) at (0,0) {$[A^{X_2},A^{X_3}]\otimes [A^{X_1},A^{X_2}]$};
			\node (d0) at (2.3,0) {$[A^{X_1},A^{X_3}]$};
			
			\path[font=\scriptsize]
			
			(a0) edge [->] node [above] {$M$} (b0)
			(a0) edge [->] node [left] {$F_{X_2,X_3}\otimes F_{X_1,X_2}$} (c0)
			(b0) edge [->] node [right] {$F_{X_1,X_3}$} (d0)
			(c0) edge [->] node [below] {$M$} (d0);
		\end{tikzpicture}	
	\end{center}
	commutes for any $X_1,X_2,X_3\in\Phi\I$. 
	
	The axioms in (c) say that the following triangle commutes
	\begin{center}
		\begin{tikzpicture}[baseline=(current  bounding  box.south), scale=2]
			
			\node (a0) at (0,0.9) {$[X,Y]$};
			\node (c0) at (0,0) {$\T(X,Y)$};
			\node (d0) at (2,0) {$[A^{X},A^{Y}]$};
			
			\path[font=\scriptsize]
			
			(a0) edge [->] node [above] {$\ \ \ A^{(-)}_{X,Y}$} (d0)
			(a0) edge [->] node [left] {$H_{X,Y}$} (c0)
			(c0) edge [->] node [above] {$F_{X,Y}$} (d0);
		\end{tikzpicture}	
	\end{center}
	for any $X,Y\in\Phi\I$. This implies in particular that $F_{X,X}$ preserves the identity.
	
	It follows that to give an $\LL$-structure satisfying (a)-(c) is the same as giving an object $A\in\V$ together with a $\V$-functor $F\colon\T\to\V$ such that $FH=(-)^X$. In addition, the $\LL$-structure satisfies condition (d) if and only the corresponding $\V$-functor preserves $\Phi\I$-powers.
	
	As a consequence $\Mod(\mathbb E)_0$ is a pullback
	\begin{center}
		\begin{tikzpicture}[baseline=(current  bounding  box.south), scale=2]
			
			\node (a0) at (0,0.8) {$\Mod(\mathbb E)_0$};
			\node (a0') at (0.3,0.6) {$\lrcorner$};
			\node (b0) at (1.55,0.8) {$\Phi\I\tx{-Pw}[\T,\V]_0$};
			\node (c0) at (0,0) {$\V$};
			\node (d0) at (1.55,0) {$\Phi\I\tx{-Pw}[\Phi\I\op,\V]_0$};
			
			\path[font=\scriptsize]
			
			(a0) edge [right hook->] node [above] {} (b0)
			(a0) edge [->] node [left] {$(U_\EE)_0$} (c0)
			(b0) edge [->] node [right] {$[H,\V]_0$} (d0)
			(c0) edge [right hook->] node [below] {$\V(K-,1)_0$} (d0);
		\end{tikzpicture}	
	\end{center} 
	of ordinary categories. It follows that the hypotheses of Lemma~\ref{ord->enriched} are satisfied, and hence we have an equivalence $E\colon \tx{Alg}(T)\to \Mod(\EE)$ as requested.
\end{proof}

\subsection{Characterization theorem} $ $

In this section we use the results above to prove the main characterization theorem for $\V$-categories of models of equational $\Phi$-theories. Recall from \cite{RT23EUA} that an object $G$ of a $\V$-category $\K$ is called {\em $\V$-projective} if $\K(G,-)$ preserves coequalizers of $\K(G,-)$-split pairs. Note that $(1)\Leftrightarrow(3)$ was shown as \cite[Theorem~5.26]{LP}; the improvement here is that it is enough to use the recursively generated terms.

\begin{teo}\label{char-single}
	The following are equivalent for a $\V$-category $\K$: \begin{enumerate}\setlength\itemsep{0.07em}
		\item $\K\simeq\Mod(\mathbb E)$ for a $\Phi$-ary equational theory $\mathbb E$ involving extended terms;
		\item $\K\simeq\Mod(\mathbb E)$ for a $\Phi$-ary equational theory $\mathbb E$ involving only standard terms;
		\item $\K\simeq\tx{Alg}(T)$ for a monad $T$ on $\V$ preserving $\Phi$-flat colimits;
		\item $\K$ is cocomplete and has a $\Phi$-presentable and $\V$-projective strong generator $G\in\K$;
		\item $\K\simeq\Phi\I\tx{-Pw}(\T,\V)$ is equivalent to the $\V$-category of $\V$-functors preserving $\Phi\I$-powers, for some $\Phi\I$-theory $\T$.
	\end{enumerate}
	In particular $\K$ is locally $\Phi$-presentable.
\end{teo}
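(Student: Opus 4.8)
The plan is to treat condition (3) as a hub and link every other condition to it, then read off the closing assertion from (4). The equivalences among (1), (2), and (3) are essentially already available from the two preceding propositions. The proposition immediately before the theorem shows that $\Mod(\EE)$ is strictly monadic over $\V$ via a forgetful $\V$-functor preserving $\Phi$-flat colimits; since the induced monad is a composite $T=UF$ with $U$ preserving $\Phi$-flat colimits and $F$ preserving all colimits, $T$ preserves $\Phi$-flat colimits, giving both $(1)\Rightarrow(3)$ and $(2)\Rightarrow(3)$. Conversely, Proposition~\ref{monad->equations} produces from any $\Phi$-flat-preserving monad $T$ a $\Phi$-ary theory $\EE$ whose equations (a)--(d) involve only standard terms and satisfies $\tx{Alg}(T)\simeq\Mod(\EE)$; this is precisely $(3)\Rightarrow(2)$. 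As standard terms are a special case of extended terms, $(2)\Rightarrow(1)$ is trivial, closing $(1)\Leftrightarrow(2)\Leftrightarrow(3)$.

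For $(3)\Leftrightarrow(4)$ I would argue by monadicity. Given (3), write $\K\simeq\tx{Alg}(T)$ and take $G:=FI$, the free algebra on the unit; then $\K(G,-)\cong U$ is conservative (so $G$ is a strong generator), preserves $\Phi$-flat colimits (so $G$ is $\Phi$-presentable), and preserves coequalizers of $U$-split pairs by monadicity (so $G$ is $\V$-projective), while cocompleteness of $\tx{Alg}(T)$ follows since $T$ is accessible (Theorem~\ref{rainsing}) and $\V$ is locally presentable. Conversely, given (4), the candidate monadic functor is $\K(G,-)\colon\K\to\V$: it has a left adjoint $(-)\cdot G$ because $\K$ is cocomplete, it is conservative because $G$ is a strong generator, and it preserves coequalizers of its own split pairs by $\V$-projectivity of $G$, while $\K$ possesses all such coequalizers. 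The enriched monadicity theorem then yields $\K\simeq\tx{Alg}(T)$ for $T=\K(G,(-)\cdot G)$, and this $T$ preserves $\Phi$-flat colimits since $(-)\cdot G$ preserves all colimits and $\K(G,-)$ preserves $\Phi$-flat colimits by $\Phi$-presentability of $G$.

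The equivalence $(3)\Leftrightarrow(5)$ comes from the theory--monad correspondence with arities in $\Phi\I$. By Proposition~\ref{V-Phi-flat}, a $\V$-endofunctor of $\V$ preserves $\Phi$-flat colimits exactly when it is left Kan extended from $\Phi\I$; hence the monads preserving $\Phi$-flat colimits are precisely the $\Phi\I$-nervous ones, whose categories of algebras are the $\V$-categories $\Phi\I\tx{-Pw}(\T,\V)$ of models of $\Phi\I$-theories. This is the content already invoked from \cite{BG,LP23}; indeed the proof of Proposition~\ref{monad->equations} exhibits $\tx{Alg}(T)$ as exactly such a pullback, supplying $(3)\Rightarrow(5)$, while the converse is the statement that models of a $\Phi\I$-theory are monadic over $\V$ through a $\Phi$-flat-preserving monad. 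Finally, the closing assertion is immediate from (4): a cocomplete $\V$-category carrying a single $\Phi$-presentable object that is a strong generator is, by definition, locally $\Phi$-presentable.

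I expect the main obstacle to be the verification in $(4)\Rightarrow(3)$ that the enriched monadicity theorem applies with $\V$-projectivity playing the role of the split-coequalizer hypothesis, and in particular that a \emph{single} $\Phi$-presentable $\V$-projective strong generator suffices to recognize $\K$ as a category of algebras. Care is needed because the general setup allows a base $\V$ that is merely complete and cocomplete, so one must use the enriched---rather than the ordinary---monadicity theorem and check that $\K(G,-)$ together with its copower left adjoint meets its hypotheses; the remaining correspondence in $(3)\Leftrightarrow(5)$ should then be routine given Proposition~\ref{V-Phi-flat}.
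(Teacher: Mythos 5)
Your proof is correct, and it rests on the same key ingredients as the paper's: the proposition that $U_\EE\colon\Mod(\EE)\to\V$ is strictly monadic and preserves $\Phi$-flat colimits, Proposition~\ref{monad->equations}, the enriched monadicity theorem, and the Bourke--Garner monad--theory correspondence combined with Proposition~\ref{V-Phi-flat}. The organization differs: the paper proves the cycle $(1)\Rightarrow(4)\Rightarrow(3)\Rightarrow(2)\Rightarrow(1)$, extracting condition (4) directly from the monadic forgetful functor of $\Mod(\EE)$ with $G=LI$, whereas you make (3) the hub, getting $(1),(2)\Rightarrow(3)$ from the composite $T=U_\EE F$ and then proving $(3)\Leftrightarrow(4)$ as a separate equivalence; the two monadicity arguments are the same in substance. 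Two local differences are worth recording. First, in $(4)\Rightarrow(3)$ the paper produces the left adjoint to $\K(G,-)$ via the adjoint functor theorem for locally $\Phi$-presentable $\V$-categories (Proposition~\ref{lpAFT}, using that $\K(G,-)$ is continuous), while you take the copower functor $(-)\cdot G$, which exists by cocompleteness of $\K$; your route is more elementary and bypasses Proposition~\ref{lpAFT} entirely. Second, you explicitly justify cocompleteness of $\tx{Alg}(T)$ in $(3)\Rightarrow(4)$ via accessibility of $T$ (through Theorem~\ref{rainsing}) together with local presentability of $\V$ --- a step the paper leaves implicit, since in its direction $(1)\Rightarrow(4)$ cocompleteness is inherited from strict monadicity of $\Mod(\EE)$ over the locally presentable $\V$, which ultimately requires the same accessibility argument. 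One small correction: your closing concern about a base $\V$ that is ``merely complete and cocomplete'' is moot here, because Section~\ref{UAsection} assumes throughout that $\V$ is locally presentable; with that standing assumption the enriched monadicity theorem applies exactly as you use it, and a single $\Phi$-presentable, $\V$-projective strong generator does suffice.
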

\begin{proof}
	$(1)\Rightarrow(4)$. By monadicity, the forgetful $\V$-functor $U_\mathbb E\colon \K\simeq\Mod(\mathbb E)\to \V$ is a right adjoint. Thus the value of its left adjoint $L$ at $I$ gives an object $G:=LI\in\K$ for which $U_\mathbb E\cong\K(G,-)$. Since $U_\mathbb E$ is conservative, preserves $\Phi$-flat colimits and $U_\mathbb E$-split coequalizers (being monadic), it follows that $G$ has the desired properties. This also makes $\K$ locally $\Phi$-presentable.
	
	$(4)\Rightarrow(3)$. Note that the $\V$-category $\K$ is locally $\Phi$-presentable and that $$U_\K:=\K(G,-)\colon\K\to\V$$ is (by hypothesis) continuous and preserves $\Phi$-flat colimits as well as coequalizers of $U$-split pairs. Thus $U_\K$ has a left adjoint (Proposition~\ref{lpAFT}) and is $\Phi$-ary monadic by the monadicity theorem.
	
	$(3)\Rightarrow(2)$ is given by Proposition~\ref{monad->equations} and $(2)\Rightarrow(1)$ is trivial.

	$(5)\Leftrightarrow(3)$. By \cite[Theorem~19]{BG}, a $\V$-category $\K$ is equivalent to the $\V$-category of models of a $\Phi$-ary monad $T$ if and only if there is a $\Phi\I$-theory $H\colon\Phi\I\op\to \T$ for which $\K$ is given by the bipullback below.
	\begin{center}
		\begin{tikzpicture}[baseline=(current  bounding  box.south), scale=2]
			
			\node (a0) at (0,0.8) {$\K$};
			\node (a0') at (0.3,0.6) {$\lrcorner$};
			\node (b0) at (1.3,0.8) {$[\T,\V]$};
			\node (c0) at (0,0) {$\V$};
			\node (d0) at (1.3,0) {$[\Phi\I\op,\V]$};
			
			\path[font=\scriptsize]
			
			(a0) edge [right hook->] node [above] {} (b0)
			(a0) edge [->] node [left] {$U$} (c0)
			(b0) edge [->] node [right] {$[H,\V]$} (d0)
			(c0) edge [right hook->] node [below] {$\V(K-,1)$} (d0);
		\end{tikzpicture}	
	\end{center} 
But we know that $\V\simeq \Phi\I\tx{-Pw}(\Phi\I\op,\V)$; thus, since $H$ is bijective on objects and preserves $\Phi\I$-powers, it follows that $\K$ is a bipullback as above if and only if $\K\simeq\Phi\I\tx{-Pw}(\T,\V)$.
\end{proof}

\begin{obs}
	Let $J\colon\Phi\I\hookrightarrow\V$ be the inclusion. It follows from \cite[7.2.9 \& 7.3.7]{arkor2022monadic} and \cite[7.8]{AM2024pullback} that the conditions of Theorem~\ref{char-single} above are further equivalent to: $\K\simeq\tx{Alg}(T)$ for a $J$-relative monad $T$.
\end{obs}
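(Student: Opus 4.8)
The plan is to adjoin to Theorem~\ref{char-single} the further condition ``$(6)$: $\K\simeq\tx{Alg}(T)$ for a $J$-relative monad $T$'' and to show it lies in the same equivalence class as $(1)$--$(5)$. Since those five conditions are already known to be mutually equivalent, it suffices to prove $(6)\Leftrightarrow(3)$: that the $J$-relative monads on $\V$ are precisely the monads on $\V$ preserving $\Phi$-flat colimits, with compatibly identified categories of algebras. The essential input is the theory of relative monads along a dense root, as developed in~\cite{arkor2022monadic} and~\cite{AM2024pullback}.

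First I would record the structural fact that makes the root $J\colon\Phi\I\hookrightarrow\V$ suitable: it is dense in $\V$ (since $\I$, and hence $\Phi\I$, is dense, as established above), and by Proposition~\ref{V-Phi-flat} left Kan extension along $J$ gives an equivalence
$$ \tx{Lan}_J\colon [\Phi\I,\V]\xrightarrow{\ \simeq\ }\Phi\+\tx{-Coct}(\V,\V) $$
between $\V$-functors $\Phi\I\to\V$ and the $\Phi$-flat colimit preserving endofunctors of $\V$. This is exactly the hypothesis required by~\cite[7.2.9]{arkor2022monadic}, which lifts such an equivalence to the level of (relative) monoids: it identifies $J$-relative monads with monads on $\V$ that preserve $\Phi$-flat colimits, the correspondence being $T\mapsto\tx{Lan}_J T$ in one direction and $S\mapsto SJ$ in the other. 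Then~\cite[7.3.7]{arkor2022monadic} transports the Eilenberg--Moore construction across this equivalence, yielding $\tx{Alg}(T)\simeq\tx{Alg}(\tx{Lan}_J T)$ compatibly with the forgetful $\V$-functors to $\V$. Combining these two facts with condition $(3)$ delivers $(6)\Leftrightarrow(3)$.

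Alternatively, staying closer to the bipullback in the proof of $(5)\Leftrightarrow(3)$ above, I would invoke~\cite[7.8]{AM2024pullback}: for a $J$-relative monad $T$ it presents $\tx{Alg}(T)$ as the (bi)pullback of the induced $\Phi\I$-theory $H\colon\Phi\I\op\to\T$ along the singular embedding $\V(J-,1)$, which is precisely the square appearing in that proof (with $K=J$). This identifies the relative-monad algebras directly with $\Phi\I\tx{-Pw}(\T,\V)$, hence with condition $(5)$.

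The main subtlety is not any single hard step but the reconciliation of conventions between the two developments: one must check that the notion of ``$J$-relative monad'' in~\cite{arkor2022monadic,AM2024pullback} uses the colimit class which, under the equivalence of Proposition~\ref{V-Phi-flat}, corresponds to the $\Phi$-flat colimits employed throughout Section~\ref{UAsection}, and that the relative Eilenberg--Moore category there matches $\tx{Alg}$ as used here, compatibly with the monadic forgetful functor. Once the equivalence $\tx{Lan}_J$ is fed into~\cite[7.2.9,\,7.3.7]{arkor2022monadic}, these compatibilities are essentially formal, so I anticipate no genuine obstacle beyond this bookkeeping.
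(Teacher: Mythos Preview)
Your proposal is correct and matches the paper's approach: the paper gives no proof of this remark beyond the citations themselves, and you have correctly unpacked how those citations combine---feeding the equivalence of Proposition~\ref{V-Phi-flat} into \cite[7.2.9 \& 7.3.7]{arkor2022monadic} for the monad/relative-monad correspondence and the algebra comparison, with \cite[7.8]{AM2024pullback} supplying the alternative bipullback route via condition~(5). Your identification of the only real content as checking that the hypotheses of the cited results are met (density of $J$ and the Kan-extension equivalence) is accurate.
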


As done in \cite{RT23EUA}, the subcategories of $\Str(\LL)$ defined by $\Phi$-ary equational theories can be characterized as certain orthogonality classes.

\begin{prop}\label{regular-epi-orth}
	Let $\mathbb L$ be a $\Phi$-ary language. Then classes defined by $\Phi$-ary equational $\mathbb L$-theories in $\Str(\mathbb L)$ are precisely given by orthogonality classes defined with respect to maps of the form 
	$$ h\colon FX\twoheadrightarrow W $$
	in $\Str(\mathbb L)$, where $X\in\Phi\I$ and $h$ is a regular epimorphism.
\end{prop}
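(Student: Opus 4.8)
The plan is to build a dictionary between equations and coequalizers under which satisfaction of an equation becomes orthogonality with respect to a regular epimorphism out of a free structure $FX$ with $X\in\Phi\I$. The one fact I rely on throughout is that, for an extended term $t\colon(X,Y)$, i.e. a morphism $t\colon FY\to FX$ in $\Str(\LL)$, its interpretation $t_A\colon A^X\to A^Y$ coincides, under the adjunction isomorphisms $\Str(\LL)(FX,A)\cong A^X$ and $\Str(\LL)(FY,A)\cong A^Y$, with $\Str(\LL)(t,A)$; ordinary terms are subsumed here once identified, via Yoneda, with their associated extended terms.

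First I would prove the key lemma: if $s,t\colon FY\rightrightarrows FX$ is a parallel pair with coequalizer $q\colon FX\twoheadrightarrow W$, then an $\LL$-structure $A$ satisfies $(s=t)$ if and only if $A$ is orthogonal to $q$. This is immediate from the dictionary: since representables send the conical coequalizer $q$ to an equalizer, $\Str(\LL)(q,A)$ is the equalizer in $\V$ of $\Str(\LL)(s,A)$ and $\Str(\LL)(t,A)$, so it is an isomorphism precisely when these two maps agree, that is, when $s_A=t_A$. The forward inclusion is then formal: given a $\Phi$-ary equational theory $\EE$, every equation $(s=t)$ in it is between $(X,Y)$-ary extended terms with $X\in\Phi\I$, and since $\Str(\LL)$ is monadic over the cocomplete $\V$, the coequalizer $q\colon FX\twoheadrightarrow W$ of $s,t$ exists and is a regular epimorphism of the required form. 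Collecting these into a set $\M$, the lemma yields $\Mod(\EE)=\M^\perp$.

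For the converse, take a set $\M$ of regular epimorphisms $h\colon FX\twoheadrightarrow W$ with $X\in\Phi\I$. By definition each $h$ is a coequalizer of some pair $a,b\colon Q\rightrightarrows FX$ whose domain $Q$ need not be free; this is the crux of the argument. I would remedy it by precomposing with the counit $\epsilon_Q\colon FUQ\to Q$: as $U\epsilon_Q$ is a split epimorphism in $\V$ and $U$ is faithful, $\epsilon_Q$ is epimorphic, so $h=\tx{coeq}(a\epsilon_Q,b\epsilon_Q)$ now has the free domain $FUQ$. Writing $Y:=UQ$, the pair $a\epsilon_Q,b\epsilon_Q\colon FY\rightrightarrows FX$ consists of $(X,Y)$-ary extended terms, which are $\Phi$-ary because $X\in\Phi\I$, so $(a\epsilon_Q=b\epsilon_Q)$ is a $\Phi$-ary equation cutting out exactly $\{h\}^\perp$ by the lemma. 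Letting $\EE$ gather one such equation for each $h\in\M$ gives $\Mod(\EE)=\M^\perp$, completing the proof.

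The main obstacle is precisely this passage from ``regular epimorphism'' to ``equation'': equations live between maps out of free structures, whereas a regular epimorphism is only presented as a coequalizer of an arbitrary pair, and it is the epimorphy of the counit that lets one freely resolve the domain without changing the coequalizer. I would also flag that it is genuinely extended terms (maps $FY\to FX$ for arbitrary $Y$, not just those realized by the term-forming rules) that arise in the resolution, which is why the characterization is naturally read with extended terms permitted.
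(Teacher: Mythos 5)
Your proposal is correct and takes essentially the same approach as the paper: the paper's proof is a one-line deferral to the argument of \cite[Proposition~5.19]{RT23EUA} (with the only change that $X\in\Phi\I$), and that argument is precisely your dictionary, namely that satisfaction of an equation between extended terms $s,t\colon FY\rightrightarrows FX$ amounts to orthogonality to their coequalizer, while conversely any regular epimorphism $h\colon FX\twoheadrightarrow W$ is exhibited as a coequalizer of a pair of extended terms by precomposing a defining pair with the epimorphic counit $FUQ\to Q$. Your two closing observations (that the converse genuinely produces extended terms, and that only the input arity need lie in $\Phi\I$) also agree with the paper's conventions in Definition~\ref{terms}.
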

\begin{proof}
	Same argument used to prove \cite[Proposition~5.19]{RT23EUA}, with the only difference that now $X\in\Phi\I$.
\end{proof}

\subsection{Examples}

\subsubsection{Finite products}\label{fp}

We assume the class $\bo{Fp}$ for finite products to be weakly sound; by \cite{KL93FfKe:articolo} this is the case whenever $\V$ is endowed with the cartesian closed structure. This in some sense generalizes the single-sorted case of~\cite{Par23} since we do not assume that the unit of $\V_0$ is a generator.

We say that a weight $M$ is {\em $\V$-sifted} if it is $\bo{Fp}$-flat; that is, if $M$-colimits commute in $\V$ with finite products. A $\V$-functor $T\colon\V\to\V$ preserves $\V$-sifted colimits if and only if it is the left Kan extension of its restriction to $\bo{Fp}(\I)$, the full subcategory of $\V$ spanned by the finite coproducts of the terminal object (Proposition~\ref{V-Phi-flat}). These $\V$-functors are often called {\em strongly finitary}.

The corresponding notions of $\bo{Fp}$-ary languages and theories are particularly simple since they only involve function symbols and terms of the form $t:(n,Y)$, where $n$ is a natural number and $Y\in\V$. The interpretation of such a term $t$ on an $\LL$-structure $A$ is a map 
$$ t_A\colon A^n\to A^Y$$
where $A^n$ is the product of $A$ with itself $n$-times (so, no purely enriched notion of powers involved in the domain). Theorem~\ref{char-single} in this context becomes:

\begin{teo}\label{char-discrete}
	The following are equivalent for a $\V$-category $\K$: \begin{enumerate}
		\item $\K\simeq\Mod(\mathbb E)$ for a $\bo{Fp}$-ary equational theory $\mathbb E$ involving extended terms;
		\item $\K\simeq\Mod(\mathbb E)$ for a $\bo{Fp}$-ary equational theory $\mathbb E$ involving standard terms;
		\item $\K\simeq\tx{Alg}(T)$ for a monad $T$ on $\V$ preserving $\V$-sifted colimits;
		\item $\K$ is cocomplete and has a $\bo{Fp}$-presentable and $\V$-projective strong generator $G\in\K$;
		\item $\K\simeq\tx{Fp}(\T,\V)$ is equivalent to the $\V$-category of $\V$-functors preserving finite products (or equivalently finite discrete powers), for some $\bo{Fp}(\I)$-theory $\T$.
	\end{enumerate}
	Moreover, they always imply:\begin{enumerate}
		\item[(6)] $\K$ is cocomplete and has a $\bo{Fp}$-presentable strong generator $G\in\K$.
	\end{enumerate}
	While, (6) implies (1)-(5) if reflexive coequalizers are $\V$-sifted.
\end{teo}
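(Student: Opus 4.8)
The plan is to derive the equivalence $(1)$--$(5)$ as a pure specialization of Theorem~\ref{char-single}, then dispatch the two directions of the ``moreover'' clause, with the final conditional implication resting on an enriched reflexive monadicity argument.

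First I would fix the dictionary. Taking $\Phi=\bo{Fp}$ (weakly sound when $\V$ is cartesian closed, by \cite{KL93FfKe:articolo}), the class of arities $\Phi\I$ is the closure of $\{I\}$ under finite coproducts, i.e.\ the $\bo{Fp}(\I)$ of the statement; powers by its objects are exactly finite products, since $A^{n\cdot I}\cong A^n$; and $\Phi$-flat means precisely $\V$-sifted by definition. Hence a $\Phi$-ary language/theory is a $\bo{Fp}$-ary one, a monad preserving $\Phi$-flat colimits is one preserving $\V$-sifted colimits, and $\Phi\I\tx{-Pw}(\T,\V)=\tx{Fp}(\T,\V)$ for a $\bo{Fp}(\I)$-theory $\T$. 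Under these identifications the equivalence $(1)\Leftrightarrow(2)\Leftrightarrow(3)\Leftrightarrow(4)\Leftrightarrow(5)$, and the closing remark that $\K$ is locally $\bo{Fp}$-presentable, are exactly the content of Theorem~\ref{char-single}.

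Next, the implication $(1)$--$(5)\Rightarrow(6)$ is immediate: condition $(4)$ gives a cocomplete $\K$ with a $\bo{Fp}$-presentable and $\V$-projective strong generator, so simply forgetting the $\V$-projectivity requirement yields $(6)$.

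The substance is the converse $(6)\Rightarrow(1)$--$(5)$ under the hypothesis that reflexive coequalizers are $\V$-sifted. I would observe first that $(6)$ is literally the definition of $\K$ being locally $\bo{Fp}$-presentable, so by Theorem~\ref{1stchar} the $\V$-category $\K$ is also complete and $\bo{Fp}$-accessible. Set $U:=\K(G,-)\colon\K\to\V$. This $\V$-functor is continuous (representables preserve limits and $\K$ is complete) and preserves $\V$-sifted colimits (as $G$ is $\bo{Fp}$-presentable), so by Proposition~\ref{lpAFT} it has a left adjoint $F$; moreover $U$ is conservative since $G$ is a strong generator, and $G\cong FI$. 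Because the weight for reflexive coequalizers is assumed $\bo{Fp}$-flat, $U$ also preserves reflexive coequalizers. It then remains to invoke the enriched reflexive monadicity theorem: a conservative $\V$-functor with a left adjoint, whose domain has and which preserves coequalizers of reflexive pairs, is monadic. This gives $\K\simeq\tx{Alg}(T)$ with $T:=UF$, and $T$ preserves $\V$-sifted colimits since $F$ preserves all colimits while $U$ preserves the $\V$-sifted ones. Thus $(3)$ holds, and the equivalence already established returns $(1),(2),(4),(5)$.

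I expect the enriched reflexive monadicity step to be the only genuine obstacle. One cannot shortcut it by appealing to $\V$-projectivity, since that is precisely part of $(4)$, the conclusion we are trying to reach; instead Beck's criterion must be verified using reflexive coequalizers alone, exploiting that the canonical presentation $FUFA\rightrightarrows FA$ of a $T$-algebra $A$ is a reflexive pair (with common section $F\eta_{UA}$), so that preservation of reflexive coequalizers suffices to construct an inverse to the comparison $\V$-functor. Care is needed to carry this out $\V$-enrichedly rather than merely on underlying categories: using that $U$ is a power-preserving $\V$-functor together with completeness of $\K$, the ordinary monadic equivalence on $\K_0$ upgrades to a $\V$-equivalence $\K\simeq\tx{Alg}(T)$, in the same spirit as the passage from $\K_0$ to $\K$ in Lemma~\ref{ord->enriched}.
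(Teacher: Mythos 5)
Your proposal is correct and follows essentially the same route as the paper: the equivalence of (1)--(5) is the specialization of Theorem~\ref{char-single} to $\Phi=\bo{Fp}$, the implication to (6) is immediate from (4), and the conditional converse $(6)\Rightarrow(3)$ is proved by observing that $\K(G,-)$ is a conservative right adjoint preserving $\V$-sifted colimits, hence reflexive coequalizers, so the crude monadicity theorem applies. Your extra care in producing the left adjoint via Proposition~\ref{lpAFT} and in upgrading the monadic equivalence to an enriched one only makes explicit what the paper leaves implicit.
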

\begin{proof}
	Mostly follows from Theorem~\ref{char-single}. It is enough to prove $(6)\Rightarrow(3)$ with the assumption that reflexive coequalizers are $\V$-sifted. By hypothesis $\K(G,-)$ is a conservative right adjoint that preserves sifted colimits, and hence also coequalizers of reflexive pairs (since these are $\V$-sifted). Thus $\K(G,-)$ is monadic by the crude monadicity theorem, so that (3) holds.
\end{proof}

Reflexive coequalizers are $\V$-sifted whenever $\V$ is a finitary variety (that is, a locally strongly finitely presentable ordinary category). By \cite[Proposition~3.6]{ADV2023sifted} and the comments below that, reflexive coequalizers (as well as all conical sifted colimits)are $\V$-sifted whenever $\V$ is cartesian closed. This includes in particular the case $\V=\bo{Cat}$ (see also Appendix~\ref{Cat-sifted}) which is not a variety.

\subsubsection{Finite products plus projective powers}\label{fpp}

Let $\bo{Fp}$ be the ordinary class for finite products; in this section we fix a base of enrichment $\V$ for which $\V_0$ is locally $\bo{Fp}$-presentable and the $\bo{Fp}$-presentable objects are closed under the tensor product (but need not contain the unit). Locally $\bo{Fp}$-presentable categories are commonly called {\em strongly finitely presentable}~\cite{LR11NotionsOL}. For such a $\V$, an object $X$ is $\bo{Fp}$-presentable (by definition) if the functor $\V_0(X,-)$ preserves sifted colimits; it is well-known that this is equivalent to preservation of filtered colimits and reflexive coequalizers~\cite{ARV2010sifted}. When $\V_0$ is moreover exact, preservation of reflexive equalizers above is equivalent to preservation of regular epimorphisms. These bases have been considered in ~\cite{LR11NotionsOL,LT20:articolo}, see Example~\ref{inducedfromSet}.

We consider the class of weights $\bo{Fpp}:=\Phi_\bo{Fp}$ induced by taking finite products and powers by $\bo{Fp}$-presentable objects. By Proposition~\ref{Phi-V-sound} this is weakly sound, and hence we can apply the results of Section~\ref{UAsection}. Moreover, by Corollary~\ref{DvsPhiD}, a local $\bo{Fpp}$-presentability can be tested using conical sifted colimits; this will be useful in the applications of Theorem~\ref{char-projective} below.

The main difference with respect to the setting of Section~\ref{fp} above (where we consider only finite products, but no powers) is that for $\V$ an additive category, the class of finite products involves non interesting monads (those preserving all colimits), while the class of finite products plus $\bo{Fp}$-presentable powers induces monads preserving conical sifted colimits.

\begin{teo}\label{char-projective}
	The following are equivalent for a $\V$-category $\K$: \begin{enumerate}
		\item $\K\simeq\Mod(\mathbb E)$ for a projective equational theory $\mathbb E$ involving extended terms;
		\item $\K\simeq\Mod(\mathbb E)$ for a projective equational theory $\mathbb E$ involving standard terms;
		\item $\K\simeq\tx{Alg}(T)$ for a monad $T$ on $\V$ preserving (conical) sifted colimits;
		\item $\K$ is cocomplete and has a strongly finitely presentable strong generator $G\in\K$;
		\item $\K\simeq\tx{Fpp}(\T,\V)$ is equivalent to the $\V$-category of $\V$-functors preserving finite products and powers by $\bo{Fp}$-presentable objects, for some $\bo{Fpp}$-theory $\T$.
	\end{enumerate}
\end{teo}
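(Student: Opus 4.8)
The theorem is a specialization of Theorem~\ref{char-single} to the weakly sound class $\Phi=\bo{Fpp}=\Phi_{\bo{Fp}}$ (weakly sound by Proposition~\ref{Phi-V-sound}), so my plan is first to set up the dictionary that translates the abstract statement into the concrete one, and then to treat the single point where the two statements genuinely differ.

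First I would record the translations. Since the $\bo{Fp}$-filtered categories are exactly the sifted ones, Propositions~\ref{Phi-V-sound} and~\ref{flat-cocomplete} give that on any complete or $\bo{Fpp}$-cocomplete $\V$-category---in particular on $\V$ itself and on any cocomplete $\K$---a $\V$-functor preserves $\bo{Fpp}$-flat colimits if and only if it preserves conical sifted colimits. Hence ``monad preserving $\bo{Fpp}$-flat colimits'' becomes ``monad preserving sifted colimits'', which translates (3). Likewise, by Corollary~\ref{DvsPhiD} together with Proposition~\ref{flat-cocomplete}, in the cocomplete case the $\bo{Fpp}$-presentable objects coincide with the $\bo{Fp}$-presentable (strongly finitely presentable) ones. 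Finally, unwinding the definition of the arities $\bo{Fpp}\I$, powers by its objects are precisely finite products together with powers by $\bo{Fp}$-presentable objects, and a $\bo{Fpp}$-theory is a $\bo{Fpp}\I$-theory; this translates (5). With these identifications, conditions (1), (2), (3), and (5) of the present theorem are literally conditions (1), (2), (3), and (5) of Theorem~\ref{char-single}, so their mutual equivalence (and the final assertion of local $\Phi$-presentability) is already in hand.

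The only genuinely new content is condition (4), which here is weaker than in Theorem~\ref{char-single}: we ask only for a strongly finitely presentable strong generator $G$, dropping the $\V$-projectivity hypothesis. That the other conditions imply this weaker (4) is immediate, since Theorem~\ref{char-single}(4) already produces a $\bo{Fpp}$-presentable (hence strongly finitely presentable) strong generator. For the converse I would argue exactly as in $(6)\Rightarrow(3)$ of Theorem~\ref{char-discrete}. A cocomplete $\K$ with a strongly finitely presentable strong generator $G$ is locally $\bo{Fpp}$-presentable by definition, hence $\Phi$-accessible and complete by Theorem~\ref{1stchar}; since $U:=\K(G,-)$ is continuous (being representable) and preserves sifted $=\bo{Fpp}$-flat colimits, Proposition~\ref{lpAFT} furnishes a left adjoint $L$. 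The crucial point is that reflexive coequalizers are $\V$-sifted in this base---conical sifted colimits commute in $\V$ with $\bo{Fpp}$-limits by Proposition~\ref{Phi-V-sound}, exactly as noted in Example~\ref{exe-sharp}(4)---so $U$ preserves reflexive coequalizers. Thus $U$ is a conservative right adjoint out of a cocomplete $\K$ preserving reflexive coequalizers, and the crude monadicity theorem makes it monadic, with monad $T=UL$ preserving sifted colimits; this is (3).

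The step I expect to be the crux is precisely this last one: confirming that reflexive coequalizers are $\bo{Fpp}$-flat, which is what allows one to replace the full Beck monadicity theorem (and hence the $\V$-projectivity hypothesis needed in Theorem~\ref{char-single}(4)) by the crude monadicity theorem. Everything else is a matter of matching the abstract $\bo{Fpp}$-notions with their concrete sifted-colimit descriptions through the cited propositions; the only care required there is to ensure that the completeness or cocompleteness hypotheses under which Proposition~\ref{flat-cocomplete} identifies $\bo{Fpp}$-flat with sifted colimits are in force at each appeal.
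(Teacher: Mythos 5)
Your proposal is correct and follows essentially the same route as the paper: reduce (1),(2),(3),(5) to the general Theorem~\ref{char-single} for $\Phi=\bo{Fpp}$, note that its condition (4') (with $\V$-projectivity) trivially implies (4), and recover (3) from (4) via the crude monadicity theorem, the key point being exactly the one you isolate --- that reflexive coequalizers are sifted, hence $\bo{Fpp}$-flat, so $\K(G,-)$ preserves them. Your write-up is in fact slightly more careful than the paper's, since you justify the right adjoint to $\K(G,-)$ explicitly via Theorem~\ref{1stchar} and Proposition~\ref{lpAFT}, which the paper leaves implicit.
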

\begin{proof}
	The general theorem gives that (1),(2),(3), and (5) are all equivalent to:\begin{enumerate}
		\item[(4')] $\K$ is cocomplete and has a strongly finitely presentable and $\V$-projective strong generator $G\in\K$;
	\end{enumerate}
	which clearly implies (4). For the converse we argue as in the finite product case: if we assume (4), then $\K(G,-)$ is a conservative right adjoint that preserves sifted colimits, and hence also coequalizers of reflexive pairs (since these are sifted). Thus $\K(G,-)$ is monadic by the crude monadicity theorem, so that (3) holds.
\end{proof}

\appendix
\section{Sifted colimits for $\V=\bo{Cat}$}\label{Cat-sifted}

In this section we focus on the cartesian closed category $\bo{Cat}$ of small categories and consider the sound class of (conical) weights $\bo{Fp}$ for finite products. We study locally $\bo{Fp}$-presentable 2-categories and characterize when 2-functors out of those preserve $\bo{Fp}$-flat (from now on called 2-sifted) colimits.

\begin{Def}
	Say that a weight $M\colon\C\op\to\bo{Cat}$ is {\em 2-sifted} if it is $\bo{Fp}$-flat; that is, if $M$-weighted colimits commute in $\bo{Cat}$ with finite products; we denote by $\bo{2}\tx{-}\bo{Sind}$ the class of 2-sifted weights.
\end{Def}

Recall~(\cite{ARValgebraic}) that an ordinary category $\B$ is called sifted if $\B$-colimits commute in $\Set$ with finite products. In the 2-categorical context, we call {\em conical sifted colimits} those conical colimits taken along a sifted category. 

Another notion of 2-categorical colimit, that will be essential in the characterization of 2-sifted colimits, is that of {\em reflexive codescent object}. The definition can be found for instance in \cite[Section~2.2]{Bou10:PhD} where they are called ``codescent objects of strict reflexive coherent data". We do not really need to recall the definition here since our results will be quite formal in nature, and we rely on the results proved in \cite{Bou10:PhD} for the more technical parts (that would require an explicit definition).

Conical sifted colimits and reflexive codescent objects are 2-sifted, see \cite{lack2002codescent} and \cite[Section~8.4]{Bou10:PhD}. The proof that these colimits commute in $\bo{Cat}$ with finite products is the actual technical bit; everything else we do here follows by abstract arguments.

The first thing we observe is that, under the existence of finite coproducts, 2-sifted colimits are generated by conical sifted colimits and reflexive codescent objects.

Let $\bo{Ref}$ be the class of weights for reflexive coequalizers and reflexive codescent objects. And recall that a 2-category is finitely cocomplete, in the (enriched) 2-categorical sense, if it has finite conical limits and powers by $\bb{2}$ (or by any finitely presentable category).

\begin{lema}\label{ref+cop}
	A 2-category $\C$ is finitely cocomplete if and only if it has finite coproducts and $\bo{Ref}$-colimits. A 2-functor from such a $\C$ preserves finite colimits if and only if it preserves coproducts and $\bo{Ref}$-colimits.
\end{lema}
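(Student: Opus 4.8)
The plan is to prove both biconditionals by comparing the saturated class of finite weights with the saturated class $\Phi$ generated by finite coproducts together with $\bo{Ref}$. The two ``only if'' directions are immediate and can be dispatched first: finite coproducts and reflexive coequalizers are finite conical colimits, while a reflexive codescent object is a colimit weighted by a finite weight, its indexing data living on a finitely presentable $2$-category. Hence all three are finite weighted $2$-colimits, so a finitely cocomplete $\C$ possesses them, and any $2$-functor preserving finite colimits preserves them. Everything substantive is in the converses, which amount to showing that every finite weighted colimit lies in $\Phi$ and is preserved by any $2$-functor preserving finite coproducts and $\bo{Ref}$-colimits.

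For the converses I would first reduce to a small generating family. Recall from \cite{Kel82:libro} that an arbitrary weighted colimit is a conical colimit of copowers, and that in the finite case this exhibits every finite weighted $2$-colimit as a finite conical colimit of copowers by finitely presentable categories; since each finitely presentable category is itself a finite conical colimit of copies of $\bb{2}$ and of the terminal object, the whole class of finite weighted colimits is generated by the finite conical colimits together with the single copower $\bb{2}\cdot(-)$. It therefore suffices to place finite conical colimits and the copower $\bb{2}\cdot X$ inside $\Phi$, and to check their preservation.

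The conical colimits are the $1$-dimensional part, handled exactly as ordinarily. They are generated by finite coproducts and coequalizers, and a coequalizer reduces to a reflexive one by the usual reflexification: the coequalizer of $f,g\colon A\rightrightarrows B$ coincides with the coequalizer of the reflexive pair $[f,\tx{id}],[g,\tx{id}]\colon A\sqcup B\rightrightarrows B$, whose common section is the injection of $B$. Thus finite conical colimits lie in $\Phi$ and are preserved by any $2$-functor preserving finite coproducts and reflexive coequalizers.

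The copower by $\bb{2}$ is the genuinely $2$-dimensional ingredient and is where I expect the real work to sit; here I would lean on the codescent machinery of \cite[Section~8.4]{Bou10:PhD} and \cite{lack2002codescent}. The object $\bb{2}\cdot X$ is the coinserter of the two injections $\iota_0,\iota_1\colon X\rightrightarrows X\sqcup X$, and such a coinserter is presented as a codescent object of a truncated datum whose faces are $[\iota_0,\tx{id}],[\iota_1,\tx{id}]\colon X\sqcup(X\sqcup X)\rightrightarrows X\sqcup X$, with the higher level a further finite coproduct of copies of $X$ chosen so that the cocycle condition is vacuous. The extra coproduct copy of $X\sqcup X$ supplies a degeneracy making the datum reflexive, in exact parallel with the $1$-dimensional reflexification above, so that $\bb{2}\cdot X$ is realized as a reflexive codescent object of a diagram built from finite coproducts of copies of $X$; it hence lies in $\Phi$ and is preserved by any $2$-functor preserving finite coproducts and reflexive codescent objects. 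Assembling the conical and the $2$-dimensional parts yields that every finite weighted colimit lies in $\Phi$ and is preserved, which is precisely the two converse statements. The main obstacle I anticipate is the verification that this reflexive datum genuinely computes $\bb{2}\cdot X$ — that the added degeneracy leaves the descent category unchanged and that the chosen top level kills the cocycle condition — and this is exactly the technical input I would draw from \cite{Bou10:PhD}.
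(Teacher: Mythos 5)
Your proposal is correct and follows essentially the same route as the paper's proof: the conical part uses the identical reflexification of coequalizers into reflexive coequalizers plus coproducts, and the $2$-dimensional part exhibits copowers as reflexive codescent objects of diagrams built from finite coproducts, with the technical codescent facts deferred to Bourke's thesis in both cases. The only cosmetic difference is that the paper directly realizes the copower $X\cdot C$ by an arbitrary finite category $X$ as the reflexive codescent object of its truncated nerve $\{X_i\cdot C\}_{i=0,1,2}$, whereas you first reduce to the single copower $\bb{2}\cdot(-)$ and then realize it as a reflexified coinserter datum --- which, once the top level is made explicit, is exactly the truncated nerve of $\bb{2}$ tensored with the given object.
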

\begin{proof}
	One direction is clear. Assume now that $\C$ has finite coproducts and $\bo{Ref}$-colimits; then it has coequalizers since these are generated by coproducts and reflexive coequalizers. Regarding finite powers, if $X$ is a finite category then we can see it as the reflexive codescent object of the finite discrete categories $\{X_i\}_{i=0,1,2}$ given by the truncated nerve of $X$. It follows that, given $C\in\C$, the copower $X\cdot C$ in $\C$ can be seen as the reflexive codescent object of $\{X_i\cdot C\}_{i=0,1,2}$, and these exist in $\C$ because they are coproducts of copies of $\C$ (the $X_i$'s being discrete categories). As a consequence $\C$ is finitely cocomplete. The fact about preservation is a direct consequence of this.
\end{proof}

When $\C=\bo{Cat}$ the following is a consequence of \cite[Theorem~8.31]{Bou10:PhD}.

\begin{cor}
	Let $\C$ be a 2-category with finite coproducts; the following are equivalent:\begin{enumerate}
		\item $\C$ is cocomplete;
		\item $\C$ has 2-sifted colimits;
		\item $\C$ has sifted colimits and reflexive codescent objects;
		\item $\C$ has filtered colimits, reflexive coequalizers, and reflexive codescent objects.
	\end{enumerate}
	%The same holds when existence is replaced by preservation of those colimits for a 2-functor $M\colon\C\to\bo{Cat}$.
\end{cor}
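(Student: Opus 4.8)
The plan is to establish the cycle $(1)\Rightarrow(2)\Rightarrow(3)\Rightarrow(4)\Rightarrow(1)$, using the standing hypothesis that $\C$ has finite coproducts only in the final implication. The first three arrows are essentially formal; the content sits in $(4)\Rightarrow(1)$, which will rest on Lemma~\ref{ref+cop} together with the generation of all weighted colimits from finite and filtered ones.

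For $(1)\Rightarrow(2)$ there is nothing to do: a cocomplete 2-category has every weighted colimit, in particular every 2-sifted one. For $(2)\Rightarrow(3)$ I would recall, as noted above (\cite{lack2002codescent} and \cite[Section~8.4]{Bou10:PhD}), that the conical sifted weights and the weight for a reflexive codescent object are themselves 2-sifted; hence a 2-category with all 2-sifted colimits has, in particular, conical sifted colimits and reflexive codescent objects. For $(3)\Rightarrow(4)$ I would observe that filtered categories and the walking reflexive pair are sifted, so that filtered colimits and reflexive coequalizers are particular conical sifted colimits, while reflexive codescent objects occur unchanged in both conditions.

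The substantive step is $(4)\Rightarrow(1)$. I would first apply Lemma~\ref{ref+cop}: since $\C$ has finite coproducts together with reflexive coequalizers and reflexive codescent objects, i.e.\ all $\bo{Ref}$-colimits, it is finitely cocomplete as a 2-category. I would then combine this with the filtered colimits provided by $(4)$ and the standard fact that, because $\bo{Cat}$ is locally finitely presentable as a closed category, every small weight is a filtered colimit of finite weights; equivalently, a 2-category is cocomplete as soon as it has finite colimits and filtered colimits (see e.g.~\cite{Kel82:articolo}). Applying this to $\C$ yields cocompleteness and closes the cycle.

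The main obstacle is this last implication, and within it the only non-formal ingredient is the generation of all weighted colimits from the finite and the filtered ones; everything else reduces to Lemma~\ref{ref+cop} and to recognising filtered colimits and reflexive coequalizers as instances of sifted colimits. For the special case $\C=\bo{Cat}$ the equivalences alternatively follow from \cite[Theorem~8.31]{Bou10:PhD}.
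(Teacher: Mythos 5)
Your proposal is correct and follows essentially the same route as the paper: the first three implications are the trivial ones (using that conical sifted colimits and reflexive codescent objects are 2-sifted), and $(4)\Rightarrow(1)$ combines Lemma~\ref{ref+cop} with filtered colimits to get cocompleteness. The only difference is that you spell out the final step (finite plus filtered colimits generate all colimits, since $\bo{Cat}$ is locally finitely presentable as a closed category), which the paper leaves implicit.
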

\begin{proof}
	$(1)\Rightarrow(2)\Rightarrow(3)\Rightarrow(4)$ are trivial. For $(4)\Rightarrow(1)$, by Lemma~\ref{ref+cop} the 2-category $\C$ has all finite colimits, since it has also filtered colimits, it follows that it is cocomplete.
	%$(4)\Rightarrow(2)$ is a direct consequence of Theorem~\ref{2-sifted} above: if $\C$ has finite coproducts and all the colimits in $(3)$, then $\C\hookrightarrow\bo{Ind}(\bo{Ref}(\C))$ has a left adjoint. By Theorem~\ref{2-sifted} this means that $\C\hookrightarrow\bo{2}\tx{-}\bo{Sind}(\C)$ has a left adjoint; thus $\C$ has 2-sifted colimits.
\end{proof}

Next we are interested in characterizing preservation of 2-sifted colimits out of a locally $\bo{Fp}$-presentable 2-category.

\begin{prop}\label{Ref-complete}
	Let $\C$ be a 2-category with finite coproducts; then the free cocompletion $\tx{Ref}(\C)$ is finitely cocomplete and the inclusion $J\colon\C\hookrightarrow\tx{Ref}(\C)$ preserves finite coproducts. Moreover, if $\A$ is finitely cocomplete, a 2-functor $M\colon\C\to\A$ preserves finite coproducts if and only if $\tx{Lan}_JM$ is finitely cocontinuous.
\end{prop}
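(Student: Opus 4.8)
The plan is to reduce the whole statement, via Lemma~\ref{ref+cop}, to the behaviour of finite coproducts, and then to extract everything from the single nontrivial input that the weights in $\bo{Ref}$ are 2-sifted, i.e. that $\bo{Ref}$-colimits commute with finite products in $\bo{Cat}$. Since $\bo{Ref}(\C)$ carries $\bo{Ref}$-colimits by construction, Lemma~\ref{ref+cop} tells us that to prove it finitely cocomplete it is enough to produce finite coproducts in it; and for the ``moreover'' the same lemma applies, because $\tx{Lan}_J M$ is automatically $\bo{Ref}$-cocontinuous (the universal property of the free $\bo{Ref}$-cocompletion, $\A$ being finitely and hence $\bo{Ref}$-cocomplete), so $\tx{Lan}_J M$ is finitely cocontinuous exactly when it preserves finite coproducts.

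First I would view $\bo{Ref}(\C)$ as the closure of the representables inside $[\C\op,\bo{Cat}]$ under $\bo{Ref}$-colimits, and record the key closure property: every $T\in\bo{Ref}(\C)$, as a presheaf $\C\op\to\bo{Cat}$, sends finite coproducts of $\C$ to finite products of $\bo{Cat}$. Indeed each representable $YC=\C(-,C)$ does, because $C'\sqcup C''$ is a coproduct in $\C$; and since $\bo{Ref}$-colimits are pointwise and commute with finite products in $\bo{Cat}$, this property is inherited by every $\bo{Ref}$-colimit of representables. Combined with the Yoneda isomorphism $\bo{Ref}(\C)(YC,T)\cong TC$ this yields, naturally in $T$,
$$\bo{Ref}(\C)(Y(C'\sqcup C''),T)\cong T(C'\sqcup C'')\cong TC'\times TC''\cong \bo{Ref}(\C)(YC',T)\times\bo{Ref}(\C)(YC'',T),$$
so $Y(C'\sqcup C'')$ is the coproduct of $YC'$ and $YC''$ in $\bo{Ref}(\C)$; in particular $J$ preserves finite coproducts.

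For arbitrary $A\cong M_A*(JH)$ and $B\cong M_B*(JK)$, with $M_A,M_B\in\bo{Ref}$ and $H,K$ valued in $\C$, I would define $A\sqcup B$ as the iterated $\bo{Ref}$-colimit of the representables $Y(Hd\sqcup Ke)$, weighted by $M_A$ in $d$ and $M_B$ in $e$; this lies in $\bo{Ref}(\C)$ since the class is closed under $\bo{Ref}$-colimits. Its universal property follows by turning these defining colimits into weighted limits in the first hom-variable, invoking the closure property above, and pulling finite products out of weighted limits (which holds as $\bo{Cat}$ is cartesian closed):
$$\bo{Ref}(\C)(A\sqcup B,T)\cong\{M_A,\{M_B,\,TH(-)\times TK(\square)\}\}\cong\{M_A,TH(-)\}\times\{M_B,TK(\square)\}\cong\bo{Ref}(\C)(A,T)\times\bo{Ref}(\C)(B,T).$$
This provides all finite coproducts, so $\bo{Ref}(\C)$ is finitely cocomplete by Lemma~\ref{ref+cop}.

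It remains to prove the equivalence. If $M$ preserves finite coproducts, I would apply the $\bo{Ref}$-cocontinuous $\tx{Lan}_J M$ to the description of $A\sqcup B$ above; using $\tx{Lan}_J M\circ J\cong M$, preservation of finite coproducts by $M$, and the commutation of finite coproducts with $\bo{Ref}$-colimits in the cocomplete $\A$, one gets $\tx{Lan}_J M(A\sqcup B)\cong \tx{Lan}_J M(A)\sqcup \tx{Lan}_J M(B)$, whence $\tx{Lan}_J M$ is finitely cocontinuous by Lemma~\ref{ref+cop}. Conversely, if $\tx{Lan}_J M$ is finitely cocontinuous then $M\cong\tx{Lan}_J M\circ J$ preserves finite coproducts because $J$ does. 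The only genuinely hard ingredient is the commutation of $\bo{Ref}$-colimits with finite products in $\bo{Cat}$, the technical fact cited from \cite{lack2002codescent,Bou10:PhD}; once it is granted, every remaining step is formal.
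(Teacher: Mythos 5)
Your overall strategy is the same as the paper's: reduce everything to finite coproducts via Lemma~\ref{ref+cop} and let the 2-siftedness of $\bo{Ref}$-colimits do all the work. But there is a genuine gap in how you quantify over the objects of $\bo{Ref}(\C)$. You treat an arbitrary object as $A\cong M_A*(JH)$ with $M_A\in\bo{Ref}$ and $H$ valued in $\C$, i.e.\ as a \emph{single} $\bo{Ref}$-weighted colimit of representables. The free cocompletion under a class of weights is the closure of the representables under \emph{iterated} colimits of that class, and for $\bo{Ref}$ (reflexive coequalizers together with reflexive codescent objects) there is no reason one step should suffice; nothing in your argument shows that the one-step objects are closed under $\bo{Ref}$-colimits. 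The paper's proof is structured precisely to handle this: it constructs $\bo{Ref}(\C)$ as the union of a chain $\D_0=\C\subseteq\D_1\subseteq\D_2\subseteq\cdots$, alternately closing under reflexive coequalizers and reflexive codescent objects, and proves closure under finite coproducts (inside $\bo{2}\tx{-}\bo{Sind}(\C)$) by induction on $n$. As written, your construction only produces coproducts of one-step objects, and the same defect propagates into your proof of the ``moreover'' clause, which applies the explicit description of $A\sqcup B$ to arbitrary $A,B\in\bo{Ref}(\C)$.

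The good news is that your hom-object computation is essentially the missing inductive step in disguise: letting $H,K$ land in $\D_n$ rather than $\C$, replacing $Y(Hd\sqcup Ke)$ by the coproduct $Hd\sqcup Ke$ supplied by the inductive hypothesis, and replacing the ``sends coproducts to products'' property by the tautological fact that homming out of a coproduct gives a product, the same chain of isomorphisms shows that the next stage is closed under finite coproducts, and taking the union then covers all of $\bo{Ref}(\C)$. One further caveat: in that chain, the isomorphism $\{M_B,TH(-)\times TK(\square)\}\cong TH(-)\times\{M_B,TK(\square)\}$ is not a consequence of cartesian closedness of $\bo{Cat}$; what it actually needs is $\{M_B,\Delta X\}\cong X$, i.e.\ that the colimit of the weight is terminal, $M_B*\Delta 1\cong 1$. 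This is true, but because $M_B$ is 2-sifted (commutation with the \emph{empty} product), which is the same nontrivial input from \cite{lack2002codescent,Bou10:PhD} that you invoke elsewhere---so it should be cited for this step as well.
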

\begin{proof}
	It is easy to see that $\tx{Ref}(\C)$ can be defined as follows: consider $\D_0=\C$, and for $k\geq0$ define $\D_{2k+1}$ as the full subcategory of $[\C\op,\bo{Cat}]$ spanned by the reflexive coequalizers of elements of $\D_{2k}$, and $\D_{2k+2}$ as the full subcategory of $[\C\op,\bo{Cat}]$ spanned by the reflexive codescent objects of elements of $\D_{2k+1}$. Then $\tx{Ref}(\C)=\cup_{n}\D_n$.
	
	By Lemma~\ref{ref+cop} above, to prove that $\tx{Ref}(\C)$ is finitely cocomplete it is enough to show that it has finite coproducts. Since $\bo{Ref}$-colimits are 2-sifted it follows that $\tx{Ref}(\C)\subseteq \tx{2}\tx{-}\tx{Sind}(\C)$; thus it is enough to show that $\tx{Ref}(\C)$ is closed in $\tx{2}\tx{-}\tx{Sind}(\C)$ under finite coproducts (which exist in $\tx{2}\tx{-}\tx{Sind}(\C)$ being cocomplete). We do that by showing that $\D_n$ is closed under finite coproducts in $\tx{2}\tx{-}\tx{Sind}(\C)$ for each $n\in\mathbb{N}$.
	
	For $n=0$ the result is true simply because $\D_0=\C$ has finite coproducts and 2-sifted colimits commute in $\bo{Cat}$ with finite products. Consider now $n=2k+1$ and assume by inductive hypothesis that $\D_{2k}$ is closed under finite coproducts in $\tx{2}\tx{-}\tx{Sind}(\C)$. Given $X,Y\in\D_{2k+1}$, we can write $X$ and $Y$ as reflexive coequalizers of maps in $\D_{2k}$; take the coproduct of these maps in $\D_{2k}$, then their coequalizer $Z$ in $\tx{2}\tx{-}\tx{Sind}(\C)$ is the coproduct of $X$ and $Y$ (by the commutativity of colimits with other colimits and the fact that $\D_n$ is closed under finite coproducts). Since reflexive coequalizers in $\D_{2k+1}$ are computed as in $\tx{2}\tx{-}\tx{Sind}(\C)$, it follows that $Z$ actually lies in $\D_{2k+1}$ and is the coproduct of $X$ and $Y$. The same argument applies for $\D_{2k+2}$. 
	
	As a consequence $\tx{Ref}(\C)$ is finitely cocomplete and the inclusion $J\colon\C\hookrightarrow\tx{Ref}(\C)$ preserves finite coproducts. Moreover, by the explicit construction of coproducts given above, it follows that if a 2-functor $M\colon\C\to\A$ preserves finite coproducts then $\tx{Lan}_JM$ preserves them too. The last part of the statement is then a consequence of Lemma~\ref{ref+cop} and the universal property of $\bo{Ref}$-cocompletions.
\end{proof}

A consequence is:

\begin{cor}\label{fp<fin}
	Let $\C$ be any 2-category; then $\tx{Fin}(\C)\simeq\tx{Ref}(\tx{Fp}(\C))$.
\end{cor}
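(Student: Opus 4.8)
The plan is to verify that $\bo{Ref}(\bo{Fp}(\C))$, equipped with the composite inclusion $E\colon\C\hookrightarrow\bo{Fp}(\C)\hookrightarrow\bo{Ref}(\bo{Fp}(\C))$, satisfies the universal property characterizing the free finite cocompletion $\bo{Fin}(\C)$; the corollary then follows from the uniqueness of the representing object. Conceptually, this just says that closing the representables first under finite coproducts and then under $\bo{Ref}$-colimits produces exactly their closure under all finite colimits, which is the content of Lemma~\ref{ref+cop}.

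First I would record that $\bo{Ref}(\bo{Fp}(\C))$ is finitely cocomplete. Since $\bo{Fp}(\C)$ has finite coproducts by construction, this is precisely Proposition~\ref{Ref-complete}, which moreover tells us that the inclusion $J\colon\bo{Fp}(\C)\hookrightarrow\bo{Ref}(\bo{Fp}(\C))$ preserves finite coproducts.

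The core step is to produce, for any finitely cocomplete $\K$, a chain of equivalences induced by restriction:
\begin{align*}
	\bo{Fin}\tx{-Coct}(\bo{Ref}(\bo{Fp}(\C)),\K) &\simeq \bo{Fp}\tx{-Coct}(\bo{Fp}(\C),\K) \\
	&\simeq \V\tx{-}\bo{CAT}(\C,\K).
\end{align*}
The second equivalence is the universal property of $\bo{Fp}(\C)$, valid since $\K$ has finite coproducts. For the first, I would start from the universal property of the $\bo{Ref}$-cocompletion, namely that restriction along $J$ gives $\bo{Ref}\tx{-Coct}(\bo{Ref}(\bo{Fp}(\C)),\K)\simeq\V\tx{-}\bo{CAT}(\bo{Fp}(\C),\K)$ (here $\K$, being finitely cocomplete, is in particular $\bo{Ref}$-cocomplete), and then cut this down to the relevant full subcategories. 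By Lemma~\ref{ref+cop}, a $2$-functor out of $\bo{Ref}(\bo{Fp}(\C))$ preserves finite colimits if and only if it is $\bo{Ref}$-cocontinuous and preserves finite coproducts; so I must check that, under the restriction equivalence, $F$ preserves finite coproducts exactly when $FJ$ does. One direction is immediate since $J$ preserves finite coproducts; for the converse I would use that the inverse of restriction is $\tx{Lan}_J$, so that $F\cong\tx{Lan}_J(FJ)$ for $\bo{Ref}$-cocontinuous $F$, together with the final assertion of Proposition~\ref{Ref-complete}, which gives that $\tx{Lan}_J(FJ)$ is finitely cocontinuous precisely when $FJ$ preserves finite coproducts.

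Since each equivalence in the chain is induced by restriction, their composite is restriction along $E$, exhibiting $\bo{Ref}(\bo{Fp}(\C))$ as the free finite cocompletion of $\C$ and yielding $\bo{Fin}(\C)\simeq\bo{Ref}(\bo{Fp}(\C))$. I expect no genuine obstacle: the corollary is a formal consequence of stitching together the three universal properties, with all the real work already discharged by Lemma~\ref{ref+cop} and Proposition~\ref{Ref-complete}. The only point demanding care is confirming that left Kan extension along $J$ is the inverse equivalence, so that a $\bo{Ref}$-cocontinuous functor is genuinely recovered as $\tx{Lan}_J$ of its restriction to $\bo{Fp}(\C)$; this is what lets the coproduct-preservation condition transfer across the equivalence.
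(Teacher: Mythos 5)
Your proposal is correct and takes essentially the same route as the paper: the paper's proof is exactly the observation that, by Proposition~\ref{Ref-complete}, the composite inclusion $\C\hookrightarrow\bo{Fp}(\C)\hookrightarrow\bo{Ref}(\bo{Fp}(\C))$ satisfies the universal property of the free cocompletion under finite colimits. Your write-up simply makes explicit the chain of restriction equivalences (and the use of Lemma~\ref{ref+cop} together with the final assertion of Proposition~\ref{Ref-complete} to transfer finite-coproduct preservation across $\tx{Lan}_J$) that the paper leaves implicit.
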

\begin{proof}
	By Proposition~\ref{Ref-complete} above, the composite 
	$$ Z\colon\C\hookrightarrow \tx{Fp}(\C) \hookrightarrow \tx{Ref}(\tx{Fp}(\C)), $$
	of the two inclusions, satisfies the universal property of free cocompletion under finite colimits.
\end{proof}

Below we denote by $\tx{Ind}(-)$ the free cocompletion under filtered colimits.

\begin{cor}\label{2-sifted}
	Let $\C$ be a 2-category with finite coproducts; then $$\tx{2}\tx{-}\tx{Sind}(\C)\simeq\tx{Ind}(\tx{Ref}(\C)).$$%
	The equivalence being induced by the map $\tx{Ind}(\tx{Ref}(\C))\to \tx{2-Sind}(\C)$ obtained by left Kan extending the inclusion $\C\hookrightarrow \tx{2-Sind}(\C)$.
\end{cor}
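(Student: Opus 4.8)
The plan is to show that the composite $Z\colon\C\hookrightarrow\bo{Ref}(\C)\hookrightarrow\bo{Ind}(\bo{Ref}(\C))$ exhibits $\bo{Ind}(\bo{Ref}(\C))$ as the free cocompletion of $\C$ under $2$-sifted colimits; by uniqueness of free cocompletions this gives the equivalence with $\bo{2}\tx{-}\bo{Sind}(\C)$, realised by the stated left Kan extension. The first step is to check that $\bo{Ind}(\bo{Ref}(\C))$ is $2$-sifted cocomplete. Since $\C$ has finite coproducts, Proposition~\ref{Ref-complete} gives that $\bo{Ref}(\C)$ has finite coproducts (it is even finitely cocomplete); as finite coproducts and the finite colimits of $\bo{Ref}$-type all commute with filtered colimits (Fubini for colimits), the category $\bo{Ind}(\bo{Ref}(\C))$ again has finite coproducts and $\bo{Ref}$-colimits, as well as filtered colimits. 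By Lemma~\ref{ref+cop} it is therefore finitely cocomplete, and then cocomplete by the implication $(4)\Rightarrow(1)$ of the Corollary following Lemma~\ref{ref+cop}; in particular it has all $2$-sifted colimits.

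For the universal property, fix a $2$-sifted cocomplete $\K$; I want restriction along $Z$ to be an equivalence between the $2$-sifted cocontinuous $2$-functors $\bo{Ind}(\bo{Ref}(\C))\to\K$ and all $2$-functors $\C\to\K$. Given $S\colon\C\to\K$, since $\K$ has $\bo{Ref}$-colimits (as $\bo{Ref}\subseteq\bo{2}\tx{-}\bo{Sind}$) the universal property of $\bo{Ref}(\C)$ extends $S$ uniquely to a $\bo{Ref}$-cocontinuous $\bar S\colon\bo{Ref}(\C)\to\K$, and since $\K$ has filtered colimits the universal property of $\bo{Ind}$ extends $\bar S$ uniquely to a filtered-cocontinuous $\widehat S\colon\bo{Ind}(\bo{Ref}(\C))\to\K$. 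The two round-trips are then forced by these uniqueness statements, using that the inclusion $\bo{Ref}(\C)\hookrightarrow\bo{Ind}(\bo{Ref}(\C))$ preserves $\bo{Ref}$-colimits (being finite colimits, preserved by the embedding of a finitely cocomplete category into its $\bo{Ind}$-completion) and that any $2$-sifted cocontinuous $F$ preserves filtered and $\bo{Ref}$-colimits. Thus everything reduces to the single claim that $\widehat S$ is $2$-sifted cocontinuous.

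The heart of the argument, and the step I expect to be the main obstacle, is this last claim. By the generation result \cite[Theorem~8.31]{Bou10:PhD} (valid precisely because the domain has finite coproducts), a $2$-functor out of $\bo{Ind}(\bo{Ref}(\C))$ preserves all $2$-sifted colimits as soon as it preserves filtered colimits and $\bo{Ref}$-colimits; crucially $\widehat S$ need not preserve finite coproducts, since these are not $2$-sifted and enter only structurally in the generation. Preservation of filtered colimits holds by construction, so it remains to verify that $\widehat S$ preserves $\bo{Ref}$-colimits throughout $\bo{Ind}(\bo{Ref}(\C))$, and not merely on $\bo{Ref}(\C)$. Here I use that each $\bo{Ref}$-weight has finite domain: given a $\bo{Ref}$-colimit of a diagram valued in $\bo{Ind}(\bo{Ref}(\C))$, each object is a filtered colimit of objects of $\bo{Ref}(\C)$, and since the diagram shape is finite it can be rewritten as a filtered colimit of $\bo{Ref}$-colimits of diagrams valued in $\bo{Ref}(\C)$. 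Applying $\widehat S$ and using that it preserves filtered colimits, that $\bar S=\widehat S|_{\bo{Ref}(\C)}$ preserves $\bo{Ref}$-colimits, and Fubini on both sides, shows $\widehat S$ preserves the original $\bo{Ref}$-colimit. This closes the gap and completes the identification $\bo{Ind}(\bo{Ref}(\C))\simeq\bo{2}\tx{-}\bo{Sind}(\C)$.
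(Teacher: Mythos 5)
You have correctly isolated the crux of the argument, but the way you discharge it is circular. The ``generation result'' you attribute to \cite[Theorem~8.31]{Bou10:PhD} --- that a $2$-functor out of $\bo{Ind}(\bo{Ref}(\C))$ preserves all $2$-sifted colimits as soon as it preserves filtered colimits and $\bo{Ref}$-colimits --- is not available at this point. Bourke's thesis establishes the relevant facts for $\bo{Cat}$ only: \cite[Corollary~8.45]{Bou10:PhD} is the preservation statement for endofunctors of $\bo{Cat}$, and Theorem~8.31 is what the paper invokes for the case $\C=\bo{Cat}$ of the unlabelled corollary following Lemma~\ref{ref+cop}. The statement you need, for $2$-functors out of an arbitrary cocomplete $2$-category, is exactly the final theorem of this appendix, and the paper's proof of it runs \emph{through} Corollary~\ref{2-sifted}: given a $2$-sifted weight $M\colon\D\op\to\bo{Cat}$ and a diagram in the domain of the functor, one replaces $\D$ by $\bo{Fp}(\D)$ and then uses $\bo{2}\tx{-}\bo{Sind}(\D)\simeq\bo{Ind}(\bo{Ref}(\D))$ to write $M$ as a filtered colimit of $\bo{Ref}$-colimits of representables. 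That decomposition of an arbitrary $2$-sifted weight over an arbitrary domain with finite coproducts is precisely the content of the corollary you are trying to prove (applied to the weight's domain $\D$, not to your fixed $\C$), so it cannot be an input to your verification that $\widehat S$ is $2$-sifted cocontinuous. Your remaining steps --- cocompleteness of $\bo{Ind}(\bo{Ref}(\C))$, the uniqueness half of the universal property, and the rewriting of $\bo{Ref}$-colimits of diagrams valued in $\bo{Ind}(\bo{Ref}(\C))$ as filtered colimits of $\bo{Ref}$-colimits in $\bo{Ref}(\C)$ --- are fine, but they do not touch this point.

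The paper's proof avoids preservation statements altogether and works at the level of presheaf categories, using soundness twice: $\bo{2}\tx{-}\bo{Sind}(\C)\simeq\bo{Fp}(\C\op,\bo{Cat})$ because the class of finite products is sound and $\C\op$ has finite products; $\bo{Fp}(\C\op,\bo{Cat})\simeq\bo{Lex}(\bo{Ref}(\C)\op,\bo{Cat})$ by right Kan extension along $J\op$, which is the dual of Proposition~\ref{Ref-complete}; and finally $\bo{Lex}(\bo{Ref}(\C)\op,\bo{Cat})\simeq\bo{Ind}(\bo{Ref}(\C))$ by soundness of the class of finite limits together with the identification of the $\bo{Ind}$-completion with the $\bo{Lex}$-flat cocompletion (\cite[Theorem~3.13]{LT21:articolo} applied to $\V=\bo{Cat}$). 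If you wish to keep your universal-property architecture, you must first establish this weight-level decomposition (equivalently, the chain of equivalences above) and only afterwards deduce the preservation theorem for functors out of cocomplete $2$-categories; that is exactly the order in which the paper proceeds.
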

\begin{proof}
	We have the following chain of equivalences
	\begin{equation*}
		\begin{split}
			\tx{2-Sind}(\C)&\simeq \tx{Fp}(\C\op,\bo{Cat})\\
			&\simeq \tx{Lex}(\tx{Ref}(\C)\op,\bo{Cat})\\
			&\simeq \tx{Ind}(\tx{Ref}(\C))\\
		\end{split}
	\end{equation*}
	where the first equivalence is given by soundness of the class of finite products, the second is obtained by right Kan extending along $J\op\colon\C\op\hookrightarrow\tx{Ref}(\C)\op$ and is a consequence of (the dual of) Proposition~\ref{Ref-complete}. The third equivalence is given by soundness of the class of finite limits, plus the fact that freely adding filtered colimits is the same as adding $\bo{Lex}$-flat colimits (by \cite[Theorem~3.13]{LT21:articolo} applied to $\V=\bo{Cat}$).
\end{proof}

When $\K=\L=\bo{Cat}$ the result below was proved in \cite[Corollary~8.45]{Bou10:PhD}. 

\begin{teo}
	Let $\K$ be a cocomplete 2-category and $F\colon\K\to\L$ be a 2-functor into a 2-category $\L$ with 2-sifted colimits. Then $F$ preserves 2-sifted colimits if and only if it preserves filtered colimits, reflexive coequalizers, and reflexive codescent objects.
\end{teo}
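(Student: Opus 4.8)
The plan is to deduce the statement from the decomposition of $\bo{2}\tx{-}\bo{Sind}$ into filtered colimits and $\bo{Ref}$-colimits, using the standard principle that preservation of a class of colimits depends only on its saturation. The ``only if'' direction is immediate: filtered colimits and reflexive coequalizers are conical sifted colimits, and reflexive codescent objects are 2-sifted, so all three lie among the 2-sifted colimits and are therefore preserved by any $F$ preserving 2-sifted colimits.

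For the ``if'' direction I would first record the mechanism that turns a statement about weights into one about $F$. Suppose $M\colon\C\op\to\bo{Cat}$ is a 2-sifted weight lying in the free cocompletion $(\bo{Ind}\cup\bo{Ref})(\C)$, that is, $M$ is obtainable from representables by iterated filtered and $\bo{Ref}$-colimits in $[\C\op,\bo{Cat}]$. Given a diagram $H\colon\C\to\K$, the weighted colimit functor $-*H\colon[\C\op,\bo{Cat}]\to\K$ is cocontinuous and sends each representable $\C(-,c)$ to $Hc$; hence $M*H$ is built from the objects $\{Hc\}$ by exactly the same iterated filtered and $\bo{Ref}$-colimits, all of which exist in $\K$ since $\K$ is cocomplete. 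As $F$ preserves filtered colimits and $\bo{Ref}$-colimits, applying $F$ to this construction produces the same colimits of $\{FHc\}$, and since $\L$ has 2-sifted (hence filtered and $\bo{Ref}$) colimits this is $M*FH$. Thus $F(M*H)\cong M*FH$, and the whole statement reduces to the cocompletion identity
\[
 \bo{2}\tx{-}\bo{Sind}(\C)=(\bo{Ind}\cup\bo{Ref})(\C)
\]
for every small $\C$, equivalently to the equality of saturations $\bo{2}\tx{-}\bo{Sind}=(\bo{Ind}\cup\bo{Ref})^*$.

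It then remains to prove this identity. One inclusion holds because filtered and $\bo{Ref}$-colimits are 2-sifted and $\bo{2}\tx{-}\bo{Sind}$ is saturated. For the other I would show that $(\bo{Ind}\cup\bo{Ref})(\C)$ is closed under 2-sifted colimits, which is the 2-categorical analogue of the Ad\'amek--Rosick\'y--Vitale theorem that a category with filtered colimits and reflexive coequalizers has all sifted colimits: here a 2-category with filtered colimits, reflexive coequalizers, and reflexive codescent objects has all 2-sifted colimits. When $\C$ has finite coproducts this reduces to Corollary~\ref{2-sifted}, which identifies $\bo{2}\tx{-}\bo{Sind}(\C)$ with $\bo{Ind}(\bo{Ref}(\C))$; the general case follows by reorganizing an arbitrary 2-sifted colimit as a filtered colimit of $\bo{Ref}$-colimits of the given objects, as in Bourke's \cite[Corollary~8.45]{Bou10:PhD}. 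The hard part is precisely that this reorganization must avoid finite coproducts: one cannot simply replace $\C$ by its free finite-coproduct completion, because the resulting colimit would be assembled from coproducts of the $Hc$, which $F$ need not preserve (coproducts not being sifted). The substance of the theorem is therefore the ARV-style fact that siftedness alone lets one build every 2-sifted colimit from filtered and $\bo{Ref}$-colimits of the original objects, without ever passing through coproducts.
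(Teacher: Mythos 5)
Your ``only if'' direction and the transfer mechanism in your second paragraph (preservation of filtered and $\bo{Ref}$-colimits propagates to colimits weighted by anything in the closure of the representables under those colimits) are both fine. The gap is in your final paragraph, where the actual content of the theorem is supposed to live. You reduce everything to the identity $\bo{2}\tx{-}\bo{Sind}(\C)=(\bo{Ind}\cup\bo{Ref})(\C)$ for \emph{every} small $\C$, equivalently to the saturation inclusion $\bo{2}\tx{-}\bo{Sind}\subseteq(\bo{Ind}\cup\bo{Ref})^*$, and then you do not prove it: you defer it to an ``ARV-style fact''. But that fact is false. ARV's theorem genuinely needs finite coproducts in the domain, and their paper \cite{ARV2010sifted} exhibits a category with filtered colimits and reflexive coequalizers which nevertheless lacks some sifted colimit; this is impossible if sifted weights lie in the saturation of the filtered and reflexive-coequalizer weights, so the ordinary analogue of your identity fails. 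Regarding that category as a locally discrete 2-category (where $\bo{Ref}$-colimits reduce to reflexive coequalizers and conical sifted weights are 2-sifted) rules out the $\bo{Cat}$-version as well. Note also that your reduction, if it could be completed, would prove the theorem for any $\K$ merely having 2-sifted colimits, with no cocompleteness hypothesis at all --- and that hypothesis is in the statement precisely because the weight-level identity fails for general $\C$. Finally, \cite[Corollary~8.45]{Bou10:PhD}, which you cite for the ``general case'', is the instance $\K=\L=\bo{Cat}$ of the theorem itself, not a weight-level identity valid for arbitrary $\C$.

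Compounding this, the step you declare impossible is exactly the correct argument (and the paper's proof), and your objection to it dissolves on inspection; this is also where cocompleteness of $\K$ enters. Let $J\colon\C\to\bo{Fp}(\C)$ be the free finite-coproduct completion, and set $M':=\tx{Lan}_{J\op}M$ and $H':=\tx{Lan}_JH$, the latter existing because $\K$ is cocomplete. The key point is that for \emph{any} 2-functor $G\colon\bo{Fp}(\C)\to\L$ one has $M'*G\cong M*(GJ)$, either side existing if the other does: both represent $[\C\op,\bo{Cat}](M,\L(G(J-),{-}))$, by the adjunction $\tx{Lan}_{J\op}\dashv(-)\circ J\op$. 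This requires nothing whatsoever of $G$. Since $M'$ is still 2-sifted and $\bo{Fp}(\C)$ has finite coproducts, Corollary~\ref{2-sifted} exhibits $M'$ as a filtered colimit of $\bo{Ref}$-colimits of representables, so $M*H\cong M'*H'$ is an iterated filtered/$\bo{Ref}$-colimit of the objects $H'(c)$, which $F$ preserves (the corresponding colimits existing in $\L$ because they are 2-sifted). Hence
\[
F(M*H)\;\cong\;F(M'*H')\;\cong\;M'*(FH')\;\cong\;M*(FH'J)\;\cong\;M*(FH).
\]
At no point must $F$ preserve the coproducts hidden inside the objects $H'(c)$: that would be needed only to identify $FH'$ with $\tx{Lan}_J(FH)$, an identification the argument never uses. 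The coproducts are absorbed into the values of $H'$ \emph{before} $F$ is applied, and the formal isomorphism $M'*(FH')\cong M*(FH)$ does the rest.
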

\begin{proof}
	One implication is clear. Suppose then that $F$ preserves filtered colimits, reflexive coequalizers, and reflexive codescent objects. 
	
	Let $\K$ be cocomplete and consider a sifted weight $M\colon\C\op\to\bo{Cat}$ together with a diagram $H\colon\C\to\K$. Without loss of generality we can assume that $\C$ has finite coproducts; indeed, we can replace $\C$ with its free cocompletion $\tx{Fp}(\C)$ under finite coproducts, $M$ with $M':=\tx{Lan}_{J\op}M$ and $H$ with $H':=\tx{Lan}_JH$, where $J\colon\C\to\tx{Fp}(\C)$ is the inclusion (then $M'$ is still 2-sifted and $M*H\cong M'*H'$).
	
	Now, it follows that $M\in \tx{2-Sind}(\C)$, which by Corollary~\ref{2-sifted} above is equivalent to $\tx{Ind}(\tx{Ref}(\C))$. Therefore $M$ is a filtered colimit of $\bo{Ref}$-colimits of representables. Thus, the preservation of $M$-weighted colimits follows from that of filtered colimits and $\bo{Ref}$-colimits, as claimed.
	
\end{proof}

%\bibliography{biblio}

\begin{thebibliography}{10}
	
	\bibitem{ABLR02:articolo}
	J.~Ad\'{a}mek, F.~Borceux, S.~Lack, and J.~Rosick\'{y}.
	\newblock A classification of accessible categories.
	\newblock {\em Journal of Pure and Applied Algebra}, 175(1):7--30, 2002.
	\newblock Special Volume celebrating the 70th birthday of Professor Max Kelly.
	
	\bibitem{ADV2023sifted}
	J.~Ad{\'a}mek, M.~Dost{\'a}l, and J.~Velebil.
	\newblock Sifted colimits, strongly finitary monads and continuous algebras.
	\newblock {\em arXiv preprint arXiv:2301.05730}, 2023.
	
	\bibitem{ADVCALCO.2023.10}
	J.~Ad\'{a}mek, M.~Dost\'{a}l, and J.~Velebil.
	\newblock {Strongly Finitary Monads for Varieties of Quantitative Algebras}.
	\newblock In P.~Baldan and V.~de~Paiva, editors, {\em 10th Conference on
		Algebra and Coalgebra in Computer Science (CALCO 2023)}, volume 270 of {\em
		Leibniz International Proceedings in Informatics (LIPIcs)}, pages
	10:1--10:14, Dagstuhl, Germany, 2023. Schloss Dagstuhl -- Leibniz-Zentrum
	f{\"u}r Informatik.
	
	\bibitem{AR94:libro}
	J.~Ad\'{a}mek and J.~Rosick\'{y}.
	\newblock {\em Locally presentable and accessible categories}, volume 189 of
	{\em London Mathematical Society Lecture Note Series}.
	\newblock Cambridge University Press, Cambridge, 1994.
	
	\bibitem{ARValgebraic}
	J.~Ad{\'a}mek, J.~Rosick{\`y}, and E.~M. Vitale.
	\newblock {\em Algebraic theories: a categorical introduction to general
		algebra}, volume 184.
	\newblock Cambridge University Press, 2010.
	
	\bibitem{ARV2010sifted}
	J.~Ad{\'a}mek, J.~Rosicky, and E.~M. Vitale.
	\newblock What are sifted colimits?
	\newblock {\em Theory and Applications of Categories [electronic only]},
	23:251--260, 2010.
	
	\bibitem{AR11:articolo}
	J.~Adámek and J.~Rosick\'{y}.
	\newblock On sifted colimits and generalized varieties.
	\newblock {\em Theory and Applications of Categories}, 8(3):33--53, 2011.
	
	\bibitem{AK1988:articolo}
	M.~Albert and G.~Kelly.
	\newblock The closure of a class of colimits.
	\newblock {\em Journal of Pure and Applied Algebra}, 51(1):1--17, 1988.
	
	\bibitem{arkor2022monadic}
	N.~Arkor.
	\newblock {\em Monadic and higher-order structure}.
	\newblock PhD thesis, 2022.
	
	\bibitem{AM2024pullback}
	N.~Arkor and D.~McDermott.
	\newblock The pullback theorem for relative monads.
	\newblock {\em arXiv preprint arXiv:2404.01281}, 2024.
	
	\bibitem{BKPS89:articolo}
	G.~Bird, G.~Kelly, A.~Power, and R.~Street.
	\newblock Flexible limits for 2-categories.
	\newblock {\em Journal of Pure and Applied Algebra}, 61(1):1 -- 27, 1989.
	
	\bibitem{Bir84:tesi}
	G.~J. Bird.
	\newblock {\em Limits in 2-categories of locally-presented categories}.
	\newblock PhD thesis, University of Sydney, 1984; published as a Sydney
	Category Seminar Report.
	
	\bibitem{BQR98}
	F.~Borceux, C.~Quinteiro, and J.~Rosick\'{y}.
	\newblock A theory of enriched sketches.
	\newblock {\em Theory and Applications of Categories}, 4:47--72, 1998.
	
	\bibitem{Bou10:PhD}
	J.~Bourke.
	\newblock {\em Codescent objects in 2-dimensional universal algebra}.
	\newblock Ph.D. thesis, University of Sydney, 2010.
	
	\bibitem{BG}
	J.~Bourke and R.~Garner.
	\newblock Monads and theories.
	\newblock {\em Advances in Mathematics}, 351:1024--1071, 2019.
	
	\bibitem{centazzo2004characterization}
	C.~Centazzo, J.~Rosick{\`y}, and E.~Vitale.
	\newblock A characterization of locally {D}-presentable categories.
	\newblock {\em Cahiers de topologie et g{\'e}om{\'e}trie diff{\'e}rentielle
		cat{\'e}goriques}, 45(2):141--146, 2004.
	
	\bibitem{DL07}
	B.~J. Day and S.~Lack.
	\newblock Limits of small functors.
	\newblock {\em Journal of Pure and Applied Algebra}, 210(3):651 -- 663, 2007.
	
	\bibitem{di2023accessibility}
	I.~Di~Liberti and F.~Loregian.
	\newblock Accessibility and presentability in 2-categories.
	\newblock {\em Journal of Pure and Applied Algebra}, 227(1):107155, 2023.
	
	\bibitem{DV14elementary}
	M.~Dost{\'a}l and J.~Velebil.
	\newblock An elementary characterisation of sifted weights.
	\newblock {\em arXiv preprint arXiv:1405.3090}, 2014.
	
	\bibitem{Ehr68:sketches}
	C.~Ehresmann.
	\newblock Esquisses et types des structures alg{\'e}briques.
	\newblock {\em Bul. Inst. Politeh. Ia{\textcommabelow{s}}i, N. Ser.},
	14(1-2):1--14, 1968.
	
	\bibitem{FH} M. Fiore and C. K. Hur, 
	\newblock Term equational systems and logics. 
	\newblock {\em Electronic Notes in Theoretical Computer Science}, 218:171-192, 2008.
	
	\bibitem{GU71:libro}
	P.~Gabriel and F.~Ulmer.
	\newblock {\em Lokal pr\"{a}sentierbare {K}ategorien}.
	\newblock Lecture Notes in Mathematics, Vol. 221. Springer-Verlag, Berlin-New
	York, 1971.
	
	\bibitem{Joh1989:articolo}
	S.~Johnson.
	\newblock Small {C}auchy completions.
	\newblock {\em Journal of Pure and Applied Algebra}, 62(1):35--45, 1989.
	
	\bibitem{KL93FfKe:articolo}
	G.~Kelly and S.~Lack.
	\newblock Finite-product-preserving functors, {K}an extensions, and
	strongly-finitary 2-monads.
	\newblock {\em Applied Categorical Structures}, 1(1):85--94, 1993.
	
	\bibitem{Kel82:libro}
	G.~M. Kelly.
	\newblock {\em Basic Concepts of Enriched Category Theory}.
	\newblock Cambridge University Press, 1982.
	
	\bibitem{Kel82:articolo}
	G.~M. Kelly.
	\newblock Structures defined by finite limits in the enriched context. {I}.
	\newblock {\em Cahiers de Topologie et G\'{e}om\'{e}trie Diff\'{e}rentielle},
	23(1):3--42, 1982.
	
	\bibitem{KL2001:articolo}
	G.~M. Kelly and S.~Lack.
	\newblock {V}-{C}at is locally presentable or locally bounded if {V} is so.
	\newblock {\em Theory and Applications of Categories}, 8:555--575, 2001.
	
	\bibitem{KS05:articolo}
	G.~M. Kelly and V.~Schmitt.
	\newblock Notes on enriched categories with colimits of some class.
	\newblock {\em Theory and Applications of Categories}, 14(17):399–423, 2005.
	
	\bibitem{lack2002codescent}
	S.~Lack.
	\newblock Codescent objects and coherence.
	\newblock {\em Journal of Pure and Applied Algebra}, 175(1-3):223--241, 2002.
	
	\bibitem{LR11NotionsOL}
	S.~Lack and J.~Rosick{\'y}.
	\newblock Notions of {L}awvere theory.
	\newblock {\em Applied Categorical Structures}, 19:363--391, 2011.
	
	\bibitem{LT20:articolo}
	S.~Lack and G.~Tendas.
	\newblock Enriched regular theories.
	\newblock {\em Journal of Pure and Applied Algebra}, 224(6):106268, 2020.
	
	\bibitem{LT21:articolo}
	S.~Lack and G.~Tendas.
	\newblock Flat vs. filtered colimits in the enriched context.
	\newblock {\em Advances in Mathematics}, 404:108381, 2022.
	
	\bibitem{LT22:virtual}
	S.~Lack and G.~Tendas.
	\newblock Virtual concepts in the theory of accessible categories.
	\newblock {\em Journal of Pure and Applied Algebra}, 227(2):107196, 2023.
	
	\bibitem{LT22:limits}
	S.~Lack and G.~Tendas.
	\newblock Accessible categories with a class of limits.
	\newblock {\em Journal of Pure and Applied Algebra}, 228(2):107444, 2024.
	
	\bibitem{LP21}
	R.~B. Lucyshyn-Wright and J.~Parker.
	\newblock Locally bounded enriched categories.
	\newblock {\em Theory and Applications of Categories}, 38(18):684--736, 2022.
	
	\bibitem{LP} R.~B. Lucyshyn-Wright, and J.~Parker, 
	\newblock Diagrammatic presentations of enriched monads and varieties for a subcategory of arities.
	\newblock {\em Applied Categorical Structures}, 31(40), 2023.
	
	\bibitem{LP23}
	R.~B. Lucyshyn-Wright and J.~Parker.
	\newblock Enriched structure-semantics adjunctions and monad-theory
	equivalences for subcategories of arities.
	\newblock {\em Theory and Applications of Categories}, 41(52):1873--1918, 2024.
	
	\bibitem{MP89:libro}
	M.~Makkai and R.~Par\'e.
	\newblock {\em Accessible Categories: The Foundations of Categorical Model
		Theory}, volume 104.
	\newblock Springer, 1989.
	
	\bibitem{NST2020cauchy}
	B.~Nikoli{\'c}, R.~Street, and G.~Tendas.
	\newblock Cauchy completeness for {DG}-categories.
	\newblock {\em Theory and Applications of Categories}, 37(28):940--963, 2021.
	
	\bibitem{pare1990simply}
	R.~Par{\'e}.
	\newblock Simply connected limits.
	\newblock {\em Canadian Journal of Mathematics}, 42(4):731--746, 1990.
	
	\bibitem{Par23}
	J.~Parker.
	\newblock Strongly finitary monads and multi-sorted varieties enriched in
	cartesian closed concrete categories.
	\newblock {\em arXiv preprint arXiv:2310.04587}, 2023.
	
	\bibitem{RT23EUA}
	J.~Rosick{\'y} and G.~Tendas.
	\newblock Towards enriched universal algebra.
	\newblock {\em arXiv preprint arXiv:2310.11972}, 2023.
	
	\bibitem{Ten2022continuity}
	G.~Tendas.
	\newblock On continuity of accessible functors.
	\newblock {\em Applied Categorical Structures}, 30:930--947, 2022.
	
	\bibitem{Ten22:phd}
	G.~Tendas.
	\newblock Topics in the theory of enriched accessible categories.
	\newblock {\em PhD Thesis, Macquarie University,
		https://doi.org/10.25949/21638081.v1}, 2022.
	
	\bibitem{tendas2023dualities}
	G.~Tendas.
	\newblock Dualities in the theory of accessible categories.
	\newblock {\em Journal of Algebra}, 674:29--49, 2025.
	
\end{thebibliography}
%\bibliographystyle{abbrv}

\end{document}